\theoremstyle{plain}
\newtheorem{Thm}{Theorem}[section]
\newtheorem{Lem}[Thm]{Lemma}
\newtheorem{Cor}[Thm]{Corollary}
\newtheorem{Pro}[Thm]{Proposition}
\newtheorem{Prp}[Thm]{Properties}
\newtheorem{Sub}[Thm]{Sublemma}
\theoremstyle{definition}
\newtheorem{Def}[Thm]{Definition}
\newtheorem{Exm}[Thm]{Example}
\newtheorem{Exs}[Thm]{Examples}
\newtheorem{Prb}[Thm]{Problem}
\theoremstyle{remark}
\newtheorem{Rem}[Thm]{Remark}
\newtheorem{Rms}[Thm]{Remarks}
\newtheorem*{Com}{Commentary}
\newcommand{\myEmail}{piotr.niemiec@uj.edu.pl}
\newcommand{\myAddress}[1]{\noindent{}\ITE{\equal{#1}{}}{}{Piotr Niemiec\\{}}
   In\-sty\-tut Ma\-te\-ma\-ty\-ki\\{}Wy\-dzia\l{} Ma\-te\-ma\-ty\-ki i In\-for\-ma\-ty\-ki\\{}
   U\-ni\-wer\-sy\-tet Ja\-giel\-lo\'{n}\-ski\\{}ul. \L{}o\-ja\-sie\-wi\-cza 6\\{}
   30-348 Kra\-k\'{o}w\\{}Poland}
\newcommand{\myData}[1][Piotr Niemiec]{\author[P. Niemiec]{Piotr Niemiec}\address{\myAddress{#1}}
   \email{\myEmail}}
\newcommand{\RRR}{\mathbb{R}}
\newcommand{\ZZZ}{\mathbb{Z}}
\newcommand{\CCc}{\CMcal{C}}
\newcommand{\DDd}{\CMcal{D}}\newcommand{\FFf}{\CMcal{F}}
\newcommand{\GGg}{\CMcal{G}}
\newcommand{\LLl}{\CMcal{L}}
\newcommand{\RRr}{\CMcal{R}}
\newcommand{\WWw}{\CMcal{W}}
\newcommand{\mM}{\mathfrak{m}}
\newcommand{\SECT}[1]{\section{#1}\renewcommand{\theequation}{\arabic{section}-\arabic{equation}}
   \setcounter{equation}{0}}
\newcounter{help}
\newcommand{\ITE}[3]{\ifthenelse{#1}{#2}{#3}}\newcommand{\ITEE}[3]{\ITE{\equal{#1}{#2}}{#3}{}}
\newcommand{\card}{\operatorname{card}}
\newcommand{\const}{\operatorname{const}}
\newcommand{\dist}{\operatorname{dist}}
\newcommand{\id}{\operatorname{id}}
\newcommand{\Iso}{\operatorname{Iso}}
\newcommand{\Lip}{\operatorname{Lip}}
\newcommand{\Metr}{\operatorname{Metr}}
\newcommand{\leqsl}{\leqslant}\newcommand{\geqsl}{\geqslant}
\newcommand{\epsi}{\varepsilon}\newcommand{\varempty}{\varnothing}\newcommand{\dd}{\colon}
\newcommand{\TFCAE}{The following conditions are equivalent:}
\newcommand{\tfcae}{the following conditions are equivalent:}\newcommand{\iaoi}{if and only if}
\newcommand{\COR}[1]{Corollary~\textup{\ref{cor:#1}}}
\newcommand{\DEF}[1]{Definition~\textup{\ref{def:#1}}}
\newcommand{\LEM}[1]{Lemma~\textup{\ref{lem:#1}}}
\newcommand{\PRO}[1]{Proposition~\textup{\ref{pro:#1}}}
\newcommand{\REM}[1]{Remark~\textup{\ref{rem:#1}}}
\newcommand{\THM}[1]{Theorem~\textup{\ref{thm:#1}}}
\newenvironment{cor}[1]{\begin{Cor}\label{cor:#1}}{\end{Cor}}
\newenvironment{dfn}[1]{\begin{Def}\label{def:#1}}{\end{Def}}
\newenvironment{lem}[1]{\begin{Lem}\label{lem:#1}}{\end{Lem}}
\newenvironment{prb}[1]{\begin{Prb}\label{prb:#1}}{\end{Prb}}
\newenvironment{pro}[1]{\begin{Pro}\label{pro:#1}}{\end{Pro}}
\newenvironment{rem}[1]{\begin{Rem}\label{rem:#1}}{\end{Rem}}
\newenvironment{thm}[1]{\begin{Thm}\label{thm:#1}}{\end{Thm}}
\newcommand{\hULL}[1]{\textup{\textsf{H}}_2(#1)} 
\newcommand{\dOUBLE}[2][p]{#2 \times_{#1} \{-1,1\}} 
\newcommand{\bibITEM}[2]{\ITE{\equal{#2}{}}{\bibitem{#1} }{\bibitem[#2]{#1} }}
\newcommand{\BIB}[8]{
   \bibITEM{#1}{#8} #2, \textit{#3}, #4{} \textbf{#5} (#6), #7.}
\newcommand{\myBIB}[7][P. Niemiec]{\ITE{\equal{#7}{*}\or\equal{#7}{**}}{}{#1, \textit{#2}, }
   #3{}\ITE{\equal{#4}{}}{}{ \textbf{#4}} (#5), #6\ITE{\equal{#7}{*}}{}{.}}
\newcommand{\BIb}[6]{
   \bibITEM{#1}{#6} #2, \textit{#3}, #4, #5.}
\newcommand{\BiB}[9]{
   \bibITEM{#1}{#9} #2, \textit{#3}, #4{} \textit{#5}, #6, #7, #8.}
\newcommand{\myBAPP}[4][P. Niemiec]{
   \ITE{\equal{#4}{*}}{}{#1, \textit{#2}, }#3}
\newcommand{\oNlINE}[2]{\ITEE{#1}{.}{\\#2}\ITEE{#1}{}{{} #2}}
\newcommand{\jRN}[2][]{
   \ITEE{#2}{AbhHamburg}{\ITE{\equal{#1}{+}}
      {Abh. Math. Sem. Hamburg}{Abh. Math. Sem. Hamburg}}
   \ITEE{#2}{ActaM}{\ITE{\equal{#1}{+}}
      {Acta Mathematica}{Acta Math.}}
   \ITEE{#2}{ActaMSinES}{\ITE{\equal{#1}{+}}
      {Acta Mathematica Sinica (English Series)}{Acta Math. Sin. (Engl. Ser.)}}
   \ITEE{#2}{AdvM}{\ITE{\equal{#1}{+}}
      {Advances in Mathematics}{Adv. Math.}}
   \ITEE{#2}{ACS}{\ITE{\equal{#1}{+}}
      {Applied Categorical Structures}{Appl. Categ. Structures}}
   \ITEE{#2}{ActaSM}{\ITE{\equal{#1}{+}}
      {Acta Scientiarum Mathematicarum (Szeged)}{Acta Sci. Math. (Szeged)}}
   \ITEE{#2}{AmJM}{\ITE{\equal{#1}{+}}
      {American Journal of Mathematics}{Amer. J. Math.}}
   \ITEE{#2}{AmMMon}{\ITE{\equal{#1}{+}}
      {The American Mathematical Monthly}{Amer. Math. Monthly}}
   \ITEE{#2}{AnnSciEcNormSupT}{\ITE{\equal{#1}{+}}
      {Annales Scientifiques de l'\'{E}cole Normale Sup\'{e}rieure (3)}
      {Ann. Sci. \'{E}c. Norm. Sup\'{e}r. (3)}}
   \ITEE{#2}{AnnM}{\ITE{\equal{#1}{+}}
      {Annals of Mathematics}{Ann. Math.}}
   \ITEE{#2}{AnnProb}{\ITE{\equal{#1}{+}}
      {The Annals of Probability}{Ann. Probab.}}
   \ITEE{#2}{AnnPALog}{\ITE{\equal{#1}{+}}
      {Annals of Pure and Applied Logic}{Ann. Pure Appl. Logic}}
   \ITEE{#2}{APM}{\ITE{\equal{#1}{+}}
      {Annales Polonici Mathematici}{Ann. Polon. Math.}}
   \ITEE{#2}{ArchM}{\ITE{\equal{#1}{+}}
      {Archiv der Mathematik}{Arch. Math.}}
   \ITEE{#2}{AttiAccLincRendNat}{\ITE{\equal{#1}{+}}
      {Atti della Accademia Nazionale dei Lincei. Rendiconti. Classe di Scienze Fisiche, 
      Matematiche e Naturali}{Atti Accad. Naz. Lincei Rend. Cl. Sci. Fis. Mat. Nat.}}
   \ITEE{#2}{BAMS}{\ITE{\equal{#1}{+}}
      {Bulletin of the American Mathematical Society}{Bull. Amer. Math. Soc.}}
   \ITEE{#2}{BAustrMS}{\ITE{\equal{#1}{+}}
      {Bulletin of the Australian Mathematical Society}{Bull. Austral. Math. Soc.}}
   \ITEE{#2}{BLondMS}{\ITE{\equal{#1}{+}}
      {Bulletin of the London Mathematical Sociecy}{Bull. Lond. Math. Soc.}}
   \ITEE{#2}{BAPolSSSM}{\ITE{\equal{#1}{+}}
      {Bulletin de l'Acad\'{e}mie Polonaise des Sciences. S\'{e}rie des Sciences 
      Math\'{e}matiques}{Bull. Acad. Pol. Sci. S\'{e}r. Sci. Math.}}
   \ITEE{#2}{BullSM}{\ITE{\equal{#1}{+}}
      {Bulletin des Sciences Math\'{e}matiques}{Bull. Sci. Math.}}
   \ITEE{#2}{BPAS}{\ITE{\equal{#1}{+}}
      {Bulletin of the Polish Academy of Sciences: Mathematics}{Bull. Pol. Acad. Sci. Math.}}
   \ITEE{#2}{CanadJM}{\ITE{\equal{#1}{+}}
      {Canadian Journal Mathematics}{Canad. J. Math.}}
   \ITEE{#2}{CollectM}{\ITE{\equal{#1}{+}}
      {Collectanea Mathematica}{Collect. Math.}}
   \ITEE{#2}{CMUC}{\ITE{\equal{#1}{+}}
      {Commentationes Mathematicae Universitatis Carolinae}{Comment. Math. Univ. Carolin.}}
   \ITEE{#2}{CRParis}{\ITE{\equal{#1}{+}}
      {C. R. Paris}{C. R. Paris}}
   \ITEE{#2}{CRASParis}{\ITE{\equal{#1}{+}}
      {Comptes Rendus de l'Acad\'{e}mie des Sciences. Paris}{C. R. Acad. Sci. Paris}}
   \ITEE{#2}{CEurJM}{\ITE{\equal{#1}{+}}
      {Central European Journal of Mathematics}{Cent. Eur. J. Math.}}
   \ITEE{#2}{CMHelv}{\ITE{\equal{#1}{+}}
      {Commentarii Mathematici Helvetici}{Comment. Math. Helv.}}
   \ITEE{#2}{CollM}{\ITE{\equal{#1}{+}}
      {Colloquium Mathematicum}{Coll. Math.}}
   \ITEE{#2}{CollMSJBoly}{\ITE{\equal{#1}{+}}
      {Colloquia Math. Soc. Janos Bolyai}{Colloq. Math. Soc. Janos Bolyai}}
   \ITEE{#2}{ComposM}{\ITE{\equal{#1}{+}}
      {Compositio Mathematica}{Compos. Math.}}
   \ITEE{#2}{CzMJ}{\ITE{\equal{#1}{+}}
      {Czechoslovak Mathematical Journal}{Czech. Math. J.}}
   \ITEE{#2}{DissM}{\ITE{\equal{#1}{+}}
      {Dissertationes Mathematicae (Roz\-pra\-wy Ma\-te\-ma\-tycz\-ne)}
      {Dissertationes Math. (Roz\-pra\-wy Mat.)}}
   \ITEE{#2}{DANSSSR}{\ITE{\equal{#1}{+}}
      {Doklady Akademii Nauk SSSR}{Dokl. Akad. Nauk SSSR}}
   \ITEE{#2}{DMJ}{\ITE{\equal{#1}{+}}
      {Duke Mathematical Journal}{Duke Math. J.}}
   \ITEE{#2}{ELA}{\ITE{\equal{#1}{+}}
      {The Electronic Journal of Linear Algebra}{Electron. J. Linear Algebra}}
   \ITEE{#2}{ExtrM}{\ITE{\equal{#1}{+}}
      {Extracta Mathematicae}{Extracta Math.}}
   \ITEE{#2}{FM}{\ITE{\equal{#1}{+}}
      {Fundamenta Mathematicae}{Fund. Math.}}
   \ITEE{#2}{FAA}{\ITE{\equal{#1}{+}}
      {Functional Analysis and its Applications}{Funct. Anal. Appl.}}
   \ITEE{#2}{FunkAnalPril}{\ITE{\equal{#1}{+}}
      {Funktsional'ny\u{\i} Analiz i Ego Prilozheniya}{Funkts. Anal. Prilozh.}}
   \ITEE{#2}{GTopA}{\ITE{\equal{#1}{+}}
      {General Topology and its Applications}{General Topol. Appl.}}
   \ITEE{#2}{HJM}{\ITE{\equal{#1}{+}}
      {Houston Journal of Mathematics}{Houston J. Math.}}
   \ITEE{#2}{IllinoisJM}{\ITE{\equal{#1}{+}}
      {Illinois Journal of Mathematics}{Illinois J. Math.}}
   \ITEE{#2}{IndagMP}{\ITE{\equal{#1}{+}}
      {Indagationes Mathematicae (Proceedings)}{Indagationes Math. Proc.}}
   \ITEE{#2}{IndianaUMJ}{\ITE{\equal{#1}{+}}
      {Indiana University Mathematical Journal}{Indiana Univ. Math. J.}}
   \ITEE{#2}{InHauEtSPM}{\ITE{\equal{#1}{+}}
      {Inst. Hautes \'{E}tudes Sci. Publ. Math.}{Inst. Hautes \'{E}tudes Sci. Publ. Math.}}
   \ITEE{#2}{IEOT}{\ITE{\equal{#1}{+}}
      {Integral Equations and Operator Theory}{Integral Equations Operator Theory}}
   \ITEE{#2}{InterJM}{\ITE{\equal{#1}{+}}
      {International Journal of Mathematics}{Internat. J. Math.}}
   \ITEE{#2}{IsraelJM}{\ITE{\equal{#1}{+}}
      {Israel Journal of Mathematics}{Israel J. Math.}}
   \ITEE{#2}{JAT}{\ITE{\equal{#1}{+}}
      {Journal of Approximation Theory}{J. Approx. Theory}}
   \ITEE{#2}{JAusMSA}{\ITE{\equal{#1}{+}}
      {Journal of the Australian Mathematical Society. Series A}{J. Aust. Math. Soc. Ser. A}}
   \ITEE{#2}{JCA}{\ITE{\equal{#1}{+}}
      {Journal of Convex Analysis}{J. Convex Anal.}}
   \ITEE{#2}{JChinUST}{\ITE{\equal{#1}{+}}
      {J. China Univ. Sci. Tech.}{J. China Univ. Sci. Tech.}}
   \ITEE{#2}{JFA}{\ITE{\equal{#1}{+}}
      {Journal of Functional Analysis}{J. Funct. Anal.}}
   \ITEE{#2}{JKoreanMS}{\ITE{\equal{#1}{+}}
      {Journal of the Korean Mathematical Society}{J. Korean Math. Soc.}}
   \ITEE{#2}{JLieTh}{\ITE{\equal{#1}{+}}
      {Journal of Lie Theory}{J. Lie Theory}}
   \ITEE{#2}{JMAnApp}{\ITE{\equal{#1}{+}}
      {J. Math. Anal. Appl.}{J. Math. Anal. Appl.}}
   \ITEE{#2}{JLondMS}{\ITE{\equal{#1}{+}}
      {Journal of the London Mathematical Society}{J. London Math. Soc.}}
   \ITEE{#2}{JMPuApNS}{\ITE{\equal{#1}{+}}
      {J. Math. Pures Appl., N. S.}{J. Math. Pures Appl., N. S.}}
   \ITEE{#2}{JOT}{\ITE{\equal{#1}{+}}
      {Journal of Operator Theory}{J. Operator Theory}}
   \ITEE{#2}{JReinAngM}{\ITE{\equal{#1}{+}}
      {Journal f\"{u}r die reine und angewandte Mathematik}{J. Reine Angew. Math.}}
   \ITEE{#2}{KodaiMSemRep}{\ITE{\equal{#1}{+}}
      {Kodai Math. Sem. Rep.}{Kodai Math. Sem. Rep.}}
   \ITEE{#2}{LAA}{\ITE{\equal{#1}{+}}
      {Linear Algebra and its Applications}{Linear Algebra Appl.}}
   \ITEE{#2}{LMLA}{\ITE{\equal{#1}{+}}
      {Linear and Multilinear Algebra}{Linear Multilinear Algebra}}
   \ITEE{#2}{LNM}{\ITE{\equal{#1}{+}}
      {Lecture Notes in Mathematics}{Lecture Notes in Math.}}
   \ITEE{#2}{MathJap}{\ITE{\equal{#1}{+}}
      {Math. Japon.}{Math. Japon.}}
   \ITEE{#2}{MLQ}{\ITE{\equal{#1}{+}}
      {Mathematical Logic Quarterly}{Math. Log. Q.}}
   \ITEE{#2}{MNotes}{\ITE{\equal{#1}{+}}
      {Math. Notes}{Math. Notes}}
   \ITEE{#2}{MProcCambPhS}{\ITE{\equal{#1}{+}}
      {Mathematical Proceedings of the Cambridge Philosophical Society}
      {Math. Proc. Cambridge Phil. Soc.}}
   \ITEE{#2}{MMag}{\ITE{\equal{#1}{+}}
      {Mathematics Magazine}{Math. Mag.}}
   \ITEE{#2}{MSb}{\ITE{\equal{#1}{+}}
      {Matematicheski\u{\i} Sbornik}{Mat. Sb.}}
   \ITEE{#2}{MStud}{\ITE{\equal{#1}{+}}
      {Matematychni Studi\"{\i}}{Mat. Stud.}}
   \ITEE{#2}{MScand}{\ITE{\equal{#1}{+}}
      {Mathematica Scandinavica}{Math. Scand.}}
   \ITEE{#2}{MAnn}{\ITE{\equal{#1}{+}}
      {Mathematische Annalen}{Math. Ann.}}
   \ITEE{#2}{MAMS}{\ITE{\equal{#1}{+}}
      {Memoirs of the American Mathematical Society}{Mem. Amer. Math. Soc.}}
   \ITEE{#2}{MichMJ}{\ITE{\equal{#1}{+}}
      {Michigan Mathematical Journal}{Mich. Math. J.}}
   \ITEE{#2}{MonatM}{\ITE{\equal{#1}{+}}
      {Monatshefte f\"{u}r Mathematik}{Mh. Math.}}
   \ITEE{#2}{MZ}{\ITE{\equal{#1}{+}}
      {Math. Z.}{Math. Z.}}
   \ITEE{#2}{MZamet}{\ITE{\equal{#1}{+}}
      {Mat. Zametki}{Mat. Zametki}}
   \ITEE{#2}{NonlinA}{\ITE{\equal{#1}{+}}
      {Nonlinear Analysis: Theory, Methods \& Applications}{Nonlinear Anal.}}
   \ITEE{#2}{NAMS}{\ITE{\equal{#1}{+}}
      {Notices of the American Mathematical Society}{Notices Amer. Math. Soc.}}
   \ITEE{#2}{OpusM}{\ITE{\equal{#1}{+}}
      {Opuscula Mathematica}{Opuscula Math.}}
   \ITEE{#2}{PacJM}{\ITE{\equal{#1}{+}}
      {Pacific Journal of Mathematics}{Pacific J. Math.}}
   \ITEE{#2}{PeriodMHung}{\ITE{\equal{#1}{+}}
      {Periodica Mathematica Hungarica}{Period. Math. Hungarica}}
   \ITEE{#2}{PAMS}{\ITE{\equal{#1}{+}}
      {Proceedings of the American Mathematical Society}{Proc. Amer. Math. Soc.}}
   \ITEE{#2}{ProcCambPhS}{\ITE{\equal{#1}{+}}
      {Proceedings of the Cambridge Philosophical Society}{Proc. Cambridge Phil. Soc.}}
   \ITEE{#2}{ProcImpAcadTokyo}{\ITE{\equal{#1}{+}}
      {Proc. Imp. Acad. Tokyo}{Proc. Imp. Acad. Tokyo}}
   \ITEE{#2}{ProcKonink}{\ITE{\equal{#1}{+}}
      {Proceedings of the Koninklijke Nederlandse Akademie van Wetenschappen}
      {Nederl. Akad. Wetensch. Proc. Ser. A}}
   \ITEE{#2}{PLondMS}{\ITE{\equal{#1}{+}}
      {Proceedings of the London Mathematical Society}{Proc. London Math. Soc.}}
   \ITEE{#2}{PNAS}{\ITE{\equal{#1}{+}}
      {Proceedings of the National Academy of Sciences of the United States of America}
      {Proc. Natl. Acad. Sci. USA}}
   \ITEE{#2}{PublRIMSKyoto}{\ITE{\equal{#1}{+}}
      {Publ. Res. Inst. Math. Sci. Kyoto Univ.}{Publ. Res. Inst. Math. Sci.}}
   \ITEE{#2}{PublSUAA}{\ITE{\equal{#1}{+}}
      {Publ. Sci. Univ. Alger. S\'{e}r. A}{Publ. Sci. Univ. Alger. S\'{e}r. A}}
   \ITEE{#2}{PWN}{\ITE{\equal{#1}{+}}
      {PWN -- Polish Scientific Publishers, Warszawa}
      {PWN -- Polish Scientific Publishers, Warszawa}}
   \ITEE{#2}{RCMP}{\ITE{\equal{#1}{+}}
      {Rendiconti del Circolo Matematico di Palermo}{Rend. Circ. Mat. Palermo}}
   \ITEE{#2}{RussMS}{\ITE{\equal{#1}{+}}
      {Russian Mathematical Surveys}{Russian Math. Surveys}}
   \ITEE{#2}{SbM}{\ITE{\equal{#1}{+}}
      {Sbornik: Mathematics}{Sb. Math.}}
   \ITEE{#2}{SciRepTokyoA}{\ITE{\equal{#1}{+}}
      {Science Reports of Tokyo Kyoiku Daigaku, Section A}
      {Sci. Rep. Tokyo Kyoiku Daigaku Sect. A}}
   \ITEE{#2}{SeminProbStras}{\ITE{\equal{#1}{+}}
      {S\'{e}minaire de probabilit\'{e}s de Strasbourg}{S\'{e}min. Prob. Strasbourg}}
   \ITEE{#2}{SIAMJMAA}{\ITE{\equal{#1}{+}}
      {SIAM Journal on Matrix Analysis and Applications}{SIAM J. Matrix Anal. Appl.}}
   \ITEE{#2}{SibirMZ}{\ITE{\equal{#1}{+}}
      {Sibirski\v{\i} Mat. \v{Z}hurnal}{Sibirsk. Mat. \v{Z}.}}
   \ITEE{#2}{SM}{\ITE{\equal{#1}{+}}
      {Studia Mathematica}{Studia Math.}}
   \ITEE{#2}{TAMS}{\ITE{\equal{#1}{+}}
      {Transactions of the American Mathematical Society}{Trans. Amer. Math. Soc.}}
   \ITEE{#2}{TohokuMJ}{\ITE{\equal{#1}{+}}
      {T\^{o}hoku Mathematical Journal}{T\^{o}hoku Math. J.}}
   \ITEE{#2}{TomskUnivRev}{\ITE{\equal{#1}{+}}
      {Tomsk Universitet Review}{Tomsk. Univ. Rev.}}
   \ITEE{#2}{TopA}{\ITE{\equal{#1}{+}}
      {Topology and its Applications}{Topology Appl.}}
   \ITEE{#2}{TMNA}{\ITE{\equal{#1}{+}}
      {Topological Methods in Nonlinear Analysis}{Topol. Methods Nonlinear Anal.}}
   \ITEE{#2}{TsukubaJM}{\ITE{\equal{#1}{+}}
      {Tsukuba Journal of Mathematics}{Tsukuba J. Math.}}
   \ITEE{#2}{UspekhiMN}{\ITE{\equal{#1}{+}}
      {Uspekhi Matem. Nauk}{Uspekhi Mat. Nauk}}
   }
\newcommand{\paplist}[3][]{
   \ITEE{#3}{HAbels,AManoussos,GNoskov2011}{
      \BIB{#2}{H. Abels, A. Manoussos, G. Noskov}
         {Proper actions and proper invariant metrics}
         {\jRN{JLondMS} (2)}{83}{2011}{619--636}{#1}}
   \ITEE{#3}{NIAkhiezer,IMGlazman1993}{
      \BIb{#2}{N.I. Akhiezer and I.M. Glazman}
         {Theory of Linear Operators in Hilbert Space}
         {Dover Publications, Inc., New York}{1993}{#1}}
   \ITEE{#3}{ASAmitsur,JLevitzki1950}{
      \BIB{#2}{A.S. Amitsur and J. Levitzki}
         {Minimal identities for algebras}
         {\jRN{PAMS}}{1}{1950}{449--463}{#1}}
   \ITEE{#3}{RDAnderson1966}{
      \BIB{#2}{R.D. Anderson}
         {Hilbert space is homeomorphic to the countable infinite product of lines}
         {\jRN{BAMS}}{72}{1966}{515--519}{#1}}
   \ITEE{#3}{RDAnderson1967}{
      \BIB{#2}{R.D. Anderson}
         {On topological infinite deficiency}
         {\jRN{MichMJ}}{14}{1967}{365--383}{#1}}
   \ITEE{#3}{RDAnderson,JMcCharen1970}{
      \BIB{#2}{R.D. Anderson and J. McCharen}
         {On extending homeomorphisms to Fr\'{e}chet manifolds}
         {\jRN{PAMS}}{25}{1970}{283--289}{#1}}
   \ITEE{#3}{RDAnderson,DWCurtis,JVanMill1982}{
      \BIB{#2}{R.D. Anderson, D.W. Curtis, J. van Mill}
         {A fake topological Hilbert space}
         {\jRN{TAMS}}{272}{1982}{311--321}{#1}}
   \ITEE{#3}{RJArchbold1995}{
      \BIB{#2}{R.J. Archbold}
         {On residually finite\hyp{}dimensional $\CCc^*$-algebras}
         {\jRN{PAMS}}{123}{1995}{2935--2937}{#1}}
   \ITEE{#3}{RArens,JEells1956}{
      \BIB{#2}{R. Arens and J. Eells}
         {On embedding uniform and topological spaces}
         {\jRN{PacJM}}{6}{1956}{397--403}{#1}}
   \ITEE{#3}{AVArhangelskii2002}{
      \BIB{#2}{A.V. Arhangel'skii}
         {The Hewitt\hyp{}Nachbin completion in topological algebra. Some effects of homogeneity}
         {\jRN{ACS}}{10}{2002}{267--278}{#1}}
   \ITEE{#3}{AVArhangelskii,MGTkachenko2008}{
      \BIb{#2}{A.V. Arhangel'skii and M.G. Tkachenko}
         {Topological Groups and Related Structures}
         {Atlantis Press, Paris; World Scientific, Hackensack, NJ}{2008}{#1}}
   \ITEE{#3}{NAronszajn,PPanitchpakdi1956}{
      \BIB{#2}{N. Aronszajn and P. Panitchpakdi}
         {Extension of uniformly continuous transformations and hyperconvex metric spaces}
         {\jRN{PacJM}}{6}{1956}{405--439}{#1}}
   \ITEE{#3}{KJBabenko1948}{
      \BIB{#2}{K.J. Babenko}
         {On conjugate functions}
         {\jRN{DANSSSR}}{62}{1948}{157--160}{#1}}
   \ITEE{#3}{TBanakh1995}{
      \BIB{#2}{T.O. Banakh}
         {Topology of spaces of probability measures, I}
         {\jRN{MStud}}{5}{1995}{65--87 (Russian)}{#1}}
   \ITEE{#3}{TBanakh1995a}{
      \BIB{#2}{T.O. Banakh}
         {Topology of spaces of probability measures, II}
         {\jRN{MStud}}{5}{1995}{88--106 (Russian)}{#1}}
   \ITEE{#3}{TBanakh1998}{
      \BIB{#2}{T. Banakh}
         {Characterization of spaces admitting a homotopy dense embedding into a Hilbert manifold}
         {\jRN{TopA}}{86}{1998}{123--131}{#1}}
   \ITEE{#3}{TBanakh,CzBessaga2000}{
      \BIB{#2}{T. Banakh and Cz. Bessaga}
         {On linear operators extending [pseudo]metrics}
         {\jRN{BPAS}}{48}{2000}{35--49}{#1}}
   \ITEE{#3}{TBanakh,TNRadul1997}{
      \BIB{#2}{T.O. Banakh and T.N. Radul}
         {Topology of spaces of probability measures}
         {\jRN{SbM}}{188}{1997}{973--995}{#1}}
   \ITEE{#3}{TBanakh,TRadul,MZarichnyi1996}{
      \BIb{#2}{T. Banakh, T. Radul, M. Zarichnyi}
         {Absorbing sets in infinite\hyp{}dimensional manifolds}
         {VNTL Publishers, Lviv}{1996}{#1}}
   \ITEE{#3}{TBanakh,IZarichnyy2008}{
      \BIB{#2}{T. Banakh and I. Zarichnyy}
         {Topological groups and convex sets homeomorphic to non\hyp{}separable Hilbert spaces}
         {\jRN{CEurJM}}{6}{2008}{77--86}{#1}}
   \ITEE{#3}{HBecker,ASKechris1996}{
      \BIb{#2}{H. Becker and A.S. Kechris}{The Descriptive Set Theory of Polish Group Actions 
         \textup{(London Math. Soc. Lecture Note Series, vol. 232)}}
         {University Press, Cambridge}{1996}{#1}}
   \ITEE{#3}{GBeer1993}{
      \BIb{#2}{G. Beer}
         {Topologies on Closed and Closed Convex Sets \textup{(Mathematics and Its Applications)}}
         {Kluwer Academic Publishers, Dordrecht}{1993}{#1}}
   \ITEE{#3}{NEBenamara,NNikolski1999}{
      \BIB{#2}{N.E. Benamara and N. Nikolski}
         {Resolvent tests for similarity to a normal operator}
         {\jRN{PLondMS}}{78}{1999}{585--626}{#1}}
   \ITEE{#3}{YBenyamini,JLindenstrauss2000}{
      \BIb{#2}{Y. Benyamini and J. Lindenstrauss}
         {Geometric nonlinear functional analysis I}
         {AMS Colloquium Publications 48}{2000}{#1}}
   \ITEE{#3}{SKBerberian1974}{
      \BIb{#2}{S.K. Berberian}
         {Lectures in Functional Analysis and Operator Theory}
         {Graduate Texts in Mathematics 15, Springer\hyp{}Verlag, New York}{1974}{#1}}
   \ITEE{#3}{SNBernstein1954}{
      \BIb{#2}{S.N. Bernstein}
         {Collected Works II}
         {Akad. Nauk SSSR, Moscow}{1954 (Russian)}{#1}}
   \ITEE{#3}{CzBessaga,APelczynski1972}{
      \BIB{#2}{Cz. Bessaga and A. Pe\l{}czy\'{n}ski}
         {On spaces of measurable functions}
         {\jRN{SM}}{44}{1972}{597--615}{#1}}
   \ITEE{#3}{CzBessaga,APelczynski1975}{
      \BIb{#2}{Cz. Bessaga and A. Pe\l{}czy\'{n}ski}
         {Selected topics in infinite\hyp{}dimensional topology}
         {\jRN{PWN}}{1975}{#1}}
   \ITEE{#3}{MBestvina,JMogilski1986}{
      \BIB{#2}{M. Bestvina and J. Mogilski}
         {Characterizing certain incomplete infinite\hyp{}dimensional absolute retracts}
         {\jRN{MichMJ}}{33}{1986}{291--313}{#1}}
   \ITEE{#3}{MBestvina,PBowers,JMogilsky,JWalsh1986}{
      \BIB{#2}{M. Bestvina, P. Bowers, J. Mogilsky, J. Walsh}
         {Characterization of Hilbert space manifolds revisited}
         {\jRN{TopA}}{24}{1986}{53--69}{#1}}
   \ITEE{#3}{RBhatia1997}{
      \BIb{#2}{R. Bhatia}
         {Matrix Analysis}
         {Springer, New York}{1997}{#1}}
   \ITEE{#3}{GBirkhoff1936}{
      \BIB{#2}{G. Birkhoff}
         {A note on topological groups}
         {\jRN{ComposM}}{3}{1936}{427--430}{#1}}
   \ITEE{#3}{MSBirman,MZSolomjak1987}{
      \BIb{#2}{M.S. Birman and M.Z. Solomjak}
         {Spectral Theory of Self\hyp{}Adjoint Operators in Hilbert Space}
         {D. Reidel Publishing Co., Dordrecht}{1987}{#1}}
   \ITEE{#3}{EBishop1961}{
      \BIB{#2}{E. Bishop}
         {A generalization of the Stone\hyp{}Weierstrass theorem}
         {\jRN{PacJM}}{11}{1961}{777--783}{#1}}
   \ITEE{#3}{BBlackadar2006}{\BIb{#2}{B. Blackadar}{Operator Algebras. 
         Theory of $\CCc^*$\hyp{}algebras and von Neumann algebras \textup{(Encyclopaedia 
         of Mathematical Sciences, vol. 122: Operator Algebras and Non\hyp{}Commutative Geometry 
         III)}}{Springer\hyp{}Verlag, Berlin\hyp{}Heidelberg}{2006}{#1}}
   \ITEE{#3}{JBlass,WHolsztynski1972}{
      \BIB{#2}{J. Blass and W. Holszty\'{n}ski}
         {Cubical polyhedra and homotopy III}
         {\jRN{AttiAccLincRendNat}}{53}{1972}{275--279}{#1}}
   \ITEE{#3}{FFBonsall,NJDuncan1973}{
      \BIb{#2}{F.F. Bonsall and N.J. Duncan}
         {Complete Normed Algebras}
         {Springer Verlag, Berlin}{1973}{#1}}
   \ITEE{#3}{NBourbaki2002}{
      \BIb{#2}{N. Bourbaki}
         {Lie Groups and Lie Algebras, Chapters 4--6}
         {Springer, New York}{2002}{#1}}
   \ITEE{#3}{ABouziad1996}{
      \BIB{#2}{A. Bouziad}
         {Every \v{C}ech-analytic Baire semitopological group is a topological group}
         {\jRN{PAMS}}{124}{1996}{953--959}{#1}}
   \ITEE{#3}{PLBowers1989}{
      \BIB{#2}{P.L. Bowers}
         {Limitation topologies on function spaces}
         {\jRN{TAMS}}{314}{1989}{421--431}{#1}}
   \ITEE{#3}{JBraconnier1948}{
      \BIB{#2}{J. Braconnier}
         {Sur les groupes topologiques localement compacts}
         {\jRN{JMPuApNS}}{27}{1948}{1--85}{#1}}
   \ITEE{#3}{NBrand1982}{
      \BIB{#2}{N. Brand}
         {Another note on the continuity of the inverse}
         {\jRN{ArchM}}{39}{1982}{241--245}{#1}}
   \ITEE{#3}{ABrown1953}{
      \BIB{#2}{A. Brown}
         {On a class of operators}
         {\jRN{PAMS}}{4}{1953}{723--728}{#1}}
   \ITEE{#3}{ABrown,CKFong,DWHadwin1978}{
      \BIB{#2}{A. Brown, C.-K. Fong, D.W. Hadwin}
         {Parts of operators on Hilbert space}
         {\jRN{IllinoisJM}}{22}{1978}{306--314}{#1}}
   \ITEE{#3}{RBrown,SMorris1977}{
      \BIB{#2}{R. Brown and S. Morris}
         {Embeddings in contractible or compact objects}
         {\jRN{CollM}}{38}{1977}{213--222}{#1}}
   \ITEE{#3}{AMBruckner,JBBruckner,BSThomson1997}{
      \BIb{#2}{A.M. Bruckner, J.B. Bruckner, B.S. Thomson}
         {Real Analysis}
         {Prentice\hyp{}Hall, New Jersey}{1997}{#1}}
   \ITEE{#3}{PJCameron,AMVershik2006}{
      \BIB{#2}{P.J. Cameron and A.M. Vershik}
         {Some isometry groups of Urysohn space}
         {\jRN{AnnPALog}}{143}{2006}{70--78}{#1}}
   \ITEE{#3}{CCastaing1966}{
      \BIB{#2}{C. Castaing}{Quelques 
         probl\`{e}mes de mesurabilit\'{e} li\'{e}es \`{a} la th\'{e}orie de la commande}
         {\jRN{CRParis}}{262}{1966}{409--411}{#1}}
   \ITEE{#3}{JAVanCasteren1980}{
      \BIB{#2}{J.A. van Casteren}
         {A problem of Sz.\hyp{}Nagy}
         {\jRN{ActaSM}}{42}{1980}{189--194}{#1}}
   \ITEE{#3}{JAVanCasteren1983}{
      \BIB{#2}{J.A. van Casteren}
         {Operators similar to unitary or selfadjoint ones}
         {\jRN{PacJM}}{104}{1983}{241--255}{#1}}
   \ITEE{#3}{XCatepillan,MPtak,WSzymanski1994}{
      \BIB{#2}{X. Catepill\'{a}n, M. Ptak, W. Szyma\'{n}ski}{Multiple 
         canonical decompositions of families of operators and a model of quasinormal families}
         {\jRN{PAMS}}{121}{1994}{1165--1172}{#1}}
   \ITEE{#3}{RCauty1994}{
      \BIB{#2}{R. Cauty}
         {Un espace m\'{e}trique lin\'{e}aire qui n'est pas un r\'{e}tracte absolu}
         {\jRN{FM}}{146}{1994}{85--99, (French)}{#1}}
   \ITEE{#3}{TAChapman1971}{
      \BIB{#2}{T.A. Chapman}
         {Deficiency in infinite\hyp{}dimensional manifolds}
         {\jRN{GTopA}}{1}{1971}{263--272}{#1}}
   \ITEE{#3}{TAChapman1976}{
      \BIb{#2}{T.A. Chapman}
         {Lectures on Hilbert cube manifolds}
         {C.B.M.S. Regional Conference Series in Math. No 28, Amer. Math. Soc.}{1976}{#1}}
   \ITEE{#3}{WMChing1974}{
      \BIB{#2}{W.-M. Ching}
         {Topologies on the quasi-spectrum of a $\CCc^*$\hyp{}algebra}
         {\jRN{PAMS}}{46}{1974}{273--276}{#1}}
   \ITEE{#3}{RBChuaqui1977}{
      \BIB{#2}{R.B. Chuaqui}
         {Measures invariant under a group of transformations}
         {\jRN{PacJM}}{68}{1977}{313--329}{#1}}
   \ITEE{#3}{JBConway1985}{
      \BIb{#2}{J.B. Conway}
         {A Course in Functional Analysis}
         {Springer\hyp{}Verlag, New York}{1985}{#1}}
   \ITEE{#3}{JBConway2000}{
      \BIb{#2}{J.B. Conway}
         {A Course in Operator Theory}
         {(Graduate Studies in Mathematics, vol. 21) Amer. Math. Soc., Providence}{2000}{#1}}
   \ITEE{#3}{GCorach,AMaestripieri,MMbekhta2009}{
      \BIB{#2}{G. Corach, A. Maestripieri, M. Mbekhta}
         {Metric and homogeneous structure of closed range operators}
         {\jRN{JOT}}{61}{2009}{171--190}{#1}}
   \ITEE{#3}{MJCowen,RGDouglas1978}{
      \BIB{#2}{M.J. Cowen and R.G. Douglas}
         {Complex geometry and operator theory}
         {\jRN{ActaM}}{141}{1978}{187--261}{#1}}
   \ITEE{#3}{DWCurtis1985}{
      \BIB{#2}{D.W. Curtis}
         {Boundary sets in the Hilbert cube}
         {\jRN{TopA}}{20}{1985}{201--221}{#1}}
   \ITEE{#3}{DVanDantzig,BLVanDerWaerden1928}{
      \BIB{#2}{D. van Dantzig and B.L. van der Waerden}
         {\"{U}ber metrisch homogene R\"{a}ume}
         {\jRN{AbhHamburg}}{6}{1928}{367--376}{#1}}
   \ITEE{#3}{MMDay1958}{
      \BIb{#2}{M.M. Day}
         {Normed Linear Spaces}
         {Springer Verlag, Berlin}{1958}{#1}}
   \ITEE{#3}{CDellacherie1967}{
      \BIB{#2}{C. Dellacherie}
         {Un compl\'{e}ment au th\'{e}or\`{e}me de Weierstrass\hyp{}Stone}
         {\jRN{SeminProbStras}}{1}{1967}{52--53}{#1}}
   \ITEE{#3}{JJDijkstra1987}{
      \BIB{#2}{J.J. Dijkstra}
         {Strong negligibility of $\sigma$\hyp{}compacta does not characterize Hilbert space}
         {\jRN{PacJM}}{127}{1987}{19--30}{#1}}
   \ITEE{#3}{JJDijkstra1990}{
      \BIB{#2}{J.J. Dijkstra}
         {Characterizing Hilbert space topology in terms of strong negligibility}
         {\jRN{ComposM}}{75}{1990}{299--306}{#1}}
   \ITEE{#3}{JDixmier,CFoias1972}{
      \BiB{#2}{J. Dixmier and C. Foias}
         {Sur le spectre ponctuel d'un op\'{e}rateur \textup{(French)}}{in:}
         {Hilbert space operators and operator algebras (Proc. Internat. Conf., Tihany, 1970)}
         {\jRN{CollMSJBoly}, No. 5, North-Holland, Amsterdam}{1972}{127--133}{#1}}
   \ITEE{#3}{TDobrowolski,WMarciszewski2002}{
      \BIB{#2}{T. Dobrowolski and W. Marciszewski}
         {Failure of the Factor Theorem for Borel pre\hyp{}Hilbert spaces}
         {\jRN{FM}}{175}{2002}{53--68}{#1}}
   \ITEE{#3}{TDobrowolski,JMogilski1990}{
      \BiB{#2}{T. Dobrowolski and J. Mogilski}{Problems on Topological Classification 
         of Incomplete Metric Spaces}{Chapter 25 in:}{Open Problems in Topology}{J. van Mill 
         and G.M. Reed (eds.), North\hyp{}Holland Amsterdam}{1990}{411--429}{#1}}
   \ITEE{#3}{TDobrowolski,HTorunczyk1981}{
      \BIB{#2}{T. Dobrowolski and H. Toru\'{n}czyk}
         {Separable complete ANR's admitting a group structure are Hilbert manifolds}
         {\jRN{TopA}}{12}{1981}{229--235}{#1}}
   \ITEE{#3}{RGDouglas1966}{
      \BIB{#2}{R.G. Douglas}
         {On majorization, factorization and range inclusion of operators in Hilbert space}
         {\jRN{PAMS}}{17}{1966}{413--416}{#1}}
   \ITEE{#3}{CHDowker1947}{
      \BIB{#2}{C.H. Dowker}
         {Mapping theorems for non\hyp{}compact spaces}
         {\jRN{AmJM}}{69}{1947}{200--242}{#1}}
   \ITEE{#3}{CHDowker1952}{
      \BIB{#2}{C.H. Dowker}
         {Topology of metric complexes}
         {\jRN{AmJM}}{74}{1952}{555--577}{#1}}
   \ITEE{#3}{JDugundji1951}{
      \BIB{#2}{J. Dugundji}
         {An extension of Tietze's theorem}
         {\jRN{PacJM}}{1}{1951}{353--367}{#1}}
   \ITEE{#3}{JDugundji1958}{
      \BIB{#2}{J. Dugundji}
         {Absolute neighborhood retracts and local connectedness for arbitrary metric spaces}
         {\jRN{ComposM}}{13}{1958}{229--246}{#1}}
   \ITEE{#3}{JDugundji1965}{
      \BIB{#2}{J. Dugundji}
         {Locally equiconnected spaces and absolute neighborhood retracts}
         {\jRN{FM}}{57}{1965}{187--193}{#1}}
   \ITEE{#3}{NDunford,JTSchwartz1958}{
      \BIb{#2}{N. Dunford and J.T. Schwartz}
         {Linear Operators, part I}
         {Interscience Publishers, New York}{1958}{#1}}
   \ITEE{#3}{NDunford,JTSchwartz1963}{
      \BIb{#2}{N. Dunford and J.T. Schwartz}
         {Linear Operators, part II}
         {Interscience Publishers, New York}{1963}{#1}}
   \ITEE{#3}{NDunford,JTSchwartz1971}{
      \BIb{#2}{N. Dunford and J.T. Schwartz}
         {Linear Operators, part III}
         {Wiley\hyp{}Interscience, New York}{1971}{#1}}
   \ITEE{#3}{MLEaton,MDPerlman1977}{
      \BIB{#2}{M.L. Eaton and M.D. Perlman}
         {Reflection groups, generalized Schur functions and the geometry of majorization}
         {\jRN{AnnProb}}{5}{1977}{829--860}{#1}}
   \ITEE{#3}{JEells,NHKuiper1969}{
      \BIB{#2}{J. Eells and N.H. Kuiper}
         {Homotopy negligible subsets in infinite\hyp{}dimensional manifolds}
         {\jRN{ComposM}}{21}{1969}{151--161}{#1}}
   \ITEE{#3}{EGEffros1965}{
      \BIB{#2}{E.G. Effros}
         {The Borel space of von Neumann algebras on a separable Hilbert space}
         {\jRN{PacJM}}{15}{1965}{1153--1164}{#1}}
   \ITEE{#3}{EGEffros1966}{
      \BIB{#2}{E.G. Effros}
         {Global structure in von Neumann algebras}
         {\jRN{TAMS}}{121}{1966}{434--454}{#1}}
   \ITEE{#3}{REllis1957}{
      \BIB{#2}{R. Ellis}
         {A note on the continuity of the inverse}
         {\jRN{PAMS}}{8}{1957}{372--373}{#1}}
   \ITEE{#3}{REngelking1977}{
      \BIb{#2}{R. Engelking}
         {General Topology}
         {\jRN{PWN}}{1977}{#1}}
   \ITEE{#3}{REngelking1978}{
      \BIb{#2}{R. Engelking}
         {Dimension Theory}
         {\jRN{PWN}}{1978}{#1}}
   \ITEE{#3}{REngelking1989}{
      \BIb{#2}{R. Engelking}{General Topology. 
         Revised and completed edition \textup{(Sigma series in pure mathematics, vol. 6)}}
         {Heldermann Verlag, Berlin}{1989}{#1}}
   \ITEE{#3}{PErdos,RDMauldin1976}{
      \BIB{#2}{P. Erd\"{o}s and R.D. Mauldin}
         {The nonexistence of certain invariant measures}
         {\jRN{PAMS}}{59}{1976}{321--322}{#1}}
   \ITEE{#3}{JErnest1976}{
      \BIB{#2}{J. Ernest}
         {Charting the operator terrain}
         {\jRN{MAMS}}{171}{1976}{207 pp}{#1}}
   \ITEE{#3}{REspinola,MAKhamsi2001}{
      \BiB{#2}{R. Espinola and M.A. Khamsi}{Introduction 
         to hyperconvex spaces}{Chapter XIII in:}{Handbook of Metric Fixed Point Theory}
         {W.A. Kirk and B. Sims (editors), Kluwer Academic Publishers}{2001}{391--435}{#1}}
   \ITEE{#3}{JMFell1960a}{
      \BIB{#2}{J.M. Fell}
         {$\CCc^*$-algebras with smooth dual}
         {\jRN{IllinoisJM}}{4}{1960}{221--230}{#1}}
   \ITEE{#3}{JMFell1960b}{
      \BIB{#2}{J.M. Fell}
         {The dual spaces of $\CCc^*$-algebras}
         {\jRN{TAMS}}{94}{1960}{365--403}{#1}}
   \ITEE{#3}{LAFialkow1975}{
      \BIB{#2}{L.A. Fialkow}
         {The similarity orbit of a normal operator}
         {\jRN{TAMS}}{210}{1975}{129--137}{#1}}
   \ITEE{#3}{PAFillmore,JPWilliams1971}{
      \BIB{#2}{P.A. Fillmore and J.P. Williams}
         {On operator ranges}
         {\jRN{AdvM}}{7}{1971}{254--281}{#1}}
   \ITEE{#3}{RHFox1943}{
      \BIB{#2}{R.H. Fox}
         {On fiber spaces, II}
         {\jRN{BAMS}}{49}{1943}{733--735}{#1}}
   \ITEE{#3}{RFraisse1954}{
      \BIB{#2}{R. Fra\"{\i}ss\'{e}}
         {Sur quelques classifications des syst\`{e}mes de relations}
         {\jRN{PublSUAA}}{1}{1954}{35--182}{#1}}
   \ITEE{#3}{NAFriedman1970}{
      \BIb{#2}{N.A. Friedman}
         {Introduction to ergodic theory}
         {Van Nostrand Reinhold Company}{1970}{#1}}
   \ITEE{#3}{MFujii,MKajiwara,YKato,FKubo1976}{
      \BIB{#2}{M. Fujii, M. Kajiwara, Y. Kato, F. Kubo}
         {Decompositions of operators in Hilbert spaces}
         {\jRN{MathJap}}{21}{1976}{117--120}{#1}}
   \ITEE{#3}{SGao,ASKechris2003}{
      \BIB{#2}{S. Gao and A.S. Kechris}
         {On the classification of Polish metric spaces up to isometry}
         {\jRN{MAMS}}{161}{2003}{viii+78}{#1}}
   \ITEE{#3}{MIGarrido,FMontalvo1991}{
      \BIB{#2}{M.I. Garrido and F. Montalvo}
         {On some generalizations of the Kakutani\hyp{}Stone and Stone\hyp{}Weierstrass theorems}
         {\jRN{ExtrM}}{6}{1991}{156--159}{#1}}
   \ITEE{#3}{LGe,JShen2002}{
      \BIB{#2}{L. Ge and J. Shen}
         {Generator problem for certain property T factors}
         {\jRN{PNAS}}{99}{2002}{565--567}{#1}}
   \ITEE{#3}{IMGelfand,MANaimark1943}{
      \BIB{#2}{I.M. Gelfand and M.A. Naimark}
         {On the embedding of normed rings into the ring of operators in Hilbert space}
         {\jRN{MSb}}{12}{1943}{197--213}{#1}}
   \ITEE{#3}{RGellar,LPage1974}{
      \BIB{#2}{R. Gellar and L. Page}
         {Limits of unitarily equivalent normal operators}
         {\jRN{DMJ}}{41}{1974}{319--322}{#1}}
   \ITEE{#3}{FGesztesy,MMalamud,MMitrea,SNaboko2009}{
      \BIB{#2}{F. Gesztesy, M. Malamud, M. Mitrea, S. Naboko}{Generalized 
         polar decompositions for closed operators in Hilbert spaces and some applications}
         {\jRN{IEOT}}{64}{2009}{83--113}{#1}}
   \ITEE{#3}{LGillman,MJerison1960}{
      \BIb{#2}{L. Gillman and M. Jerison}
         {Rings of continuous functions}
         {New York}{1960}{#1}}
   \ITEE{#3}{JGlimm1960}{
      \BIB{#2}{J. Glimm}
         {A Stone\hyp{}Weierstrass theorem for $\CCc^*$\hyp{}algebras}
         {\jRN{AnnM}}{72}{1960}{216--244}{#1}}
   \ITEE{#3}{JGlimm1961}{
      \BIB{#2}{J. Glimm}
         {Type I $\CCc^*$-algebras}
         {\jRN{AnnM}}{73}{1961}{572--612}{#1}}
   \ITEE{#3}{GGodefroy,NJKalton2003}{
      \BIB{#2}{G. Godefroy and N.J. Kalton}
         {Lipschitz\hyp{}free Banach spaces}
         {\jRN{SM}}{159}{2003}{121--141}{#1}}
   \ITEE{#3}{ICGohberg,MGKrein1967}{
      \BIB{#2}{I.C. Gohberg and M.G. Krein}
         {On a description of contraction operators similar to unitary ones}
         {\jRN{FunkAnalPril}}{1}{1967}{38--60}{#1}}
   \ITEE{#3}{KRGoodearl,PMenal1990}{
      \BIB{#2}{K.R. Goodearl and P. Menal}
         {Free and residually finite\hyp{}dimensional $\CCc^*$-algebras}
         {\jRN{JFA}}{90}{1990}{391--410}{#1}}
   \ITEE{#3}{ELGriffinJr1953}{
      \BIB{#2}{E.L. Griffin Jr.}
         {Some contributions to the theory of rings of operators}
         {\jRN{TAMS}}{75}{1953}{471--504}{#1}}
   \ITEE{#3}{ELGriffinJr1955}{
      \BIB{#2}{E.L. Griffin Jr.}
         {Some contributions to the theory of rings of operators II}
         {\jRN{TAMS}}{79}{1955}{389--400}{#1}}
   \ITEE{#3}{MGromov1981}{
      \BIB{#2}{M. Gromov}
         {Groups of polynomial growth and expanding maps}
         {\jRN{InHauEtSPM}}{53}{1981}{53--73}{#1}}
   \ITEE{#3}{MGromov1999}{
      \BIb{#2}{M. Gromov}
         {Metric Structures for Riemannian and Non\hyp{}Riemannian Spaces}
         {Progress in Math. \textbf{152}, Birkh\"{a}user}{1999}{#1}}
   \ITEE{#3}{JDeGroot1956}{
      \BIB{#2}{J. de Groot}
         {Non\hyp{}archimedean metrics in topology}
         {\jRN{PAMS}}{7}{1956}{948--953}{#1}}
   \ITEE{#3}{LCGrove,CTBenson1985}{
      \BIb{#2}{L.C. Grove and C.T. Benson}
         {Finite Reflection Group}
         {2nd ed., Springer\hyp{}Verlag}{1985}{#1}}
   \ITEE{#3}{JBGuerrero,ARodriguez-Palacios2002}{
      \BIB{#2}{J.B. Guerrero and A. Rodr\'{\i}guez\hyp{}Palacios}
         {Transitivity of the Norm on Banach Spaces}
         {\jRN{ExtrM}}{17}{2002}{1--58}{#1}}
   \ITEE{#3}{VIGurarii1966}{
      \BIB{#2}{V.I. Gurari\v{\i}}{Spaces of universal placement, isotropic spaces and a problem 
         of Mazur on rotations of Banach spaces \textup{(Russian)}}
         {\jRN{SibirMZ}}{7}{1966}{1002--1013}{#1}}
   \ITEE{#3}{DWHadwin1974}{
      \BIB{#2}{D.W. Hadwin}
         {Closures of unitary equivalence classes}
         {\jRN{NAMS}}{21}{1974}{\#74T-B55}{#1}}
   \ITEE{#3}{DWHadwin1976}{
      \BIB{#2}{D.W. Hadwin}
         {An operator\hyp{}valued spectrum}
         {\jRN{NAMS}}{23}{1976}{A-163}{#1}}
   \ITEE{#3}{DWHadwin1977}{
      \BIB{#2}{D.W. Hadwin}
         {An operator\hyp{}valued spectrum}
         {\jRN{IndianaUMJ}}{26}{1977}{329--340}{#1}}
   \ITEE{#3}{DWHadwin1981}{
      \BIB{#2}{D.W. Hadwin}
         {Nonseparable approximate equivalence}
         {\jRN{TAMS}}{266}{1981}{203--231}{#1}}
   \ITEE{#3}{HHahn1932}{
      \BIb{#2}{H. Hahn}
         {Reelle Funktionen I}
         {Leipzig}{1932}{#1}}
   \ITEE{#3}{PRHalmos1950}{
      \BIb{#2}{P.R. Halmos}
         {Measure theory}
         {Van Nostrand, New York}{1950}{#1}}
   \ITEE{#3}{PRHalmos1951}{
      \BIb{#2}{P.R. Halmos}
         {Introduction to Hilbert Space and the Theory of Spectral Multiplicity}
         {Chelsea Publishing Company, New York}{1951}{#1}}
   \ITEE{#3}{PRHalmos1956}{
      \BIb{#2}{P.R. Halmos}
         {Lectures on Ergodic Theory}
         {Publ. Math. Soc. Japan, Tokyo}{1956}{#1}}
   \ITEE{#3}{PRHalmos1974}{
      \BIb{#2}{P.R. Halmos}
         {Measure theory}
         {Springer\hyp{}Verlag, New York}{1974}{#1}}
   \ITEE{#3}{PRHalmos1982}{
      \BIb{#2}{P.R. Halmos}
         {A Hilbert Space Problem Book}
         {Springer\hyp{}Verlag New York Inc.}{1982}{#1}}
  \ITEE{#3}{PRHalmos,JEMcLaughlin1963}{
      \BIB{#2}{P.R. Halmos and J.E. McLaughlin}
         {Partial isometries}
         {\jRN{PacJM}}{13}{1963}{585--596}{#1}}
   \ITEE{#3}{RWHansell1972}{
      \BIB{#2}{R.W. Hansell}
         {On the nonseparable theory of Borel and Souslin sets}
         {\jRN{BAMS}}{78}{1972}{236--241}{#1}}
   \ITEE{#3}{SHartman,JMycielski1957}{
      \BIB{#2}{S. Hartman and J. Mycielski}
         {On the imbedding of topological groups into connected topological groups}
         {\jRN{CollM}}{5}{1957}{167--169}{#1}}
   \ITEE{#3}{FHausdorff1930}{
      \BIB{#2}{F. Hausdorff}
         {Erweiterung einer Hom\"{o}omorphie}
         {\jRN{FM}}{16}{1930}{353--360}{#1}}
   \ITEE{#3}{FHausdorff1934}{
      \BIB{#2}{F. Hausdorff}
         {\"{U}ber innere Abbildungen}
         {\jRN{FM}}{23}{1934}{279--291}{#1}}
   \ITEE{#3}{FHausdorff1938}{
      \BIB{#2}{F. Hausdorff}
         {Erweiterung einer stetigen Abbildung}
         {\jRN{FM}}{30}{1938}{40--47}{#1}}
   \ITEE{#3}{DWHenderson1971}{
      \BIB{#2}{D.W. Henderson}
         {Corrections and extensions of two papers about infinite\hyp{}dimensional manifolds}
         {\jRN{GTopA}}{1}{1971}{321--327}{#1}}
   \ITEE{#3}{DWHenderson1975}{
      \BIB{#2}{D.W. Henderson}
         {$Z$\hyp{}sets in ANR's}
         {\jRN{TAMS}}{213}{1975}{205--216}{#1}}
   \ITEE{#3}{DWHenderson,RMSchori1970}{
      \BIB{#2}{D.W. Henderson and R.M. Schori}
         {Topological classification of infinite\hyp{}dimensional manifolds by homotopy type}
         {\jRN{BAMS}}{76}{1970}{121--124}{#1}}
   \ITEE{#3}{DWHenderson,JEWest1970}{
      \BIB{#2}{D.W. Henderson and J.E. West}
         {Triangulated infinite\hyp{}dimensional manifolds}
         {\jRN{BAMS}}{76}{1970}{655--660}{#1}}
   \ITEE{#3}{DAHerrero1976}{
      \BIB{#2}{D.A. Herrero}
         {Closure of similarity orbits of Hilbert space operators, II: normal operators}
         {\jRN{JLondMS}}{13}{1976}{299--316}{#1}}
   \ITEE{#3}{EHewitt,KARoss1979}{
      \BIb{#2}{E. Hewitt and K.A. Ross}{Abstract Harmonic 
         Analysis I \textup{(A Series of Comprehensive Studies in Mathematics, Vol. 115)}}
         {Springer\hyp{}Verlag, New York}{1979}{#1}}
   \ITEE{#3}{EHewitt,KARoss1997}{
      \BIb{#2}{E. Hewitt and K.A. Ross}{Abstract Harmonic 
         Analysis II \textup{(A Series of Comprehensive Studies in Mathematics, Vol. 152}}
         {Springer\hyp{}Verlag, Berlin}{1997}{#1}}
   \ITEE{#3}{BHoffmann1979}{
      \BIB{#2}{B. Hoffmann}
         {A compact contractible topological group is trivial}
         {\jRN{ArchM}}{32}{1979}{585--587}{#1}}
   \ITEE{#3}{DHofmann2002}{
      \BIB{#2}{D. Hofmann}
         {On a generalization of the Stone\hyp{}Weierstrass theorem}
         {\jRN{ACS}}{10}{2002}{569--592}{#1}}
   \ITEE{#3}{GHognas,AMukherjea1995}{
      \BIb{#2}{G. H\"ogn\"as and A. Mukherjea}{Probability 
         Measures on Semigroups. Convolution Products, Random Walks, and Random Matrices}
         {Plenum Press, New York}{1995}{#1}}
   \ITEE{#3}{MRHolmes1992}{
      \BIB{#2}{M.R. Holmes}{The universal 
         separable metric space of Urysohn and isometric embeddings thereof in Banach spaces}
         {\jRN{FM}}{140}{1992}{199--223}{#1}}
   \ITEE{#3}{MRHolmes2008}{
      \BIB{#2}{M.R. Holmes}
         {The Urysohn space embeds in Banach spaces in just one way}
         {\jRN{TopA}}{155}{2008}{1479--1482}{#1}}
   \ITEE{#3}{RRHolmes,TYTam1999}{
      \BIB{#2}{R.R. Holmes and T.Y. Tam}
         {Distance to the convex hull of an orbit under the action of a compact group}
         {\jRN{JAusMSA}}{66}{1999}{331--357}{#1}}
   \ITEE{#3}{RHorn,RMathias1990}{
      \BIB{#2}{R. Horn and R. Mathias}
         {Cauchy\hyp{}Schwartz inequalities associated with positive semidefinite matrices}
         {\jRN{LAA}}{142}{1990}{63--82}{#1}}
   \ITEE{#3}{GEHuhunaisvili1955}{
      \BIB{#2}{G.E. Huhunai\v{s}vili}
         {On a property of Urysohn's universal metric space}
         {\jRN{DANSSSR}}{101}{1955}{607--610 (Russian)}{#1}}
   \ITEE{#3}{JEHumphreys1990}{
      \BIb{#2}{J.E. Humphreys}
         {Reflection Groups and Coxeter Groups}
         {Cambridge University Press}{1990}{#1}}
   \ITEE{#3}{JRIsbell1964}{
      \BIB{#2}{J.R. Isbell}
         {Six theorems about injective metric spaces}
         {\jRN{CMHelv}}{39}{1964}{65--76}{#1}}
   \ITEE{#3}{SIzumino,YKato1985}{
      \BIB{#2}{S. Izumino and Y. Kato}
         {The closure of invertible operators on Hilbert space}
         {\jRN{ActaSM}}{49}{1985}{321--327}{#1}}
   \ITEE{#3}{CJiang2004}{
      \BIB{#2}{C. Jiang}
         {Similarity classification of Cowen\hyp{}Douglas operators}
         {\jRN{CanadJM}}{56}{2004}{742--775}{#1}}
   \ITEE{#3}{WBJohnson,JLindenstrauss2001}{
      \BiB{#2}{W.B. Johnson and J. Lindenstrauss}{Basic Concepts in the Geometry of Banach Spaces}
         {Chapter 1 in:}{Handbook of the Geometry of Banach Spaces, Vol. 1}{W.B. Johnson 
         and J. Lindenstrauss (editors), Elsevier Science B.V., Amsterdam}{2001}{1--84}{#1}}
   \ITEE{#3}{IBJung,JStochel2008}{
      \BIB{#2}{I.B. Jung and J. Stochel}
         {Subnormal operators whose adjoints have rich point spectrum}
         {\jRN{JFA}}{255}{2008}{1797--1816}{#1}}
   \ITEE{#3}{RVKadison,JRRingrose1983}{
      \BIb{#2}{R.V. Kadison and J.R. Ringrose}
         {Fundamentals of the Theory of Operator Algebras. Volume I: Elementary Theory}
         {Academic Press, Inc., New York\hyp{}London}{1983}{#1}}
   \ITEE{#3}{RVKadison,JRRingrose1986}{
      \BIb{#2}{R.V. Kadison and J.R. Ringrose}
         {Fundamentals of the Theory of Operator Algebras. Volume II: Advanced Theory}
         {Academic Press, Inc., Orlando\hyp{}London}{1986}{#1}}
   \ITEE{#3}{SKakutani1936}{
      \BIB{#2}{S. Kakutani}
         {\"{U}ber die Metrisation der topologischen Gruppen}
         {\jRN{ProcImpAcadTokyo}}{12}{1936}{82--84}{#1}}
   \ITEE{#3}{SKakutani1938}{
      \BIB{#2}{S. Kakutani}
         {Two fixed\hyp{}point theorems concerning bicompact convex sets}
         {\jRN{ProcImpAcadTokyo}}{14}{1938}{242--245}{#1}}
   \ITEE{#3}{SKakutani1941}{
      \BIB{#2}{S. Kakutani}
         {Concrete representation of abstract L\hyp{}spaces}
         {\jRN{AnnM}}{42}{1941}{523--537}{#1}}
   \ITEE{#3}{SKakutani1941a}{
      \BIB{#2}{S. Kakutani}
         {Concrete representation of abstract M\hyp{}spaces}
         {\jRN{AnnM}}{42}{1941}{994--1024}{#1}}
   \ITEE{#3}{NKalton2007}{
      \BIB{#2}{N. Kalton}
         {Extending Lipschitz maps into $\CCc(K)$\hyp{}spaces}
         {\jRN{IsraelJM}}{162}{2007}{275--315}{#1}}
   \ITEE{#3}{RKane2001}{
      \BIb{#2}{R. Kane}
         {Reflection Groups and Invariant Theory}
         {Canadian Mathematical Society, Springer}{2001}{#1}}
   \ITEE{#3}{VKannan,SRRaju1980}{
      \BIB{#2}{V. Kannan and S.R. Raju}
         {The nonexistence of invariant universal measures on semigroups}
         {\jRN{PAMS}}{78}{1980}{482--484}{#1}}
   \ITEE{#3}{IKaplansky1951}{
      \BIB{#2}{I. Kaplansky}
         {A theorem on rings of operators}
         {\jRN{PacJM}}{1}{1951}{227--232}{#1}}
   \ITEE{#3}{MKatetov1988}{
      \BiB{#2}{M. Kat\v{e}tov}{On universal metric spaces}{in: Frolik (ed.),}{General Topology 
         and its Relations to Modern Analysis and Algebra VI. Proceedings of the Sixth Prague 
         Topological Symposium 1986}{Heldermann Verlag Berlin}{1988}{323--330}{#1}}
   \ITEE{#3}{YKatznelson1960}{
      \BIB{#2}{Y. Katznelson}{Sur les alg\'{e}bres 
         dont les \'{e}l\'{e}ments non n\'{e}gatifs admettent des racines carr\'{e}es}
         {\jRN{AnnSciEcNormSupT}}{77}{1960}{167--174}{#1}}
   \ITEE{#3}{RKaufman1981}{
      \BIB{#2}{R. Kaufman}
         {Lipschitz spaces and Suslin sets}
         {\jRN{JFA}}{42}{1981}{271--273}{#1}}
   \ITEE{#3}{RKaufman1984}{
      \BIB{#2}{R. Kaufman}
         {Representation of Suslin sets by operators}
         {\jRN{IEOT}}{7}{1984}{808--814}{#1}}
   \ITEE{#3}{OHKeller1931}{
      \BIB{#2}{O.H. Keller}
         {Die Homoiomorphie der kompakten konvexen Mengen in Hilbertschen Raum}
         {\jRN{MAnn}}{105}{1931}{748--758}{#1}}
   \ITEE{#3}{MAKhamsi,WAKirk,CMartinez2000}{
      \BIB{#2}{M.A. Khamsi, W.A. Kirk, C. Martinez}
         {Fixed point and selection theorems in hyperconvex spaces}
         {\jRN{PAMS}}{128}{2000}{3275--3283}{#1}}
   \ITEE{#3}{ABKhararazishvili1998}{
      \BIb{#2}{A.B. Khararazishvili}
         {Transformation groups and invariant measures. Set\hyp{}theoretic aspects}
         {World Scientific Publishing Co., Inc., River Edge, NJ}{1998}{#1}}
   \ITEE{#3}{YKijima1987}{
      \BIB{#2}{Y. Kijima}
         {Fixed points of nonexpansive self\hyp{}maps of a compact metric space}
         {\jRN{JMAnApp}}{123}{1987}{114--116}{#1}}
  \ITEE{#3}{JSKim,ChRKim,SGLee1980}{
      \BIB{#2}{J.S. Kim, Ch.R. Kim, S.G. Lee}
         {Reducing operator valued spectra of a Hilbert space operator}
         {\jRN{JKoreanMS}}{17}{1980}{123--129}{#1}}
   \ITEE{#3}{JKindler1995}{
      \BIB{#2}{J. Kindler}
         {Minimax theorems with applications to convex metric spaces}
         {\jRN{CollM}}{68}{1995}{179--186}{#1}}
   \ITEE{#3}{WAKirk1998}{
      \BIB{#2}{W.A. Kirk}
         {Hyperconvexity of $\RRR$\hyp{}trees}
         {\jRN{FM}}{156}{1998}{67--72}{#1}}
   \ITEE{#3}{VLKleeJr1952}{
      \BIB{#2}{V.L. Klee Jr.}
         {Invariant metrics in groups (solution of a problem of Banach)}
         {\jRN{PAMS}}{3}{1952}{484--487}{#1}}
   \ITEE{#3}{JLKoszul1965}{
      \BIb{#2}{J.L. Koszul}
         {Lectures on groups of transformations}
         {Tata Institute of Fundamental Research, Bombay}{1965}{#1}}
   \ITEE{#3}{HJKowalsky1957}{
      \BIB{#2}{H.J. Kowalsky}
         {Einbettung metrischer R\"{a}ume}
         {\jRN{ArchM}}{8}{1957}{336--339}{#1}}
   \ITEE{#3}{WKubis,MRubin2010}{
      \BIB{#2}{W. Kubi\'{s} and M. Rubin}
         {Extension and reconstruction theorems for the Urysohn universal metric space}
         {\jRN{CzMJ}}{60}{2010}{1--29}{#1}}
   \ITEE{#3}{KKuratowski1966}{
      \BIb{#2}{K. Kuratowski}
         {Topology. \textup{Vol. I}}
         {\jRN{PWN}}{1966}{#1}}
   \ITEE{#3}{KKuratowski,BKnaster1927}{
      \BIB{#2}{K. Kuratowski and B. Knaster}
         {A connected and connected im kleinen point set which contains no perfect subset}
         {\jRN{BAMS}}{33}{1927}{106--109}{#1}}
   \ITEE{#3}{KKuratowski,AMostowski1976}{
      \BIb{#2}{K. Kuratowski and A. Mostowski}
         {Set Theory with an Introduction to Descriptive Set Theory}
         {\jRN{PWN}}{1976}{#1}}
   \ITEE{#3}{GLewicki1992}{
      \BIB{#2}{G. Lewicki}
         {Bernstein's ``lethargy'' theorem in metrizable topological linear spaces}
         {\jRN{MonatM}}{113}{1992}{213--226}{#1}}
   \ITEE{#3}{ASLewis1996}{
      \BIB{#2}{A.S. Lewis}
         {Group invariance and convex matrix analysis}
         {\jRN{SIAMJMAA}}{17}{1996}{927--949}{#1}}
   \ITEE{#3}{C-KLi,N-KTsing1991}{
      \BIB{#2}{C.-K. Li and N.-K. Tsing}
         {$G$\hyp{}invariant norms and $G(c)$\hyp{}radii}
         {\jRN{LAA}}{150}{1991}{179--194}{#1}}
   \ITEE{#3}{AJLazar,JLindenstrauss1971}{
      \BIB{#2}{A.J. Lazar and J. Lindenstrauss}
         {Banach spaces whose duals are $L_1$ spaces and their representing matrices}
         {\jRN{ActaM}}{126}{1971}{165--193}{#1}}
   \ITEE{#3}{SGLee1980}{
      \BIB{#2}{S.G. Lee}
         {Remarks on reducing operator valued spectrum}
         {\jRN{JKoreanMS}}{16}{1980}{131--136}{#1}}
   \ITEE{#3}{EHLieb,MLoss1997}{
      \BIb{#2}{E.H. Lieb and M. Loss}
         {Analysis \textup{(Graduate Studies in Mathematics, vol. 14)}}
         {Amer. Math. Soc., Providence, RI}{1997}{#1}}
   \ITEE{#3}{HLin2001}{
      \BIB{#2}{H. Lin}
         {Residually finite\hyp{}dimensional and AF\hyp{}embeddable $\CCc^*$\hyp{}algebras}
         {\jRN{PAMS}}{129}{2001}{1689--1696}{#1}}
   \ITEE{#3}{ALindenbaum1926}{
      \BIB{#2}{A. Lindenbaum}
         {Contributions \`{a} l'\'{e}tude de l'espace m\'{e}trique I}
         {\jRN{FM}}{8}{1926}{209--222}{#1}}
   \ITEE{#3}{DLindenstrauss,LTzafriri1971}{
      \BIB{#2}{D. Lindenstrauss and L. Tzafriri}
         {On the complemented subspaces problem}
         {\jRN{IsraelJM}}{9}{1971}{263--269}{#1}}
   \ITEE{#3}{RILoebl1986}{
      \BIB{#2}{R.I. Loebl}
         {A note on containment of operators}
         {\jRN{BAustrMS}}{33}{1986}{279--291}{#1}}
   \ITEE{#3}{LHLoomis1945}{
      \BIB{#2}{L.H. Loomis}
         {Abstract congruence and the uniqueness of Haar measure}
         {\jRN{AnnM}}{46}{1945}{348--355}{#1}}
   \ITEE{#3}{LHLoomis1949}{
      \BIB{#2}{L.H. Loomis}
         {Haar measure in uniform structures}
         {\jRN{DMJ}}{16}{1949}{193--208}{#1}}
   \ITEE{#3}{ERLorch1939}{
      \BIB{#2}{E.R. Lorch}
         {Bicontinuous linear transformation in certain vector spaces}
         {\jRN{BAMS}}{45}{1939}{564--569}{#1}}
   \ITEE{#3}{KLowner1934}{
      \BIB{#2}{K. L\"{o}wner}
         {\"{U}ber monotone Matrixfunctionen}
         {\jRN{MZ}}{38}{1934}{177--216}{#1}}
   \ITEE{#3}{FLuft1968}{
      \BIB{#2}{F. Luft}{The two-sided 
         closed ideals of the algebra of bounded linear operators of a Hilbert space}
         {\jRN{CzMJ}}{18}{1968}{595--605}{#1}}
   \ITEE{#3}{ATLundell,SWeingram1969}{
      \BIb{#2}{A.T. Lundell and S. Weingram}
         {The topology of CW\hyp{}complexes}
         {Litton Educ. Publ.}{1969}{#1}}
   \ITEE{#3}{WLusky1976}{
      \BIB{#2}{W. Lusky}
         {The Gurarij spaces are unique}
         {\jRN{ArchM}}{27}{1976}{627--635}{#1}}
   \ITEE{#3}{WLusky1977}{
      \BIB{#2}{W. Lusky}
         {On separable Lindenstrauss spaces}
         {\jRN{JFA}}{26}{1977}{103--120}{#1}}
   \ITEE{#3}{DMaharam1942}{
      \BIB{#2}{D. Maharam}
         {On homogeneous measure algebras}
         {\jRN{PNAS}}{28}{1942}{108--111}{#1}}
   \ITEE{#3}{MMalicki,SSolecki2009}{
      \BIB{#2}{M. Malicki and S. Solecki}
         {Isometry groups of separable metric spaces}
         {\jRN{MProcCambPhS}}{146}{2009}{67--81}{#1}}
   \ITEE{#3}{PMankiewicz1972}{
      \BIB{#2}{P. Mankiewicz}
         {On extension of isometries in normed linear spaces}
         {\jRN{BAPolSSSM}}{20}{1972}{367--371}{#1}}
   \ITEE{#3}{AManoussos,PStrantzalos2003}{
      \BIB{#2}{A. Manoussos and P. Strantzalos}
         {On the group of isometries on a locally compact metric space}
         {\jRN{JLieTh}}{13}{2003}{7--12}{#1}}
   \ITEE{#3}{JMartinezMaurica,MTPellon1987}{
      \BIB{#2}{J. Martinez\hyp{}Maurica and M.T. Pell\'{o}n}
         {Non\hyp{}archimedean Chebyshev centers}
         {\jRN{IndagMP}}{90}{1987}{417--421}{#1}}
   \ITEE{#3}{KMaurin1980}{
      \BIb{#2}{K. Maurin}
         {Analysis, Part II}
         {D. Reidel, Dordrecht\hyp{}Boston\hyp{}London}{1980}{#1}}
   \ITEE{#3}{EMayer-Wolf1981}{
      \BIB{#2}{E. Mayer-Wolf}
         {Isometries between Banach spaces of Lipschitz functions}
         {\jRN{IsraelJM}}{38}{1981}{58--74}{#1}}
   \ITEE{#3}{SMazur,SUlam1932}{
      \BIB{#2}{S. Mazur and S. Ulam}
         {Sur les transformationes isom\'{e}triques d'espaces vectoriels norm\'{e}s}
         {\jRN{CRASParis}}{194}{1932}{946--948}{#1}}
   \ITEE{#3}{SMazurkiewicz1920}{
      \BIB{#2}{S. Mazurkiewicz}
         {Sur les lignes de Jordan}
         {\jRN{FM}}{1}{1920}{166--209}{#1}}
   \ITEE{#3}{SMazurkiewicz,WSierpinski1920}{
      \BIB{#2}{S. Mazurkiewicz and W. Sierpi\'{n}ski}
         {Contributions a la topologie des ensembles denombrables}
         {\jRN{FM}}{1}{1920}{17--27}{#1}}
   \ITEE{#3}{MMbekhta1992}{
      \BIB{#2}{M. Mbekhta}
         {Sur la structure des composantes connexes semi\hyp{}Fredholm de $B(H)$}
         {\jRN{PAMS}}{116}{1992}{521--524}{#1}}
   \ITEE{#3}{JEMcCarthy1996}{
      \BIB{#2}{J.E. McCarthy}
         {Boundary values and Cowen\hyp{}Douglas curvature}
         {\jRN{JFA}}{137}{1996}{1--18}{#1}}
   \ITEE{#3}{JMelleray2007}{
      \BIB{#2}{J. Melleray}
         {Computing the complexity of the relation of isometry between separable Banach spaces}
         {\jRN{MLQ}}{53}{2007}{128--131}{#1}}
   \ITEE{#3}{JMelleray2007a}{
      \BIB{#2}{J. Melleray}
         {On the geometry of Urysohn's universal metric space}
         {\jRN{TopA}}{154}{2007}{384--403}{#1}}
   \ITEE{#3}{JMelleray2008}{
      \BIB{#2}{J. Melleray}
         {Some geometric and dynamical properties of the Urysohn space}
         {\jRN{TopA}}{155}{2008}{1531--1560}{#1}}
   \ITEE{#3}{JMelleray2008a}{
      \BIB{#2}{J. Melleray}
         {Compact metrizable groups are isometry groups of compact metric spaces}
         {\jRN{PAMS}}{136}{2008}{1451--1455}{#1}}
   \ITEE{#3}{JMelleray,FVPetrov,AMVershik2008}{
      \BIB{#2}{J. Melleray, F.V. Petrov, A.M. Vershik}
         {Linearly rigid metric spaces and the embedding problem}
         {\jRN{FM}}{199}{2008}{177--194}{#1}}
   \ITEE{#3}{EMichael1953}{
      \BIB{#2}{E. Michael}
         {Some extension theorems for continuous functions}
         {\jRN{PacJM}}{3}{1953}{789--806}{#1}}
   \ITEE{#3}{EMichael1954}{
      \BIB{#2}{E. Michael}
         {Local properties of topological spaces}
         {\jRN{DMJ}}{21}{1954}{163--171}{#1}}
   \ITEE{#3}{EMichael1956}{
      \BIB{#2}{E. Michael}
         {Selected selection theorems}
         {\jRN{AmMMon}}{58}{1956}{233--238}{#1}}
   \ITEE{#3}{EMichael1956a}{
      \BIB{#2}{E. Michael}
         {Continuous selections. I}
         {\jRN{AnnM}}{63}{1956}{361--382}{#1}}
   \ITEE{#3}{EMichael1956b}{
      \BIB{#2}{E. Michael}
         {Continuous selections. II}
         {\jRN{AnnM}}{64}{1956}{562--580}{#1}}
   \ITEE{#3}{EMichael1959}{
      \BIB{#2}{E. Michael}
         {A theorem on semi\hyp{}continuous set\hyp{}valued functions}
         {\jRN{DMJ}}{26}{1959}{647--652}{#1}}
   \ITEE{#3}{EMichael1964}{
      \BIB{#2}{E. Michael}
         {A short proof of the Arens-Eells embedding theorem}
         {\jRN{PAMS}}{15}{1964}{415--416}{#1}}
   \ITEE{#3}{JVanMill1986}{
      \BIB{#2}{J. van Mill}
         {Another counterexample in ANR theory}
         {\jRN{PAMS}}{97}{1986}{136--138}{#1}}
   \ITEE{#3}{JVanMill2001}{
      \BIb{#2}{J. van Mill}
         {The Infinite\hyp{}Dimensional Topology of Function Spaces 
         \textup{(North\hyp{}Holland Mathematical Library, vol. 64)}}
         {Elsevier, Amsterdam}{2001}{#1}}
   \ITEE{#3}{KMine2006}{
      \BIB{#2}{K. Mine}
         {Universal spaces of non\hyp{}separable absolute Borel classes}
         {\jRN{TsukubaJM}}{30}{2006}{137--148}{#1}}
   \ITEE{#3}{WMlak1991}{
      \BIb{#2}{W. Mlak}{Hilbert Spaces and Operator Theory}
         {PWN --- Polish Scientific Publishers and Kluwer Academic Publishers, 
         Warszawa\hyp{}Dordrecht}{1991}{#1}}
   \ITEE{#3}{JMogilski1979}{
      \BIB{#2}{J. Mogilski}
         {$CE$\hyp{}decomposition of $\ell_2$\hyp{}manifolds}
         {\jRN{BAPolSSSM}}{27}{1979}{309--314}{#1}}
   \ITEE{#3}{RLMoore1916}{
      \BIB{#2}{R.L. Moore}
         {On the foundations of plane analysis situs}
         {\jRN{TAMS}}{17}{1916}{131--164}{#1}}
   \ITEE{#3}{KMorita1955}{
      \BIB{#2}{K. Morita}
         {A condition for the metrizability of topological spaces and for $n$\hyp{}dimensionality}
         {\jRN{SciRepTokyoA}}{5}{1955}{33--36}{#1}}
   \ITEE{#3}{AMukherjea,NATserpes1976}{
      \BIb{#2}{A. Mukherjea and N.A. Tserpes}
         {Measures on topological semigroups}
         {Springer Lecture Notes in Math. Vol. 547, Berlin}{1976}{#1}}
   \ITEE{#3}{JMycielski1974}{
      \BIB{#2}{J. Mycielski}
         {Remarks on invariant measures in metric spaces}
         {\jRN{CollM}}{32}{1974}{105--112}{#1}}
   \ITEE{#3}{SNNaboko1984}{
      \BIB{#2}{S.N. Naboko}
         {Conditions for similarity to unitary and selfadjoint operators}
         {\jRN{FunkAnalPril}}{18}{1984}{16--27}{#1}}
   \ITEE{#3}{LNachbin1965}{
      \BIb{#2}{L. Nachbin}{The Haar Integral}
         {D. Van Nostrand Company, Inc., 
         Princeton\hyp{}New Jersey\hyp{}Toronto\hyp{}New York\hyp{}London}{1965}{#1}}
   \ITEE{#3}{TDNarang,SKGarg1991}{
      \BIB{#2}{T.D. Narang and S.K. Garg}
         {On the uniqueness of best approximation in non\hyp{}archimedian spaces}
         {\jRN{PeriodMHung}}{22}{1991}{121--124}{#1}}
   \ITEE{#3}{JVonNeumann1930}{
      \BIB{#2}{J. von Neumann}
         {Zur Algebra der Funktionaloperationen und Theorie der normalen Operatoren}
         {\jRN{MAnn}}{102}{1930}{370--427}{#1}}
   \ITEE{#3}{JVonNeumann1934}{
      \BIB{#2}{J. von Neumann}
         {Zum Haarschen Mass in topologischen Gruppen}
         {\jRN{ComposM}}{1}{1934}{106--114}{#1}}
   \ITEE{#3}{JVonNeumann1937}{
      \BiB{#2}{J. von Neumann}{Some matrix\hyp{}inequalities 
         and metrization of matrix\hyp{}space}{\jRN{TomskUnivRev}{} \textbf{1} (1937), 
         286--300; in }{Collected Works}{Pergamon, New York}{1962}{Vol. 4, 205--219}{#1}}
   \ITEE{#3}{JVonNeumann1949}{
      \BIB{#2}{J. von Neumann}
         {On Rings of Operators. Reduction Theory}
         {\jRN{AnnM}}{50}{1949}{401--485}{#1}}
   \ITEE{#3}{ONielson1973}{
      \BIB{#2}{O. Nielson}
         {Borel sets of von Neumann algebras}
         {\jRN{AmJM}}{95}{1973}{145--164}{#1}}
   \ITEE{#3}{pn2002}{\bibITEM{#2}{#1} \mypaplist{pn1}{}}
   \ITEE{#3}{pn2006a}{\bibITEM{#2}{#1} \mypaplist{pn2}{}}
   \ITEE{#3}{pn2006b}{\bibITEM{#2}{#1} \mypaplist{pn3}{}}
   \ITEE{#3}{pn2007}{\bibITEM{#2}{#1} \mypaplist{pn4}{}}
   \ITEE{#3}{pn2008a}{\bibITEM{#2}{#1} \mypaplist{pn5}{}}
   \ITEE{#3}{pn2008b}{\bibITEM{#2}{#1} \mypaplist{pn6}{}}
   \ITEE{#3}{pn2009a}{\bibITEM{#2}{#1} \mypaplist{pn7}{}}
   \ITEE{#3}{pn2009b}{\bibITEM{#2}{#1} \mypaplist{pn8}{}}
   \ITEE{#3}{pn2009c}{\bibITEM{#2}{#1} \mypaplist{pn9}{}}
   \ITEE{#3}{pn2010a}{\bibITEM{#2}{#1} \mypaplist{pn10}{}}
   \ITEE{#3}{pn2010b}{\bibITEM{#2}{#1} \mypaplist{pn11}{}}
   \ITEE{#3}{pn2011a}{\bibITEM{#2}{#1} \mypaplist{pn12}{}}
   \ITEE{#3}{pn2011b}{\bibITEM{#2}{#1} \mypaplist{pn13}{}}
   \ITEE{#3}{pn2011c}{\bibITEM{#2}{#1} \mypaplist{pn14}{}}
   \ITEE{#3}{pn2011d}{\bibITEM{#2}{#1} \mypaplist{pn15}{}}
   \ITEE{#3}{pn2011e}{\bibITEM{#2}{#1} \mypaplist{pn16}{}}
   \ITEE{#3}{pn2011f}{\bibITEM{#2}{#1} \mypaplist{pn17}{}}
   \ITEE{#3}{pn2012a-}{\bibITEM{#2}{#1} \mypaplist[*]{pn18}{}}
   \ITEE{#3}{pn2012a}{\bibITEM{#2}{#1} \mypaplist{pn18}{}}
   \ITEE{#3}{pn2012b}{\bibITEM{#2}{#1} \mypaplist{pn19}{}}
   \ITEE{#3}{pn2012c}{\bibITEM{#2}{#1} \mypaplist{pn20}{}}
   \ITEE{#3}{pn2012d}{\bibITEM{#2}{#1} \mypaplist{pn21}{}}
   \ITEE{#3}{pn2012e}{\bibITEM{#2}{#1} \mypaplist{pn22}{}}
   \ITEE{#3}{pnXXXXb}{
      \bibITEM{#2}{#1} \mypaplist{pnX2}{}}
   \ITEE{#3}{pnXXXXc}{
      \bibITEM{#2}{#1} \mypaplist{pnX3}{}}
   \ITEE{#3}{pnXXXXd}{
      \bibITEM{#2}{#1} \mypaplist{pnX17}{}}
   \ITEE{#3}{MNiezgoda1998}{
      \BIB{#2}{M. Niezgoda}
         {Group majorization and Schur type inequalities}
         {\jRN{LAA}}{268}{1998}{9--30}{#1}}
   \ITEE{#3}{MNiezgoda1998a}{
      \BIB{#2}{M. Niezgoda}{An analytical 
         characterization of effective and of irreducible groups inducing cone orderings}
         {\jRN{LAA}}{269}{1998}{105--114}{#1}}
   \ITEE{#3}{MNiezgoda,TYTam2001}{
      \BIB{#2}{M. Niezgoda and T.Y. Tam}
         {On norm property of $G(c)$\hyp{}radii and Eaton triples}
         {\jRN{LAA}}{336}{2001}{119--130}{#1}}
   \ITEE{#3}{LNNikolskaya1974}{
      \BIB{#2}{L.N. Nikol'skaya}{Structure of the point spectrum 
         of a linear operator \textup{(Russian)}}{\jRN{MZamet}}{15}{1974}{149--158; 
         English translation in \jRN{MNotes} \textbf{15} (1974), 83--87}{#1}}
   \ITEE{#3}{APazy1983}{
      \BIb{#2}{A. Pazy}{Semigroups of Linear Operators and Applications 
         to Partial Differential Equations \textup{(Applied Mathematical Sciences, vol. 44)}}
         {Springer\hyp{}Verlag, New York}{1983}{#1}}
   \ITEE{#3}{CPearcy,NSalinas1974}{
      \BIB{#2}{C. Pearcy and N. Salinas}
         {Finite-dimensional representations of separable $\CCc^*$-algebras}
         {\jRN{NAMS}}{21}{1974}{A-376}{#1}}
   \ITEE{#3}{APelc1982}{
      \BIB{#2}{A. Pelc}
         {Semiregular invariant measures on abelian groups}
         {\jRN{PAMS}}{86}{1982}{423--426}{#1}}
   \ITEE{#3}{RPenrose1955}{
      \BIB{#2}{R. Penrose}
         {A generalized inverse for matrices}
         {\jRN{ProcCambPhS}}{51}{1955}{406--413}{#1}}
   \ITEE{#3}{VPestov2006}{
      \BIb{#2}{V. Pestov}{Dynamics of infinite\hyp{}dimensional 
         groups. The Ramsey\hyp{}Dvoretzky\hyp{}Milman phenomenon}
         {University Lecture Series \textbf{40}, AMS, Providence, RI}{2006}{#1}}
   \ITEE{#3}{VPestov2007}{
      \BiB{#2}{V. Pestov}{Forty\hyp{}plus annotated 
         questions about large topological groups}{in:}{Open Problems in Topology II}
         {Elliot Pearl (editor), Elsevier B.V., Amsterdam}{2007}{439--450}{#1}}
   \ITEE{#3}{PVPetersen1993}{
      \BiB{#2}{P.V. Petersen}{Gromov\hyp{}Hausdorff convergence 
         of metric spaces}{in book:}{Differential Geometry: Riemannian Geometry 
         (Los Angeles, CA, 1990)}{Amer. Math. Soc., Providence, RI}{1993}{489--504}{#1}}
   \ITEE{#3}{HPfister1985}{
      \BIB{#2}{H. Pfister}
         {Continuity of the inverse}
         {\jRN{PAMS}}{95}{1985}{312--314}{#1}}
   \ITEE{#3}{LPontrjagin1946}{
      \BIb{#2}{L. Pontrjagin}
         {Topological Groups}
         {Princeton University Press, Princeton}{1946}{#1}}
   \ITEE{#3}{DRamachandran,MMisiurewicz1982}{
      \BIB{#2}{D. Ramachandran and M. Misiurewicz}
         {Hopf's theorem on invariant measures for a group of transformations}
         {\jRN{SM}}{74}{1982}{183--189}{#1}}
   \ITEE{#3}{WRoelcke,SDierolf1981}{
      \BIb{#2}{W. Roelcke and S. Dierolf}
         {Uniform Structures on Topological Groups and Their Quotients}
         {McGraw Hill, New York}{1981}{#1}}
   \ITEE{#3}{JMRosenblatt1974}{
      \BIB{#2}{J.M. Rosenblatt}
         {Equivalent invariant measures}
         {\jRN{IsraelJM}}{17}{1974}{261--270}{#1}}
   \ITEE{#3}{SRosset1976}{
      \BIB{#2}{S. Rosset}
         {A new proof of the Amitsur-Levitski identity}
         {\jRN{IsraelJM}}{23}{1976}{187--188}{#1}}
   \ITEE{#3}{HLRoyden1963}{
      \BIb{#2}{H.L. Royden}
         {Real Analysis}
         {The Macmillan Co., New York}{1963}{#1}}
   \ITEE{#3}{WRudin1962}{
      \BIb{#2}{W. Rudin}{Fourier Analysis on Groups 
         \textup{(Interscience Tracts in Pure and Applied Mathematics, Number 12)}}
         {Interscience Publishers, New York}{1962}{#1}}
   \ITEE{#3}{WRudin1991}{
      \BIb{#2}{W. Rudin}
         {Functional Analysis}
         {McGraw\hyp{}Hill Science}{1991}{#1}}
   \ITEE{#3}{TSaito1972}{
      \BiB{#2}{T. Sait\^{o}}{Generations of von Neumann algebras}
         {Lecture Notes in Math. vol. 247}{\textup{(}Lecture on Operator Algebras\textup{)}}
         {Springer, Berlin\hyp{}Heidelberg\hyp{}New York}{1972}{435--531}{#1}}
   \ITEE{#3}{KSakai,MYaguchi2003}{
      \BIB{#2}{K. Sakai and M. Yaguchi}{Characterizing 
         manifolds modeled on certain dense subspaces of non\hyp{}separable Hilbert spaces}
         {\jRN{TsukubaJM}}{27}{2003}{143--159}{#1}}
   \ITEE{#3}{SSakai1971}{
      \BIb{#2}{S. Sakai}
         {$\CCc^*$\hyp{}Algebras and $\WWw^*$\hyp{}Algebras}
         {Springer\hyp{}Verlag, Berlin\hyp{}Heidelberg\hyp{}New York}{1971}{#1}}
   \ITEE{#3}{RSchori1971}{
      \BIB{#2}{R. Schori}
         {Topological stability for infinite\hyp{}dimensional manifolds}
         {\jRN{ComposM}}{23}{1971}{87--100}{#1}}
   \ITEE{#3}{JTSchwartz1967}{
      \BIb{#2}{J.T. Schwartz}
         {$\WWw^*$\hyp{}algebras}
         {Gordon and Breach, Science Publishers Inc., New York\hyp{}London\hyp{}Paris}{1967}{#1}}
   \ITEE{#3}{ZSemadeni1971}{
      \BIb{#2}{Z. Semadeni}
         {Banach Spaces of Continuous Functions (Vol. I)}
         {\jRN{PWN}}{1971}{#1}}
   \ITEE{#3}{JPSerre1951}{
      \BIB{#2}{J.-P. Serre}
         {Homologie singuli\`{e}re des espaces fibr\'{e}s}
         {\jRN{AnnM}}{54}{1951}{425--505}{#1}}
   \ITEE{#3}{DSherman2007}{
      \BIB{#2}{D. Sherman}
         {On the dimension theory of von Neumann algebras}
         {\jRN{MScand}}{101}{2007}{123--147}{#1}}
   \ITEE{#3}{DSherman2007a}{
      \BIB{#2}{D. Sherman}
         {Unitary orbits of normal operators in von Neumann algebras}
         {\jRN{JReinAngM}}{605}{2007}{95--132}{#1}}
   \ITEE{#3}{SAShkarin1999}{
      \BIB{#2}{S.A. Shkarin}
         {Universal Abelian topological groups}
         {\jRN{SbM}}{190}{1999}{1059--1076}{#1}}
   \ITEE{#3}{WSierpinski1920}{
      \BIB{#2}{W. Sierpi\'{n}ski}
         {Sur une propri\'{e}t\'{e} topologique des ensembles d\'{e}nombrables denses en soi}
         {\jRN{FM}}{1}{1920}{11--16}{#1}}
   \ITEE{#3}{WSierpinski1928}{
      \BIB{#2}{W. Sierpi\'{n}ski}
         {Sur les projections des ensembles compl\'{e}mentaires aux ensembles \textup{(A)}}
         {\jRN{FM}}{11}{1928}{117--122}{#1}}
   \ITEE{#3}{MSlocinski1980}{
      \BIB{#2}{M. S\l{}oci\'{n}ski}
         {On the Wold\hyp{}type decomposition of a pair of commuting isometries}
         {\jRN{APM}}{37}{1980}{255--262}{#1}}
   \ITEE{#3}{OGSmolyanov,SAShkarin1999}{
      \BiB{#2}{O.G. Smolyanov and S.A. Shkarin}{On the structure of spectra of linear operators 
         in Hilbert space}{in:}{Selected questions in mathematics, mechanics, and their 
         applications}{Moscow State University, Moscow}{1999}{289-–302}{#1}}
   \ITEE{#3}{OGSmolyanov,SAShkarin2001}{
      \BIB{#2}{O.G. Smolyanov and S.A. Shkarin}{On the structure of the spectra 
         of linear operators in Banach spaces \textup{(Russian)}}{\jRN{MSb}}{192}
         {2001}{99--114}{#1} English translation: \jRN{SbM} \textbf{192} (2001), 577--591.}
   \ITEE{#3}{RCSteinlage1975}{
      \BIB{#2}{R.C. Steinlage}
         {On Haar measure in locally compact $T_2$ spaces}
         {\jRN{AmJM}}{97}{1975}{291--307}{#1}}
   \ITEE{#3}{JStochel,FHSzafraniec1989}{
      \BIB{#2}{J. Stochel and F.H. Szafraniec}
         {On normal extensions of unbounded operators. III. Spectral properties}
         {\jRN{PublRIMSKyoto}}{25}{1989}{105--139}{#1}}
   \ITEE{#3}{JStochel,FHSzafraniec1989a}{
      \BIB{#2}{J. Stochel and F.H. Szafraniec}
         {The normal part of an unbounded operator}
         {\jRN{ProcKonink}}{92}{1989}{495--503}{#1}}
   \ITEE{#3}{AHStone1962}{
      \BIB{#2}{A.H. Stone}
         {Absolute $\FFf_{\sigma}$\hyp{}spaces}
         {\jRN{PAMS}}{13}{1962}{495--499}{#1}}
   \ITEE{#3}{AHStone1962a}{
      \BIB{#2}{A.H. Stone}
         {Non\hyp{}separable Borel sets}
         {\jRN{DissM}}{28}{1962}{41 pages}{#1}}
   \ITEE{#3}{AHStone1972}{
      \BIB{#2}{A.H. Stone}
         {Non\hyp{}separable Borel sets II}
         {\jRN{GTopA}}{2}{1972}{249--270}{#1}}
   \ITEE{#3}{MHStone1937}{
      \BIB{#2}{M.H. Stone}
         {Application of the theory of Boolean rings to general topology}
         {\jRN{TAMS}}{41}{1937}{375--481}{#1}}
   \ITEE{#3}{MHStone1948}{
      \BIB{#2}{M.H. Stone}
         {The generalized Weierstrass approximation theorem}
         {\jRN{MMag}}{21}{1948}{167--184}{#1}}
   \ITEE{#3}{RAStruble1974}{
      \BIB{#2}{R.A. Struble}
         {Metrics in locally compact groups}
         {\jRN{ComposM}}{28}{1974}{217--222}{#1}}
   \ITEE{#3}{RGSwan1963}{
      \BIB{#2}{R.G. Swan}
         {An application of graph theory to algebra}
         {\jRN{PAMS}}{14}{1963}{367--373}{#1}}
   \ITEE{#3}{RGSwan1969}{
      \BIB{#2}{R.G. Swan}
         {Correction to ``An application of graph theory to algebra''}
         {\jRN{PAMS}}{21}{1969}{379--380}{#1}}
   \ITEE{#3}{BSz-Nagy1947}{
      \BIB{#2}{B. Sz.\hyp{}Nagy}
         {On uniformly bounded linear transformations in Hilbert space}
         {\jRN{ActaSM}}{11}{1947}{152--157}{#1}}
   \ITEE{#3}{WSzymanski1974}{
      \BIB{#2}{W. Szyma\'{n}ski}
         {Decompositions of operator-valued functions in Hilbert spaces}
         {\jRN{SM}}{50}{1974}{265--280}{#1}}
   \ITEE{#3}{WTakahashi1970}{
      \BIB{#2}{W. Takahashi}
         {A convexity in metric space and nonexpansive mappings, I}
         {\jRN{KodaiMSemRep}}{22}{1970}{142--149}{#1}}
   \ITEE{#3}{MTakesaki2002}{
      \BIb{#2}{M. Takesaki}{Theory 
         of Operator Algebras I \textup{(Encyclopaedia of Mathematical Sciences, Volume 124)}}
         {Springer\hyp{}Verlag, Berlin\hyp{}Heidelberg\hyp{}New York}{2002}{#1}}
   \ITEE{#3}{MTakesaki2003}{
      \BIb{#2}{M. Takesaki}{Theory 
         of Operator Algebras II \textup{(Encyclopaedia of Mathematical Sciences, Volume 125)}}
         {Springer\hyp{}Verlag, Berlin\hyp{}Heidelberg\hyp{}New York}{2003}{#1}}
   \ITEE{#3}{MTakesaki2003a}{
      \BIb{#2}{M. Takesaki}{Theory 
         of Operator Algebras III \textup{(Encyclopaedia of Mathematical Sciences, Volume 127)}}
         {Springer\hyp{}Verlag, Berlin\hyp{}Heidelberg\hyp{}New York}{2003}{#1}}
   \ITEE{#3}{TYTam1999}{
      \BIB{#2}{T.Y. Tam}
         {An extension of a result of Lewis}
         {\jRN{ELA}}{5}{1999}{1--10}{#1}}
   \ITEE{#3}{TYTam2000}{
      \BIB{#2}{T.Y. Tam}
         {Group majorization, Eaton triples and numerical range}
         {\jRN{LMLA}}{47}{2000}{11--28}{#1}}
   \ITEE{#3}{TYTam2002}{
      \BIB{#2}{T.Y. Tam}
         {Generalized Schur\hyp{}concave functions and Eaton triples}
         {\jRN{LMLA}}{50}{2002}{113--120}{#1}}
   \ITEE{#3}{TYTam,WCHill2001}{
      \BIB{#2}{T.Y. Tam and W.C. Hill}
         {On $G$\hyp{}invariant norms}
         {\jRN{LAA}}{331}{2001}{101--112}{#1}}
   \ITEE{#3}{AFTiman,IAVestfrid1983}{
      \BIB{#2}{A.F. Timan and I.A. Vestfrid}
         {Any separable ultrametric space can be isometrically imbedded in $\ell_2$}
         {\jRN{FAA}}{17}{1983}{70--71}{#1}}
   \ITEE{#3}{VTimofte2005}{
      \BIB{#2}{V. Timofte}
         {Stone\hyp{}Weierstrass theorems revisited}
         {\jRN{JAT}}{136}{2005}{45--59}{#1}}
   \ITEE{#3}{JTomiyama1958}{
      \BIB{#2}{J. Tomiyama}
         {Generalized dimension function for $\WWw^*$\hyp{}algebras of infinite type}
         {\jRN{TohokuMJ} (2)}{10}{1958}{121--129}{#1}}
   \ITEE{#3}{HTorunczyk1970}{
      \BIB{#2}{H. Toru\'{n}czyk}
         {Remarks on Anderson's paper ``On topological infinite deficiency''}
         {\jRN{FM}}{66}{1970}{393--401}{#1}}
   \ITEE{#3}{HTorunczyk1970a}{
      \BIb{#2}{H. Toru\'{n}czyk}
         {$G$\hyp{}$K$\hyp{}absorbing and skeletonized sets in metric spaces}
         {Ph.D. thesis, Inst. Math. Polish Acad. Sci., Warszawa}{1970}{#1}}
   \ITEE{#3}{HTorunczyk1972}{
      \BIB{#2}{H. Toru\'{n}czyk}
         {A short proof of Hausdorff's theorem on extending metrics}
         {\jRN{FM}}{77}{1972}{191--193}{#1}}
   \ITEE{#3}{HTorunczyk1974}{
      \BIB{#2}{H. Toru\'{n}czyk}
         {Absolute retracts as factors of normed linear spaces}
         {\jRN{FM}}{86}{1974}{53--67}{#1}}
   \ITEE{#3}{HTorunczyk1975}{
      \BIB{#2}{H. Toru\'{n}czyk}
         {On Cartesian factors and the topological classification of linear metric spaces}
         {\jRN{FM}}{88}{1975}{71--86}{#1}}
   \ITEE{#3}{HTorunczyk1978}{
      \BIB{#2}{H. Toru\'{n}czyk}{Concerning 
         locally homotopy negligible sets and characterization of $\ell_2$\hyp{}manifolds}
         {\jRN{FM}}{101}{1978}{93--110}{#1}}
   \ITEE{#3}{HTorunczyk1980}{
      \BiB{#2}{H. Toru\'{n}czyk}{Characterization of infinite\hyp{}dimensional manifolds}{in:}
         {Proceedings of the International Conference on Geometric Topology (Warsaw, 1978)}
         {\jRN{PWN}}{1980}{431--437}{#1}}
   \ITEE{#3}{HTorunczyk1981}{
      \BIB{#2}{H. Toru\'{n}czyk}
         {Characterizing Hilbert space topology}
         {\jRN{FM}}{111}{1981}{247--262}{#1}}
   \ITEE{#3}{HTorunczyk1985}{
      \BIB{#2}{H. Toru\'{n}czyk}
         {A correction of two papers concerning Hilbert manifolds}
         {\jRN{FM}}{125}{1985}{89--93}{#1}}
   \ITEE{#3}{KTsuda1985}{
      \BIB{#2}{K. Tsuda}
         {A note on closed embeddings of finite dimensional metric spaces}
         {\jRN{BLondMS}}{17}{1985}{273--278}{#1}}
   \ITEE{#3}{PSUrysohn1925}{
      \BIB{#2}{P.S. Urysohn}
         {Sur un espace m\'{e}trique universel}
         {\jRN{CRASParis}}{180}{1925}{803--806}{#1}}
   \ITEE{#3}{PSUrysohn1927}{
      \BIB{#2}{P.S. Urysohn}
         {Sur un espace m\'{e}trique universel}
         {\jRN{BullSM}}{51}{1927}{43--64, 74--96}{#1}}
   \ITEE{#3}{VVUspenskij1986}{
      \BIB{#2}{V.V. Uspenskij}
         {A universal topological group with a countable basis}
         {\jRN{FAA}}{20}{1986}{86--87}{#1}}
   \ITEE{#3}{VVUspenskij1990}{
      \BIB{#2}{V.V. Uspenskij}
         {On the group of isometries of the Urysohn universal metric space}
         {\jRN{CMUC}}{31}{1990}{181--182}{#1}}
   \ITEE{#3}{VVUspenskij2004}{
      \BIB{#2}{V.V. Uspenskij}
         {The Urysohn universal metric space is homeomorphic to a Hilbert space}
         {\jRN{TopA}}{139}{2004}{145--149}{#1}}
   \ITEE{#3}{VVUspenskij2008}{
      \BIB{#2}{V.V. Uspenskij}
         {On subgroups of minimal topological groups}
         {\jRN{TopA}}{155}{2008}{1580--1606}{#1}}
   \ITEE{#3}{VSVaradarajan1963}{
      \BIB{#2}{V.S. Varadarajan}
         {Groups of automorphisms of Borel spaces}
         {\jRN{TAMS}}{109}{1963}{191--220}{#1}}
   \ITEE{#3}{AMVershik1998}{
      \BIB{#2}{A.M. Vershik}{The universal Urysohn space, 
         Gromov's metric triples, and random metrics on the series of natural numbers}
         {\jRN{UspekhiMN}}{53}{1998}{57--64}{#1} English translation: \jRN{RussMS}{} \textbf{53} 
         (1998), 921--928. Correction: \jRN{UspekhiMN}{} \textbf{56} (2001), p. 207. English 
         translation: \jRN{RussMS}{} \textbf{56} (2001), p. 1015.}
   \ITEE{#3}{AMVershik2002}{
      \BIb{#2}{A.M. Vershik}{Random metric spaces and the universal Urysohn space}
         {Fundamental Mathematics Today. 10th anniversary of the Independent Moscow University. 
         MCCME Publ.}{2002}{#1}}
   \ITEE{#3}{NWeaver1999}{
      \BIb{#2}{N. Weaver}
         {Lipschitz Algebras}
         {World Scientific}{1999}{#1}}
   \ITEE{#3}{JWeidmann1980}{
      \BIb{#2}{J. Weidmann}
         {Linear Operators in Hilbert Spaces}
         {(Graduate Texts in Mathematics, vol. 68) Springer\hyp{}Verlag New York Inc.}{1980}{#1}}
   \ITEE{#3}{JEWest1969}{
      \BIB{#2}{J.E. West}
         {Approximating homotopies by isotopies in Fr\'{e}chet manifolds}
         {\jRN{BAMS}}{75}{1969}{1254--1257}{#1}}
   \ITEE{#3}{JEWest1969a}{
      \BIB{#2}{J.E. West}
         {Fixed\hyp{}point sets of transformation groups on infinite\hyp{}product spaces}
         {\jRN{PAMS}}{21}{1969}{575--582}{#1}}
   \ITEE{#3}{JEWest1970}{
      \BIB{#2}{J.E. West}
         {The ambient homeomorphy of infinite\hyp{}dimensional Hilbert spaces}
         {\jRN{PacJM}}{34}{1970}{257--267}{#1}}
   \ITEE{#3}{JHCWhitehead1949}{
      \BIB{#2}{J.H.C. Whitehead}
         {Combinatorial homotopy I}
         {\jRN{BAMS}}{55}{1949}{213--245}{#1}}
   \ITEE{#3}{GTWhyburn1942}{
      \BIb{#2}{G. T. Whyburn}
         {Analytic Topology}
         {Amer. Math. Soc. Colloquium Publications (vol. XXVIII), New York}{1942}{#1}}
   \ITEE{#3}{RWilliamson,LJanos1987}{
      \BIB{#2}{R. Williamson and L. Janos}
         {Constructing metrics with the Heine\hyp{}Borel property}
         {\jRN{PAMS}}{100}{1987}{567--573}{#1}}
   \ITEE{#3}{WWogen1969}{
      \BIB{#2}{W. Wogen}
         {On generators for von Neumann algebras}
         {\jRN{BAMS}}{75}{1969}{95--99}{#1}}
   \ITEE{#3}{RYTWong1967}{
      \BIB{#2}{R.Y.T. Wong}
         {On homeomorphisms of certain infinite dimensional spaces}
         {\jRN{TAMS}}{128}{1967}{148--154}{#1}}
   \ITEE{#3}{LYang,JZhang1987}{
      \BIB{#2}{L. Yang and J. Zhang}
         {Average distance constants of some compact convex space}
         {\jRN{JChinUST}}{17}{1987}{17--23}{#1}}
   \ITEE{#3}{PZakrzewski1993}{
      \BIB{#2}{P. Zakrzewski}
         {The existence of invariant $\sigma$\hyp{}finite measures for a group of transformations}
         {\jRN{IsraelJM}}{83}{1993}{275--287}{#1}}
   \ITEE{#3}{PZakrzewski2002}{
      \BIb{#2}{P. Zakrzewski}
         {Measures on Algebraic\hyp{}Topological Structures, Handbook of Measure Thoery}
         {E. Pap, ed., Elsevier, Amsterdam}{2002, 1091--1130}{#1}}
   \ITEE{#3}{WZelazko1960}{
      \BIB{#2}{W. \.{Z}elazko}
         {A theorem on $B_0$ division algebras}
         {\jRN{BPAS}}{8}{1960}{373--375}{#1}}
   \ITEE{#3}{KZhu2000}{
      \BIB{#2}{K. Zhu}
         {Operators in Cowen\hyp{}Douglas classes}
         {\jRN{IllinoisJM}}{44}{2000}{767--783}{#1}}
   }
\newcommand{\mypaplist}[3][]{
   \ITEE{#2}{pn1}{
      \myBIB{Separate and joint similarity to families of normal operators}
         {\jRN{SM}}{149}{2002}{39--62}{#3}}
   \ITEE{#2}{pn2}{
      \myBIB{Locally arcwise connected metrizable spaces with the fixed point property are 
         complete\hyp{}metrizable}{\jRN{TopA}}{153}{2006}{1639--1642}{#3}}
   \ITEE{#2}{pn3}{
      \myBIB{Invariant measures for equicontinuous semigroups of continuous transformations 
         of a compact Hausdorff space}{\jRN{TopA}}{153}{2006}{3373--3382}{#3}}
   \ITEE{#2}{pn4}{
      \myBIB{Approximation of the Hausdorff distance by the distance of continuous surjections}
         {\jRN{TopA}}{154}{2007}{655--664}{#3}}
   \ITEE{#2}{pn5}{
      \myBIB{Generalized Haar integral}
         {\jRN{TopA}}{155}{2008}{1323--1328}{#3}}
   \ITEE{#2}{pn6}{
      \myBIB{Integration and Lipschitz functions}
         {\jRN{RCMP}}{57}{2008}{391--399}{#3}}
   \ITEE{#2}{pn7}{
      \myBIB{Canonical Banach function spaces generated by Urysohn universal spaces. Measures 
         as Lipschitz maps}{\jRN{SM}}{192}{2009}{97--110}{#3}}
   \ITEE{#2}{pn8}{
      \myBIB{Urysohn universal spaces as metric groups of exponent $2$}
         {\jRN{FM}}{204}{2009}{1--6}{#3}}
   \ITEE{#2}{pn9}{
      \myBIB{Central subsets of Urysohn universal spaces}
         {\jRN{CMUC}}{50}{2009}{445--461}{#3}}
   \ITEE{#2}{pn10}{
      \myBIB{Ultra\hyp{}$\mM$\hyp{}separability}
         {\jRN{TopA}}{157}{2010}{669--673}{#3}}
   \ITEE{#2}{pn11}{
      \myBIB{Functor of extension of $\Lambda$\hyp{}isometric maps between central subsets 
         of the unbounded Urysohn universal space}{\jRN{CMUC}}{51}{2010}{541--549}{#3}}
   \ITEE{#2}{pn12}{
      \myBIB[P. Niemiec and T.Y. Tam]{A representation of $G$\hyp{}invariant norms for Eaton 
         triple}{\jRN{JCA}}{18}{2011}{59--65}{#3}}
   \ITEE{#2}{pn13}{
      \myBIB{Topological structure of Urysohn universal spaces}
         {\jRN{TopA}}{158}{2011}{352--359}{#3}}
   \ITEE{#2}{pn14}{
      \myBIB{A note on invariant measures}
         {\jRN{OpusM}}{31}{2011}{425--431}{#3}}
   \ITEE{#2}{pn15}{
      \myBIB{Strengthened Stone\hyp{}Weierstrass type theorem}
         {\jRN{OpusM}}{31}{2011}{645--650}{#3}}
   \ITEE{#2}{pn16}{
      \myBIB{Generalized absolute values and polar decompositions of a bounded operator}
         {\jRN{IEOT}}{71}{2011}{151--160}{#3}}
   \ITEE{#2}{pn17}{
      \myBIB{Functor of extension of contractions on Urysohn universal spaces}
         {\jRN{ACS}}{19}{2011}{959--967}{#3}}
   \ITEE{#2}{pn18}{
      \myBIB{A note on ANR's}{\jRN{TopA}}{159}{2012}{\ITEE{#1}{}{315--321; erratum: p.~2232}
         \ITEE{#1}{*}{315--321}\ITEE{#1}{**}{p.~2232}}{#3}}
   \ITEE{#2}{pn19}{
      \myBIB{Unitary equivalence and decompositions of finite systems of closed densely defined 
         operators in Hilbert spaces}{\jRN{DissM}}{482}{2012}{106~pp}{#3}}
   \ITEE{#2}{pn20}{
      \myBIB{Problem with almost everywhere equality}
         {\jRN{APM}}{104}{2012}{105--108}{#3}}
   \ITEE{#2}{pn21}{
      \myBIB{Borel parts of the spectrum of an operator and of the operator algebra 
         of a separable Hilbert space}{\jRN{SM}}{208}{2012}{77--85}{#3}}
   \ITEE{#2}{pn22}{
      \myBIB{Normed topological pseudovector groups}
         {\jRN{ACS}}{20}{2012}{303--322}{#3}}
   \ITEE{#2}{pn1001}{
      \myBIB{Normal systems over ANR's, rigid embeddings and nonseparable absorbing sets}
         {\jRN{ActaMSinES}}{}{2012}{\ITE{\equal{#1}{*}}{on-line first}{doi: \texttt{10.1007/s10114-012-0709-8}}}{#3}}
   \ITEE{#2}{pnX2}{
      \myBAPP{Functor of continuation in Hilbert cube and Hilbert space}
         {submitted to \jRN{FM}\oNlINE{#1}{(\texttt{http://arxiv.org/abs/1107.1386})}}{#3}}
   \ITEE{#2}{pnX3}{
      \myBAPP{Norm closures of orbits of bounded operators}{accepted for publication 
         in \jRN{JOT}\oNlINE{#1}{(\texttt{http://arxiv.org/abs/1107.1505})}}{#3}}
   \ITEE{#2}{pnX6}{
      \myBAPP{Extending maps in Hilbert manifolds}
         {accepted for publication in \jRN{BPAS}}{#3}}
   \ITEE{#2}{pnX7}{
      \myBAPP{Spaces of measurable functions}
         {submitted to \jRN{CEurJM}\oNlINE{#1}{(\texttt{http://arxiv.org/abs/1107.1495})}}{#3}}
   \ITEE{#2}{pnX10}{
      \myBAPP{Central points and measures and dense subsets of compact metric spaces}
         {accepted for publication in \jRN{TMNA}}{#3}}
   \ITEE{#2}{pnX12}{
      \myBAPP{Ultrametrics, extending of Lipschitz maps and nonexpansive selections}
         {accepted for publication in \jRN{HJM}}{#3}}
   \ITEE{#2}{pnX15}{
      \myBAPP{Universal valued Abelian groups}
         {submitted to \jRN{AdvM}\oNlINE{#1}{(\texttt{http://arxiv.org/abs/1103.1623})}}{#3}}
   \ITEE{#2}{pnX17}{
      \myBAPP{Isometry groups of proper metric spaces}
         {submitted to \jRN{TAMS}\oNlINE{#1}{(\texttt{http://arxiv.org/abs/1201.5675})}}{#3}}
   \ITEE{#2}{pnX18}{
      \myBAPP{Isometry groups among topological groups}
         {submitted to \jRN{ComposM}\oNlINE{#1}{(\texttt{http://arxiv.org/abs/1202.3368})}}{#3}}
   \ITEE{#2}{pnX19}{
      \myBAPP{$\CCc^*$\hyp{}algebras of pure type $I_n$}
         {submitted to \jRN{IsraelJM}\oNlINE{#1}{(\texttt{http://arxiv.org/abs/1203.0857})}}{#3}}
   }
\begin{document}

\title{Isometry groups of proper metric spaces}
\myData
\begin{abstract}
Given a locally compact Polish space $X$, a necessary and sufficient condition for a group $G$
of homeomorphisms of $X$ to be the full isometry group of $(X,d)$ for some proper metric $d$ on $X$
is given. It is shown that every locally compact Polish group $G$ acts freely on $G \times X$
as the full isometry group of $G \times X$ with respect to a certain proper metric on $G \times X$,
where $X$ is an arbitrary locally compact Polish space having more than one point such that
$(\card(G),\card(X)) \neq (1,2)$. Locally compact Polish groups which act effectively and almost
transitively on complete metric spaces as full isometry groups are characterized. Locally compact
Polish non-Abelian groups on which every left invariant metric is automatically right invariant are
characterized and fully classified. It is demonstrated that for every locally compact Polish space
$X$ having more than two points the set of all proper metrics $d$ such that $\Iso(X,d) = \{\id_X\}$
is dense in the space of all proper metrics on $X$.
\end{abstract}
\subjclass[2010]{Primary 37B05, 54H15; Secondary 54D45.}
\keywords{Locally compact Polish group; isometry group; transitive group action; 
   proper metric space; Heine-Borel metric space; proper metric; proper action.}
\maketitle

\SECT{Introduction}

Everywhere in this paper, a metric on a metrizable space (or a metric space) is \textit{proper} iff
all closed balls (with respect to this metric) are compact. A topological group or space is
\textit{Polish} if it is completely metrizable and separable. The set of all proper metrics
on a locally compact Polish space $X$ which induce the topology of $X$ is denoted by $\Metr_c(X)$
(for such $X$, $\Metr_c(X)$ is nonempty, see e.g. \cite{w-j}). The neutral element of a group $G$ is
denoted by $e_G$. The identity map on a set $X$ is denoted by $\id_X$. For every metric space
$(Y,\varrho)$, $\Iso(Y,\varrho)$ stands for the group of all (bijective) isometries
of $(Y,\varrho)$.\par
Isometry groups (equipped with the topology of pointwise convergence) of separable complete metric
spaces are useful `models' for studying Polish groups. On the one hand, they are defined and appear
in topology quite naturally. On the other hand, thanks to the result of Gao and Kechris \cite{g-k},
every Polish group may be `represented' as (that is, is isomorphic to) the (full) isometry group
$\Iso(X,d)$ of some separable complete metric space $(X,d)$. It may be of great importance to know
how to build the space $(X,d)$ (or how `nice' the topological space $X$ can be) such that
$\Iso(X,d)$ is isomorphic to a given Polish group $G$. Natural questions which arise when dealing
with this issue, are the following:
\begin{enumerate}[(Q1)]
\item Does there exist compact $X$ for compact $G$~?
\item Does there exist locally compact $X$ for locally compact $G$~?
\item Is every Lie group isomorphic to the isometry group of a manifold (with respect to some
   compatible metric)?
\item (Melleray \cite{mel}) Is every compact Lie group the isometry group of some compact Riemannian
   manifold?
\item Can $X$ be (metrically) homogeneous? (That is, can $G$ act effectively and transitively on any
   $X$ as the full isometry group?)
\end{enumerate}
Melleray \cite{mel} improved the original proof of Gao and Kechris and answered in the affirmative
question (Q1). Later Malicki and Solecki \cite{ms!} solved problem (Q2) by showing that each locally
compact Polish group is (isomorphic to) the isometry group of some proper metric space. In all these
papers the construction of the crucial metric space $X$ is complicated and based on the techniques
of the so-called Kat\v{e}tov maps, and it may turn out that for a `nice' group $G$ (e.g. a connected
Lie group) the space $X$ contains a totally disconnected open (nonempty) subset. In the present
paper we deal with locally compact Polish groups and propose a new approach to the above topic.
Using new ideas, we solve in the affirmative problem (Q3) (which is closely related to (Q4)) and
characterize locally compact Polish groups which are isomorphic to (full) isometry groups acting
transitively on proper metric spaces (see \THM{trans}, especially points (i) and (ii))---this
answers question (Q5). Our main results in this direction are:

\begin{thm}{free}
Let $G$ be a locally compact Polish group and $X$ be a locally compact Polish space with
$(\card(G),\card(X)) \neq (1,2)$. Let $G \times X \ni (g,x) \mapsto g.x \in X$ be a (continuous)
proper \textbf{non-transitive} action of $G$ on $X$ such that for some point $\omega \in X$:
\begin{enumerate}[\upshape(F1)]
\item $G$ acts freely at $\omega$,
\item $G$ acts effectively on $X \setminus G.\omega$.
\end{enumerate}
Then there exists $d \in \Metr_c(X)$ such that $\Iso(X,d)$ consists precisely of all maps
of the form $x \mapsto a.x\ (a \in G)$.
\end{thm}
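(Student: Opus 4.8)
The plan is to build $d$ in two stages. In the first stage I secure $G$-invariance, which guarantees that every translation $\Phi_a\colon x\mapsto a.x$ $(a\in G)$ is a $d$-isometry, so that $\Phi(G):=\{\Phi_a:a\in G\}\subseteq\Iso(X,d)$; moreover (F1) makes $a\mapsto\Phi_a$ injective, since $\Phi_a=\id_X$ forces $a.\omega=\omega$ and hence $a=e_G$. Thus the entire content of the theorem lies in the reverse inclusion $\Iso(X,d)\subseteq\Phi(G)$, and the second stage consists of perturbing the metric so as to annihilate every isometry that is not a translation.

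For the first stage, since the action is proper and $G,X$ are locally compact Polish, I would apply the Abels--Manoussos--Noskov theorem \cite{HAbels,AManoussos,GNoskov2011} to obtain a compatible $G$-invariant proper metric $\rho$ on $X$. Being $G$-invariant, $\rho$ descends to a proper metric $\bar\rho(\pi(x),\pi(y))=\inf_{g\in G}\rho(x,g.y)$ on the (again locally compact Polish) orbit space $Q:=X/G$, and non-transitivity yields $\card(Q)\geqsl 2$. The second stage then modifies $\rho$ into $d$ by superimposing, always $G$-invariantly and without destroying properness, two pieces of data. First, an \emph{orbit code}, i.e.\ a modification making the partition of $X$ into orbits metrically recoverable and making every isometry descend to the identity of the coded orbit space. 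Second, an \emph{anchoring} to the distinguished free orbit $G.\omega$: because the action is free at $\omega$, the map $q\mapsto q.\omega$ identifies $G$ with $G.\omega$, and I would shape the coupling $d(y,q.\omega)$ so that distances to $G.\omega$ separate points of $X$ as strongly as possible and so that the induced (left-invariant, proper) metric on $G.\omega$ has only the left translations as isometries.

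With such a $d$ the rigidity argument proceeds as follows. Fix $f\in\Iso(X,d)$. The orbit code forces $f$ to permute orbits and to induce the identity on $Q$, so $f$ preserves every orbit; in particular $f(G.\omega)=G.\omega$. The restriction $f|_{G.\omega}$ is then a self-isometry of $G.\omega\cong G$, which by the anchor's built-in rigidity (established directly by breaking the compact group of isometries fixing $\omega$, or by coupling to a further orbit, one of which exists whenever $G\neq\{e_G\}$) must be a left translation; hence $f=\Phi_a$ on $G.\omega$ for a unique $a\in G$. Replacing $f$ by $\Phi_a^{-1}\circ f$, I may assume $f$ fixes $G.\omega$ pointwise. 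The anchoring then forces $f$ to coincide on the $G$-invariant set $X\setminus G.\omega$ with the action of a single element $b\in G$, and I compare the two pieces: for $p\in G.\omega$ and $r\in X\setminus G.\omega$ invariance gives $d(p,r)=d(f(p),f(r))=d(p,b.r)$, whence, since distances to the anchor separate points, $b.r=r$ for every $r\in X\setminus G.\omega$. Effectiveness of the action on $X\setminus G.\omega$, hypothesis (F2), now yields $b=e_G$, so $f=\id_X$, and therefore the original isometry equals $\Phi_a$.

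The principal obstacle is the second stage together with its use just made: one must maintain both $G$-invariance and properness while simultaneously forcing the orbit code to trivialise the induced action on $Q$ and the anchoring to separate points by their distances to $G.\omega$. Invariance is the true constraint, because the coupling $d(y,q.\omega)$ may depend on the pair only through $G$-invariant quantities, and the separation requirement amounts to demanding, for each $y$, that $q\mapsto d(y,q.\omega)$ be rigid modulo the (compact) stabiliser of $y$; realising this for all orbits at once, compatibly with the triangle inequality against $\rho$, is the delicate technical core, and it is here that (F2) is indispensable for stitching the orbitwise behaviour into a single group element. The excluded case $(\card(G),\card(X))=(1,2)$ is precisely the degeneracy where the programme fails: with $G$ trivial one needs a rigid metric on $X=Q$, which is impossible on a two-point space because the transposition is always an isometry, whereas in every other configuration a nontrivial $G$ supplies the anchor or $X$ carries enough distinct distances to be rigidified.
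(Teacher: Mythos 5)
Your stage-one reduction (produce a $G$-invariant proper metric via Abels--Manoussos--Noskov, observe that the translations embed into $\Iso(X,d)$, and aim for the reverse inclusion) matches the paper's starting point, and your final combinatorial step --- pin an isometry down orbitwise and use (F2) together with non-transitivity to stitch the orbitwise data into a single group element --- is essentially the argument the paper uses to verify that the symmetrized $2$-hull of the translation group equals the translation group (condition (Iso3) of \THM{main}). But the entire second stage, which is where the theorem actually lives, is only announced, never executed: you do not construct the ``orbit code'' or the ``anchoring,'' and you yourself flag this as ``the delicate technical core.'' The paper does not attempt such a direct construction at all. It first proves the general characterization \THM{main} --- a group of homeomorphisms is $\Iso(X,d)$ for some $d\in\Metr_c(X)$ iff it is closed in $\CCc(X,X)$, acts properly, and equals its own symmetrized $2$-hull --- by a Baire-category argument in the complete space $\Delta_G(d)$ of $G$-invariant proper metrics, where the dense $\GGg_{\delta}$ sets are manufactured by small Lipschitz perturbations (\LEM{Lip}, \PRO{const}, \LEM{2p}, \PRO{Gdelta}). \THM{free} is then a short corollary: one checks (Iso1)--(Iso3) for the translation group using (F1), (F2) and non-transitivity. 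Nothing in your sketch substitutes for that analytic machinery.

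There is also a concrete flaw in the rigidity step as you phrase it. You want the induced left-invariant metric on $G.\omega\cong G$ to have only left translations as isometries. By the paper's own \LEM{Abel} (and \THM{trans}), this is impossible whenever $G$ is Abelian and non-Boolean (for instance $G=\ZZZ$ or $G=\RRR$): every left-invariant metric on such a group admits all the maps $x\mapsto ax^{-1}$ as isometries. Hence the fact that $f|_{G.\omega}$ is a left translation cannot be forced by the metric restricted to the distinguished orbit; it must be extracted from the interaction with $X\setminus G.\omega$ (nonempty by non-transitivity), which is precisely what the paper's argument with the elements $\beta_{\alpha}$ does, and what your parenthetical ``coupling to a further orbit'' would have to become. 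As it stands the proposal is a plausible programme rather than a proof.
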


\begin{cor}{gx}
Let $(G,\cdot)$ be a locally compact Polish group and $X$ be a locally compact Polish space having
more than one point for which $(\card(G),\card(X)) \neq (1,2)$. There exists $d \in \Metr_c(G \times
X)$ such that $\Iso(G \times X,d)$ consists precisely of all maps of the form $(g,x) \mapsto (ag,x)\
(a \in G)$. In particular, $\Iso(G \times X,d)$ acts freely on $G \times X$ and is isomorphic
to $G$.
\end{cor}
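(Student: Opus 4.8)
The plan is to obtain the statement as a direct application of \THM{free} to the product space $Y:=G\times X$ carrying the left-translation action of $G$ on its first coordinate,
\[
a.(g,x):=(ag,x)\qquad(a\in G,\ (g,x)\in Y).
\]
Once the hypotheses of \THM{free} are checked for this action, the theorem furnishes a metric $d\in\Metr_c(Y)$ whose isometry group consists exactly of the maps $y\mapsto a.y$, which are precisely the maps $(g,x)\mapsto(ag,x)$ claimed in the statement; so the bulk of the work is verification rather than construction.

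First I would record the easy structural facts. The action is continuous because multiplication in $G$ is continuous, and $Y$ is again locally compact Polish. The orbit of $(g,x)$ is $G\times\{x\}$ (as $a\mapsto ag$ runs over all of $G$), so the orbits are exactly the slices $G\times\{x\}$; since $\card(X)>1$ there are at least two of them, and the action is thus \textbf{non-transitive}. Moreover the action is free everywhere, since $a.(g,x)=(g,x)$ forces $ag=g$, i.e. $a=e_G$. Choosing $\omega:=(e_G,x_0)$ for some $x_0\in X$, we get (F1) at $\omega$ immediately, and $Y\setminus G.\omega=G\times(X\setminus\{x_0\})$, which is nonempty because $\card(X)>1$; the action restricted there is still free, hence effective, giving (F2).

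The one computation worth doing is properness. The associated map is $\Theta(a,(g,x))=\bigl((ag,x),(g,x)\bigr)$; given a compact $C\subseteq Y\times Y$, its second coordinate confines $g$ to a compact $K_1\subseteq G$ and its first confines $ag$ to a compact $K_2\subseteq G$, whence $a=(ag)g^{-1}$ lies in the compact set $K_2K_1^{-1}$. Thus $\Theta^{-1}(C)$ is a closed subset of a compact set and therefore compact, i.e. the action is proper.

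It remains to reconcile the cardinality hypotheses. \THM{free} is invoked with the pair $(\card(G),\card(Y))=(\card(G),\card(G\times X))$, so I must know this is not $(1,2)$. If $\card(G)\geqsl2$ this is automatic; if $\card(G)=1$ then $\card(G\times X)=\card(X)$, which differs from $2$ precisely by the standing assumption $(\card(G),\card(X))\neq(1,2)$. Hence \THM{free} applies and delivers the desired $d$. Finally, $a\mapsto L_a$ with $L_a(g,x):=(ag,x)$ is a group homomorphism $G\to\Iso(G\times X,d)$ which is injective by freeness and surjective by the theorem; being a continuous bijective homomorphism of Polish groups it is a topological isomorphism, and freeness of the action is exactly the ``acts freely'' assertion. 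I expect no genuine obstacle here: all the real difficulty is absorbed into \THM{free}, and the only point demanding care is the trivial-group edge case, where the excluded configuration $(\card(G),\card(X))=(1,2)$ is genuinely necessary, since a two-point metric space always admits the transposition as a nontrivial isometry.
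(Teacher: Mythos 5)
Your proposal is correct and follows exactly the route the paper takes: the paper's own proof is a one-line observation that $(a,(g,x))\mapsto(ag,x)$ is a continuous, free, non-transitive, proper action, followed by an appeal to \THM{free}. Your more detailed verifications of properness, of (F1)--(F2), and of the cardinality condition for the pair $(\card(G),\card(G\times X))$ are all sound and simply make explicit what the paper leaves to the reader.
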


\begin{thm}{trans}
For a locally compact Polish group $(G,\cdot)$ \tfcae
\begin{enumerate}[\upshape(i)]
\item there exists a complete metric space $(X,d)$ and an effective action $G \times X \ni (g,x)
   \mapsto g.x \in X$ such that $\Iso(X,d)$ consists precisely of all maps of the form $x \mapsto
   a.x\ (a \in G)$ and $G.b$ is dense in $X$ for some $b \in X$,
\item there exists a left invariant metric $\varrho \in \Metr_c(G)$ such that $\Iso(G,\varrho)$
   consists precisely of all natural left translations of $G$ on itself; in particular,
   $\Iso(G,\varrho)$ is isomorphic to $G$ and acts freely, transitively and properly on $G$,
\item one of the following two conditions is fulfilled:
   \begin{enumerate}[\upshape(a)]
   \item $G$ is Boolean; that is, $x^2 = e_G$ for each $x \in G$,
   \item $G$ is non-Abelian and there is \textbf{no} open normal Abelian subgroup $H$ of $G$
      of index $2$ such that $x^2 = p$ for any $x \in G \setminus H$, where $p \in H$ is
      (independent of $x$ and) such that $p^2 = e_G \neq p$.
   \end{enumerate}
\end{enumerate}
Moreover, in each of the following cases condition \textup{(ii)} is fulfilled:
\begin{itemize}
\item $G$ is non-solvable,
\item $G$ is non-Abelian and its center is either trivial or non-Boolean,
\item $G$ is non-Abelian and connected.
\end{itemize}
\end{thm}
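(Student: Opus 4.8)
The plan is to prove the cycle (ii)$\Rightarrow$(i)$\Rightarrow$(iii)$\Rightarrow$(ii) and then to deduce the three listed sufficient conditions from (iii). The implication (ii)$\Rightarrow$(i) is immediate: take $X=G$, $d=\varrho$ and the action by left translations; a proper metric is automatically complete, the orbit of $e_G$ is all of $G$ and hence dense, the action is free and therefore effective, and by hypothesis $\Iso(G,\varrho)$ is exactly this set of translations. Everything else rests on one elementary reduction. For a left invariant $\varrho$ every isometry $\psi$ factors as $\psi=L_{\psi(e_G)}\circ\varphi$, where $L_a(x)=ax$ and $\varphi:=L_{\psi(e_G)}^{-1}\psi$ fixes $e_G$; consequently $\Iso(G,\varrho)$ reduces to the left translations \emph{if and only if} the identity is the only isometry of $(G,\varrho)$ fixing $e_G$. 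Thus (ii) is equivalent to producing a proper left invariant metric admitting no nontrivial isometry fixing the neutral element.

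For (i)$\Rightarrow$(iii) I argue by contraposition, exhibiting an unavoidable extra isometry whenever (iii) fails. Failure of (iii) means that either $G$ is Abelian but not Boolean, or $G$ is non-Abelian and carries the excluded configuration: an open normal Abelian $H$ of index $2$ with $x^2=p$ for all $x\notin H$, $p\in H$ central, $p^2=e_G\neq p$; a short computation shows the latter is equivalent to saying that conjugation by any element outside $H$ acts on $H$ as inversion. In a hypothetical realisation the $G$-action is isometric, so $N(u):=d(b,u.b)$ is a symmetric length function with $d(g.b,h.b)=N(g^{-1}h)$. When $G$ is Abelian and non-Boolean, inversion $\iota(x)=x^{-1}$ is a self-isometry of the orbit fixing $b$ (a one-line computation, using that $\iota$ respects the stabiliser $S$ of $b$ in the Abelian case); extending it to the completion $X$ it must equal some $a.$ with $a\in S$, and comparing values forces $a g^{2}\in S$ for every $g$. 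Since in the Abelian case $S$ stabilises every orbit point, effectiveness makes $S$ trivial, whence $g^{2}=e_G$ for all $g$ --- contradicting non-Booleanness. In the excluded non-Abelian configuration the same scheme runs with $\iota$ replaced by the explicit involution $\varphi$ equal to $\id_G$ on $H$ and to inversion on $G\setminus H$: a four-case check on the cosets shows $\varphi(x)^{-1}\varphi(y)\in\{x^{-1}y,(x^{-1}y)^{-1}\}$ for all $x,y$, so $\varphi$ is an isometry of \emph{every} left invariant metric and is not a translation because $p\neq e_G$. Verifying that $\varphi$ descends past a possibly nontrivial stabiliser and extracting the final contradiction is the first delicate point.

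The converse (iii)$\Rightarrow$(ii) is the main construction. I start from a proper left invariant metric $\varrho_0$ (these exist on any locally compact Polish group) and work in the Polish space of proper left invariant length functions. For a proper metric the group of isometries fixing $e_G$ is compact by Arzel\`a--Ascoli, and the aim is to shrink it to $\{\id_G\}$. The computation above isolates precisely the \emph{forced} isometries fixing $e_G$, namely those $\varphi$ with $\varphi(x)^{-1}\varphi(y)\in\{x^{-1}y,(x^{-1}y)^{-1}\}$ for all $x,y$, and a case analysis shows that under (iii) the only such map is $\id_G$ (Booleanness collapses inversion to the identity, while outside the excluded configuration no consistent ``identity on one coset, inversion on the other'' patching survives). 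Any \emph{non}-forced candidate can be destroyed by an arbitrarily small left invariant change of $N$ that alters the length of a single pair in violation of the defining relation; the real difficulty is to do this simultaneously for the whole (compact) residual isometry group while keeping the metric left invariant and proper. I expect this simultaneous elimination --- showing that the metrics with trivial point-stabiliser isometry group are residual, or building one by an explicit back-and-forth coding of group elements --- to be the hardest step.

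Finally, each listed condition forces (iii)(b) by ruling out the bad configuration. If $G$ is non-solvable it cannot contain an Abelian subgroup of index $2$ (such a subgroup would make the derived series terminate in two steps), so the configuration is impossible. If $G$ is connected it has no proper open subgroup at all, hence no open $H$ of index $2$. If $G$ is non-Abelian with the bad configuration, then since conjugation by outside elements inverts $H$, an element $z\in H$ is central exactly when $z^{2}=e_G$, and no element outside $H$ is central (that would force $H$ Boolean and hence $G$ Abelian); thus the centre equals the $2$-torsion of $H$, which has exponent $2$ and contains $p\neq e_G$, so it is non-trivial and Boolean. Therefore a non-Abelian $G$ with trivial or non-Boolean centre cannot carry the configuration, and in all three cases (iii) holds and (ii) follows.
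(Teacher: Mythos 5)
Your skeleton follows the paper's own route quite closely: (ii)$\implies$(i) is the same triviality; your relation $\varphi(x)^{-1}\varphi(y)\in\{x^{-1}y,(x^{-1}y)^{-1}\}$ is exactly membership in the symmetrized $2$-hull $\hULL{\LLl(G)}$ around which the whole paper is organized; your involution ``identity on $H$, inversion off $H$'' is the paper's $\Phi_G$ from \LEM{odd}; and your treatment of the three sufficient conditions matches the paper's. The problem is that the two steps you yourself flag as ``delicate'' and ``the hardest step'' are precisely where the content of the theorem lives, and you have not supplied either of them, so as it stands the proposal proves neither (i)$\implies$(iii) in full nor (iii)$\implies$(ii).

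Concretely. (1) For (iii)$\implies$(ii) you assert that ``a case analysis shows'' that under (iii) the identity is the only map fixing $e_G$ and satisfying the $2$-hull relation, on the intuition that any nontrivial such map is an ``identity on one coset, inversion on the other'' patching. That every nontrivial element of $\hULL{\LLl(G),e_G}$ has this rigid form is not at all obvious: a priori such a map need only satisfy $f(x)\in\{x,x^{-1}\}$ pointwise, and one must prove that its fixed-point set is an Abelian subgroup of index $2$ with the prescribed squaring behaviour. This is the paper's \LEM{-1} together with the twelve-step \LEM{non-1}, and it is the group-theoretic heart of the equivalence of (iii) with $\hULL{\LLl(G)}=\LLl(G)$. (2) Even granting that, you must still produce a proper left invariant metric whose full isometry group is exactly $\LLl(G)$, i.e.\ eliminate simultaneously every non-forced isometry while keeping the metric left invariant and proper; you explicitly leave this open. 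It is the paper's \THM{hull}, a density-plus-$\GGg_{\delta}$ Baire-category argument in the space of $G$-invariant proper metrics built on \LEM{Lip}, \PRO{const}, \LEM{2p} and \PRO{Gdelta}, and it occupies most of Section~2 — the one-pair perturbation you describe kills a single candidate but does not control the residual compact stabiliser group. (3) A smaller gap of the same kind sits in your (i)$\implies$(iii): in the non-Abelian case $\varphi$ only transports to a well-defined isometry of the orbit once the stabiliser of $b$ is shown to be trivial; the paper's \PRO{trans} does this by observing that the normal core of the stabiliser is trivial (by density, continuity and effectiveness) and then using the structure of $\dOUBLE{H}$ to conclude $K\cap Z=\{e_G\}$, $K\subset\tilde H$ and finally $K=\{e_G\}$ — an argument you would need to reproduce rather than defer.
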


\begin{pro}{Abel}
Every locally compact Polish Abelian group $(G,\cdot)$ admits an invariant metric $\varrho \in
\Metr_c(G)$ such that $\Iso(G,\varrho)$ consists precisely of all maps of the forms $x \mapsto a x$
and $x \mapsto a x^{-1}\ (a \in G)$.
\end{pro}

\COR{gx} answers in the affirmative question (Q3): every (separable) Lie group $G$ is isomorphic
to the isometry group of $G \times \{-1,1\}$ as well as of $G \times (\RRR/\ZZZ)$ and $G \times
\RRR$ (for certain proper metrics). All these spaces are manifolds (even Lie groups) and the first
two of them are compact provided so is the group $G$. What is more, if $G$ is a connected
non-Abelian Lie group, it is isomorphic to its own isometry group with respect to a certain left
invariant proper metric, by \THM{trans}.\par
It is worth mentioning that studying problems discussed above we have managed to find (and classify)
\textit{all} locally compact Polish non-Abelian groups on which every left invariant metric is
automatically right invariant (see \COR{left-right}). It is fascinating and unexpected that
up to isomorphism there are only countable number of such groups, each of them is of exponent $4$
and totally disconnected and among them only three are infinite: one compact, one discrete and one
noncompact non-discrete. Explicit descriptions are given in \REM{left-right}.\par
To formulate the main result of the paper (which characterizes isometry groups on proper metric
spaces), let us introduce a few necessary notions. Some of them are well-known.

\begin{dfn}{Ascoli}
Let $X$ and $Y$ be topological spaces and let $\FFf$ be a collection of transformations of $X$ into
$Y$.
\begin{itemize}
\item (cf. \cite[p.~162]{eng}) $\FFf$ is said to be \textit{evenly continuous} iff for any $x \in
   X$, $y \in Y$ and a neighbourhood $W$ of $y$ there exist neighbourhoods $U$ and $V$ of $x$ and
   $y$ (respectively) such that conditions $f \in \FFf$ and $f(x) \in V$ imply $f(U) \subset W$.
\item $\FFf$ is \textit{pointwise precompact} iff for each $x \in X$ the closure (in $Y$)
   of the set $\FFf.x := \{f(x)\dd\ f \in \FFf\}$ is compact.
\end{itemize}
\end{dfn}

\begin{dfn}{hull}
Let $X$ and $Y$ be arbitrary sets and let $\FFf$ be a collection of functions of $X$ into $Y$.
\textit{Symmetrized 2-hull} of $\FFf$ is the family $\hULL{\FFf}$ of all functions $g\dd X \to Y$
such that for any two points $x$ and $y$ of $X$ there is $f \in \FFf$ with $\{g(x),g(y)\} =
\{f(x),f(y)\}$. Notice that $\hULL{\FFf} \supset \FFf$.
\end{dfn}

Let $X$ be a locally compact Polish space. Since then $X$ is also $\sigma$-compact, the space
$\CCc(X,X)$ of all continuous functions of $X$ into itself is Polish when equipped with
the compact-open topology (that is, the topology of uniform convergence on compact sets). We shall
always consider $\CCc(X,X)$ with this topology. According to the Ascoli-type theorem (see e.g.
\cite[Theorem~3.4.20]{eng}), a set $\FFf \subset \CCc(X,X)$ is compact iff $\FFf$ is closed, evenly
continuous and pointwise precompact.\par
We are now ready to formulate the main result of the paper. (Notice that below it is not assumed
that the group $G$ is a topological group.)

\begin{thm}{main}
Let $X$ be a locally compact Polish space and $G$ be a group of homeomorphisms of $X$ such that
$(\card(G),\card(X)) \neq (1,2)$. \TFCAE
\begin{enumerate}[\upshape(i)]
\item there exists $d \in \Metr_c(X)$ such that $\Iso(X,d) = G$,
\item there exists a $G$-invariant metric in $\Metr_c(X)$ and for any $G$-invariant metric $d \in
   \Metr_c(X)$ and each $\epsi > 0$ there is $\varrho \in \Metr_c(X)$ such that $d \leqsl \varrho
   \leqsl (1 + \epsi) d$ and $\Iso(X,\varrho) = G$,
\item each of the following three conditions is fulfilled:
   \begin{enumerate}[\upshape({I}so1)]
   \item $G$ is closed in the space $\CCc(X,X)$,
   \item for every compact set $K$ in $X$ the family $\DDd_K = \{h \in G\dd\ h(K) \cap K \neq
      \varempty\}$ is evenly continuous and pointwise precompact,
   \item $\hULL{G} = G$.
   \end{enumerate}
\end{enumerate}
\end{thm}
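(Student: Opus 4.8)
The plan is to prove the cycle of implications \textup{(ii)}$\Rightarrow$\textup{(i)}$\Rightarrow$\textup{(iii)}$\Rightarrow$\textup{(ii)}. The first of these is immediate: if \textup{(ii)} holds then there is at least one $G$-invariant $d \in \Metr_c(X)$, and applying the second clause of \textup{(ii)} to this $d$ (with, say, $\epsi = 1$) produces $\varrho \in \Metr_c(X)$ with $\Iso(X,\varrho) = G$, which is exactly \textup{(i)}.

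For \textup{(i)}$\Rightarrow$\textup{(iii)} I would fix $d \in \Metr_c(X)$ with $\Iso(X,d) = G$ and check the three conditions directly. Condition \textup{(Iso2)} is essentially automatic: every isometry is $1$-Lipschitz, so any family of isometries is equicontinuous and hence evenly continuous; and for compact $K$ and a point $x$, any $h \in \DDd_K$ satisfies $h(k) \in K$ for some $k \in K$, so $d(h(x),h(k)) = d(x,k)$ shows that $h(x)$ lies in a fixed $d$-bounded set, which is relatively compact by properness, giving pointwise precompactness. Condition \textup{(Iso1)}: if $h_n \in G$ converges to $h$ in $\CCc(X,X)$, then $h$ preserves $d$, so $h$ is a distance-preserving embedding; properness makes $h(X)$ closed, and a short argument using convergent inverses (where even continuity and pointwise precompactness re-enter through the Ascoli theorem \cite{eng}) shows $h$ is onto, so $h \in \Iso(X,d) = G$. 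Condition \textup{(Iso3)}: if $g \in \hULL{G}$, then for every pair $x,y$ there is $f \in G$ with $\{g(x),g(y)\} = \{f(x),f(y)\}$, whence $d(g(x),g(y)) = d(f(x),f(y)) = d(x,y)$; thus $g$ too is a distance-preserving embedding, and the same properness argument forces $g$ onto, so $g \in G$. The standing hypothesis $(\card(G),\card(X)) \neq (1,2)$ is not used here; it only serves to exclude the degenerate two-point space, on which $\{\id_X\}$ can never be a full isometry group.

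The heart of the theorem is \textup{(iii)}$\Rightarrow$\textup{(ii)}. First I would manufacture a $G$-invariant proper metric. By \textup{(Iso1)} the set $G$ is a closed subgroup of $\CCc(X,X)$, and by \textup{(Iso2)} together with the Ascoli theorem \cite{eng} the closures $\overline{\DDd_K}$ are compact, which is precisely the statement that $G$ acts \emph{properly} on $X$; the theorem of Abels, Manoussos and Noskov on proper invariant metrics \cite{a-m-n} then yields a $G$-invariant $d_0 \in \Metr_c(X)$, establishing the existence clause of \textup{(ii)}. Now fix any $G$-invariant $d \in \Metr_c(X)$ and $\epsi > 0$; the task is to build $\varrho \in \Metr_c(X)$ with $d \leqsl \varrho \leqsl (1+\epsi)d$, still $G$-invariant (so that $G \subseteq \Iso(X,\varrho)$), yet rigid enough that $\Iso(X,\varrho) \subseteq \hULL{G}$, which by \textup{(Iso3)} equals $G$. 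The mechanism I would use is to make the value $\varrho(x,y)$ encode the $G$-orbit of the unordered pair $\{x,y\}$: if distinct $G$-orbits of pairs receive suitably separated $\varrho$-values, then any $h \in \Iso(X,\varrho)$ must send each pair $\{x,y\}$ to a pair $\{h(x),h(y)\}$ in the same $G$-orbit, i.e. there is $f \in G$ with $\{f(x),f(y)\} = \{h(x),h(y)\}$, which is exactly the assertion $h \in \hULL{G} = G$.

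The main obstacle is constructing such a $\varrho$ inside the tight multiplicative window $d \leqsl \varrho \leqsl (1+\epsi)d$ while keeping it a genuine proper $G$-invariant metric. I would exploit separability and $\sigma$-compactness: the space of unordered pairs and its quotient by $G$ are again Polish and $\sigma$-compact, so the required orbit-separation can be arranged through a countable sequence of controlled $G$-invariant perturbations of $d$, each supported on a compact region and each of size bounded by a term of a convergent geometric series summing to less than $\epsi$. At every stage the perturbation breaks one more unwanted coincidence between orbits of pairs while respecting the triangle inequality; the limit metric $\varrho$ stays within $(1+\epsi)d$ by construction and remains proper because it is bi-Lipschitz to $d$. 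Finally, the bound $\varrho \leqsl (1+\epsi)d$ forces every $\varrho$-isometry to be $(1+\epsi)$-bi-Lipschitz for $d$, so the family $\Iso(X,\varrho)$ still satisfies the even-continuity and pointwise-precompactness estimates of \textup{(Iso2)}; this keeps the Ascoli machinery available and guarantees that the orbit-separating property of $\varrho$ genuinely confines $\Iso(X,\varrho)$ to $\hULL{G}$. Verifying that finitely many coincidences can always be broken without violating properness or the multiplicative bound, and that the separation survives passage to the limit, is where the real work lies.
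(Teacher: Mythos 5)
Your treatment of the two easy implications agrees with the paper's: (ii)$\Rightarrow$(i) is trivial, and for (i)$\Rightarrow$(iii) the paper likewise derives (Iso2) from properness of the action (via Gao--Kechris), (Iso1) from an Ascoli argument on the inverses, and (Iso3) from the observation that members of $\hULL{G}$ preserve $d$ and are therefore surjective isometries. The problem lies in (iii)$\Rightarrow$(ii), where your central mechanism --- arranging that ``distinct $G$-orbits of pairs receive suitably separated $\varrho$-values'' so that every $\varrho$-isometry is forced to preserve orbits of unordered pairs --- cannot work as stated. If $X$ is, say, connected with more than one point and $G$ is trivial, the orbits of unordered pairs form a connected set of cardinality continuum, and a continuous symmetric function $\varrho$ pinched between $d$ and $(1+\epsi)d$ cannot take pairwise distinct values on them (there is no continuous injection of a connected space of dimension $\geqsl 2$ into $\RRR$). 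So no single ``orbit-separating'' metric exists in the literal sense, and breaking coincidences one at a time along a convergent series does not converge to one.

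The paper's actual route is different, and the difference is essential. \PRO{const} produces, by precisely the kind of countable perturbation scheme you describe, a $G$-invariant metric with only the much weaker property that any open set on which it is constant is contained in a single $G^s(x,y)$; this is achievable because only countably many of the sets $G^s(x,y)$ have nonempty interior. The confinement of isometries to $\hULL{G}$ is then obtained by a Baire category argument in the complete metric space $\Delta_G^M(d)$ of $G$-invariant metrics between $d$ and $Md$: \PRO{Gdelta} shows that for a fixed pair $(a,b)$ and an open $\Omega \supset G^s(a,b)$ the set of metrics $\varrho$ with $(g(a),g(b)) \in \Omega$ for all $g \in \Iso(X,\varrho)$ is $\GGg_{\delta}$, \LEM{2p} (the technical heart, resting on \LEM{Lip} and on the special property furnished by \PRO{const}) shows it is dense, and \THM{hull} intersects countably many such sets over a dense family of pairs $(a_n,b_n)$ and shrinking neighbourhoods $\Omega_{n,m}$. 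Only after this intersection does one obtain $(g(a_n),g(b_n)) \in G^s(a_n,b_n)$ for all $n$, whence, by the closedness of the relation $\RRr_G$ and \LEM{hull} (the one place where the hypothesis $(\card(G),\card(X)) \neq (1,2)$ is actually used), $g \in \hULL{G} = G$. Your sketch contains the first step of this programme but not the category argument, and without it there is no reason why the limit of your perturbations should have isometry group contained in $\hULL{G}$.
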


Point (ii) in the above result asserts much more than just the existence of a proper metric
$\varrho$ with $\Iso(X,\varrho) = G$. It says that `almost' preserving the geometry of the space,
we may approximate any proper $G$-invariant metric by such metrics. The only difficult part
of \THM{main} is the implication `(iii)$\implies$(ii)'. We shall prove it in the next section
involving Baire's theorem and a very recent result by Abels, Manoussos and Noskov \cite{amn}.\par
Further consequences of \THM{main} are stated below.

\begin{cor}{filtr}
Let $X$ be a locally compact Polish space and let $\GGg = \{\Iso(X,d)\dd\ d \in \Metr_c(X)\}$. Then
for any nonempty family $\FFf \subset \GGg$, $\bigcap \FFf \in \GGg$.
\end{cor}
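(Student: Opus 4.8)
The plan is to deduce the statement directly from \THM{main}, by showing that the three conditions (Iso1)--(Iso3) are inherited by arbitrary intersections. Write $G := \bigcap \FFf$. As the intersection of a nonempty family of groups of homeomorphisms of $X$, $G$ is itself a group of homeomorphisms of $X$. Since every member $H \in \FFf$ belongs to $\GGg$, i.e. $H = \Iso(X,d_H)$ for some $d_H \in \Metr_c(X)$, each such $H$ satisfies (Iso1)--(Iso3) by the implication (i)$\implies$(iii) of \THM{main}. The side condition $(\card(H),\card(X)) \neq (1,2)$ needed to invoke the theorem holds automatically: if $\card(X) = 2$ then the transposition of the two points is an isometry of every $d \in \Metr_c(X)$, so every element of $\GGg$ has at least two elements, and $(\card(H),\card(X)) = (1,2)$ never occurs. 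I would then verify the three conditions for $G$ one at a time.

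For (Iso1): $G$ is an intersection of subsets of $\CCc(X,X)$ each closed by (Iso1) for the corresponding $H$, hence $G$ is closed. For (Iso2): fix any $H_0 \in \FFf$ and a compact $K \subset X$. Since $G \subset H_0$, the family $\{h \in G\dd\ h(K) \cap K \neq \varempty\}$ is a subfamily of $\DDd_K = \{h \in H_0\dd\ h(K) \cap K \neq \varempty\}$, which is evenly continuous and pointwise precompact by (Iso2) for $H_0$. Both properties pass to subfamilies: the same neighbourhoods $U,V$ from \DEF{Ascoli} witness even continuity, and a closed subset of a compact set is compact; hence (Iso2) holds for $G$. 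For (Iso3): the symmetrized $2$-hull operator is monotone, since if $\FFf_1 \subset \FFf_2$ then any $f \in \FFf_1$ witnessing a pair $\{x,y\}$ for a function $g$ also lies in $\FFf_2$, so $\hULL{\FFf_1} \subset \hULL{\FFf_2}$. Therefore $\hULL{G} \subset \hULL{H} = H$ for each $H \in \FFf$, the middle equality being (Iso3) for $H$; intersecting over all $H$ gives $\hULL{G} \subset G$, and the reverse inclusion $\hULL{G} \supset G$ is automatic, so $\hULL{G} = G$.

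It then remains to check the cardinality hypothesis for $G$ itself and to conclude. If $\card(X) \neq 2$ then $(\card(G),\card(X)) \neq (1,2)$ trivially. If $\card(X) = 2$, the observation above shows that $\GGg$ consists of the single group $\{\id_X,\tau\}$, where $\tau$ is the transposition of the two points; hence $\FFf = \{\{\id_X,\tau\}\}$ and $G = \{\id_X,\tau\}$ has two elements, so again $(\card(G),\card(X)) \neq (1,2)$. Thus \THM{main} applies to $G$, and its implication (iii)$\implies$(i) yields some $d \in \Metr_c(X)$ with $\Iso(X,d) = G = \bigcap \FFf$, that is, $\bigcap \FFf \in \GGg$.

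The argument is essentially formal once \THM{main} is available; the only genuinely non-routine point is the bookkeeping of the exceptional pair $(1,2)$, which I expect to be the sole obstacle, and which is resolved by noting that on a two-point space the transposition is always an isometry so that $\GGg$ cannot contain the trivial group. Everything else reduces to the elementary facts that closedness is preserved under intersection, that even continuity and pointwise precompactness pass to subfamilies, and that the operator $\hULL{\cdot}$ is monotone.
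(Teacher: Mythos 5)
Your proof is correct and is exactly the argument the paper intends: the paper leaves this corollary to the reader as an immediate consequence of \THM{main}, and your verification that (Iso1)--(Iso3) are preserved under intersection (closedness, heredity of even continuity and pointwise precompactness to subfamilies, monotonicity of $\hULL{\cdot}$), together with the careful handling of the exceptional pair $(1,2)$ via the transposition on a two-point space, is precisely that deduction.
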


\begin{cor}{id}
For every locally compact Polish space $X$ having more than two points the set of all metrics $d \in
\Metr_c(X)$ for which $\Iso(X,d) = \{\id_X\}$ is dense (in the topology of uniform convergence
on compact subsets of $X \times X$) in $\Metr_c(X)$.
\end{cor}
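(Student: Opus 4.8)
The plan is to derive the statement from \THM{main} applied to the one-element group $G = \{\id_X\}$, and then to translate the multiplicative control on the metric furnished by point (ii) of that theorem into closeness in the topology of uniform convergence on compact subsets of $X \times X$. Since $X$ has more than two points, $(\card(G),\card(X)) = (1,\card(X)) \neq (1,2)$, so \THM{main} is applicable to $G = \{\id_X\}$. First I would verify that $\{\id_X\}$ satisfies (Iso1)--(Iso3). Because $\CCc(X,X)$ is Hausdorff, the singleton $\{\id_X\}$ is closed, giving (Iso1). For (Iso2), observe that for every compact $K$ the family $\DDd_K$ is contained in $\{\id_X\}$, and a family consisting of at most one continuous map into a Hausdorff space is automatically evenly continuous (split into the vacuous and the continuity cases) and pointwise precompact (its orbits are singletons).

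The one point that genuinely uses the hypothesis that $X$ has more than two points is (Iso3), i.e.\ $\hULL{\{\id_X\}} = \{\id_X\}$. By \DEF{hull}, a map $g\dd X \to X$ lies in $\hULL{\{\id_X\}}$ iff $\{g(x),g(y)\} = \{x,y\}$ for every pair $x,y \in X$. Fixing $x$ and choosing two further points $y \neq z$ both distinct from $x$ (possible precisely because $\card(X) > 2$), one gets $g(x) \in \{x,y\} \cap \{x,z\} = \{x\}$, so $g = \id_X$; the reverse inclusion is trivial. (For a two-point space the transposition lies in the hull, which is exactly why the hypothesis cannot be dropped.) Hence all three conditions in point (iii) hold, and \THM{main} yields point (ii) for $G = \{\id_X\}$: since every metric is $\{\id_X\}$-invariant and $\Metr_c(X) \neq \varempty$, for every $d \in \Metr_c(X)$ and every $\epsi > 0$ there is $\varrho \in \Metr_c(X)$ with $d \leqsl \varrho \leqsl (1+\epsi)d$ and $\Iso(X,\varrho) = \{\id_X\}$.

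It then remains to convert this sandwich into density. Fix $d \in \Metr_c(X)$ together with a basic neighbourhood of $d$, determined by a compact set $C \subset X \times X$ and a number $\delta > 0$, consisting of those metrics $\varrho$ with $|\varrho(x,y) - d(x,y)| < \delta$ on $C$. As $d$ is continuous, it is bounded on $C$, say by $M$; choosing $\epsi > 0$ small enough that $\epsi M < \delta$ and taking the corresponding $\varrho$ from the previous paragraph, one obtains $0 \leqsl \varrho(x,y) - d(x,y) \leqsl \epsi\, d(x,y) \leqsl \epsi M < \delta$ for all $(x,y) \in C$. Thus $\varrho$ lies in the prescribed neighbourhood and satisfies $\Iso(X,\varrho) = \{\id_X\}$, which proves the asserted density.

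The conceptual content is already packaged inside \THM{main}; here the only mild obstacle is bookkeeping, namely recognizing that the two-sided multiplicative estimate $d \leqsl \varrho \leqsl (1+\epsi)d$ automatically delivers uniform closeness on compacta because a proper (hence continuous) metric is bounded on every compact subset of $X \times X$. I expect no serious difficulty beyond the verification of (Iso3), which is where the ``more than two points'' hypothesis is indispensable.
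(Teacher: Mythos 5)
Your proof is correct and follows exactly the route the paper intends: the paper leaves this corollary to the reader as an immediate consequence of \THM{main}, and you apply that theorem to $G=\{\id_X\}$, verify (Iso1)--(Iso3) (with the three-point hypothesis entering only in (Iso3)), and convert the multiplicative estimate $d\leqsl\varrho\leqsl(1+\epsi)d$ into uniform closeness on compacta via boundedness of $d$ there. Nothing is missing.
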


The paper is organized as follows. The next section is devoted to the proof of \THM{main}. It also
contains short proofs of \THM{free} and \COR{gx}. The proofs of Corollaries~\ref{cor:filtr} and
\ref{cor:id} are left to the reader as simple exercises (they are immediate consequences
of \THM{main}). In the last section we study locally compact Polish groups $G$ which satisfy
condition (ii) of \THM{trans} and prove this result as well as \PRO{Abel}.\par
In general, isometry groups of locally compact separable complete metric spaces may not be locally
compact nor may not act properly on the underlying spaces. However, if $(X,d)$ is a connected
locally compact metric space, then the isometry group of $(X,d)$ is locally compact and acts
properly on $X$ (see \cite{d-w}). This result remains true when we replace the connectedness of $X$
by the properness of the metric $d$ (see \cite{g-k}). Other results in this topic may be found
in \cite{m-s}.\par
It is already known that a proper action of a locally compact Polish group $G$ on a locally compact
Polish space $X$ admits a $G$-invariant proper metric on $X$ (see \cite{amn}). It was proved much
earlier that every locally compact Polish group admits a left invariant proper metric (see
\cite{str}). These are the two main tools of our work. For other results on constructing proper
or $G$-invariant metrics the reader is referred to \cite{kos} and \cite{w-j}.

\begin{rem}{1-2}
A careful reader noticed that in some of results stated above a strange condition that
$(\card(G),\card(X)) \neq (1,2)$ appears. It is an interesting phenomenon that this trivial case ---
when $(\card(G),\card(X)) = (1,2)$ --- is the only exception for these theorems to hold. (We leave
it as a very simple exercise that the above condition is necessary whenever it appears
in the statement.)
\end{rem}

\subsection*{Notation and terminology.} In this paper all considered topological spaces (unless
otherwise stated) are Polish. By a \textit{map} we mean a continuous function. All isomorphisms
between topological groups as well as actions of topological groups on topological spaces are
assumed to be continuous (unless otherwise stated). We use the multiplicative notation for all
groups. Let $G$ be a topological group, $X$ be a topological space and let $G \times X \ni (g,x)
\mapsto g.x \in X$ be an action. We call $G$ a \textit{Boolean} group iff $g^2 = e_G$ for each
$g \in G$. (Every Boolean group is Abelian.) For $g \in G$, $x \in X$, $A \subset G$ and $B \subset
X$ we write $A.B := \{a.b\dd\ a \in A,\ b \in B\}$, $g.B := \{g\}.B$ and $A.x := A.\{x\}$.
The action is \textit{effective} (on a set $Y \subset X$) iff $g.x = x$ for each $x$ (belonging
to $Y$) implies $g = e_G$. It is \textit{free} (at a point $\omega \in X$) iff $g.x = x$ for some
$x \in X$ (for $x = \omega$) implies $g = e_G$. The action is \textit{transitive} (resp.
\textit{almost transitive}) iff $G.x = X$ (resp. iff $G.x$ is dense in $X$) for some $x \in X$.
Finally, it is \textit{proper}, if the map $\Phi\dd G \times X \ni (g,x) \mapsto (x,g.x) \in
X \times X$ is proper, that is, if $\Phi^{-1}(K)$ is compact for any compact $K \subset X \times X$.
This is equivalent to (see \cite{amn}):
\begin{itemize}
\item[($\star$)] for any compact set $K \subset X$, $D_K := \{g \in G\dd\ g.K \cap K \neq
   \varempty\}$ is compact.
\end{itemize}
A metric $d$ on $X$ is \textit{$G$-invariant} if $d(g.x,g.y) = d(x,y)$ for all $g \in G$ and $x, y
\in X$. When $G$ and $X$ are locally compact and Polish, and the action is proper, the set of all
$G$-invariant metrics $d \in \Metr_c(X)$ is nonempty (\cite{amn}) and we denote it
by $\Metr_c(X|G)$.\par
Whenever $X$ is a locally compact Polish space and $G$ is a group of homeomorphisms of $X$, $G$ acts
naturally on $X$ by $\varphi.x = \varphi(x)$. What is more, the function $\CCc(X,X) \times \CCc(X,X)
\ni (f,g) \mapsto g \circ f \in \CCc(X,X)$ is continuous (\cite[Theorem~3.4.2]{eng}) and thus
$(G,\circ)$ is a topological group iff the inverse is continuous on $G$. But this is always true for
locally compact groups with continuous multiplication, by a theorem of Ellis \cite{ell} (for more
general results in this fashion consult \cite{zel}, \cite{bra}, \cite{pfi}, \cite{bou}). However,
in the context of isometry groups of proper metric spaces the continuity of the inverse is
an elementary excercise.\par
Let $(X,d)$ be a metric space. The closed $d$-ball in $X$ with center at $a \in X$ and of radius
$r > 0$ is denoted by $\bar{B}_d(a,r)$; and $d \oplus d$ stands for the `sum' metric on $X \times
X$, that is, $(d \oplus d)((x,y),(x',y')) = d(x,x') + d(y,y')$. For a function $f\dd X \to \RRR$
we put
$$\Lip_d(f) = \sup \left\{\frac{|f(x) - f(y)|}{d(x,y)}\dd\ x,y \in X,\ x \neq y\right\} \in
[0,\infty]$$
provided $\card(X) > 1$ and $\Lip_d(f) = 0$ otherwise. The function $f$ is \textit{nonexpansive} iff
$\Lip_d(f) \leqsl 1$. For every $b \in X$ let $e_b\dd X \to \RRR$ be the so-called
\textit{Kuratowski} map corresponding to $b$; that is, $e_b(x) = d(b,x)$. For a nonempty set
$A \subset X$ we denote by $\dist_d(x,A)$ the $d$-distance of a point $x$ from $A$, i.e.
$$\dist_d(x,A) = \inf_{a \in A} d(x,a).$$
It is well-known (and easy to prove) that both $e_b$ and $\dist_d(\cdot,A)$ are nonexpansive maps.
Also the maximum and the minimum of finitely many nonexpansive (real-valued) maps is nonexpansive.
These facts will be applied later.\par
For any set $X$, $\Delta_X$ is the diagonal of $X$; that is, $\Delta_X = \{(x,x)\dd\ x \in X\}$.
A subset $K$ of $X \times X$ is said to be \textit{symmetric} if $(y,x) \in K$ for every $(x,y)
\in K$.\par
Beside all aspects discussed above, all notions and notations which appeared earlier are obligatory.

\SECT{Characterization of isometry groups\\with respect to proper metrics}

Our first aim is to show \THM{main}. Its proof will be preceded by several auxiliary results.\par
From now on, $X$ is a fixed locally compact Polish (nonempty) space and $G$ is a group
of homeomorphisms of $X$ such that
\begin{equation}\label{eqn:1,2}
(\card(G),\card(X)) \neq (1,2).
\end{equation}
We equip $G$ with the topology inherited from $\CCc(X,X)$. Further, we put
\begin{multline}\label{eqn:RG}
\RRr_G = \{(x,y;f(x),f(y))\dd\ f \in G,\ x,y \in X\} \cup\\\cup \{(x,y;f(y),f(x))\dd\ f \in G,\ x, y
\in X\}
\end{multline}
and for $x,y \in X$,
$$G^s(x,y) = \{(f(x),f(y))\dd\ f \in G\} \cup \{(f(y),f(x))\dd\ f \in G\}.$$
We begin with the following already known result whose proof we omit (see the notes
in the introductory section; use the main result of \cite{amn} to conclude the nonemptiness
of the set $\Metr_c(X|G)$).

\begin{pro}{1-2}
If $G$ satisfies conditions \textup{(Iso1)--(Iso2)} of \textup{\THM{main}}, then $G$ is a locally
compact topological group, the natural action of $G$ on $X$ is proper and the set $\Metr_c(X|G)$ is
nonempty.
\end{pro}

\begin{lem}{equiv}
If $G$ satisfies conditions \textup{(Iso1)--(Iso2)} of \textup{\THM{main}}, then $\RRr_G$ is
a closed equivalence relation on $X \times X$ and for each $(x,y) \in X \times X$ the equivalence
class of $(x,y)$ with respect to $\RRr_G$ coincides with $G^s(x,y)$.
\end{lem}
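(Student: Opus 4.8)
The plan is to separate the purely algebraic verification that $\RRr_G$ is an equivalence relation (and the identification of its classes) from the topological verification that it is closed; only the latter uses the hypotheses (Iso1)--(Iso2).

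For the algebraic part I read $\RRr_G$ as a relation on $X \times X$ declaring $(x,y)$ and $(x',y')$ related precisely when some $f \in G$ carries the pair $(x,y)$ to $(x',y')$ in one of the two listed ways, i.e. when $(x',y') \in G^s(x,y)$. \emph{Reflexivity} is witnessed by $\id_X \in G$. \emph{Symmetry} and \emph{transitivity} use only that $G$ is a group: if $(x',y') = (f(x),f(y))$ then $f^{-1} \in G$ returns $(x,y)$, and if $(x',y') = (f(y),f(x))$ then $f^{-1}$ returns the swapped pair, so in either case $(x',y')$ is $\RRr_G$-related to $(x,y)$; for transitivity one composes the relevant $f,g \in G$ and checks the four sign-combinations, each producing $g \circ f \in G$ applied either to the pair or to its swap, hence a member of $\RRr_G$. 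Once symmetry is established, the equivalence class of $(x,y)$ is $\{(x',y')\dd (x,y)\mathrel{\RRr_G}(x',y')\}$, which is exactly $G^s(x,y)$ by definition.

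The substantial point is \textbf{closedness} of $\RRr_G$ in $X \times X \times X \times X$. I would take a sequence of points of $\RRr_G$ converging to $(a,b,c,d)$ and show the limit lies in $\RRr_G$ (sequences suffice, $X$ being metrizable). Passing to a subsequence, I may assume all terms have the non-swapped form (the swapped case being symmetric), so there are $f_n \in G$ with $x_n \to a$, $y_n \to b$, $f_n(x_n) \to c$, $f_n(y_n) \to d$. Using local compactness I would fix a compact $K$ containing $a,b,c,d$ in its interior; then for all large $n$ one has $x_n \in K$ and $f_n(x_n) \in K$, whence $f_n(K) \cap K \neq \varempty$, i.e. $f_n \in \DDd_K$. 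By \PRO{1-2} the natural action of $G$ is proper, so by $(\star)$ the set $\DDd_K$ (which coincides with $D_K$) is compact---equivalently, this is the Ascoli theorem applied to the evenly continuous, pointwise precompact family $\DDd_K$ supplied by (Iso2). Hence a subsequence $f_{n_k}$ converges in $\CCc(X,X)$ to some $f \in \DDd_K \subseteq G$, where (Iso1) guarantees the limit stays in $G$.

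It remains to identify the values of $f$. Since $f_{n_k} \to f$ uniformly on compact sets and $x_{n_k} \to a$, the standard fact that $f_{n_k}(x_{n_k}) \to f(a)$ (uniform convergence on a compact neighbourhood of $a$ together with continuity of $f$) combines with $f_{n_k}(x_{n_k}) \to c$ to force $f(a) = c$, and likewise $f(b) = d$. Therefore $(a,b,c,d) = (a,b,f(a),f(b)) \in \RRr_G$; in the swapped case one lands in the second family in the definition of $\RRr_G$. The main obstacle is exactly this extraction of a limiting homeomorphism $f \in G$ realising the limit pair: it is where compactness of $\DDd_K$ (hence the hypotheses, through \PRO{1-2} or Ascoli) and the joint-continuity of evaluation under uniform-on-compacts convergence are indispensable.
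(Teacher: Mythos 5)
Your proof is correct, and the closedness part takes a genuinely different route from the paper's. Like you, the paper dismisses the algebraic verification as an exercise and concentrates on closedness, but there it argues globally: by \PRO{1-2} the action is proper, hence the map $X \times G \ni (x,f) \mapsto (x,f(x)) \in X \times X$ is closed, so $W = \{(x,f(x))\dd\ x \in X,\ f \in G\}$ is closed in $X \times X$, and $\RRr_G$ is then exhibited as a set built from $W$. You instead run a sequential extraction: from a convergent sequence of quadruples in $\RRr_G$ you produce, via compactness of $\DDd_K$ (equivalently $(\star)$, or Ascoli applied to the evenly continuous, pointwise precompact family from (Iso2)) together with (Iso1), a single limiting $f \in G$ realising the limit pair. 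Both proofs consume the same input (properness of the action through \PRO{1-2}), but your version has a concrete advantage: it keeps track of the fact that \emph{one and the same} $f$ must move both coordinates. The paper's displayed identity, membership of $(x,y;z,w)$ in $\RRr_G$ iff $(x,z;y,w) \in W \times W$ or $(x,w;y,z) \in W \times W$, as literally written only asserts $z \in G.x$ and $w \in G.y$ for possibly different group elements, which yields the inclusion $\subset$ but not $\supset$; to repair that route one should apply the closed-map argument to the diagonal action of $G$ on $X \times X$, i.e.\ to the set $\{(x,y,f(x),f(y))\dd\ x,y \in X,\ f \in G\}$. Your argument sidesteps this issue entirely. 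Two cosmetic points: the limit $f$ of your subsequence a priori lies in the closure of $\DDd_K$ rather than in $\DDd_K$ itself, and it is (Iso1) that places this closure inside $G$, which is all you need; and the reduction to quadruples all of the non-swapped form should be stated as a pigeonhole passage to a subsequence, which is what you intend.
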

\begin{proof}
We leave this as an exercise that $\RRr_G$ is an equivalence relation and that $G^s(x,y)$ is
the equivalence class of $(x,y)$. Here we shall focus only on the closedness of $\RRr_G$
in $(X \times X)^2$. By \PRO{1-2}, the action of $G$ on $X$ is proper and thus the function
$X \times G \ni (x,f) \mapsto (x,f(x)) \in X \times X$ is a closed map (cf.
\cite[Theorem~3.7.18]{eng}). Consequently, the set $W = \{(x,f(x))\dd\ x \in X,\ f \in G\}$ is
closed in $X \times X$. So, the notice that
$$\RRr_G = \{(x,y;z,w) \in (X \times X)^2\dd\ (x,z;y,w) \in W \times W\ \vee\ (x,w;y,z)
\in W \times W\}$$
completes the proof.
\end{proof}

The next lemma is the only result in the proof of which the strange condition \eqref{eqn:1,2} is
used. This lemma will find an application later.

\begin{lem}{hull}
Let $u\dd X \to X$ be a function such that for any two distinct points $x$ and $y$ in $X$ there
exists $f \in G$ such that $\{u(x),u(y)\} = \{f(x),f(y)\}$. Then $u \in \hULL{G}$.
\end{lem}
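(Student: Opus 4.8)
The plan is to read off from \DEF{hull} what remains to be checked. Membership $u \in \hULL{G}$ means that for \emph{every} pair of points $x,y \in X$, now allowing $x=y$, there is $f \in G$ with $\{u(x),u(y)\}=\{f(x),f(y)\}$. For distinct $x \neq y$ this is exactly the hypothesis, so the only new content is the diagonal case $x=y$, which reads $u(x)=f(x)$ for some $f \in G$. Thus the entire lemma reduces to the single assertion
$$u(x) \in G.x \qquad \text{for every } x \in X,$$
and the whole difficulty is to rule out that the matching $f$ witnessing a pair $\{u(x),u(y)\}$ always \emph{interchanges} $x$ with its partner, so that $u(x)$ keeps landing in a foreign orbit.

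The main case is $\card(X) \geqsl 3$. Here I would fix $x$, pick two further points $y,z \in X \setminus \{x\}$ with $y \neq z$, and feed the hypothesis the three pairs $(x,y)$, $(x,z)$, $(y,z)$. The structural fact driving everything is that the $G$-orbits partition $X$ (since $G$ is a group), so distinct orbits are disjoint. Arguing by contradiction, suppose $u(x) \notin G.x$ and set $w = u(x)$. Because $\{u(x),u(y)\}=\{f(x),f(y)\}$ with $f(x) \in G.x$ and $w \notin G.x$, the pair $(x,y)$ forces $w \in G.y$ and $u(y) \in G.x$; the pair $(x,z)$ forces likewise $w \in G.z$ and $u(z) \in G.x$. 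As $w$ lies in a single orbit, this yields $G.y = G.z = G.w \neq G.x$. But then the pair $(y,z)$ places both $u(y)$ and $u(z)$ in $G.y = G.w$, while we already have $u(y) \in G.x$; since $G.x \cap G.w = \varnothing$, this is impossible. Hence $u(x) \in G.x$.

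The degenerate cases are where the standing hypothesis \eqref{eqn:1,2} is spent. If $\card(X)=1$, then $u=\id_X$ and there is nothing to prove. If $\card(X)=2$, then \eqref{eqn:1,2} gives $\card(G) \geqsl 2$; the only nontrivial group of homeomorphisms of a two-point (necessarily discrete) Polish space is the transposition group, which acts transitively, so $G.x = X$ for each $x$ and $u(x) \in G.x$ holds for free. Combining the three cases establishes $u(x) \in G.x$ for all $x$, and with it $u \in \hULL{G}$.

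I expect the one genuine obstacle to be exactly the ``persistent swapping'' alluded to above, and the three-point argument leaning on disjointness of distinct orbits is what eliminates it; note the argument is purely set-theoretic, using only that $G$ is a group. This is also where one sees why the hypothesis is sharp: when $(\card(G),\card(X)) = (1,2)$ the two-point transposition satisfies the distinct-pair condition yet lies outside $\hULL{\{\id_X\}}$, so there is genuinely no slack in excluding that case.
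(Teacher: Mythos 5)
Your proof is correct, and it reaches the conclusion by a genuinely different route than the paper. Both arguments first observe that only the diagonal condition $u(x)\in G.x$ needs proof, and both rest on the fact that the $G$-orbits partition $X$; but the paper splits on the group rather than on the space. There, the transitive case is dismissed at once; for non-transitive nontrivial $G$ one picks $c$ and $g\in G$ with $g(c)\neq c$ and uses the pair $(c,g(c))$ --- both of whose entries lie in the single orbit $G.c$ --- to get $u(c)\in G.c$ for free, and then propagates to every other point by pairing it with a point in a different orbit; condition \eqref{eqn:1,2} is spent only in the residual case $G=\{\id_X\}$. You instead split on $\card(X)$, run one uniform three-pair contradiction when $\card(X)\geqsl 3$ (which in effect generalizes the paper's trivial-group computation to arbitrary $G$), and spend \eqref{eqn:1,2} on $\card(X)=2$, where the only nontrivial group of homeomorphisms of a two-point discrete space is the transposition group, which acts transitively. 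Your version buys uniformity in the main case --- no distinction between trivial, transitive and generic $G$ --- while the paper's is a little quicker per point once the anchor $u(c)\in G.c$ is in hand. One clause you leave tacit: deducing $u(y)=f(x)$ from $\{w,u(y)\}=\{f(x),f(y)\}$ and $w=f(y)\neq f(x)$ uses that $f$ is injective and $x\neq y$, so both sets genuinely have two elements; trivial, but worth a word.
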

\begin{proof}
According to \DEF{hull}, we only need to check that $u(x) \in G.x$ for every $x \in X$. If $G$ acts
transitively on $X$, the assertion immediately follows since then $G(x) = X$ for any $x \in X$. Thus
we assume that $G$ acts non-transitively. This means that $G(x) \neq X$ for any $x \in X$.\par
First assume that $\card(G) > 1$. Then there is $g \in G$ and $c \in X$ with $g(c) \neq c$. So,
by assumption, there is $f \in G$ with $\{u(c), u(g(c))\} = \{f(c),f(g(c))\} \subset G(c)$ and hence
$u(c) \in G(c)$. Now let $x \in X \setminus G(c)$. Then necessarily $x \neq c$ and thus there is
$f \in G$ with $\{u(x),u(c)\} = \{f(x),f(c)\}$. Since $c \notin G(x)$ and $u(c) \in G(c)$,
it follows that $u(c) = f(c)$ and therefore $u(x) = f(x) \in G(x)$. Finally, if $x \in G(c)$, take
$a \in X \setminus G(c)$ and a function $f \in G$ such that $\{u(x),u(a)\} = \{f(x),f(a)\}$. Now,
as before, since $u(a) \in G(a)$ and $a \notin G(x)$, we obtain that $u(a) = f(a)$ and consequently
$u(x) = f(x) \in G(x)$ as well.\par
Now assume that $G = \{\id_X\}$. By \eqref{eqn:1,2}, $\card(X) > 2$. Let $x \in X$ be arbitrary.
Take two distinct points $y, z \in X \setminus \{x\}$. By assumption, $\{u(x),u(y)\} = \{x,y\}$ and
$\{u(x),u(z)\} = \{x,z\}$ which yields that $u(x) = x$ and we are done.
\end{proof}

\begin{lem}{iso-hull}
If $G$ satisfies conditions \textup{(Iso1)--(Iso2)} of \textup{\THM{main}}, then $\hULL{G}$ is
a group of homeomorphisms, closed in $\CCc(X,X)$, and every $G$-invariant metric on $X$ is
$\hULL{G}$-invariant as well.
\end{lem}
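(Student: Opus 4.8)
The plan is to fix once and for all a $G$-invariant proper metric $d \in \Metr_c(X|G)$ --- such a metric exists by \PRO{1-2} --- and to reduce the entire statement to the single claim that every $g \in \hULL{G}$ is a surjective $d$-isometry. First I would settle invariance, which needs neither \textup{(Iso1)} nor \textup{(Iso2)}: if $\varrho$ is any $G$-invariant metric, $g \in \hULL{G}$ and $x,y \in X$, then choosing $f \in G$ with $\{g(x),g(y)\} = \{f(x),f(y)\}$ gives $\varrho(g(x),g(y)) = \varrho(f(x),f(y)) = \varrho(x,y)$ by the symmetry of $\varrho$ and its $G$-invariance. Specializing $\varrho = d$ shows every $g \in \hULL{G}$ preserves $d$; hence $g$ is injective and, as $d$ induces the topology of $X$, continuous, so in particular $\hULL{G} \subseteq \CCc(X,X)$.

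Next I would dispose of the formal parts. For closedness in $\CCc(X,X)$ I would use \LEM{equiv}: by the definition of $\RRr_G$, membership $g \in \hULL{G}$ is equivalent to $(x,y;g(x),g(y)) \in \RRr_G$ holding for all $x,y \in X$; since convergence in the compact-open topology implies pointwise convergence and $\RRr_G$ is closed, this condition is inherited by the limit of any sequence drawn from $\hULL{G}$. That $\id_X \in G \subseteq \hULL{G}$ is clear, and closure under composition is a two-case check: for $g_1,g_2 \in \hULL{G}$ and $x,y$, pick $f_2 \in G$ matching the pair $\{g_2(x),g_2(y)\}$ with $\{f_2(x),f_2(y)\}$ and then $f_1 \in G$ matching $\{g_1(g_2 x),g_1(g_2 y)\}$ with $\{f_1(g_2 x),f_1(g_2 y)\}$; whichever way $\{g_2 x,g_2 y\}$ corresponds to $\{f_2 x,f_2 y\}$, one obtains $\{g_1 g_2(x),g_1 g_2(y)\} = \{(f_1 f_2)(x),(f_1 f_2)(y)\}$ with $f_1 f_2 \in G$. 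I would also record here the orbit containment $g(x) \in G.x$, which drops out of \DEF{hull} on taking $y = x$.

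The hard part will be surjectivity; everything else is formal. Here the idea is to exploit the properness of $d$. Fix $g \in \hULL{G}$ and a base point $x_0 \in X$; by orbit containment there is $h \in G$ with $g(x_0) = h(x_0)$, so $\psi := h^{-1} \circ g$ preserves $d$ (since $h^{-1} \in G$ and $g$ both do) and fixes $x_0$. Then $\psi$ maps each closed ball $K := \bar{B}_d(x_0,R)$ into itself, and $K$ is compact because $d$ is proper. I would invoke the standard fact that a distance-preserving self-map of a compact metric space is onto --- otherwise, for an omitted point $z$, the iterates $z,\psi(z),\psi^2(z),\dots$ all lie in $K$ and are pairwise at distance at least $\dist_d(z,\psi(K)) > 0$, contradicting total boundedness --- to conclude $\psi(K) = K$ for every $R$, whence $\psi$, and therefore $g = h \circ \psi$, is surjective. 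Thus each $g \in \hULL{G}$ is a bijective $d$-isometry, i.e.\ a homeomorphism. Closure under inverses is then automatic: given $x,y$, set $a = g^{-1}(x)$, $b = g^{-1}(y)$, choose $f \in G$ with $\{g(a),g(b)\} = \{f(a),f(b)\}$, and apply $f^{-1}$ to the resulting equality $\{x,y\} = \{f(a),f(b)\}$ to get $\{g^{-1}(x),g^{-1}(y)\} = \{f^{-1}(x),f^{-1}(y)\}$, so that $g^{-1} \in \hULL{G}$. Together with the invariance and closedness already obtained, this yields all three assertions of the lemma.
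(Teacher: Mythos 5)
Your proposal is correct and follows essentially the same route as the paper: invariance is read off from the definition, a $G$-invariant proper metric from \PRO{1-2} makes every element of $\hULL{G}$ a $d$-isometry, closedness comes from \LEM{equiv}, and surjectivity is obtained by composing with an element of $G$ fixing a base point and using that isometric self-maps of compact balls are onto. The only differences are cosmetic: you spell out closure under composition and the compactness argument that the paper delegates to a citation.
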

\begin{proof}
The last part of the lemma immediately follows from the definition of $\hULL{G}$. Further,
it follows from \PRO{1-2} that $G$ acts properly on $X$, and thus there is $d \in \Metr_c(X|G)$.
Then---by the first argument of the proof---each member of $\hULL{G}$ is isometric with respect
to $d$. This yields that $\hULL{G} \subset \CCc(X,X)$. Moreover, it follows from \LEM{equiv} that
$G^s(x,y)$ is closed in $X \times X$ for any $x, y \in X$. We infer from this that $\hULL{G}$ is
closed in $\CCc(X,X)$ (since $\hULL{G}$ consists of all functions $u\dd X \to X$ such that
$(u(x),u(y)) \in G^s(x,y)$ for all $x, y \in X$). Hence it suffices to show that each member $u$
of $\hULL{G}$ is a bijection and $u^{-1} \in \hULL{G}$. To see this, fix $a \in X$ and take $f \in
G$ such that $u(a) = f(a)$. Then the map $v := f^{-1} \circ u\dd (X,d) \to (X,d)$ is isometric and
$v(a) = a$. This implies that $v$ sends each closed $d$-ball around $a$, which is compact, into
itself. Since every isometric map of a compact metric space into itself is onto (see e.g.
\cite{lin}), $v$, and consequently $u$, is a bijection. Finally, for arbitrary points $x$ and $y$
of $X$ take $f \in G$ such that $\{u(u^{-1}(x)),u(u^{-1}(y))\} = \{f(u^{-1}(x)),f(u^{-1}(y))\}$.
Then $f^{-1} \in G$ and $\{u^{-1}(x),u^{-1}(y)\} = \{f^{-1}(x),f^{-1}(y)\}$. This yields that
$u^{-1} \in \hULL{G}$ and we are done.
\end{proof}

The statement of the next lemma is complicated. However, this result is our key tool and it will be
applied in its full form in a part of the proof of \THM{main}.

\begin{lem}{Lip}
Let $(Y,\varrho)$ be a metric space, $a$ and $b$ be two distinct points of $Y$ and let $K$ be
a closed symmetric nonempty set in $Y \times Y$ such that $(a,b) \notin K$. Further, let $\epsi > 0$
and let $D$ be a dense subset of $[0,\infty)$. Then there are $\delta > 0$, $\alpha \in D$ and a map
$u\dd Y \to \RRR$ such that:
\begin{enumerate}[\upshape(L1)]
\item $\Lip_{\varrho}(u) \leqsl 1 + \epsi$ and $|u(x) - u(y)| \leqsl \alpha$ for all $x, y \in Y$,
\item $|u(x) - u(y)| = \alpha > \varrho(x,y)$ for each $(x,y) \in \bar{B}_{\varrho}(a,\delta) \times
   \bar{B}_{\varrho}(b,\delta)$,
\item $\sup_{(x,y) \in K} |u(x) - u(y)| < \alpha$.
\end{enumerate}
\end{lem}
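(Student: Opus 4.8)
The plan is to build $u$ explicitly as a normalized ratio of distances to small closed balls around $a$ and $b$, choosing the scale $\alpha$ and radius $\delta$ so that all three requirements reduce to elementary estimates. Write $R=\varrho(a,b)>0$. Since $K$ is closed and symmetric and $(a,b)\notin K$ (hence also $(b,a)\notin K$), I would first fix $\delta_*\in(0,R/4)$ so small that the ``corner'' $\bar{B}_{\varrho}(a,\delta_*)\times\bar{B}_{\varrho}(b,\delta_*)$ together with its mirror image misses $K$. Because $\epsi>0$ forces $(1+\epsi)R>R$, I can then pick $\delta\in(0,\delta_*)$ with $R+2\delta<(1+\epsi)(R-2\delta)$, and, since $D$ is dense, choose $\alpha\in D$ inside the nonempty open interval $(R+2\delta,(1+\epsi)(R-2\delta))$. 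Setting $p(x)=\dist_{\varrho}(x,\bar{B}_{\varrho}(a,\delta))$ and $q(x)=\dist_{\varrho}(x,\bar{B}_{\varrho}(b,\delta))$, I define $u=\alpha\,p/(p+q)$; here $p+q$ never vanishes, since $p(x)+q(x)\geqsl\dist_{\varrho}(\bar{B}_{\varrho}(a,\delta),\bar{B}_{\varrho}(b,\delta))\geqsl R-2\delta>0$.

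For (L1) the bound $|u(x)-u(y)|\leqsl\alpha$ is immediate from $0\leqsl u\leqsl\alpha$. For the Lipschitz estimate I would use that $p,q$ are nonexpansive together with $p+q\geqsl R-2\delta$: clearing denominators in $p(x)/(p(x)+q(x))-p(y)/(p(y)+q(y))$ and applying $|p(x)-p(y)|\leqsl\varrho(x,y)$ and $|q(x)-q(y)|\leqsl\varrho(x,y)$ yields $\Lip_{\varrho}(u)\leqsl\alpha/(R-2\delta)\leqsl 1+\epsi$ by the choice of $\alpha$. For (L2), if $x\in\bar{B}_{\varrho}(a,\delta)$ then $p(x)=0$, so $u(x)=0$, and if $y\in\bar{B}_{\varrho}(b,\delta)$ then $q(y)=0$ and $p(y)>0$ (the balls are disjoint), so $u(y)=\alpha$; hence $|u(x)-u(y)|=\alpha$, while $\varrho(x,y)\leqsl\varrho(x,a)+R+\varrho(b,y)\leqsl R+2\delta<\alpha$.

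The hard part is (L3): because $K$ is only closed, not compact, the pointwise inequality $|u(x)-u(y)|<\alpha$ on $K$ does not give a strict supremum, so I must produce a \emph{definite} gap by localizing the near-extremal level sets of $u$. The key estimate is that small $u$ forces proximity to $a$: from $u(x)\leqsl\eta$ one gets $q(x)\geqsl\frac{\alpha-\eta}{\eta}p(x)$, which with the triangle bound $q(x)\leqsl p(x)+(R+2\delta)$ gives $p(x)\leqsl\eta(R+2\delta)/(\alpha-2\eta)$; symmetrically $u(y)\geqsl\alpha-\eta$ forces $q(y)$, hence $\varrho(y,b)$, correspondingly small. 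Choosing $\eta>0$ small enough that these radii, increased by $\delta$, stay below $\delta_*$, I obtain $\{u\leqsl\eta\}\subseteq\bar{B}_{\varrho}(a,\delta_*)$ and $\{u\geqsl\alpha-\eta\}\subseteq\bar{B}_{\varrho}(b,\delta_*)$. Then if some $(x,y)\in K$ had $|u(x)-u(y)|>\alpha-\eta$, I would have (say) $u(x)>\alpha-\eta$ and $u(y)<\eta$, forcing $x\in\bar{B}_{\varrho}(b,\delta_*)$ and $y\in\bar{B}_{\varrho}(a,\delta_*)$, i.e. $(x,y)$ in the mirror corner that misses $K$ --- a contradiction. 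Hence $\sup_{(x,y)\in K}|u(x)-u(y)|\leqsl\alpha-\eta<\alpha$, which is (L3). I expect the only delicate points to be bookkeeping the order of the choices $\delta_*,\delta,\alpha,\eta$ and verifying the two symmetric localization estimates; the rest is routine.
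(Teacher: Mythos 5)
Your proof is correct, and while it shares the paper's basic strategy---an explicit map built from the distance functions to small closed balls $\bar{B}_{\varrho}(a,\delta)$ and $\bar{B}_{\varrho}(b,\delta)$---both the formula and, more importantly, the mechanism for the strict inequality in (L3) are genuinely different. The paper takes $u_{\delta}(y)=\min(\dist_{\varrho}(y,A_{\delta}),c_{\delta})-\delta\min(\dist_{\varrho}(y,A_{\delta}),\dist_{\varrho}(y,B_{\delta}),c_{\delta})$ with $c_{\delta}=\varrho(a,b)-2\delta$: the subtracted correction term uniformly depresses the value by at least $\delta\epsi$ at any point lying $\epsi$-far from both balls, which produces the gap in (L3) once one notes that a pair in $K$ cannot have one coordinate near $a$ and the other near $b$; the requirement $\alpha\in D$ is then arranged a posteriori by rescaling $u_{\delta}$ by a factor $\lambda$ close to $1$ with $\lambda c_{\delta}\in D$. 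You instead use the Urysohn ratio $\alpha p/(p+q)$, select $\alpha\in D$ up front from the open interval $(R+2\delta,(1+\epsi)(R-2\delta))$, and get (L3) by trapping the sublevel set $\{u\leqsl\eta\}$ in $\bar{B}_{\varrho}(a,\delta_*)$ and the superlevel set $\{u\geqsl\alpha-\eta\}$ in $\bar{B}_{\varrho}(b,\delta_*)$. What each buys: the paper's function is a combination of truncated distance functions, so its Lipschitz bound $1+\delta$ is immediate, whereas your ratio needs the quotient-rule estimate via $p(x)q(y)-p(y)q(x)$ and the lower bound $p+q\geqsl R-2\delta$; on the other hand, your level-set localization makes the quantitative gap in (L3) fully explicit (the corresponding step in the paper is stated rather tersely), and choosing $\alpha\in D$ directly avoids the final rescaling. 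The order of the choices $\delta_*$, $\delta$, $\alpha$, $\eta$ in your argument is consistent, and all the auxiliary inequalities you invoke (such as $q\leqsl p+R+2\delta$ and $\varrho(x,a)\leqsl p(x)+\delta$) do hold.
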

\begin{proof}
Decreasing $\epsi$, if needed, we may and do assume that
\begin{equation}\label{eqn:aux00}
\epsi < \frac14 \min(1,\varrho(a,b)) \quad \textup{and} \quad [\bar{B}_{\varrho}(a,2\epsi) \times
\bar{B}_{\varrho}(b,2\epsi)] \cap K = \varempty
\end{equation}
(here we use the closedness of $K$). Everywhere below in this proof $\delta$ is a positive number
less than $\epsi$. Let $A_{\delta} = \bar{B}_{\varrho}(a,\delta)$, $B_{\delta} =
\bar{B}_{\varrho}(b,\delta)$, $c_{\delta} = \varrho(a,b) - 2 \delta > 0$ and let $u_{\delta}\dd
Y \to \RRR$ be given by the formula
$$u_{\delta}(y) = \min(\dist_{\varrho}(y,A_{\delta}),c_{\delta}) - \delta
\min(\dist_{\varrho}(y,A_{\delta}),\dist_{\varrho}(y,B_{\delta}),c_{\delta}).$$
Note that:
\begin{gather}
\Lip_{\varrho}(u_{\delta}) \leqsl 1 + \delta,\label{eqn:aux01}\\
\varrho(x,y) \in [c_{\delta},c_{\delta} + 4\delta] \quad \textup{for} \quad (x,y) \in A_{\delta}
\times B_{\delta},\label{eqn:aux02}\\
\lim_{\delta\to0} c_{\delta} = \varrho(a,b).\label{eqn:aux03}
\end{gather}
It follows from \eqref{eqn:aux00}, \eqref{eqn:aux02} and the fact that $\delta < \epsi$ that
\begin{equation}\label{eqn:aux04}
u_{\delta}(Y) \subset [0,c_{\delta}], \quad u_{\delta}^{-1}(\{0\}) = A_{\delta} \quad \textup{and}
\quad u_{\delta}^{-1}(\{c_{\delta}\}) = B_{\delta}.
\end{equation}
Now let $(x,y) \in K$. We infer from \eqref{eqn:aux00} and the symmetry of $K$ that
$$\min(\varrho(z,a),\varrho(z,b)) \geqsl 2\epsi \quad \textup{for} \quad z \in \{x,y\}.$$
So, for such $z$ we have $\dist_{\varrho}(z,A_{\delta}) \geqsl \epsi$ and
$\dist_{\varrho}(z,B_{\delta}) \geqsl \epsi$. This combined with the inequality $\epsi < c_{\delta}$
(cf. \eqref{eqn:aux00}) gives $u_{\delta}(z) \in [0,c_{\delta} - \delta \epsi]$ which shows that
\begin{equation}\label{eqn:aux05}
\sup_{(x,y) \in K} |u_{\delta}(x) - u_{\delta}(y)| \leqsl c_{\delta} -\delta \epsi < c_{\delta}.
\end{equation}
The final function $u$ may be taken in the form $u = \lambda u_{\delta}$ for small enough $\delta$
and suitably chosen $\lambda > 1$ (so that $\lambda c_{\delta} \in D$). It follows from
\eqref{eqn:aux01}--\eqref{eqn:aux05} that it is possible to do this. The details are left
to the reader.
\end{proof}

As a consequence of \LEM{Lip} we obtain

\begin{lem}{sep}
Let $(Y,\varrho)$ be a metric space, $K$ and $L$ be two disjoint symmetric closed nonempty subsets
of $Y \times Y$ such that $L \cap \Delta_Y = \varempty$ and $\varrho\bigr|_L \equiv \const$. Then
for every $\epsi > 0$ there exists a metric $\varrho_{\epsi}$ on $Y$ such that $\varrho \leqsl
\varrho_{\epsi} \leqsl (1 + \epsi) \varrho$ and
$$\sup_{(x,y) \in K} \varrho_{\epsi}(x,y) \neq \sup_{(x,y) \in L} \varrho_{\epsi}(x,y).$$
\end{lem}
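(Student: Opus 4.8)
The plan is to reduce the whole statement to a single application of \LEM{Lip}, after a trivial case distinction. Write $c$ for the constant value of $\varrho$ on $L$; since $L \cap \Delta_Y = \varempty$, every pair in $L$ consists of distinct points, so $c > 0$. I would split according to whether the supremum of $\varrho$ over $K$ equals $c$.

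In the easy case $\sup_{(x,y) \in K} \varrho(x,y) \neq c$ one simply takes $\varrho_{\epsi} = \varrho$: the inequality $\varrho \leqsl \varrho_{\epsi} \leqsl (1+\epsi)\varrho$ is immediate, and $\sup_{(x,y) \in L} \varrho_{\epsi}(x,y) = c \neq \sup_{(x,y) \in K} \varrho_{\epsi}(x,y)$ holds by the defining assumption of this case. All the content lies in the case $\sup_{(x,y) \in K} \varrho(x,y) = c$. Here I would fix an arbitrary pair $(a,b) \in L$. Then $a \neq b$ because $L \cap \Delta_Y = \varempty$, and $(a,b) \notin K$ because $K \cap L = \varempty$; moreover $\varrho(a,b) = c$. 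Thus \LEM{Lip} applies to $(Y,\varrho)$, the points $a,b$, the closed symmetric nonempty set $K$, the given $\epsi$, and the dense set $D = [0,\infty)$, yielding $\delta > 0$, a value $\alpha$ and a map $u\dd Y \to \RRR$ satisfying \textup{(L1)--(L3)}. Taking $x=a$, $y=b$ in \textup{(L2)} gives the crucial strict inequality $\alpha > \varrho(a,b) = c$.

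I would then set $\varrho_{\epsi}(x,y) = \max\bigl(\varrho(x,y),\,|u(x)-u(y)|\bigr)$. Since $(x,y) \mapsto |u(x)-u(y)|$ is a pseudometric, its maximum with the genuine metric $\varrho$ is again a metric (positivity survives because $\varrho_{\epsi}(x,y)=0$ forces $\varrho(x,y)=0$). From \textup{(L1)}, $\Lip_{\varrho}(u) \leqsl 1+\epsi$ gives $|u(x)-u(y)| \leqsl (1+\epsi)\varrho(x,y)$, so at once $\varrho \leqsl \varrho_{\epsi} \leqsl (1+\epsi)\varrho$. To separate the suprema I would use that the supremum of a maximum is the maximum of the suprema: the case assumption $\sup_{(x,y)\in K}\varrho(x,y) = c$ together with \textup{(L3)} gives $\sup_{(x,y)\in K}\varrho_{\epsi}(x,y) = \max\bigl(c,\ \sup_{(x,y)\in K}|u(x)-u(y)|\bigr) < \alpha$, because both arguments of the maximum are strictly below $\alpha$ (the first by $c < \alpha$, the second by \textup{(L3)}). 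On the other side, evaluating at the chosen pair, $\sup_{(x,y)\in L}\varrho_{\epsi}(x,y) \geqsl \varrho_{\epsi}(a,b) = \max(c,\alpha) = \alpha$. Hence $\sup_{(x,y)\in K}\varrho_{\epsi}(x,y) < \alpha \leqsl \sup_{(x,y)\in L}\varrho_{\epsi}(x,y)$, so the two suprema differ, as required.

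The only genuinely delicate point is exactly the simultaneous control furnished by \LEM{Lip}: the map $u$ must stretch the chosen pair of $L$ all the way up to a value $\alpha$ strictly exceeding $c$, while keeping the oscillation $|u(x)-u(y)|$ over all of $K$ strictly below $\alpha$ — and it is the closedness and disjointness of $K$ and $L$ (via $(a,b)\notin K$) that make this possible. Once that function is produced the separation of the suprema is automatic, so I expect no obstacle beyond invoking \LEM{Lip} with the correct data and verifying that $\max(\varrho,|u(\cdot)-u(\cdot)|)$ is a metric.
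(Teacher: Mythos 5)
Your proposal is correct and follows essentially the same route as the paper: reduce to the case $\sup_{(x,y)\in K}\varrho(x,y)=c$, pick $(a,b)\in L$, invoke \LEM{Lip} with the closed symmetric set $K$, and set $\varrho_{\epsi}=\max(\varrho,|u(\cdot)-u(\cdot)|)$, separating the suprema via $\sup_K\varrho_{\epsi}<\alpha=|u(a)-u(b)|\leqsl\sup_L\varrho_{\epsi}$. The only (harmless) difference is that you spell out the choice $D=[0,\infty)$ and the verification that the maximum is a metric, which the paper leaves implicit.
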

\begin{proof}
We may assume that $\sup_{(x,y) \in K} \varrho(x,y) = \sup_{(x,y) \in L} \varrho(x,y) =: c$ (because
otherwise we may put $\varrho_{\epsi} = \varrho$). Take $(a,b) \in L$, note that $\varrho(a,b) = c$
and apply \LEM{Lip} to obtain a map $u\dd Y \to \RRR$ with properties (L1)--(L3). Now it suffices
to define $\varrho_{\epsi}$ by $\varrho_{\epsi}(x,y) = \max(\varrho(x,y),|u(x) - u(y)|)$. Then
$$\sup_{(x,y) \in K} \varrho_{\epsi}(x,y) = \max(c,\sup_{(x,y) \in K} |u(x) - u(y)|) < |u(a) - u(b)|
\leqsl \sup_{(x,y) \in L} \varrho_{\epsi}(x,y).$$
\end{proof}

The next lemma is obvious and we omit its proof.

\begin{lem}{dG}
Let $d \in \Metr_c(X|G)$.
\begin{enumerate}[\upshape(a)]
\item For any $x, y \in X$, $d$ is constant on $G^s(x,y)$.
\item Let $\varrho$ be a metric on $X$ such that $d \leqsl \varrho \leqsl M d$ for some $M > 1$. Let
   $\varrho_G\dd X \times X \to [0,\infty)$ be given by
   \begin{equation}\label{eqn:dG}
   \varrho_G(x,y) = \sup_{f \in G} \varrho(f(x),f(y)).
   \end{equation}
   Then $\varrho_G \in \Metr_c(X|G)$, $d \leqsl \varrho_G \leqsl M d$ and $\varrho_G(a,b) =
   \sup_{(x,y) \in G^s(a,b)} \varrho(x,y)$ for any $a, b \in X$.
\end{enumerate}
\end{lem}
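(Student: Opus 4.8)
The plan is to dispose of part (a) in a single line and then verify the several assertions of part (b) one at a time, the whole argument resting on the two-sided bound $d \leqsl \varrho_G \leqsl M d$. For part (a): since $d \in \Metr_c(X|G)$ is $G$-invariant and symmetric, for every $f \in G$ we have $d(f(x),f(y)) = d(x,y) = d(f(y),f(x))$, so $d$ takes the single value $d(x,y)$ on all of $G^s(x,y)$.

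For part (b) I would first secure the two-sided bound, as everything else follows from it. The upper estimate comes from $\varrho(f(x),f(y)) \leqsl M d(f(x),f(y)) = M d(x,y)$, where the equality is the $G$-invariance of $d$; taking the supremum over $f \in G$ gives $\varrho_G \leqsl M d$ and, in passing, the finiteness of $\varrho_G$. The lower estimate is immediate on taking $f = \id_X \in G$, which yields $d \leqsl \varrho \leqsl \varrho_G$.

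Next I would check that $\varrho_G$ is a $G$-invariant metric. Symmetry is inherited from $\varrho$; the triangle inequality follows by applying it to $\varrho$ inside the supremum and using subadditivity of $\sup$; and $\varrho_G(x,y) = 0$ forces $d(x,y) = 0$ by the lower bound, hence $x = y$. For invariance, given $g \in G$ one has $\varrho_G(g(x),g(y)) = \sup_{f \in G} \varrho((f \circ g)(x),(f \circ g)(y))$, and since $f \mapsto f \circ g$ is a bijection of $G$ onto itself, this supremum is again $\varrho_G(x,y)$. The only point needing a moment's thought---and the closest thing to an obstacle---is that $\varrho_G$ is proper and compatible, i.e. that $\varrho_G \in \Metr_c(X)$; but this is forced by the bi-Lipschitz bound $d \leqsl \varrho_G \leqsl M d$, which makes $d$ and $\varrho_G$ uniformly equivalent, hence inducing the same topology. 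Concretely $\bar{B}_{\varrho_G}(a,r) \subset \bar{B}_d(a,r)$ because $d \leqsl \varrho_G$, and the latter ball is compact since $d$ is proper; as $\bar{B}_{\varrho_G}(a,r)$ is closed it is therefore compact, so $\varrho_G$ is proper. Together with $G$-invariance this gives $\varrho_G \in \Metr_c(X|G)$.

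Finally, the formula $\varrho_G(a,b) = \sup_{(x,y) \in G^s(a,b)} \varrho(x,y)$ is read off directly from the definition of $G^s(a,b)$ as the union of $\{(f(a),f(b))\dd f \in G\}$ and $\{(f(b),f(a))\dd f \in G\}$: by symmetry of $\varrho$ we have $\varrho(f(b),f(a)) = \varrho(f(a),f(b))$, so the supremum over $G^s(a,b)$ collapses to $\sup_{f \in G} \varrho(f(a),f(b)) = \varrho_G(a,b)$. This completes the plan; every step is routine once the Lipschitz bound is in hand, which is presumably why the authors call the lemma obvious.
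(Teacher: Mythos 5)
Your proof is correct, and since the paper explicitly omits the proof of this lemma as obvious, your routine verification (two-sided bound from $G$-invariance of $d$ plus $\id_X \in G$, metric axioms and invariance via reindexing the supremum, properness and compatibility from the bi-Lipschitz equivalence with $d$, and the final formula from symmetry of $\varrho$) is exactly the argument the authors intended the reader to supply. No gaps.
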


By the above result, whenever $d \in \Metr_c(X|G)$ and $K = G^s(x,y)$ for some $x, y \in X$, $d(K)$
consists of a single number. For simplicity, we shall write $d[K]$ to denote this number.
In the next two result we shall use the transformation $\varrho \mapsto \varrho_G$ defined
by the formula \eqref{eqn:dG}.

\begin{pro}{const}
Assume $G$ satisfies conditions \textup{(Iso1)--(Iso2)} of \textup{\THM{main}}. For any $d' \in
\Metr_c(X|G)$ and each $\epsi > 0$ there exists a metric $\varrho \in \Metr_c(X|G)$ such that $d'
\leqsl \varrho \leqsl (1 + \epsi) d'$ and whenever $U$ is an open subset of $(X \times X) \setminus
\Delta_X$ with $\varrho\bigr|_U \equiv \const$, then there are two distinct points $x$ and $y$
in $X$ for which $U \subset G^s(x,y)$.
\end{pro}
\begin{proof}
We may assume that $\card(X) > 2$. Let $\{(\xi_n,\eta_n)\}_{n=1}^{\infty}$ be a dense subset
of $(X \times X) \setminus \Delta_X$. We arrange all members of the collection
$\{G^s(\xi_n,\eta_n)\dd\ n \geqsl 1\}$ in a one-to-one sequence (finite or not) $(K_n)_{n=0}^N$
(where $N \in \{0,1,2,\ldots,\infty\}$). We conclude from \LEM{equiv} that the sets $K_0,K_1,\ldots$
are closed, symmetric, disjoint from $\Delta_X$ as well as pairwise disjoint. We shall now construct
sequences $(d_n)_{n=1}^N$ and $(s_n)_{n=1}^N$ such that
\begin{enumerate}[(1$_n$)]
\item $d_n \in \Metr_c(X|G)$, $d_{n-1} \leqsl d_n \leqsl (1 + s_{n-1}) d_{n-1}$ with $d_0 = d'$ and
   $s_0 = \frac{\epsi}{2}$,
\item $c_n := \min \{|d_n[K_j] - d_n[K_l]|\dd\ j,l \in \{0,\ldots,n\},\ j \neq l\} > 0$,
\item $0 < 8 \max(1,d_n[K_0],\ldots,d_n[K_{n+1}]) s_n \leqsl \min(s_{n-1},c_n)$ and $\prod_{j=0}^n
   (1 + s_j) < 1 + \epsi$.
\end{enumerate}
It follows from \LEM{sep} that there is a metric $\varrho_0$ such that $d_0 \leqsl \varrho_0 \leqsl
(1 + s_0) d_0$ and $\sup_{(x,y) \in K_0} \varrho_0(x,y) \neq \sup_{(x,y) \in K_1} \varrho_0(x,y)$.
Put $d_1 = (\varrho_0)_G$. It follows from \LEM{dG} that conditions (1$_1$)--(2$_1$) are fulfilled.
Now choose $s_1$ so that (3$_1$) is satisfied as well.\par
Suppose that we have defined $d_n$ and $s_n$ for some positive $n < N$. If $d_n[K_{n+1}] \notin
\{d_n[K_0],\ldots,d_n[K_n]\}$, we put $d_{n+1} = d_n$. Otherwise there is a unique $s \in
\{0,\ldots,n\}$ such that $d_n[K_{n+1}] = d_n[K_s]$. Another application of \LEM{sep} gives a metric
$\varrho_n$ on $X$ such that $\sup_{(x,y) \in K_{n+1}} \varrho_n(x,y) \neq \sup_{(x,y) \in K_s}
\varrho_n(x,y)$ and $d_n \leqsl \varrho_n \leqsl (1 + s_n) d_n$. We put $d_{n+1} = (\varrho_n)_G$.
As before, we see that (1$_{n+1}$) is satisfied and that $d_{n+1}[K_s] \neq d_{n+1}[K_{n+1}]$. Let
us check that (2$_{n+1}$) is satisfied too. Since $|d_{n+1} - d_n| \leqsl s_n d_n$, for
$j=0,\ldots,n+1$ we have, by (3$_n$), $|d_{n+1}[K_j] - d_n[K_j]| \leqsl \frac14 c_n$. Therefore, for
$j \in \{0,\ldots,n\} \setminus \{s\}$, we obtain
\begin{multline*}
|d_{n+1}[K_{n+1}] - d_{n+1}[K_j]| \geqsl |d_n[K_{n+1}] - d_n[K_j]| - \frac12 c_n =\\
= |d_n[K_s] - d_n[K_j]| - \frac12 c_n \geqsl \frac12 c_n > 0.
\end{multline*}
Similarly, when $j, l \in \{0,\ldots,n\}$ are different, then $|d_{n+1}[K_j] - d_{n+1}[K_l]| \geqsl
|d_n[K_j] - d_n[K_l]| - \frac12 c_n \geqsl \frac12 c_n > 0$. This shows (2$_{n+1}$). Now, as before,
choose $s_{n+1}$ so that (3$_{n+1}$) is fulfilled.\par
Having the sequences $(d_n)_{n=0}^N$ and $(s_n)_{n=0}^N$, use (1$_n$) and (3$_n$) to show that
\begin{enumerate}[(1$_n$)]\addtocounter{enumi}{3}
\item $d_0 \leqsl d_n \leqsl (1 + \epsi) d_0$ and $s_n \leqsl 2^{m-n} s_m$ for $m \in
   \{0,\ldots,n\}$
\end{enumerate}
for each $n$. We define the final metric $\varrho$ as the pointwise limit of $d_n$'s. Precisely,
when $N$ is finite, put $\varrho = d_N$ and note that, by (2$_N$), the numbers
$\varrho[K_0],\ldots,\varrho[K_N]$ are distinct. If $N = \infty$, let $\varrho(x,y) =
\lim_{n\to\infty} d_n(x,y)$ (the limit exists by (1$_n$) and (4$_n$)). Observe that in both
the cases $\varrho \in \Metr_c(X|G)$ and $d' \leqsl \varrho \leqsl (1 + \epsi) d'$. We claim that
also for $N = \infty$ the numbers $\varrho[K_0],\varrho[K_1],\ldots$ are distinct. To see this, take
two integers $p$ and $q$ such that $0 \leqsl q < p$. It then follows that
\begin{equation}\label{eqn:aux1}
|d_p[K_p] - d_p[K_q]| \geqsl c_p > 0
\end{equation}
(see (2$_p$)). For $j \in \{p,q\}$ we have
\begin{equation}\label{eqn:aux2}
0 \leqsl d_{p+1}[K_j] - d_p[K_j] \leqsl s_p d_p[K_j] \leqsl \frac18 c_p
\end{equation}
and for $n > p$:
\begin{equation}\label{eqn:aux3}
0 \leqsl d_{n+1}[K_j] - d_n[K_j] \leqsl s_n d_n[K_j] \leqsl \frac12 s_{n-1} \leqsl \frac{1}{2^{n-p}}
s_p \leqsl \frac{1}{2^{n-p}} \cdot \frac18 c_p.
\end{equation}
So, \eqref{eqn:aux2} and \eqref{eqn:aux3} give $0 \leqsl \varrho[K_j] - d_p[K_j] \leqsl \frac14 c_p$
($j \in \{p,q\}$). But this, combined with \eqref{eqn:aux1}, implies that
$|\varrho[K_p] - \varrho[K_q]| \geqsl \frac12 c_p > 0$ and we are done.\par
To complete the proof, assume that $U \subset (X \times X) \setminus \Delta_X$ is open and nonempty
and $\varrho\bigr|_U \equiv \const$. Since $\{(\xi_n,\eta_n)\}_{n=1}^{\infty}$ is dense
in $(X \times X) \setminus \Delta_X$, so is the set $\bigcup_{j=0}^N K_j$. Hence there is
a nonnegative integer $j \leqsl N$ such that $U \cap K_j \neq \varempty$. Then, by the assumption
on $U$, $\varrho\bigr|_U \equiv \varrho[K_j]$. Since $U \setminus K_j$ is open and $U \cap K_l =
\varempty$ for $l \neq j$ (because $\varrho[K_j] \neq \varrho[K_l]$), $U \setminus K_j$ is empty and
we are done.
\end{proof}

\begin{lem}{2p}
Assume $G$ satisfies conditions \textup{(Iso1)--(Iso2)} of \textup{\THM{main}}. Let $\varrho \in
\Metr_c(X|G)$ be such that whenever $U$ is an open subset of $(X \times X) \setminus \Delta_X$ with
$\varrho\bigr|_U \equiv \const$, then there are two distinct points $x$ and $y$ in $X$ for which
$U \subset G^s(x,y)$. Let $a$ and $b$ be two distinct points of $X$ and let $\Omega_r = \{(x,y) \in
X \times X\dd\ \dist_{\varrho \oplus \varrho}((x,y),G^s(a,b)) < r\}$ where $r > 0$. For each $\epsi
> 0$ there is a metric $\lambda \in \Metr_c(X|G)$ such that $\varrho \leqsl \lambda \leqsl
(1 + \epsi) \varrho$ and $(g(a),g(b)) \in \Omega_r$ for every $g \in \Iso(X,\lambda)$.
\end{lem}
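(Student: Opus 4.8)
The plan is to manufacture $\lambda$ by grafting a single localized bump onto $\varrho$ near the pair $(a,b)$, then to $G$-symmetrize, and finally to read the required rigidity off the geometry of that bump. Write $L:=G^s(a,b)$. By \LEM{equiv} the set $L$ is closed and symmetric, and since $\varrho$ is $G$-invariant the map $(x,y)\mapsto\dist_{\varrho\oplus\varrho}((x,y),L)$ is invariant under the diagonal action of $G$ and under the coordinate flip. Hence $\Omega_r$ is open, symmetric and $G$-saturated, while $K:=(X\times X)\setminus\Omega_r$ is closed, symmetric, $G$-saturated, disjoint from $L$ and omits $(a,b)$.

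First I would invoke \LEM{Lip} for $(X,\varrho)$ with this $K$, the points $a,b$, the given $\epsi$ and $D=[0,\infty)$, choosing $\delta$ so small that $2\delta<r$; this yields $u\dd X\to\RRR$ and $\alpha>\varrho(a,b)$ satisfying \textup{(L1)--(L3)}, and it places the closed box $B:=\bar{B}_{\varrho}(a,\delta)\times\bar{B}_{\varrho}(b,\delta)$ inside $\Omega_r$. Put $\mu(x,y)=\max(\varrho(x,y),|u(x)-u(y)|)$, so that \textup{(L1)} gives $\varrho\leqsl\mu\leqsl(1+\epsi)\varrho$, and set $\lambda=\mu_G$ as in \LEM{dG}\POINT{b}; then $\lambda\in\Metr_c(X|G)$ and $\varrho\leqsl\lambda\leqsl(1+\epsi)\varrho$. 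Because $\varrho$ is constant on each class $G^s(x,y)$ (\LEM{dG}\POINT{a}), one computes $\lambda=\max(\varrho,\Psi)$, where $\Psi(x,y):=\sup_{(p,q)\in G^s(x,y)}|u(p)-u(q)|$.

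The shape of the bump is the heart of the matter. From \textup{(L1)}, $\Psi\leqsl\alpha$ everywhere; from \textup{(L2)}, $\Psi\equiv\alpha\equiv\lambda$ on the interior of the box $B$, in particular $\lambda[L]=\alpha$; and \textup{(L3)} gives $\Psi\leqsl\beta<\alpha$ on $K$ for some $\beta$. The decisive observation is that $|u(p)-u(q)|=\alpha$ forces $(p,q)$ into the region $\bar{B}_{\varrho}(a,\delta)\times\bar{B}_{\varrho}(b,\delta)$ (and its flip), a neighbourhood of which lies in $\Omega_r$; since $\Psi$ is an orbital supremum and $\Omega_r$ is $G$-saturated, a point with $\Psi=\alpha$ has its orbit meeting $\Omega_r$ and therefore lies in $\Omega_r$. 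Thus $\{\Psi=\alpha\}\subset\Omega_r$.

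Now take $g\in\Iso(X,\lambda)$; then $\lambda(g(a),g(b))=\lambda(a,b)=\alpha$, i.e. $\max(\varrho(g(a),g(b)),\Psi(g(a),g(b)))=\alpha$. If $\Psi(g(a),g(b))=\alpha$, then $(g(a),g(b))\in\{\Psi=\alpha\}\subset\Omega_r$ and we are finished, so the whole task reduces to excluding the alternative $\varrho(g(a),g(b))=\alpha$ with $\Psi(g(a),g(b))<\alpha$. To attack it I would transport the open box: acting coordinatewise $g$ is a homeomorphism, so the image of $\intt B$ is an open neighbourhood of $(g(a),g(b))$ on which $\lambda\equiv\alpha$ (the nonemptiness of $\intt B$, and the few isolated-point configurations, being settled directly from $a\neq b$ and $(\card(G),\card(X))\neq(1,2)$). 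On the open part of this neighbourhood where $\varrho<\alpha$ one has $\Psi=\alpha$, so that part sits in $\Omega_r$; on the complementary part $\varrho\equiv\alpha$, so wherever it has interior the standing hypothesis on $\varrho$ confines it to a single class $G^s(x_0,y_0)$. \textbf{The main obstacle is exactly this ``flat'' case}: ruling out that $g$ carries a whole neighbourhood of $(a,b)$ onto an open piece of a $\varrho$-level orbit lying outside $\Omega_r$. I expect to dispose of it by noting that such a class, having nonempty interior, is open by homogeneity of the action and closed by \LEM{equiv}, hence clopen, and then using the component structure of $X\times X$ together with the non-degeneracy hypothesis to contradict $\varrho(a,b)<\alpha$; this forces $\Psi(g(a),g(b))=\alpha$ and hence $(g(a),g(b))\in\Omega_r$, as required.
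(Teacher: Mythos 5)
Your construction of $\lambda$ is essentially the paper's: apply \LEM{Lip} to the complement of a neighbourhood of $G^s(a,b)$, pass to $\lambda=(\max(\varrho(\cdot,\cdot),|u(\cdot)-u(\cdot)|))_G$ via \LEM{dG}, and use the standing hypothesis on $\varrho$ to control open sets on which $\varrho$ is constant. But the step you yourself flag as ``the main obstacle'' --- the flat case, where an open piece of the transported box lands inside a single class $G^s(x_0,y_0)$ with $\varrho[G^s(x_0,y_0)]=\alpha$ --- is a genuine gap, and the disposal you sketch does not work. You correctly note that such a class is clopen, but for disconnected $X$ (a Cantor set, a countable discrete space) $X\times X$ abounds in proper clopen sets disjoint from $\Delta_X$, so ``the component structure of $X\times X$'' yields no contradiction, and nothing you have established prevents a clopen class of $\varrho$-value $\alpha$ from missing $\Omega_r$. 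The paper kills this case not by topology but by the choice of $\alpha$: the collection $\LLl$ of classes $G^s(x,y)$ with nonempty interior is countable (by \LEM{equiv} and separability), so $D:=(0,\infty)\setminus\{\varrho[L]\dd\ L\in\LLl\}$ is dense, and \LEM{Lip} is invoked with \emph{this} $D$; the flat case then ends in the contradiction $\alpha=\varrho[G^s(x_0,y_0)]\notin D$. By taking $D=[0,\infty)$ you discarded exactly the degree of freedom that \LEM{Lip} carries for this purpose.

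Two further, smaller defects. First, working with $K=(X\times X)\setminus\Omega_r$ you can only place boundary points (e.g.\ when $P\cap\{\varrho=\alpha\}$ has empty interior yet contains $(g(a),g(b))$) in $\overline{\Omega_r}$, not in $\Omega_r$; the paper inserts a buffer, taking $K=(X\times X)\setminus\Omega_s$ with $s<r$ and $F=\overline{\Omega_s}\subset\Omega_r$, so that a whole neighbourhood of an offending point can be pushed into $K$. Second, your ``decisive observation'' that $\Psi(x,y)=\alpha$ forces the orbit $G^s(x,y)$ to meet $\bar{B}_{\varrho}(a,\delta)\times\bar{B}_{\varrho}(b,\delta)$ does not follow from (L1)--(L3) as stated: the supremum defining $\Psi$ need not be attained, and the lemma does not locate the level set $\{|u(p)-u(q)|=\alpha\}$. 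It can be repaired by unwinding the explicit formula for $u_{\delta}$ and using properness of $\varrho$, but the paper's contradiction argument sidesteps the need for it entirely.
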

\begin{proof}
We may assume that $\Omega_r \neq X \times X$. Fix $s \in (0,r)$ and let $F$ be the closure
of $\Omega_s$ in $X \times X$. Observe that:
\begin{equation}\label{eqn:aux7}
\Omega_s \subset F \subset \Omega_r, \quad (f \times f)(\Omega_s) = \Omega_s \textup{ for all }
f \in G \quad \textup{and} \quad \Omega_s \textup{ is symmetric}.
\end{equation}
Let $\LLl$ be the collection of all sets $G^s(x,y)$ whose interior is nonempty. By \LEM{equiv} and
the separability of $X$, the family $\LLl$ is countable (finite or not) and thus the set $D =
(0,\infty) \setminus \{\varrho[L]\dd\ L \in \LLl\}$ is dense in $[0,\infty)$. Finally, put $K =
(X \times X) \setminus \Omega_s$ and notice that $K$ is closed, symmetric (thanks
to \eqref{eqn:aux7}), nonempty and $(a,b) \notin K$. Let $\delta > 0$, $\alpha \in D$ and $u\dd X
\to \RRR$ be as in \LEM{Lip} (applied for $Y = X$). Define a metric $\lambda_0$ on $X$
by $\lambda_0(x,y) = \max(\varrho(x,y),|u(x) - u(y)|)$ and put $\lambda = (\lambda_0)_G$. It follows
from \LEM{dG} that $\lambda \in \Metr_c(X|G)$ and $\varrho \leqsl \lambda \leqsl (1 + \epsi)
\varrho$. We claim that $\lambda$ is the metric we searched for. We argue by a contradiction.
Suppose there is $g \in \Iso(X,\lambda)$ such that $(g(a),g(b)) \notin \Omega_r$. Then $(g(a),g(b))
\notin F$. By the continuity of $g$, there are open neighbourhoods
\begin{equation}\label{eqn:aux4}
U \subset \bar{B}_{\varrho}(a,\delta) \quad \textup{and} \quad V \subset \bar{B}_{\varrho}(b,\delta)
\end{equation}
of $a$ and $b$ (respectively) such that
\begin{equation}\label{eqn:aux5}
[g(U) \times g(V)] \cap F = \varempty.
\end{equation}
Fix arbitrary $(x,y) \in U \times V$. We infer from \eqref{eqn:aux5} that $(g(x),g(y)) \notin F$ and
hence $(g(x),g(y)) \in K$. Consequently (see the second property in \eqref{eqn:aux7}), for any
$f \in G$, $(f(g(x)),f(g(y))) \in K$. So, by (L3) (see \LEM{Lip}):
\begin{equation}\label{eqn:aux6}
\sup_{f \in G} |u(f(g(x))) - u(f(g(y)))| < \alpha.
\end{equation}
At the same time, by (L1)--(L2) and \eqref{eqn:aux4}, $|u(f(x)) - u(f(y))| \leqsl \alpha =
|u(x) - u(y)| > \varrho(x,y)$. We infer from this (and from the $G$-invariance of $\varrho$) that
$\lambda(x,y) = \alpha$. Since $g \in \Iso(X,\lambda)$, $\lambda(g(x),g(y)) = \alpha$. This combined
with \eqref{eqn:aux6} and the $G$-invariance of $\varrho$ yields that $\varrho(g(x),g(y)) = \alpha$.
Since $x$ and $y$ were arbitrary, $\varrho\bigr|_P \equiv \alpha$ where $P = g(U) \times g(V)$. $P$
is an open nonempty subset of $(X \times X) \setminus \Delta_X$, since $g$ is a homeomorphism and
$\alpha \in D$ (thus $\alpha \neq 0$). So, it follows from the property of $\varrho$ that there
exist distinct points $p$ and $q$ in $X$ such that $P \subset G^s(p,q)$. But then $G^s(p,q) \in
\LLl$ and consequently $\alpha = \varrho[G^s(p,q)] \notin D$ which is a contradiction and finishes
the proof.
\end{proof}

For $d \in \Metr_c(X|G)$ let $\Delta(d)$ be the set of all metrics $\varrho$ such that $d \leqsl
\varrho \leqsl M d$ for some constant $M > 1$; and let $\Delta_G(d) = \Delta(d) \cap \Metr_c(X|G)$.
For two metrics $\varrho,\varrho' \in \Delta(d)$ we define their distance
$\Lambda_d(\varrho,\varrho')$ as the least nonnegative constant $C$ such that $|\varrho - \varrho'|
\leqsl C d$. The following result is left to the reader as a simple exercise.

\begin{lem}{Delta}
Let $d \in \Metr_c(X|G)$ and $\varrho_1,\varrho_2,\ldots,\varrho \in \Delta(d)$.
\begin{enumerate}[\upshape(A)]
\item $\Lambda_d$ is a complete metric on $\Delta(d)$ and $\Delta_G(d)$ is a closed set
   in $(\Delta(d),\Lambda_d)$.
\item $\lim_{n\to\infty} \Lambda_d(\varrho_n,\varrho) = 0$ iff there is a sequence
   $(\epsi_n)_{n=1}^{\infty}$ of positive real numbers such that $(1 + \epsi_k)^{-1} \varrho \leqsl
   \varrho_k \leqsl (1 + \epsi_k) \varrho$ for each $k$ and $\lim_{n\to\infty} \epsi_n = 0$.
   If $\lim_{n\to\infty} \Lambda_d(\varrho_n,\varrho) = 0$, then the metrics
   $\varrho_1,\varrho_2,\ldots$ converge uniformly to $\varrho$ on each of the sets $\{(x,y) \in
   X \times X\dd\ d(x,y) \leqsl r\}\ (r > 0)$.
\item For any $C > 1$ and $\varrho \in \Delta_G(d)$ the set $\{\varrho' \in \Delta_G(d)\dd\ \varrho
   \leqsl \varrho' \leqsl t C \varrho \textup{ for some } t \in (0,1)\}$ is dense (in the topology
   of $(\Delta(d),\Lambda_d)$) in the set $\{\varrho' \in \Delta_G(d)\dd\ \varrho \leqsl \varrho'
   \leqsl C \varrho\}$.
\end{enumerate}
\end{lem}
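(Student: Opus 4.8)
The plan is to treat the three assertions separately, since each reduces to elementary estimates once one observes that, for $\varrho,\varrho' \in \Delta(d)$,
\[
\Lambda_d(\varrho,\varrho') = \sup_{x \neq y} \frac{|\varrho(x,y) - \varrho'(x,y)|}{d(x,y)} ,
\]
a quantity which is finite (if $\varrho \leqsl M d$ and $\varrho' \leqsl M' d$ then $|\varrho - \varrho'| \leqsl \max(M,M')\,d$) and which is exactly the least constant $C$ with $|\varrho - \varrho'| \leqsl C d$.

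For (A), I would first verify the metric axioms: symmetry is immediate, $\Lambda_d(\varrho,\varrho') = 0$ forces $\varrho = \varrho'$ because $d(x,y) > 0$ for $x \neq y$, and the triangle inequality follows by adding the pointwise estimates $|\varrho - \varrho'| \leqsl C_1 d$ and $|\varrho' - \varrho''| \leqsl C_2 d$. For completeness, given a $\Lambda_d$-Cauchy sequence $(\varrho_n)$, the inequality $|\varrho_n(x,y) - \varrho_m(x,y)| \leqsl \Lambda_d(\varrho_n,\varrho_m)\,d(x,y)$ shows that $(\varrho_n(x,y))$ is Cauchy in $\RRR$ for every pair $(x,y)$; let $\varrho$ be the pointwise limit. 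Then $\varrho$ inherits symmetry and the triangle inequality from the $\varrho_n$, and since each $\varrho_n \geqsl d$ one gets $\varrho \geqsl d$, so $\varrho$ separates points; boundedness of the Cauchy sequence gives $\varrho \leqsl M d$ for some $M$, whence $d \leqsl \varrho \leqsl M d$ makes $\varrho$ bi-Lipschitz equivalent to $d$, so it induces the topology of $X$ and has closed $\varrho$-balls sitting inside compact $d$-balls, hence is proper; thus $\varrho \in \Delta(d)$. Letting $m \to \infty$ in the Cauchy estimate yields $\Lambda_d(\varrho_n,\varrho) \to 0$. Closedness of $\Delta_G(d)$ is then clear: $\Lambda_d$-convergence implies pointwise convergence, and the identity $\varrho_n(g(x),g(y)) = \varrho_n(x,y)$ passes to the limit for every $g \in G$.

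For (B), the key is again the sandwich $d \leqsl \varrho \leqsl M d$. If $\Lambda_d(\varrho_n,\varrho) = \delta_n \to 0$, then $|\varrho_n - \varrho| \leqsl \delta_n d \leqsl \delta_n \varrho$, so $(1 - \delta_n)\varrho \leqsl \varrho_n \leqsl (1 + \delta_n)\varrho$, and for $\delta_n < 1$ the choice $\epsi_n = \delta_n/(1 - \delta_n)$ (and any small positive value when $\delta_n = 0$) gives $(1 + \epsi_n)^{-1}\varrho \leqsl \varrho_n \leqsl (1 + \epsi_n)\varrho$ with $\epsi_n \to 0$. Conversely, from such a sandwich one estimates $|\varrho_k - \varrho| \leqsl \epsi_k \varrho \leqsl \epsi_k M d$, whence $\Lambda_d(\varrho_k,\varrho) \leqsl M \epsi_k \to 0$. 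The uniform-convergence claim is then immediate, since on $\{d(x,y) \leqsl r\}$ one has $|\varrho_n - \varrho| \leqsl \Lambda_d(\varrho_n,\varrho)\,r$. For (C), I would approximate a given $\varrho'$ in the target set by the convex combinations $\varrho'_s := (1 - s)\varrho' + s\varrho$ for small $s \in (0,1)$: each $\varrho'_s$ is a $G$-invariant metric with $\varrho \leqsl \varrho'_s$ (as $\varrho' \geqsl \varrho$) and $\varrho'_s \leqsl [C - s(C-1)]\varrho = tC\varrho$ with $t = 1 - s(C-1)/C \in (0,1)$, so $\varrho'_s$ lies in the smaller set, while $\Lambda_d(\varrho'_s,\varrho') \leqsl s(C-1)M \to 0$ as $s \to 0^{+}$ gives density.

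I expect no serious obstacle, as every step is a routine inequality. The only point demanding a little care is the completeness argument in (A): one must check that the pointwise limit of a Cauchy sequence is a genuine \emph{proper} metric in $\Delta(d)$ — positivity coming from $\varrho \geqsl d$ and properness from the bi-Lipschitz equivalence with $d$ — rather than merely a pseudometric.
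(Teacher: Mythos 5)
Your proof is correct; the paper itself leaves this lemma to the reader as a simple exercise, and your argument is exactly the intended routine verification (identifying $\Lambda_d$ with the weighted sup-norm, pointwise limits for completeness and closedness, the $\delta_n/(1-\delta_n)$ trick for (B), and convex combinations $(1-s)\varrho'+s\varrho$ for (C)). The only microscopic loose end is the finitely many indices in (B) with $\delta_n\geqsl 1$, for which one picks $\epsi_k$ from the bi-Lipschitz constants of $\varrho_k$ and $\varrho$ against $d$; this does not affect $\epsi_n\to 0$.
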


From now on, $\Delta(d)$ and all its subsets are equipped with the topology induced by the metric
$\Lambda_d$.

\begin{pro}{Gdelta}
Suppose $G$ satisfies conditions \textup{(Iso1)--(Iso2)} of \textup{\THM{main}}. Let $d \in
\Metr_c(X|G)$ and $\Omega$ be an open set in $X \times X$. For any two distinct points $a$ and $b$
of $X$ the set $A_d(a,b;\Omega)$ consisting of all metrics $\varrho \in \Delta_G(d)$ such that
$(g(a),g(b)) \in \Omega$ for every $g \in \Iso(X,\varrho)$ is $\GGg_{\delta}$ in $\Delta_G(d)$.
\end{pro}
\begin{proof}
Fix $\omega \in X$. For $n \geqsl 1$ let $U_n$ consists of all metrics $\varrho \in \Delta_G(d)$
such that $(g(a),g(b)) \in \Omega$ whenever $g \in \Iso(X,\varrho)$ is such that
$\max(d(g(\omega),\omega),d(g^{-1}(\omega),\omega)) \leqsl n$. Observe that $A_d(a,b;\Omega) =
\bigcap_{n=1}^{\infty} U_n$ and thus it suffices to show that the set $\Delta_G(d) \setminus U_n$ is
closed for every $n$. Fix $m \geqsl 1$ and let $\varrho_1,\varrho_2,\ldots \in \Delta_G(d) \setminus
U_m$ be a sequence which converges to some $\varrho \in \Delta_G(d)$. Then there are a sequence
$(g_n)_{n=1}^{\infty} \subset \CCc(X,X)$ and a number $M > 0$ such that for every $n \geqsl 1$:
\begin{gather}
d \leqsl \varrho_n \leqsl M d, \qquad g_n \in \Iso(X,\varrho_n),\label{eqn:aux11}\\
\max(d(g_n(\omega),\omega),d(g_n^{-1}(\omega),\omega)) \leqsl m,\label{eqn:aux12}\\
(g_n(a),g_n(b)) \in (X \times X) \setminus \Omega.\label{eqn:aux13}
\end{gather}
We conclude from \eqref{eqn:aux11} that $\max(\Lip_d(g_n),\Lip_d(g_n^{-1})) \leqsl M$ for any $n$.
But then, thanks to \eqref{eqn:aux12}, $g_n(x), g_n^{-1}(x) \in \bar{B}_d(\omega,M d(x,\omega) + m)$
for all $x \in X$ and $n \geqsl 1$. Since the metric $d$ is proper, we see that the family $\FFf :=
\{g_n\dd\ n \geqsl 1\} \cup \{g_n^{-1}\dd\ n \geqsl 1\}$ is evenly continuous as well as pointwise
precompact. So, by the Ascoli-type theorem, the closure of $\FFf$ in $\CCc(X,X)$ is compact. Hence,
passing to a subsequence, we may assume that the sequences $g_1,g_2,\ldots$ and
$g_1^{-1},g_2^{-1},\ldots$ converge uniformly on compact sets to some maps $g$ and $h$,
respectively. But then $\max(d(g(\omega),\omega),d(h(\omega),\omega)) \leqsl m$
(by \eqref{eqn:aux12}), $(g(a),g(b)) \notin \Omega$ (cf. \eqref{eqn:aux13}) and $g \circ h = h \circ
g = \id_X$ which means that $g$ is bijective and $h = g^{-1}$. To end the proof, it suffices to show
that $g \in \Iso(X,\varrho)$ (because then $\varrho \notin U_m$). This simply follows from
\eqref{eqn:aux11}:
\begin{multline*}
|\varrho_n(g_n(x),g_n(y)) - \varrho(g(x),g(y))| \leqsl \Lambda_d(\varrho_n,\varrho) d(g_n(x),g_n(y))
+\\+ |\varrho(g_n(x),g_n(y)) - \varrho(g(x),g(y))| \to 0 \quad (n\to\infty).
\end{multline*}
But $\varrho_n(g_n(x),g_n(y)) = \varrho_n(x,y)$ and, similarly as above,
$$\lim_{n\to\infty} |\varrho_n(x,y) - \varrho(x,y)| = 0,$$
which gives $\varrho(g(x),g(y)) = \varrho(x,y)$.
\end{proof}

\THM{main} will easily be concluded from the following

\begin{thm}{hull}
Suppose $G$ satisfies conditions \textup{(Iso1)--(Iso2)} of \textup{\THM{main}}. Then for every
$d \in \Metr_c(X|G)$ and each $M > 1$ the set $\{\varrho \in \Delta_G(d)\dd\ \varrho \leqsl M d,\
\Iso(X,\varrho) = \hULL{G}\}$ is dense and $\GGg_{\delta}$ in $\Delta_G^M(d) := \{\varrho \in
\Delta_G(d)\dd\ \varrho \leqsl M d\}$.
\end{thm}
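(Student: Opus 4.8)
The plan is to realize the target set as a countable intersection of dense $\GGg_{\delta}$ subsets of the complete metric space $\Delta_G^M(d)$ and then invoke Baire's theorem. First I record that, by \LEM{Delta}(A), $(\Delta(d),\Lambda_d)$ is complete and $\Delta_G(d)$ is closed; since $\Lambda_d$-convergence forces pointwise convergence, the condition $\varrho \leqsl M d$ is closed as well, so $\Delta_G^M(d)$ is a complete, hence Baire, metric space. Fix a countable dense set $\{(a_n,b_n)\}_{n\geqsl1}$ in $(X \times X) \setminus \Delta_X$ and, for distinct $a,b \in X$ and $r>0$, put $\Omega_r(a,b) = \{(x,y) \in X \times X\dd\ \dist_{d \oplus d}((x,y),G^s(a,b)) < r\}$, an open neighbourhood of the closed set $G^s(a,b)$ (\LEM{equiv}) with $\bigcap_{k\geqsl1} \Omega_{1/k}(a,b) = G^s(a,b)$. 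Finally set $A_{n,k} = A_d(a_n,b_n;\Omega_{1/k}(a_n,b_n)) \cap \Delta_G^M(d)$.

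The first step is to verify $\{\varrho \in \Delta_G^M(d)\dd\ \Iso(X,\varrho) = \hULL{G}\} = \bigcap_{n,k} A_{n,k}$. Every member of $\Delta_G^M(d)$ is $G$-invariant, hence $\hULL{G}$-invariant by \LEM{iso-hull}, so only the inclusion $\Iso(X,\varrho) \subseteq \hULL{G}$ is at stake. If $\varrho \in \bigcap_{n,k} A_{n,k}$ and $g \in \Iso(X,\varrho)$, then for every $n$ one gets $(g(a_n),g(b_n)) \in \bigcap_k \Omega_{1/k}(a_n,b_n) = G^s(a_n,b_n)$, that is $(a_n,b_n;g(a_n),g(b_n)) \in \RRr_G$. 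Given distinct $x,y \in X$, pick $(a_{n_j},b_{n_j}) \to (x,y)$; by continuity of $g$ and the closedness of $\RRr_G$ (\LEM{equiv}) we obtain $(x,y;g(x),g(y)) \in \RRr_G$, i.e. $(g(x),g(y)) \in G^s(x,y)$, whence $g \in \hULL{G}$ by \LEM{hull}. The reverse inclusion is immediate, since $g \in \hULL{G}$ gives $(g(a_n),g(b_n)) \in G^s(a_n,b_n) \subset \Omega_{1/k}(a_n,b_n)$ for all $n,k$. The $\GGg_{\delta}$ assertion then follows at once, because each $A_{n,k}$ is $\GGg_{\delta}$ in $\Delta_G^M(d)$ by \PRO{Gdelta}.

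It remains to prove that each $A_{n,k}$ is dense in $\Delta_G^M(d)$; density of the whole intersection is then Baire's theorem. (One may assume $\card(X) > 2$, the remaining cases being handled directly.) By \LEM{Delta}(C) with $C = M$ and $\varrho = d$, the metrics $\varrho_0 \in \Delta_G(d)$ satisfying $\varrho_0 \leqsl t M d$ for some $t \in (0,1)$ are dense in $\Delta_G^M(d)$, so it suffices to approximate such a $\varrho_0$ by elements of $A_{n,k}$. Given small $\epsi_1,\epsi_2 > 0$, I would first feed $\varrho_0$ into \PRO{const} to produce $\varrho_1 \in \Delta_G(d)$ with $\varrho_0 \leqsl \varrho_1 \leqsl (1+\epsi_1)\varrho_0$ such that any open subset of $(X \times X)\setminus\Delta_X$ on which $\varrho_1$ is constant lies in a single $G^s(x,y)$---exactly the hypothesis of \LEM{2p}. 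Applying \LEM{2p} to $\varrho_1$, the points $a_n,b_n$ and radius $1/k$ yields $\lambda \in \Delta_G(d)$ with $\varrho_1 \leqsl \lambda \leqsl (1+\epsi_2)\varrho_1$ and $(g(a_n),g(b_n))$ lying in the $(1/k)$-neighbourhood of $G^s(a_n,b_n)$ measured by $\varrho_1 \oplus \varrho_1$, for every $g \in \Iso(X,\lambda)$. Since $d \leqsl \varrho_1$, that neighbourhood is contained in $\Omega_{1/k}(a_n,b_n)$, so $\lambda \in A_d(a_n,b_n;\Omega_{1/k}(a_n,b_n))$.

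Finally I would check that $\lambda$ stays inside $\Delta_G^M(d)$ and close to $\varrho_0$: choosing $\epsi_1,\epsi_2$ with $(1+\epsi_1)(1+\epsi_2) t \leqsl 1$ gives $\lambda \leqsl (1+\epsi_1)(1+\epsi_2)\varrho_0 \leqsl M d$, while $\varrho_0 \leqsl \lambda \leqsl (1+\epsi_1)(1+\epsi_2)\varrho_0$ forces $\Lambda_d(\varrho_0,\lambda) \leqsl [(1+\epsi_1)(1+\epsi_2)-1]M$, as small as we please. Thus $\lambda \in A_{n,k}$ approximates $\varrho_0$, each $A_{n,k}$ is dense (and $\GGg_{\delta}$), and Baire's theorem finishes the proof. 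I expect the main obstacle to be precisely this last bookkeeping: both \PRO{const} and \LEM{2p} inflate the metric, so without the room secured by \LEM{Delta}(C) one cannot preserve the bound $\varrho \leqsl M d$; reconciling the $\varrho_1$-neighbourhood delivered by \LEM{2p} with the fixed $d$-neighbourhood defining $A_{n,k}$, via $d \leqsl \varrho_1$, is the other point demanding care.
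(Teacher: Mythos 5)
Your proposal is correct and follows essentially the same route as the paper: the same decomposition into the sets $A_d(a_n,b_n;\Omega)\cap\Delta_G^M(d)$ (shown $\GGg_\delta$ via \PRO{Gdelta}), the same identification of the target set with their intersection via the closedness of $\RRr_G$ and \LEM{hull}, and the same density argument combining \LEM{Delta}(C), \PRO{const} and \LEM{2p} before invoking Baire's theorem. The only differences are cosmetic (two separate $\epsi$'s instead of one, $1/k$ in place of $2^{-m}$).
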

\begin{proof}
We may and do assume that $\card(X) > 1$. Below we continue the notation of \PRO{Gdelta}. Let
$\{(a_n,b_n)\}_{n=1}^{\infty}$ be a dense set in $(X \times X) \setminus \Delta_X$. It is clear that
$\Delta_G^M(d)$ is closed in $\Delta_G(d)$ and thus it is a complete metric space (with respect
to $\Lambda_d$; cf. \LEM{Delta}). For any $n$ and $m$ put
$$\Omega_{n,m} = \{(x,y) \in X \times X\dd\ \dist_{d \oplus d}((x,y),G^s(a_n,b_n)) < 2^{-m}\}.$$
It is clear that $\Omega_{n,m}$ is open in $X \times X$ and thus it follows from \PRO{Gdelta} that
$A_{n,m} := A_d(a_n,b_n;\Omega_{n,m}) \cap \Delta_G^M(d)$ is a $\GGg_{\delta}$ set
in $\Delta_G^M(d)$. Hence, thanks to Baire's theorem, it suffices to show that:
\begin{enumerate}[(C1)]
\item $A_{n,m}$ is dense in $\Delta_G^M(d)$ for all $n,m \geqsl 1$,
\item $(\Delta :=) \{\varrho \in \Delta_G(d)\dd\ \varrho \leqsl M d,\ \Iso(X,\varrho) = \hULL{G}\} =
   \bigcap_{n,m=1}^{\infty} A_{n,m}$.
\end{enumerate}
We begin with (C1). Fix positive integers $n$ and $m$. By \LEM{Delta}, the set
$$W = \{\varrho \in \Delta_G(d)\dd\ \varrho \leqsl tM d \textup{ for some } t \in (0,1)\}$$
is dense in $\Delta_G^M(d)$. So, take a metric $d' \in W$ and $t \in (0,1)$ such that $d' \leqsl
tM d$. Fix arbitrary $\epsi > 0$ such that $t(1+\epsi)^2 \leqsl 1$. Now apply \PRO{const} to obtain
a suitable metric $\varrho$ (see the statement of \PRO{const}). Since then $d \leqsl \varrho$,
we have $\Omega_r \subset \Omega_{n,m}$ where $r = 2^{-m}$ and
$$\Omega_r = \{(x,y) \in X \times X\dd\ \dist_{\varrho \oplus \varrho}((x,y),G^s(a_n,b_n)) < r\}.$$
Finally, apply \LEM{2p} to obtain a metric $\lambda \in \Metr_c(X|G)$ such that $\varrho \leqsl
\lambda \leqsl (1 + \epsi) \varrho$ and
\begin{equation}\label{eqn:aux20}
(g(a_n),g(b_n)) \in \Omega_r (\subset \Omega_{n,m}) \quad \textup{for every } g \in \Iso(X,\lambda).
\end{equation}
Then necessarily
$$d \leqsl d' \leqsl \varrho \leqsl \lambda \leqsl (1 + \epsi) \varrho
\leqsl (1 + \epsi)^2 d' \leqsl (1 + \epsi)^2 tM d
\leqsl Md$$
which yields that $\lambda \in \Delta_G^M(d)$ and consequently $\lambda \in A_{n,m}$
(by \eqref{eqn:aux20}). What is more, the above estimations imply that $|d' - \lambda| \leqsl
\epsi(2 + \epsi) d' \leqsl \epsi (2 + \epsi) M d$ and hence $\Lambda_d(d',\lambda) \leqsl
\epsi(2 + \epsi) M$ which may be arbitrarily small. This shows (C1).\par
Now we pass to (C2). Since
$$\hULL{G} = \{f\dd X \to X|\quad \forall x,y \in X\dd\ (f(x),f(y)) \in G^s(x,y)\},$$
we clearly have $\Delta \subset \bigcap_{n,m=1}^{\infty} A_{n,m}$. To prove the converse inclusion,
take $\varrho \in \Delta_G^M(d)$ which belongs to each of $A_{n,m}$'s, fix $g \in \Iso(X,\varrho)$
and notice that then, since $G^s(a_n,b_n) = \bigcap_{m=1}^{\infty} \Omega_{n,m}$ (by the closedness
of $G^s(a_n,b_n)$, see \LEM{equiv}), for all $n \geqsl 1$ we have $(g(a_n),g(b_n)) \in
G^s(a_n,b_n)$. Equivalently, $(a_n,b_n;g(a_n),g(b_n)) \in \RRr_G$ (cf. \eqref{eqn:RG}) for any
$n \geqsl 1$. Therefore (since $g$ is continuous, $\RRr_G$ is closed in $(X \times X)^2 $(see
\LEM{equiv}) and the set $\{(a_n,b_n)\dd\ n \geqsl 1\}$ is dense in $(X \times X) \setminus
\Delta_X$) $(x,y;g(x),g(y)) \in \RRr_G$ for any two distinct points $x$ and $y$ of $X$. This means
that we may apply \LEM{hull} which gives $g \in \hULL{G}$. Since $\hULL{G} \subset \Iso(X,\varrho)$
by \LEM{iso-hull}, the proof is complete.
\end{proof}

\begin{proof}[Proof of \THM{main}]
The most difficult part --- implication `(iii)$\implies$(ii)' --- immediately follows from \PRO{1-2}
and \THM{hull}, while implication `(ii)$\implies$(i)' is trivial. Let us briefly explain that (iii)
is implied by (i). If $G = \Iso(X,d)$ for a proper metric $d$ on $X$, then the natural action of $G$
on $X$ is proper, by a theorem of Gao and Kechris \cite{g-k}, which gives (Iso2) (see ($\star$)
in subsection `Notation and terminology'). Point (Iso3) follows from \LEM{iso-hull}. Finally, (Iso1)
is well-known and may be shown in the following way. If functions $g_1,g_2,\ldots \in \Iso(X,d)$
converge uniformly on compact sets to a function $g \in \CCc(X,X)$, then necessarily $g$ is
isometric with respect to $d$ and it suffices to check that $g$ is a surjection. Fixing a point
$\omega \in X$, we see that there is $M > 0$ such that $d(g_n(\omega),\omega) \leqsl M$ and
consequently $d(g_n^{-1}(\omega),\omega) \leqsl M$ for every $n$. Then $g_n^{-1}(x) \in
\bar{B}_d(\omega,d(x,\omega) + M)$ for each $x \in X$ and hence the sequence
$g_1^{-1},g_2^{-1},\ldots$ contains a subsequence which converges uniformly on compact sets to some
$h\dd X \to X$. Then $g \circ h = \id_X$ and we are done.
\end{proof}

\begin{cor}{d-w}
For any connected locally compact Polish space $X$,
\begin{equation}\label{eqn:00}
\{\Iso(X,d)\dd\ d \textup{ is a compatible metric}\}
= \{\Iso(X,\varrho)\dd\ \varrho \in \Metr_c(X)\}.
\end{equation}
\end{cor}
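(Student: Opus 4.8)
The plan is to deduce the corollary from \THM{main}. The inclusion $\supseteq$ is immediate: every $\varrho \in \Metr_c(X)$ induces the topology of $X$ and hence is a compatible metric, so each $\Iso(X,\varrho)$ already appears on the left-hand side of \eqref{eqn:00}. For the reverse inclusion I would fix an arbitrary compatible metric $d$ on $X$, set $G = \Iso(X,d)$, and aim to produce some $\varrho \in \Metr_c(X)$ with $\Iso(X,\varrho) = G$. Since $G$ is a group of homeomorphisms of $X$, by \THM{main} it is enough to verify that $G$ satisfies \textup{(Iso1)--(Iso3)}. The cardinality hypothesis is automatic: if $X$ is a single point then $G = \{\id_X\}$ and $(\card(G),\card(X)) = (1,1)$, whereas if the connected space $X$ has more than one point it is uncountable (the image of $x \mapsto d(x,a)$ is a nondegenerate interval), so $\card(X) > 2$. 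The one external input I rely on is the theorem of van Dantzig and van der Waerden \cite{d-w}: as $X$ is connected and locally compact, the natural action of $G$ on $X$ is proper, that is, $D_K := \{h \in G\dd\ h(K) \cap K \neq \varempty\}$ is compact for every compact $K \subset X$.

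Granting this, \textup{(Iso2)} is free, since the family $\DDd_K$ occurring there is exactly $D_K$, which is compact, and by the Ascoli-type theorem quoted in the introduction a compact subset of $\CCc(X,X)$ is in particular evenly continuous and pointwise precompact. For \textup{(Iso1)} I would take $g_n \in G$ converging uniformly on compacta to some $g \in \CCc(X,X)$; passing to the limit in $d(g_n(x),g_n(y)) = d(x,y)$ shows $g$ is distance-preserving, so only surjectivity of $g$ is in question. Here I would use the proper action rather than any properness of $d$. Fixing $\omega \in X$, the set $K = \overline{\{\omega\} \cup \{g_n(\omega)\dd\ n \geqsl 1\} \cup \{g(\omega)\}}$ is compact, being the closure of a convergent sequence together with two points. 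Since $g_n(\omega) \in K$ and $\omega \in K$, we get $\omega \in g_n^{-1}(K) \cap K$, so $g_n^{-1} \in D_K$ for all $n$; compactness of $D_K$ yields a subsequence $g_{n_k}^{-1} \to h \in D_K \subset G$. By continuity of composition in $\CCc(X,X)$, $g \circ h = \lim_k g_{n_k} \circ g_{n_k}^{-1} = \id_X$ and likewise $h \circ g = \id_X$, so $g$ is a bijective $d$-isometry, i.e. $g \in \Iso(X,d) = G$; thus $G$ is closed.

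With \textup{(Iso1)} and \textup{(Iso2)} in hand, \textup{(Iso3)} would follow from \LEM{iso-hull}, which says $\hULL{G}$ is a group of homeomorphisms; in particular every $u \in \hULL{G}$ is a bijection of $X$. On the other hand, by \DEF{hull} each such $u$ satisfies $\{u(x),u(y)\} = \{f(x),f(y)\}$ for some $f \in G$ (depending on $x,y$), whence $d(u(x),u(y)) = d(f(x),f(y)) = d(x,y)$, so $u$ is distance-preserving; being bijective, $u \in \Iso(X,d) = G$. Combined with the trivial $G \subset \hULL{G}$ this gives $\hULL{G} = G$, which is \textup{(Iso3)}. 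At this point \THM{main} (the implication `(iii)$\implies$(i)') supplies $\varrho \in \Metr_c(X)$ with $\Iso(X,\varrho) = G = \Iso(X,d)$, finishing the inclusion $\subseteq$ and hence the proof.

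The genuinely delicate step is \textup{(Iso1)}: for a non-proper compatible metric $d$ one cannot copy the argument from the proof of \THM{main}, where compactness of closed $d$-balls was used to extract a convergent subsequence of the inverses. The substitute I propose is to feed the compact set $K$ assembled from the convergent sequence $g_n(\omega)$ into the properness of the \emph{action} from \cite{d-w}, which is precisely what confines all the inverses $g_n^{-1}$ to a single compact subset $D_K$ of $G$. Everything else is a routine application of the apparatus already built for \THM{main}, so I expect no further obstacles.
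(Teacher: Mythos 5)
Your proof is correct and follows essentially the same route as the paper's: the van Dantzig--van der Waerden theorem gives properness of the action of $G = \Iso(X,d)$, from which \textup{(Iso1)--(Iso2)} follow, \LEM{iso-hull} combined with the fact that members of $\hULL{G}$ preserve $d$ gives \textup{(Iso3)}, and \THM{main} finishes. The paper dismisses the verification of \textup{(Iso1)--(Iso2)} with ``one simply concludes''; your argument confining the inverses $g_n^{-1}$ to the compact set $D_K$ is exactly the intended filling-in of that step, and your check of the cardinality hypothesis is a correct (if tacit in the paper) detail.
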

\begin{proof}
It follows from the connectedness of $X$ and the theorem of van Dantzig and van der Waerden
\cite{d-w} that for every compatible metric $d$ on $X$, $G := \Iso(X,d)$ is locally compact and acts
properly on $X$. One simply concludes that therefore conditions (Iso1)--(Iso2) are satisfied. Hence,
according to \LEM{iso-hull}, $\hULL{G}$ is a group of homeomorphisms and (consequently) $\hULL{G}
\subset \Iso(X,d) = G$. This shows that (Iso3) is fulfilled as well. So, there is $\varrho \in
\Metr_c(X)$ for which $\Iso(X,\varrho) = \Iso(X,d)$ (by \THM{main}). This proves the inclusion
`$\subset$' in \eqref{eqn:00}. Since the converse one is obvious, the proof is complete.
\end{proof}

For simplicity, let us call a group $G$ of homeomorphisms of a locally compact Polish space $X$
an \textit{iso-group of transformations} iff $G$ satisfies conditions (Iso1)--(Iso3), or,
equivalently (thanks to \THM{main}), if there is a metric $d \in \Metr_c(X)$ such that $G =
\Iso(X,d)$.

\begin{proof}[Proof of \THM{free}]
Denote by $\tilde{G} \subset \CCc(X,X)$ the group of all maps of the form $x \mapsto a.x\
(a \in G)$. It suffices to show that $\tilde{G}$ satisfies conditions (Iso1)--(Iso3), thanks
to \THM{main}. Since the action of $G$ is proper, (Iso2) is fulfilled (cf. ($\star$)) and the set
$G.\omega$ is closed. Indeed, the properness of the action of $G$ implies that the function $\Phi\dd
G \times X \ni (g,x) \mapsto (g.x,x) \in X \times X$ is a closed map and therefore the set $\Phi(G
\times \{\omega\})$ is closed in $X \times X$, but $\Phi(G \times \{\omega\}) = (G.\omega) \times
\{\omega\}$.\par
Let us briefly check that $\tilde{G}$ is closed in $\CCc(X,X)$. If $\lim_{n\to\infty} g_n.x = u(x)$
for any $x \in X$, then there is $a \in G$ such that $u(\omega) = a.\omega$. It follows from (F1)
and the properness of the action that the function $G \ni g \mapsto g.\omega \in G.\omega$ is
a homeomorphism and hence $\lim_{n\to\infty} g_n = a$, which yields that $u(x) = a.x$ for each
$x \in X$.\par
Now we pass to (Iso3). Suppose $f \in \hULL{\tilde{G}}$. Then
\begin{equation}\label{eqn:aux30}
f(x) \in G.x
\end{equation}
for any $x \in X$. By (F1) and \eqref{eqn:aux30}, for each $\alpha \in G$ there is a \textbf{unique}
$\beta_{\alpha} \in G$ such that
\begin{equation}\label{eqn:aux31}
f(\alpha.\omega) = (\beta_{\alpha} \alpha).\omega.
\end{equation}
Let $a = \beta_{e_G}$. We shall show that $f(x) = a.x$ for every $x \in X$. First assume that
$x \notin G.\omega$. Then $\{f(x),f(\omega)\} = \{g.x,g.\omega\}$ for some $g \in G$. This implies,
thanks to \eqref{eqn:aux30}--\eqref{eqn:aux31}, that $g = a$ and $f(x) = a.x$ (since $x \notin
G.\omega$). Now let $x = \alpha.\omega$ for some $\alpha \in G$. Taking into account
\eqref{eqn:aux31}, we have to show that $\beta_{\alpha} = a$. Since the action of $G$ is
non-transitive, the set $D = X \setminus G.\omega$ is nonempty. For each $y \in D$ we have
$\{f(y),f(x)\} = \{g.y,g.x\}$ for some $g \in G$. We conclude from this that $g = \beta_{\alpha}$
and $f(y) = \beta_{\alpha}.y$ (because $f(x) \notin D$ and $\beta_{\alpha}$ is unique). At the same
time, $f(y) = a.y$, by the previous part of the proof. Hence $(a^{-1} \beta_{\alpha}).y = y$ for
each $y \in D$. Now an application of (F2) yields that $a^{-1} \beta_{\alpha} = e_G$ and
consequently $\beta_{\alpha} = a$.
\end{proof}

\begin{proof}[Proof of \COR{gx}]
It suffices to observe that the function $G \times (G \times X) \ni (a,(g,x)) \mapsto (ag,x) \in
G \times X$ is a (continuous) free and non-transitive (since $\card(X) > 1$) proper action and
to apply \THM{free}.
\end{proof}

Other consequences of \THM{free} are stated below.

\begin{cor}{bd}
Let $G$ be a locally compact Polish group, $X$ be a locally compact Polish space and let $G \times X
\ni (g,x) \mapsto g.x \in X$ be a proper action of $G$ on $X$ such that for some point $\omega \in
X$, $G$ acts freely at $\omega$ and the $G$-orbit $G.\omega$ of $\omega$ is non-open in $X$. Then
the maps $x \mapsto g.x\ (g \in G)$ form an iso-group of transformations.
\end{cor}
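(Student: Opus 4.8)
The plan is to verify that the group $\tilde{G} \subset \CCc(X,X)$ of all maps $x \mapsto g.x\ (g \in G)$ satisfies the hypotheses of \THM{free}, and then to invoke that theorem directly: it yields a metric $d \in \Metr_c(X)$ with $\Iso(X,d) = \tilde{G}$, which is precisely what it means for $\tilde{G}$ to be an iso-group of transformations. So the whole task reduces to deducing, from the non-openness of $G.\omega$ together with freeness at $\omega$, the four things \THM{free} requires: $(\card(G),\card(X)) \neq (1,2)$, non-transitivity of the action, (F1), and (F2). Condition (F1) is among the hypotheses, so nothing is needed there.

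First I would record two elementary consequences of $G.\omega$ being non-open. Were the action transitive, every point would lie in one orbit, forcing $G.\omega = X$, which is open --- a contradiction; hence the action is non-transitive. Likewise, if $(\card(G),\card(X)) = (1,2)$ then $G = \{e_G\}$ and $G.\omega = \{\omega\}$ is a singleton in a two-point Polish (hence Hausdorff, hence discrete) space, so it would be open, again contradicting the hypothesis; thus $(\card(G),\card(X)) \neq (1,2)$. At this point I would also note that freeness at $\omega$ propagates along the orbit: if $g.(a.\omega) = a.\omega$ with $a \in G$, then $a^{-1}ga$ fixes $\omega$, so $a^{-1}ga = e_G$ and $g = e_G$; that is, the action is free at every point of $G.\omega$.

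The heart of the matter is (F2), and this is the step in which non-openness is genuinely used. Write $A = G.\omega$. Since $A$ is non-open and $A \neq X$ (by non-transitivity), the set $X \setminus A$ is nonempty and not closed, so its closure meets $A$; thus there is a point $p \in A \cap \overline{X \setminus A}$, and by metrizability a sequence $y_n \in X \setminus A$ with $y_n \to p$. Now suppose $g \in G$ fixes every point of $X \setminus A$. Then $g.y_n = y_n$ for all $n$, and continuity of $x \mapsto g.x$ gives $g.p = \lim_n g.y_n = \lim_n y_n = p$. As $p \in A$ and the action is free on $A$, this forces $g = e_G$, so the action is effective on $X \setminus G.\omega$, which is exactly (F2).

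With all four hypotheses established, a single application of \THM{free} completes the argument. The only nontrivial ingredient is the production of the limit point $p \in A \cap \overline{X \setminus A}$ from non-openness and the passage $g.y_n = y_n \Rightarrow g.p = p$ via continuity; everything else is a direct translation of the stated hypotheses, so I expect (F2) to be the main (and essentially the sole) obstacle.
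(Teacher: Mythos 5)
Your proposal is correct and follows essentially the same route as the paper: reduce to \THM{free}, get non-transitivity and the cardinality condition from non-openness of $G.\omega$, and obtain (F2) by taking a point of $G.\omega$ in the closure of $X \setminus G.\omega$, using continuity of $x \mapsto g.x$ and the fact that freeness at $\omega$ propagates to the whole orbit. The paper's version is just terser (it notes $\card(X) \geqslant \aleph_0$ rather than excluding the pair $(1,2)$ directly, and leaves the conjugation step implicit).
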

\begin{proof}
It follows from the assumptions that $G.\omega \neq X$ and $\card(X) \geqsl \aleph_0$. Thus,
according to \THM{free}, it suffices to show that $G$ acts effectively on $X \setminus G.\omega$.
But if $g \in G$ is such that $g.x = x$ for every $x \in X \setminus G.\omega$, then $g.z = z$ for
some $z \in G.\omega$ (since $X \setminus G.\omega$ is non-closed) and hence $g = e_G$.
\end{proof}

\begin{cor}{effect}
Let $G$ be a locally compact Polish group, $X$ be a locally compact Polish space having more than
one point and let $G \times X \ni (g,x) \mapsto g.x \in X$ be a proper effective action of $G$
on $X$. Let $X \sqcup G$ denote the topological disjoint union of $X$ and $G$ and $\tilde{G} \subset
\CCc(X \sqcup G,X \sqcup G)$ be the set of all maps $\psi_a\dd X \sqcup G \to X \sqcup G$ with
$a \in G$ of the form: $\psi_a(x) = a.x$ for $x \in X$ and $\psi_a(g) = ag$ for $g \in G$. Then
$\tilde{G}$ is an iso-group of transformations.
\end{cor}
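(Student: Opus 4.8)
The plan is to deduce \COR{effect} directly from \THM{free}, applied not to $X$ itself but to the auxiliary space $Y := X \sqcup G$ equipped with the ``diagonal'' action
\[
G \times Y \ni (a,y) \mapsto a.y \in Y, \qquad a.y = \begin{cases} a.x & \text{if } y = x \in X,\\ ag & \text{if } y = g \in G.\end{cases}
\]
Since $X$ and $G$ are clopen in $Y$ and the map $a.(\cdot)$ preserves this partition, the action is continuous (it restricts to the given action on $X$ and to left translation on $G$), and each $\psi_a$ is precisely the homeomorphism $y \mapsto a.y$. Thus it suffices to verify that this action satisfies the hypotheses of \THM{free} with the distinguished point $\omega := e_G \in G \subset Y$; the theorem then supplies $d \in \Metr_c(Y)$ with $\Iso(Y,d) = \tilde{G}$, which by \THM{main} is exactly the assertion that $\tilde{G}$ is an iso-group of transformations.

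The easy hypotheses go as follows. The space $Y$ is locally compact Polish, being a disjoint union of two such spaces; and since $X$ has at least two points while $G$ is nonempty, $Y$ has at least three points, so $(\card(G),\card(Y)) \neq (1,2)$ holds automatically. The action is non-transitive because every orbit lies inside one of the two nonempty clopen summands and hence never equals $Y$. It is free at $\omega = e_G$, since $a.e_G = a = e_G$ forces $a = e_G$, giving (F1). Finally $G.\omega = G.e_G$ is the whole summand $G$, so $Y \setminus G.\omega = X$, and the action on $X$ is effective by hypothesis; this is precisely (F2). The point worth flagging here is the choice of $\omega$ inside the $G$-summand, made exactly so that the complement of its orbit equals $X$, where effectiveness is available.

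The one genuinely substantive (though routine) verification is \emph{properness}. Given a compact $K \subset Y$, write $K_X := K \cap X$ and $K_G := K \cap G$, both compact because the summands are clopen. As $\psi_a$ preserves the partition, $a.K \cap K \neq \varempty$ if and only if $a.K_X \cap K_X \neq \varempty$ or $aK_G \cap K_G \neq \varempty$, whence
\[
D_K = \{a \in G\dd a.K_X \cap K_X \neq \varempty\} \cup K_G K_G^{-1}.
\]
The first set is compact because the original action of $G$ on $X$ is proper (condition ($\star$)), and $K_G K_G^{-1}$ is compact as a continuous image of $K_G \times K_G$. A union of two compact sets is compact, so $D_K$ is compact, and the auxiliary action is proper (again by ($\star$)).

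With every hypothesis of \THM{free} in place, the theorem produces $d \in \Metr_c(Y)$ whose isometry group consists exactly of the maps $y \mapsto a.y\ (a \in G)$, that is, of the maps $\psi_a$; equivalently $\Iso(Y,d) = \tilde{G}$, which is what we wanted. I expect no real obstacle beyond the properness bookkeeping above and the deliberate placement of $\omega$ in the $G$-summand.
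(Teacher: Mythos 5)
Your proof is correct and takes exactly the route the paper intends: the paper states that \COR{effect} is a special case of \THM{free} and leaves the verification to the reader, and your check of the hypotheses (the diagonal action on $X \sqcup G$, the choice $\omega = e_G$ in the group summand so that $Y \setminus G.\omega = X$, and the decomposition $D_K = D_{K_X} \cup K_G K_G^{-1}$ for properness) is precisely that verification.
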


The above result is a special case of \THM{free} and its proof is left to the reader.

\SECT{Isometry groups of homogeneous proper metric spaces}

Throughout this section, $G$ is a fixed topological group. For basic information on topological
groups the reader is referred to any classical book in this topic, e.g. \cite{pon}
or \cite{hr1,hr2}.\par
By $\kappa_G\dd G \to G$ we denote the map $x \mapsto x^{-1}$. For $a \in G$ let $L_a\dd G \ni x
\mapsto ax \in G$ and let $\LLl(G) =\{L_a\dd\ a \in G\} \subset \CCc(G,G)$. It is clear that:
\begin{itemize}
\item $\LLl(G)$ is a group of homeomorphisms,
\item $\LLl(G)$ satisfies conditions (Iso1)--(Iso2) (cf. the proof of \THM{free}) provided $G$ is
   locally compact and Polish,
\item $\LLl(G)$ acts freely and transitively on $G$.
\end{itemize}
The main goal of this section is to determine $\hULL{\LLl(G)}$. As a consequence, we shall
characterize all topological groups $G$ satisfying $\hULL{\LLl(G)} = \LLl(G)$ (see \THM{L(G)}). For
this purpose we introduce the following

\begin{dfn}{Hxp}
Let $H$ be a topological Abelian group and $p \in H$ be such that $p^2 = e_H \neq p$. We define
a multiplication on $H \times \{-1,1\}$ as follows:
$$(x,j) \cdot (y,k) = \begin{cases}
                      (xy,jk)       & \textup{if } j = 1\\
                      (xy^{-1},jk)  & \textup{if } j = -1,\ k = 1\\
                      (xy^{-1}p,jk) & \textup{if } j = k = -1
                      \end{cases} \qquad (x,y \in H,\ j,k \in \{-1,1\}).$$
Straightforward calculations show that $(H \times \{-1,1\},\cdot)$ is a topological group when it is
equipped with the product topology. We shall denote it by $\dOUBLE{H}$. Observe that $(e_H,1)$
is the neutral element of $\dOUBLE{H}$ and for any $x \in H$, $(x,-1)^{-1} = (xp,-1)$.\par
To avoid repetitions, every pair $(H,p)$ where $H$ is a topological Abelian group and $p \in H$ is
such that $p^2 = e_H \neq p$ will be called a \textit{base pair}. Two base pairs $(H,p)$ and $(K,q)$
are \textit{isomorphic} if there exists an isomorphism of $H$ onto $K$ which sends $p$ to $q$.
\end{dfn}

The following is left to the reader as an exercise.

\begin{lem}{Hxp}
Let $(H,p)$ be a base pair and let $K = \dOUBLE{H}$.
\begin{enumerate}[\upshape(a)]
\item The set $\tilde{H} := H \times \{1\}$ is an open normal subgroup of $K$ of index $2$ and
   the function $H \ni x \mapsto (x,1) \in \tilde{H}$ is an isomorphism of topological groups.
\item For each $a \in K \setminus \tilde{H}$, $a^2 = \tilde{p} := (p,1)$ and $\tilde{p}^2 = e_K$.
\item For any $x, y \in H$, $(x,-1) \cdot (y,1) \cdot (x,-1)^{-1} = (y^{-1},1)$, $(x,1) \cdot (y,-1)
   \cdot (x,1)^{-1} = (x^2y,-1)$ and $(x,-1) \cdot (y,-1) \cdot (x,-1)^{-1} = (x^2y^{-1},-1)$.
   In particular, if $H$ is non-Boolean, the center $Z(K)$ of $K$ coincides with $\{(x,1)\dd\
   x \in H,\ x^2 = e_H\} = \{z \in K\dd\ z^2 = e_K\}$.
\item $K$ is non-Boolean. $K$ is Abelian iff $H$ is Boolean.
\end{enumerate}
\end{lem}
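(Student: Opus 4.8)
The plan is to verify every assertion by direct computation from the product rule of \DEF{Hxp}, using throughout that $H$ is abelian, that $p^2 = e_H$, and that $p \neq e_H$. No new idea is required: the only care needed is to apply the correct one of the three branches of the product formula at each step and to reuse the two identities $(x,1)(y,1) = (xy,1)$ and $(x,-1)^{-1} = (xp,-1)$. I expect the bookkeeping of these cases, together with the case split in the center computation, to be the main (if modest) obstacle.

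For (a), the first branch of the product gives $(x,1)(y,1) = (xy,1)$, so $x \mapsto (x,1)$ is a homomorphism; it is plainly a bijection onto $\tilde{H}$, and it is a homeomorphism because $K$ carries the product topology and $\{1\}$ is clopen in the discrete factor. Hence $\tilde{H}$ is open, its only other coset is $H \times \{-1\}$, so it has index $2$ and is therefore normal. For (b) I would write a general element of $K \setminus \tilde{H}$ as $(x,-1)$ and apply the third branch to get $(x,-1)^2 = \tilde{p}$, and then $\tilde{p}^2 = (p^2,1) = e_K$ by $p^2 = e_H$. The three conjugation formulas of (c) are obtained the same way, each by two applications of the product rule (inserting the relevant inverse) followed by cancellation using commutativity and $p^2 = e_H$; I would display the three results and leave the simplifications to the reader.

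The one place where a genuine, if short, argument is needed is the center clause, and it is there that the non-Boolean hypothesis on $H$ enters. I would decide membership in $Z(K)$ by the coset of the element: an element $(x,1)$ always commutes with $\tilde{H}$ and, comparing $(x,1)(y,-1)$ with $(y,-1)(x,1)$, commutes with $(y,-1)$ exactly when $x^2 = e_H$; an element $(x,-1)$ conjugates $(y,1)$ to $(y^{-1},1)$ by the first formula of (c), so it can be central only if $y = y^{-1}$ for all $y$, which non-Booleanness forbids. This gives $Z(K) = \{(x,1) \dd x^2 = e_H\}$, and to match it with $\{z \in K \dd z^2 = e_K\}$ I would invoke (b): $(x,-1)^2 = \tilde{p} \neq e_K$ (here $p \neq e_H$ is essential) while $(x,1)^2 = (x^2,1)$, so the elements squaring to $e_K$ are precisely those $(x,1)$ with $x^2 = e_H$. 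Finally (d) is immediate from the earlier parts: every $(x,-1)$ satisfies $(x,-1)^2 = \tilde{p} \neq e_K$, so $K$ is non-Boolean, and the first conjugation formula of (c) shows $K$ is abelian iff $y^{-1} = y$ for all $y \in H$, that is, iff $H$ is Boolean.
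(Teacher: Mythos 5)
Your verification is correct; the paper in fact leaves this lemma entirely to the reader as an exercise, and your case-by-case computation from the three branches of the product rule (together with $(x,-1)^{-1}=(xp,-1)$ and the non-Booleanness of $H$ for the center clause) is exactly the intended argument. One pedantic point: in (d) the first conjugation formula only gives that abelianness of $K$ forces $y=y^{-1}$ for all $y$; the converse direction also uses the second and third formulas (with $x^2=e_H$), but that is the same routine check.
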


For further studies, let us introduce one more useful notation: let $\hULL{\LLl(G),e_G}$ stand for
the set of all $f \in \hULL{\LLl(G)}$ which map $e_G$ onto itself.\par
In the following result the group $K$ is equipped with no topology.

\begin{lem}{basic}
Let $K = \hULL{\LLl(G)}$ and $K_0 = \hULL{\LLl(G),e_G}$.
\begin{enumerate}[\upshape(a)]
\item $(K,\circ)$ is a group.
\item $K_0$ is a Boolean subgroup of $K$.
\item For every $f \in K$ there is a unique pair $(a,f_0) \in G \times K_0$ such that $f = L_a \circ
   f_0$.
\item Let $f \in K_0$. Then:
   \begin{equation}\label{eqn:x-x-1}
   \{f(x),f(x^{-1})\} = \{x,x^{-1}\} \qquad (x \in G)
   \end{equation}
   and for every (possibly non-closed) subgroup $D$ of $G$, $f(D) \subset D$ and $f\bigr|_D \in
   \hULL{\LLl(D),e_D}$.
\end{enumerate}
\end{lem}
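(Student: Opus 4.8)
The plan is to reduce everything to one elementary observation about $K_0=\hULL{\LLl(G),e_G}$: every $f\in K_0$ satisfies $f(x)\in\{x,x^{-1}\}$ for all $x\in G$. Indeed, applying \DEF{hull} to the pair $(x,e_G)$ produces $a\in G$ with $\{f(x),e_G\}=\{f(x),f(e_G)\}=\{ax,a\}$; since $e_G\in\{ax,a\}$, either $a=e_G$, whence $\{f(x),e_G\}=\{x,e_G\}$ and $f(x)=x$, or $a=x^{-1}$, whence $\{f(x),e_G\}=\{e_G,x^{-1}\}$ and $f(x)=x^{-1}$. I would also record at the outset that $K=\hULL{\LLl(G)}$ is closed under composition, since this underlies (a)--(c): for $f,g\in K$ and $x,y\in G$ pick $b$ with $\{g(x),g(y)\}=\{bx,by\}$ and then $c$ with $\{f(g(x)),f(g(y))\}=\{c\,g(x),c\,g(y)\}$; because the bijection $L_c$ carries the set $\{g(x),g(y)\}=\{bx,by\}$ onto $\{cbx,cby\}$, we obtain $\{(f\circ g)(x),(f\circ g)(y)\}=\{(cb)x,(cb)y\}$, so $f\circ g\in K$. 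As $\id_G=L_{e_G}\in\LLl(G)\subset K$, the set $K$ is a composition-closed monoid containing the identity.

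Next I would prove (d). If $x\neq x^{-1}$, applying \DEF{hull} to $(x,x^{-1})$ gives $a$ with $\{f(x),f(x^{-1})\}=\{ax,ax^{-1}\}$; the equality $f(x)=f(x^{-1})$ would force $ax=ax^{-1}$, i.e.\ $x=x^{-1}$, so $f(x)\neq f(x^{-1})$, and as both already lie in $\{x,x^{-1}\}$ we conclude $\{f(x),f(x^{-1})\}=\{x,x^{-1}\}$ (the case $x=x^{-1}$ being trivial). For a subgroup $D$ of $G$, the observation gives $f(x)\in\{x,x^{-1}\}\subset D$, hence $f(D)\subset D$; and for $x,y\in D$ the witness $a$ in $\{f(x),f(y)\}=\{ax,ay\}$ equals either $f(x)x^{-1}$ or $f(x)y^{-1}$, both of which lie in $D$, so $f|_D\in\hULL{\LLl(D),e_D}$.

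The remaining parts then bootstrap from (d). The first identity of (d) shows each $f\in K_0$ is an involution: when $f(x)=x^{-1}$ it yields $f(x^{-1})=x$, so $f(f(x))=x$, and trivially $f(f(x))=x$ when $f(x)=x$; thus $f\circ f=\id_G$. For (c), set $a=f(e_G)$ and $f_0=L_{a^{-1}}\circ f$; then $f_0\in K$ by composition-closure and $f_0(e_G)=e_G$, so $f_0\in K_0$ and $f=L_a\circ f_0$, while evaluating any factorization $f=L_{a'}\circ f_0'$ at $e_G$ forces $a'=f(e_G)=a$ and hence $f_0'=f_0$, giving uniqueness. Part (a) follows because each $f=L_a\circ f_0$ is then a bijection with $f^{-1}=f_0\circ L_{a^{-1}}\in K$ (using $f_0^{-1}=f_0$), so $(K,\circ)$ is a group. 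For (b), $K_0$ contains $\id_G$, is closed under composition (as $(f\circ g)(e_G)=e_G$) and under inversion (as $f^{-1}=f$), hence is a subgroup, and $f\circ f=\id_G$ for all its elements says precisely that it is Boolean.

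The only step demanding genuine care is the composition-closure of $K$: the hull witness for $f$ must be chosen for the \emph{image} pair $(g(x),g(y))$ rather than for $(x,y)$, and the argument succeeds exactly because left translation commutes with unordered pairing, converting the set identity $\{g(x),g(y)\}=\{bx,by\}$ into $\{cg(x),cg(y)\}=\{cbx,cby\}$. Once this and the observation $f(x)\in\{x,x^{-1}\}$ for $f\in K_0$ are in place, all four items are direct.
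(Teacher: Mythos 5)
Your proof is correct and follows essentially the same route as the paper: the key step in both is to pair $x$ with $e_G$ to get $f(x)\in\{x,x^{-1}\}$ for $f\in K_0$, then pair $x$ with $x^{-1}$ to obtain \eqref{eqn:x-x-1}, deduce that every element of $K_0$ is an involution, and bootstrap (a)--(c) from the decomposition $f=L_{f(e_G)}\circ f_0$. The only difference is that you write out the composition-closure of $K$ and the uniqueness in (c), which the paper leaves as exercises, and these details are handled correctly.
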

\begin{proof}
Point (c) as well as the fact that $(K,\circ)$ is a semigroup are left as simple exercises.
Consequently, $K_0$ is a semigroup as well.\par
We start with (d). For any $x \in G$ there is $a \in G$ such that $\{f(x),f(e_G)\} = \{ax,a\}$.
We conclude from this that $f(x) \in \{x,x^{-1}\}$ and consequently $f(x^{-1}) \in \{x,x^{-1}\}$
as well. Further, there is $b \in G$ such that $\{f(x),f(x^{-1})\} = \{bx,bx^{-1}\}$ which yields
that either $f(x) \neq f(x^{-1})$ or $x = x^{-1}$. Both these cases give \eqref{eqn:x-x-1} and hence
$f(f(x)) = x$. This shows (b). Now (b) and (c) imply (a). Finally, if $D$ is a subgroup of $G$,
\eqref{eqn:x-x-1} shows that $f(D) \subset D$. So, if $x$ and $y$ belong to $D$ and $a \in G$ is
such that $\{f(x),f(y)\} = \{ax,ay\}$, then $a \in Dx^{-1} = D$ and therefore $f\bigr|_D \in
\hULL{\LLl(D),e_D}$.
\end{proof}

\begin{lem}{Abel}
For every topological Abelian group $H$,
$$\hULL{\LLl(H)} = \LLl(H) \cup \{L_a \circ \kappa_H\dd\ a \in H\}.$$
\end{lem}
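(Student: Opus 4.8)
The plan is to prove the two inclusions separately: the containment $\LLl(H) \cup \{L_a \circ \kappa_H \dd\ a \in H\} \subseteq \hULL{\LLl(H)}$ by direct verification, and the reverse containment by reducing, via \LEM{basic}, to the computation of the set $K_0 = \hULL{\LLl(H),e_H}$ of hull elements fixing the neutral element.

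For the first inclusion, $\LLl(H) \subseteq \hULL{\LLl(H)}$ holds by \DEF{hull}. For a map $g = L_a \circ \kappa_H$ (so $g(x) = ax^{-1}$) I would verify the defining condition of \DEF{hull} by hand: given $x,y \in H$, put $b = a(xy)^{-1}$; commutativity of $H$ gives $bx = ay^{-1}$ and $by = ax^{-1}$, so that $\{g(x),g(y)\} = \{ax^{-1},ay^{-1}\} = \{bx,by\}$, witnessed by $L_b \in \LLl(H)$. Hence $g \in \hULL{\LLl(H)}$.

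For the reverse inclusion, write $K = \hULL{\LLl(H)}$ and $K_0 = \hULL{\LLl(H),e_H}$. By \LEM{basic}(c) every $f \in K$ factors uniquely as $f = L_a \circ f_0$ with $a \in H$ and $f_0 \in K_0$, so it suffices to show $K_0 = \{\id_H,\kappa_H\}$; the factorisation then yields $f = L_a$ or $f = L_a \circ \kappa_H$, as required. Both $\id_H$ and $\kappa_H$ lie in $K_0$ (the former trivially, the latter by the computation above together with $\kappa_H(e_H) = e_H$). Conversely, fix $f \in K_0$. By \LEM{basic}(d) we have $\{f(x),f(x^{-1})\} = \{x,x^{-1}\}$ for every $x$; in particular $f(x) \in \{x,x^{-1}\}$ for all $x$, and $f(x) = x = x^{-1}$ whenever $x^2 = e_H$.

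The heart of the argument, and the step I expect to be the main obstacle, is to rule out `mixing', i.e. to show $f$ cannot invert one non-involution while fixing another. Suppose $f(x) = x$ and $f(y) = y^{-1}$ with $x^2 \neq e_H$ and $y^2 \neq e_H$. Since $f \in \hULL{\LLl(H)}$, there is $a \in H$ with $\{x,y^{-1}\} = \{ax,ay\}$. If $x = ax$ and $y^{-1} = ay$, then $a = e_H$ and $y = y^{-1}$, contradicting $y^2 \neq e_H$; in the remaining case $x = ay$ and $y^{-1} = ax$, so $a = xy^{-1} = y^{-1}x^{-1}$ and commutativity forces $x = x^{-1}$, contradicting $x^2 \neq e_H$. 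Thus either $f$ fixes every non-involution (so $f = \id_H$, as involutions are fixed automatically) or $f$ inverts every non-involution (so $f = \kappa_H$, since $\kappa_H$ agrees with $f$ on involutions). This gives $K_0 = \{\id_H,\kappa_H\}$ and completes the proof. The whole difficulty is concentrated in the two-case computation above, where the Abelian hypothesis is used in an essential way.
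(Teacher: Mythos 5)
Your proof is correct and takes essentially the same route as the paper: both reduce via \LEM{basic} to showing $\hULL{\LLl(H),e_H}=\{\id_H,\kappa_H\}$, verify $\kappa_H$ belongs to the hull by exhibiting an explicit translation witness, and then settle the key point by the same two-case analysis of $\{f(x),f(y)\}=\{ax,ay\}$ (the paper phrases it as ``every fixed point of a non-identity $f$ is an involution,'' which is the contrapositive of your no-mixing claim). No gaps worth noting.
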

\begin{proof}
Thanks to \LEM{basic}, it suffices to show that $\hULL{\LLl(H),e_H} = \{\id_H,\kappa_H\}$. Observe
that $\{x^{-1},y^{-1}\} = \{(x^{-1}y^{-1}) \cdot x,(x^{-1}y^{-1}) \cdot y\}$ for any $x, y \in H$
and thus $\kappa_H \in \hULL{\LLl(H)}$. So, it remains to check that if $f \in \hULL{\LLl(H),e_H}$
is different from the identity map, then $f = \kappa_H$. By assumption, there is $b \in H$ such that
$f(b) \neq b$. By \eqref{eqn:x-x-1}, it is enough to check that if $f(x) = x$, then $x = x^{-1}$,
or---equivalently---that $x^2 = e_H$.\par
Assume $f(x) = x$. We infer from \eqref{eqn:x-x-1} that $f(b) = b^{-1} \neq b$. Since $f \in
\hULL{\LLl(H)}$, there is $a \in H$ for which $\{ax,ab\} = \{f(x),f(b)\} (= \{x,b^{-1}\})$. Notice
that $ax \neq x$, because otherwise $a = e_H$ and $b^{-1} = ab = b$ which is false. Hence $ax =
b^{-1}$ and $ab = x$ from which we easily deduce that $x^2 = e_H$.
\end{proof}

\begin{proof}[Proof of \PRO{Abel}]
We know that $\LLl(H)$ satisfies (Iso1)--(Iso2) and hence, by \LEM{iso-hull}, so does
$\hULL{\LLl(H)}$. What is more, it is obvious that $\hULL{\hULL{\FFf}} = \hULL{\FFf}$ for
an arbitrary family $\FFf$ of functions. Now it suffices to apply \LEM{Abel} and \THM{main} to get
the assertion.
\end{proof}

The main result of this section is the following

\begin{thm}{L(G)}
Let $G$ be a topological group, $K = \LLl(G)$ and $L = \hULL{K}$. Then $L$ is a group
of homeomorphisms of $G$ and exactly one of the following three conditions holds:
\begin{enumerate}[\upshape(a)]
\item $G$ is either Boolean, or non-Abelian and isomorphic to \textbf{no} group of the form
   $\dOUBLE{H}$ where $(H,p)$ is a base pair. In that case $L = K$.
\item $G$ is either Abelian non-Boolean or isomorphic to a group of the form $\dOUBLE{H}$ where
   $(H,p)$ is a base pair and $\{x^2\dd\ x \in H\} \not\subset \{e_H,p\}$. In that case $K$ is
   a normal subgroup of $L$; $\hULL{K,e_G}$ is isomorphic to $\{-1,1\}$ and consists of two
   automorphisms of $G$; and for any $a \in G$ and $f \in \hULL{K,e_G}$, $f \circ L_a \circ f^{-1} =
   L_{f(a)}$.
\item $G$ is isomorphic to a group of the form $\dOUBLE[\tilde{p}]{(\dOUBLE{H})}$ where $(H,p)$ is
   a base pair, $H$ is Boolean and $\tilde{p} = (p,1)$. In that case $K$ is non-normal in $L$,
   $\hULL{K,e_G}$ is isomorphic to $\{-1,1\}^3$ and contains $\kappa_G$ (which is not
   an automorphism).
\end{enumerate}
\end{thm}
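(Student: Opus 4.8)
The plan is to reduce the entire statement to a description of $K_0 := \hULL{\LLl(G),e_G}$. By \LEM{basic}, $L = \hULL{\LLl(G)}$ is a group under composition, $K_0$ is a Boolean subgroup of it, every $f \in L$ factors uniquely as $f = L_a \circ f_0$ with $a \in G$ and $f_0 \in K_0$, and each $f_0 \in K_0$ obeys $f_0(x) \in \{x,x^{-1}\}$ for all $x$ (equation \eqref{eqn:x-x-1}). Thus the trichotomy is entirely governed by $K_0$: the three alternatives will turn out to be $K_0 = \{\id_G\}$, then $K_0 \cong \{-1,1\}$ with its nontrivial element an automorphism, and finally $K_0 \cong \{-1,1\}^3$. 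Once every member of $K_0$ is identified as a continuous map (an automorphism, $\kappa_G$, or a composite built from inversion and the product-topology coordinates of a double), the factorization $L = \LLl(G)\,K_0$ shows $L$ is a group of homeomorphisms, since $\LLl(G)$ already consists of homeomorphisms. Normality of $\LLl(G)$ in $L$ is then decided by whether $K_0$ consists of automorphisms: if $f_0$ is an automorphism, then $f_0 \circ L_a \circ f_0^{-1} = L_{f_0(a)} \in \LLl(G)$, whereas $\kappa_G \circ L_a \circ \kappa_G$ is the right translation $x \mapsto xa^{-1}$, which lies in $\LLl(G)$ only when $G$ is Abelian. So the presence of $\kappa_G$ in $K_0$ will be exactly what destroys normality.

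The core is to determine $K_0$ for an arbitrary $G$. Fix a nontrivial $f \in K_0$ and set $\delta(x) = f(x)x^{-1} \in \{e_G, x^{-2}\}$. Feeding each pair $x,y$ into the defining property of $\hULL{\LLl(G)}$ (there is $a$ with $\{f(x),f(y)\} = \{ax,ay\}$) yields the master relation: for all $x,y$, either $\delta(x) = \delta(y)$, or $\delta(x)\,x y^{-1} x = \delta(y)\,y$. Specializing to $y$ with $f(y)=y$ and $x$ with $f(x)=x^{-1}\neq x$ forces $x y x^{-1} = y^{-1}$. I would exploit this, together with the closure of $\hULL{\LLl(G)}$ under composition, to prove that the fixed set $F := \{x : f(x)=x\}$ is an Abelian subgroup of index at most two, that the elements of the outer coset conjugate $F$ by inversion and share a common square $p$ with $p^2 = e_G \neq p$, and hence that $(F,p)$ is a base pair identifying $G$ with the double $\dOUBLE{F}$, the map $f$ being the automorphism that is the identity on $\tilde F$ and inversion on the outer coset; \LEM{Hxp} and a direct check give the converse that every such double admits this $f$ in $K_0$. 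This dichotomizes $G$: a Boolean $G$ trivially has $K_0 = \{\id_G\}$ (since $x = x^{-1}$ for all $x$), a non-Abelian $G$ isomorphic to no $\dOUBLE{H}$ likewise has $K_0 = \{\id_G\}$ and $L = \LLl(G)$ (case (a)), and an Abelian non-Boolean $G$ is exactly \LEM{Abel}, giving $K_0 = \{\id_G,\kappa_G\} \cong \{-1,1\}$ with $\kappa_G$ an automorphism (case (b)).

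It remains to treat the doubles $G = \dOUBLE{H}$ with $H$ Abelian non-Boolean, splitting according to $\{x^2 : x \in H\}$. First, $\kappa_G \in \hULL{\LLl(G)}$ iff for all $z,w$ one has $z^2 = w^2$ or $zw = wz$, and by \LEM{Hxp}(b),(c) this holds iff $\{x^2 : x \in H\} \subseteq \{e_H,p\}$. If $\{x^2 : x \in H\} \not\subseteq \{e_H,p\}$, then $\kappa_G \notin K_0$, and one shows the unique nontrivial hull map fixing $e_G$ is the automorphism $\sigma$ of the previous paragraph, so $K_0 = \{\id_G,\sigma\} \cong \{-1,1\}$; by paragraph one $\LLl(G)$ is then normal in $L$ with $f \circ L_a \circ f^{-1} = L_{f(a)}$, which is case (b). If instead $\{x^2 : x \in H\} \subseteq \{e_H,p\}$, the squaring map $x \mapsto x^2$ is a homomorphism of $H$ onto $\{e_H,p\}$; splitting $H$ over its Boolean kernel $H'$ of index two re-coordinatizes $H$ as $\dOUBLE{H'}$ and hence $G \cong \dOUBLE[\tilde{p}]{(\dOUBLE{H'})}$ with $\tilde p = (p,1)$ (case (c)). Here $\kappa_G \in K_0$, and the two levels of doubling each contribute a further commuting involution; checking that these three involutions are independent and that no other hull map fixes $e_G$ gives $K_0 \cong \{-1,1\}^3$, while non-normality of $\LLl(G)$ follows since $\kappa_G$ is not an automorphism of the non-Abelian $G$.

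The main obstacle is the structural step of paragraph two: extracting from the single master relation that $F$ is a subgroup of index at most two and that $G$ must be a double. This demands a careful analysis of the fix/invert dichotomy and repeated composition inside $\hULL{\LLl(G)}$, and it is what makes the three cases exhaustive and mutually exclusive once combined with \LEM{Hxp}(d). A secondary delicate point is the bookkeeping in case (c): exhibiting the three independent generators of $K_0 \cong \{-1,1\}^3$, proving no additional hull map fixes $e_G$, and verifying the re-coordinatization $\dOUBLE{H} \cong \dOUBLE[\tilde{p}]{(\dOUBLE{H'})}$. Continuity of each element of $K_0$ — essential for ``group of homeomorphisms'' and not automatic for abstract groups — is then read off from these explicit descriptions, which together with the unique factorization $L = \LLl(G)\,K_0$ completes the proof.
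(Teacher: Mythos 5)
Your overall architecture coincides with the paper's: reduce everything to $K_0 := \hULL{\LLl(G),e_G}$ via the factorization of \LEM{basic}, decide normality of $\LLl(G)$ in $L$ by whether $K_0$ consists of automorphisms, characterize $\kappa_G \in \hULL{\LLl(G)}$ by the condition ``$x^2=y^2$ or $xy=yx$'' (the paper's \LEM{-1}), and otherwise show that a nontrivial $f \in K_0$ forces its fixed set to be an Abelian index-$2$ subgroup exhibiting $G$ as a double (the paper's \LEM{non-1}, whose twelve steps are exactly the ``careful analysis of the fix/invert dichotomy'' you anticipate). Your master relation is the same computation as the paper's step S2, and your treatment of the doubles, the re-coordinatization in case (c), and the count $K_0 \cong \{-1,1\}^3$ all match the paper's Lemmas \REF{lem:odd}, \REF{lem:hull1} and \REF{lem:hull2}.

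The one step that fails as written is the blanket claim in your second paragraph: for an \emph{arbitrary} nontrivial $f \in K_0$ the fixed set $F=\{x \in G\dd\ f(x)=x\}$ need not be an Abelian subgroup of index at most two, and $f$ need not be an automorphism. Counterexample: in the iso-singular case $G=\dOUBLE[\tilde p]{(\dOUBLE{H})}$ with $H$ Boolean (for $H=\{-1,1\}$ this is $Q_8$), the map $\kappa_G$ lies in $K_0$, is not an automorphism, and its fixed set is the center, of index $4$; other elements of $K_0$ (identity on three of the four cosets of the center, inversion on the fourth) have fixed sets that are not subgroups at all. Consequently your derivation of case (a) --- ``$K_0 \neq \{\id_G\}$ implies $G$ is a double,'' obtained by running the fixed-set analysis on an arbitrary nontrivial $f$ --- is not sound until you first split on whether $\kappa_G \in \hULL{\LLl(G)}$: if it is, \LEM{-1} directly yields the iterated-double form (so $G$ is in particular a double); only if it is not does the fixed-set/index-$2$ argument (the paper's \LEM{non-1}, which carries exactly this hypothesis) go through. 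Since your third paragraph already contains the $\kappa_G$ criterion, the repair is a reordering rather than a new idea, but as stated the second paragraph asserts something false and the exhaustiveness of the trichotomy rests on it.
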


The above result implies, among other things, that there is a unique topology $\tau$ (namely,
the topology of pointwise convergence) on $L = \hULL{\LLl(G)}$ finer than the topology of pointwise
convergence such that $(L,\tau)$ is a topological group and the function $G \ni x \mapsto L_x \in L$
is an embedding. What is more, in this topology $\LLl(G)$ is open in $L$.\par
For simplicity, we shall call every group $G$ satisfying condition (a) of \THM{L(G)}
an \textit{iso-group}; and if $G$ satifies condition (c), it will be called
\textit{iso-singular}.\par
The proof of \THM{L(G)} is quite elementary. However, it is not so short. We shall precede it
by a few auxiliary results. Under the notation of \THM{L(G)}, it follows from point (c)
of \LEM{basic} that connection $\hULL{\LLl(G),e_G} = \{\id_G\}$ implies that $K = L$. This is true
if $G$ is Boolean; on the other hand, for non-Boolean Abelian groups we have that
$\hULL{\LLl(G),e_G} = \{\id_G,\kappa_G\}$, by \LEM{Abel}. In the sequel we shall show, among other
things, that if $G$ is non-Abelian and $\hULL{\LLl(G),e_G} \neq \{\id_G\}$, then $G$ has a very
special form: it is isomorphic to $\dOUBLE{H}$ for some base pair $(H,p)$ with non-Boolean
$H$.

\begin{lem}{Hp}
For a topological group $G$ \tfcae
\begin{enumerate}[\upshape(i)]
\item there exists a base pair $(H,p)$ such that $G$ is isomorphic to $\dOUBLE{H}$,
\item there is an Abelian (possibly non-closed) subgroup $K$ of $G$ of index $2$ such that $x^2 = q$
   for any $x \in G \setminus K$ and some $q \in K$ such that $q^2 = e_G \neq q$.
\end{enumerate}
Moreover, if $G$, $K$ and $q$ are as in \textup{(ii)}, then $K$ is open and normal (in $G$) and $G$
is isomorphic to $\dOUBLE[q]{K}$.
\end{lem}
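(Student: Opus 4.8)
The plan is to prove the equivalence by handling the two implications separately and extracting the ``moreover'' clause while proving (ii)$\Rightarrow$(i). The implication (i)$\Rightarrow$(ii) I would obtain by simply transporting the structure already recorded in \LEM{Hxp} across an isomorphism. Concretely, if $\phi\dd \dOUBLE{H} \to G$ is an isomorphism of topological groups, I set $K = \phi(H \times \{1\})$ and $q = \phi((p,1))$. By parts (a) and (b) of \LEM{Hxp}, $H \times \{1\}$ is an open normal subgroup of index $2$, abelian (being isomorphic to $H$), every element outside it squares to $(p,1)$, and $(p,1)^2 = e$ while $(p,1) \neq e$ (since $p \neq e_H$). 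Applying $\phi$ delivers exactly condition (ii).

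For (ii)$\Rightarrow$(i) I would fix a point $g_0 \in G \setminus K$. As $K$ has index $2$ it is normal and $G = K \sqcup g_0 K$. The squaring hypothesis forces a conjugation relation: for every $k \in K$ both $g_0 k$ and $g_0$ lie in $G \setminus K$, so $(g_0 k)^2 = q = g_0^2$, and cancelling gives $g_0^{-1} k g_0 = k^{-1}$, i.e. $g_0 k = k^{-1} g_0$ for all $k \in K$ (in particular $q = g_0^2$ is central). I would then define $\Phi\dd \dOUBLE[q]{K} \to G$ by $\Phi(k,1) = k$ and $\Phi(k,-1) = k g_0$, noting that $(K,q)$ is a base pair. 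Using $g_0 k = k^{-1} g_0$ and $g_0^2 = q$, a short check against the three branches of the multiplication in \DEF{Hxp} shows $\Phi$ is a homomorphism; for instance in the branch $j = k' = -1$ one gets $\Phi(x,-1)\Phi(y,-1) = x g_0 y g_0 = x y^{-1} g_0^2 = x y^{-1} q = \Phi(xy^{-1}q,1)$, matching the product $(x,-1)(y,-1) = (xy^{-1}q,1)$. Since $\Phi$ maps $K \times \{1\}$ bijectively onto $K$ and $K \times \{-1\}$ bijectively onto $K g_0 = g_0 K = G \setminus K$, it is a bijective group isomorphism.

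What remains is to upgrade $\Phi$ to a homeomorphism, and here lies the only genuine topological point: I must show $K$ is open, and this is the step I expect to be the main obstacle, since abstractly $K$ is merely an index-$2$ subgroup, which in a general topological group need not be open. The rescue is the squaring hypothesis combined with the fact that $G$, being Polish, is $T_1$. The squaring map $s\dd G \to G$, $s(x) = x^2$, is continuous and $s(e_G) = e_G \neq q$, so (as $\{q\}$ is closed) the set $G \setminus s^{-1}(\{q\})$ is an open neighbourhood of $e_G$; every element of $G \setminus K$ squares to $q$, hence this neighbourhood is contained in $K$, and a subgroup that is a neighbourhood of the identity is open. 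Thus $K$ is open, and therefore clopen, its complement $g_0 K$ being a translate of an open set. Consequently $\Phi$ restricts to homeomorphisms $K \times \{1\} \to K$ (the inclusion) and $K \times \{-1\} \to g_0 K$ (right translation by $g_0$) between clopen pieces, so $\Phi$ is a homeomorphism. Together with the normality of $K$ this establishes both (ii)$\Rightarrow$(i) and the ``moreover'' statement, with $G \cong \dOUBLE[q]{K}$; everything beyond the openness of $K$ is routine once the relation $g_0 k = k^{-1} g_0$ is in hand.
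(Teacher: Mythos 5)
Your proof is correct and follows essentially the same route as the paper's: the forward implication is read off from \LEM{Hxp}, and the converse is established by exhibiting the explicit bijection $(x,1)\mapsto x$, $(x,-1)\mapsto xb$ (your $b=g_0$) and verifying the homomorphism property from the relation $xb=bx^{-1}$, which you derive from $(bx)^2=b^2$ just as the paper derives it from $z^{-1}=zq$ for $z\notin K$. The one point where you genuinely diverge is the openness of $K$: the paper splits into cases, arguing for non-Abelian $G$ that the closure of the Abelian subgroup $K$ is a proper subgroup containing $K$ and hence equals $K$, and for Abelian $G$ that $K=\{x\in G\dd\ x^2=e_G\}$ is closed; you instead give a single unified argument, observing that $G\setminus s^{-1}(\{q\})$ (with $s$ the continuous squaring map) is an open identity neighbourhood contained in $K$, so that $K$, being a subgroup with nonempty interior, is open. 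Your variant is arguably cleaner and avoids the case distinction; both rely on the same separation hypothesis (a closed singleton), which is harmless under the paper's standing convention that all spaces are Polish.
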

\begin{proof}
Implication `(i)$\implies$(ii)' follows from \LEM{Hxp}. Here we shall focus on the converse
implication. Under the assumptions of (ii), $K$ is normal, there is $b \in G \setminus K$ and
$G \setminus K = K b$. Let us check that the formulas $(x,1) \mapsto x$ and $(x,-1) \mapsto xb$
define an isomorphism $\Phi$ of $\dOUBLE[q]{K}$ onto $G$. It is clear that $\Phi$ is a bijection.
Observe that for each $z \in Kb$, $z^4 = q^2 = e_G$ and therefore $z^{-1} = z^3 = z q = q z$. This
means that for each $x \in K$, $xqb = (xb)q = (xb)^{-1} = b^{-1}x^{-1} = qbx^{-1}$ and consequently
$xb = bx^{-1}$. So, for any $x, y \in K$ we have $\Phi(x,1) \cdot \Phi(y,k) = \Phi((x,1)\cdot(y,k))$
for each $k \in \{-1,1\}$ and:
\begin{gather*}
\Phi(x,-1) \cdot \Phi(y,1) = x b y = x y^{-1} b = \Phi(xy^{-1},-1) = \Phi((x,-1)\cdot(y,1)),\\
\Phi(x,-1) \cdot \Phi(y,-1) = xbyb = xy^{-1} b^2 = xy^{-1}q = \Phi(xy^{-1}q,1)
= \Phi((x,-1)\cdot(y,-1))
\end{gather*}
which shows that $\Phi$ is a (possibly discontinuous) homomorphism. Notice that $\Phi$ is
a homeomorphism iff $K$ is open in $G$, iff $K$ is closed (being of index $2$ in $G$). So,
to complete the proof, we only need to verify the closedness of $K$. If $G$ is non-Abelian, then
the closure of $K$ differs from $G$ (since $K$ is Abelian) and thus it has to coincide with $K$,
since $K$ is of index $2$. Finally, if $G$ is Abelian, then $K$ is Boolean and $K = \{x \in G\dd\
x^2 = e_G\}$, which yields the closedness of $K$.
\end{proof}

\begin{lem}{odd}
Let $(H,p)$ be a base pair and $G = \dOUBLE{H}$. Let $\Phi_G\dd G \to G$ be given by $\Phi_G(x,1) =
(x,1)$ and $\Phi_G(x,-1) = (xp,-1) (= (x,-1)^{-1})\ (x \in H)$. Then $\Phi_G \in \hULL{\LLl(G),e_G}
\setminus \{\id_G\}$; $\Phi_G$ is both a homeomorphism and an automorphism of $G$ and
\begin{equation}\label{eqn:split}
\Phi_G \circ L_a \circ \Phi_G^{-1} = L_{\Phi_G(a)}
\end{equation}
for each $a \in G$.
\end{lem}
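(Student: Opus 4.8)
The plan is to establish the three assertions in turn, exploiting the coset description of $\Phi_G$. Writing $\tilde{H} = H \times \{1\}$ and $\tilde{p} = (p,1)$ as in \LEM{Hxp}, the map $\Phi_G$ fixes $\tilde{H}$ pointwise and sends every $w \in G \setminus \tilde{H}$ to $w^{-1}$ (indeed $\Phi_G(x,-1) = (xp,-1) = (x,-1)^{-1}$ by \DEF{Hxp}). From this it is immediate that $\Phi_G(e_G) = e_G$ and, since $p \neq e_H$, that $\Phi_G \neq \id_G$. First I would record that $\Phi_G$ is an involution: on $\tilde{H}$ it is the identity, while for $w \notin \tilde{H}$ the element $\Phi_G(w) = w^{-1}$ again lies outside $\tilde{H}$, so $\Phi_G(\Phi_G(w)) = (w^{-1})^{-1} = w$. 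As $\Phi_G$ is continuous on each of the two clopen cosets, it is a homeomorphism with $\Phi_G^{-1} = \Phi_G$.

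Next I would show $\Phi_G$ is a homomorphism, which gives both the automorphism claim and formula \eqref{eqn:split}. I would verify $\Phi_G(gh) = \Phi_G(g)\Phi_G(h)$ by splitting into the four cases according to which cosets $g$ and $h$ lie in. The two mixed cases reduce exactly to the relation $wzw^{-1} = z^{-1}$ for $z \in \tilde{H}$ and $w \notin \tilde{H}$ supplied by \LEM{Hxp}(c), while the case of two factors outside $\tilde{H}$ uses that $\tilde{p}$ is central (again \LEM{Hxp}(c), with $y = p$) together with $w^{-1} = w\tilde{p}$, valid since $w^2 = \tilde{p}$ and $\tilde{p}^2 = e_G$ by \LEM{Hxp}(b); these also settle the Boolean-$H$ (Abelian $G$) subcase uniformly. (Alternatively the four identities can simply be read off from the multiplication rule in \DEF{Hxp}.) Once $\Phi_G$ is known to be an automorphism, \eqref{eqn:split} is a one-line computation: for any $g \in G$,
$$(\Phi_G \circ L_a \circ \Phi_G^{-1})(g) = \Phi_G\bigl(a \cdot \Phi_G^{-1}(g)\bigr) = \Phi_G(a) \cdot g = L_{\Phi_G(a)}(g).$$

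The heart of the matter is the membership $\Phi_G \in \hULL{\LLl(G)}$, for which (by \DEF{hull}) I must exhibit, for each pair $z, w \in G$, an element $a \in G$ with $\{\Phi_G(z),\Phi_G(w)\} = \{az, aw\}$. The key observation is that $\Phi_G(g)g^{-1}$ equals $e_G$ when $g \in \tilde{H}$ and equals $\tilde{p}$ when $g \notin \tilde{H}$ (the latter because $\Phi_G(g)g^{-1} = g^{-1}g^{-1} = (g^{-1})^2 = \tilde{p}$ by \LEM{Hxp}(b), as $g^{-1} \notin \tilde{H}$). Hence if $z$ and $w$ lie in the same coset I would take $a = \Phi_G(z)z^{-1} = \Phi_G(w)w^{-1} \in \{e_G, \tilde{p}\}$, giving $az = \Phi_G(z)$ and $aw = \Phi_G(w)$ directly. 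If $z$ and $w$ lie in different cosets, say after possibly swapping $z \in \tilde{H}$ and $w \notin \tilde{H}$, I would instead set $a = \Phi_G(z)w^{-1} = zw^{-1}$; then $aw = z = \Phi_G(z)$ automatically, and $az = zw^{-1}z = w^{-1} = \Phi_G(w)$, where the middle equality $zw^{-1}z = w^{-1}$ is precisely the relation $wzw^{-1} = z^{-1}$ rearranged, i.e. \LEM{Hxp}(c) once more. Combining this with $\Phi_G(e_G) = e_G$ and $\Phi_G \neq \id_G$ yields $\Phi_G \in \hULL{\LLl(G),e_G} \setminus \{\id_G\}$.

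The step I expect to be most delicate is this last one, the cross-coset case of the hull membership, since it is the only place where one must genuinely \emph{solve} for $a$ rather than read it off as a value of $\Phi_G(g)g^{-1}$, and where the anticommutation relation \LEM{Hxp}(c) is used in an essential way; everything else is either a routine four-case homomorphism check or a formal consequence of $\Phi_G$ being an automorphism.
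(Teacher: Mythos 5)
Your proof is correct and is essentially the same as the paper's: the elements $a$ you produce for the hull membership ($e_G$, $\tilde{p}$, and $zw^{-1}$ according to the coset configuration) are exactly the elements $e_G$, $(p,1)$ and $(xyp,-1)$ that the paper exhibits in coordinates, just expressed structurally via \LEM{Hxp} instead of via the multiplication table. You additionally write out the automorphism check and \eqref{eqn:split}, which the paper leaves to the reader; both are verified correctly.
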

\begin{proof}
Since $p \neq e_H$, $\Phi_G \neq \id_G$. It is clear that $\Phi_G(e_G) = e_G$ and $\Phi_G$ is
a homeomorphism. For $x, y \in H$ we have $\{\Phi_G(x,1),\Phi_G(y,1)\} =
\{e_G\cdot(x,1),e_G\cdot(y,1)\}$, $\{\Phi_G(x,1),\Phi_G(y,-1)\} =
\{(xyp,-1)\cdot(x,1),(xyp,-1)\cdot(y,-1)\}$ and
$$\{\Phi_G(x,-1),\Phi_G(y,-1)\} = \{(p,1)\cdot(x,-1),(p,1)\cdot(y,-1)\}$$
which gives $\Phi_G \in \hULL{\LLl(G)}$. A verification that $\Phi_G$ is an automorphism and
satisfies condition \eqref{eqn:split} is left to the reader.
\end{proof}

\begin{lem}{-1}
Let $G$ be non-Abelian. \TFCAE
\begin{enumerate}[\upshape(i)]
\item $\kappa_G \in \hULL{\LLl(G)}$,
\item for any $x, y \in G$, $x^2 = y^2$ or $xy = yx$,
\item there is a nontrivial topological Boolean group $H$ and $p \in H \setminus \{e_H\}$ such that
   $G$ is isomorphic to $\dOUBLE[\tilde{p}]{(\dOUBLE{H})}$ where, as usual, $\tilde{p} = (p,1)$.
   In particular, $\card(\{x^2\dd\ x \in G\}) = 2$.
\end{enumerate}
\end{lem}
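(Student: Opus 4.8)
The plan is to prove the three conditions equivalent by a cycle, say (i)$\Rightarrow$(ii)$\Rightarrow$(iii)$\Rightarrow$(i). Condition (iii) is the concrete structural description, so it is easiest to verify that (iii)$\Rightarrow$(i) directly: if $G \cong \dOUBLE[\tilde{p}]{(\dOUBLE{H})}$ with $H$ Boolean, then I want to exhibit $\kappa_G$ as a member of $\hULL{\LLl(G)}$. The cleanest route is to note that this double-double construction has the feature that conjugation and inversion are tightly controlled; in fact for $H$ Boolean, \LEM{Hxp}(c) shows the inner structure is almost Boolean, and I expect that $\card(\{x^2\dd x \in G\}) = 2$ forces inversion to act on each pair $\{x,x^{-1}\}$ in a way realizable by a left translation. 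I would verify $\kappa_G \in \hULL{\LLl(G)}$ by checking, for arbitrary $x,y \in G$, that there is $a \in G$ with $\{x^{-1},y^{-1}\} = \{ax,ay\}$; the natural candidate is $a = x^{-1}y^{-1}$ (which works in the Abelian case by \LEM{Abel}) and one must check the non-Abelian correction still lands in $G$ using $x^2 = y^2 = \tilde{p}$ whenever $x,y$ lie outside the index-two subgroups.

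**The implication (ii)$\Rightarrow$(iii).** This is where I expect the main obstacle. Assume $x^2 = y^2$ or $xy = yx$ for all $x,y$, with $G$ non-Abelian. First I would extract the squaring data: the hypothesis says the map $x \mapsto x^2$ is ``constant on non-commuting pairs,'' which strongly restricts the image $S = \{x^2 \dd x \in G\}$. The key observation is that if $u,v$ do not commute then $u^2 = v^2$, so all squares of ``generic'' (mutually non-commuting) elements coincide; I would argue that $\card(S) = 2$, the two values being $e_G$ and a single central involution $\tilde{p}$. Establishing that $\tilde{p}$ is central and that $\{x \dd x^2 = e_G\}$ is a subgroup is the delicate combinatorial core. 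Once $S = \{e_G,\tilde{p}\}$ is in hand, I would set $A = \{x \dd x^2 \in \{e_G\}\}$... more carefully, I would locate the Boolean part and apply \LEM{Hp} twice: once to recognize an index-two Abelian (indeed Boolean-plus-$p$) subgroup via the condition $x^2 = \tilde{p}$ off a subgroup, giving $G \cong \dOUBLE{(\,\cdot\,)}$, and a second time after identifying that the inner factor itself has the base-pair form $\dOUBLE{H}$ with $H$ Boolean. The bookkeeping of which elements square to $e_G$, to $\tilde{p}$, and which commute is the part that will require genuine care rather than a formula.

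**The implication (i)$\Rightarrow$(ii).** This direction should be comparatively short. Assuming $\kappa_G \in \hULL{\LLl(G)}$, I apply the defining property of the symmetrized $2$-hull: for every $x,y \in G$ there is $a \in G$ with $\{x^{-1},y^{-1}\} = \{ax,ay\}$. I would split into the two matching cases. If $x^{-1} = ax$ and $y^{-1} = ay$, then $a = x^{-2} = y^{-2}$, giving $x^2 = y^2$. If instead $x^{-1} = ay$ and $y^{-1} = ax$, then $a = x^{-1}y^{-1} = y^{-1}x^{-1}$, whence $xy = yx$. Thus every pair satisfies $x^2 = y^2$ or $xy = yx$, which is exactly (ii). This case analysis is routine once the hull property is written out.

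**Summary of strategy and the hard point.** Overall I would run the cycle (i)$\Rightarrow$(ii)$\Rightarrow$(iii)$\Rightarrow$(i), leaning on \LEM{basic}(d) (the fact that any $f \in \hULL{\LLl(G),e_G}$ respects $\{x,x^{-1}\}$ and restricts to subgroups) and on \LEM{Hp} as the recognition criterion for the $\dOUBLE{\,\cdot\,}$ form. The two short legs are (i)$\Rightarrow$(ii), which is pure case-chasing on the hull condition, and (iii)$\Rightarrow$(i), which is a direct verification that $\kappa_G$ lies in the hull for the iterated double. The genuinely hard leg is (ii)$\Rightarrow$(iii): turning the dichotomy ``equal squares or commuting'' into the rigid statement that $G$ is an iterated double extension of a Boolean group by two involutions. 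The crux there is proving $\card(\{x^2 \dd x \in G\}) = 2$ with the unique nontrivial square central, after which the structural recognition is an organized double application of \LEM{Hp}.
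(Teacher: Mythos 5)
Your overall architecture is sound and is essentially the paper's: the equivalence of (i) and (ii) is indeed the routine two-case matching you describe (and the paper dismisses it in one sentence), the verification that the concrete group $\dOUBLE[\tilde{p}]{(\dOUBLE{H})}$ satisfies the dichotomy reduces to computing its set of squares and its center via \LEM{Hxp}, and the endgame of (ii)$\implies$(iii) --- a double application of \LEM{Hp}, first to an index-two Abelian subgroup $K$ with $x^2=p$ off $K$, then to $K$ itself --- is exactly how the paper closes. So there is no divergence of route to report.

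The problem is that the one leg you yourself identify as ``the genuinely hard'' one is not proved but only announced. Everything in (ii)$\implies$(iii) hinges on three structural facts that you label ``the delicate combinatorial core'' and then skip: (a) the center of $G$ is precisely $\{z \in G\dd\ z^2 = e_G\}$; (b) all elements with nontrivial square share a single common square $p$, and $p^2 = e_G$; (c) for a fixed non-central $c$, the centralizer $K$ of $c$ equals $Z \cup cZ$, is Abelian, and has index $2$. None of these follows formally from ``non-commuting pairs have equal squares'' without an extra idea. The paper's engine is the observation that if $xy \neq yx$ and $z$ commutes with both $x$ and $y$, then $(zx)y \neq y(zx)$, so the hypothesis applied to the pair $(zx,y)$ gives $y^2 = (zx)^2 = z^2x^2 = z^2y^2$ and hence $z^2 = e_G$; this single trick drives the identification of the center with the involutions, and from there the constancy of the nontrivial square, the description of the centralizer, and the closure property $x,y \notin K \implies xy \in K$ are each short but separate verifications (six steps in the paper). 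Since your write-up contains no substitute for this argument --- in particular no reason why an involution must be central, nor why $\card(\{x^2\dd\ x\in G\})$ cannot exceed $2$ --- the implication (ii)$\implies$(iii), which is the entire content of the lemma, remains unproved. To repair the proposal you would need to supply this chain of elementary but non-automatic deductions.
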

\begin{proof}
The equivalence of (i) and (ii) is straightforward. To see that (iii) is followed by (ii), first
note that $K = \dOUBLE{H}$ is Abelian (by \LEM{Hxp}) and that $\{y^2\dd\ y \in K\} =
\{\tilde{p},e_K\}$. Thus, $K$ is non-Boolean and therefore the center of $L :=
\dOUBLE[\tilde{p}]{K}$ coincides with $\{z \in L\dd\ z^2 = e_L\}$. Consequently, the assertion
of (ii) now easily follows since $\{z^2\dd\ z \in L\} = \{(\tilde{p},1),e_L\}$. The main point is
to show that (iii) is implied by (ii).\par
Assume $G$ satisfies (ii). Since $G$ is non-Abelian, there are points $a, b, c \in G$ such that
\begin{equation}\label{eqn:nonAbel}
ab \neq ba \qquad \textup{and} \qquad c^2 \neq e_G.
\end{equation}
Everywhere below, $x, y$ and $z$ denote arbitrary elements of $G$ and $Z$ denotes its center. Let
$p = c^2 (\neq e_G)$ and $K = Z \cup cZ$. We divide the proof into a few steps.\par
\begin{enumerate}[\bfseries S1.]
\item If $xy \neq yx$, $zx = xz$ and $yz = zy$, then $z^2 = e_G$.\par
   Proof: It follows from (ii) that $x^2 = y^2$. Similarly, since $(zx) y \neq z(yx) = y(zx)$,
   another application of (ii) gives $y^2 = (zx)^2 = z^2 x^2 = z^2 y^2$ and therefore $z^2 = e_G$.
\item $z \in Z(G) \iff z^2 = e_G$.\par
   Proof: Implication `$\implies$' follows from \eqref{eqn:nonAbel} and S1. To see the converse,
   assume $z^2 = e_G$. If $x^2 \neq e_G$, (ii) gives $zx = xz$. Finally, if $x^2 = e_G$, then
   $c^2 \neq x^2$ and $c^2 \neq z^2$ (cf. \eqref{eqn:nonAbel}) and hence, again thanks to (ii),
   $cx = xc$ and $cz = zc$. Since $c^2 \neq e_G$, S1 yields $zx = xz$.
\item $\{x^2\dd\ x \in G\} = \{p,e_G\}$, $p^2 = e_G$ and $K$ is an Abelian subgroup of $G$.\par
   Proof: It follows from S2 that there is $u \in G$ for which $c u \neq u c$. Then, by (ii), $u^2 =
   p$. Assume $x^2 \neq p$. Another applications of (ii) give: $xc = cx$ (since $x^2 \neq c^2$) and
   $xu = ux$ (since $x^2 \neq u^2$). Now S1 implies that $x^2 = e_G$. This proves the first
   assertion of S3, which is followed by the remainder (because $c^2 = p \in Z \subset K$).
\item $K = \{x \in G\dd\ xc = cx\}$.\par
   Proof: Since $K$ is Abelian, we only need to check that if $xc = cx$, then $x \in K$. Taking into
   account S2, we may assume that $x^2 \neq e_G$ and hence, by S3, $x^2 = p$. Then $(xc)^2 = x^2 c^2
   = p^2 = e_G$ and consequently $xc \in Z$ or, equivalently, $x \in Zc^{-1} = Zc^3 = Zp c =
   Zc \subset K$.
\item $xc \in \{cx,pcx\} \cup Z$.\par
   Proof: We may assume that $xc \neq cx$ (and thus $x^2 = p$, by (ii)) and $(xc)^2 \neq e_G$
   (by S2). Then $(xc)^2 = p$ (cf. S3) and consequently $xcxc = c^2$ from which we deduce that
   $xcx = c$, $xc = cx^3$ (by S3) and finally $xc = cpx = pcx$.
\item $x, y \notin K \implies xy \in K$.\par
   Proof: It follows from S4 that $xc \neq cx$ and $yc \neq cy$. Now since $xc, yc \notin K$,
   S5 gives $cx = pxc$ and $cy = pyc$. So, $c (xy) = pxcy = p^2xyc = (xy) c$ and therefore
   $xy \in K$ (by S4).
\end{enumerate}
We are now ready to complete the proof. It follows from S3 and S6 that $K$ is an Abelian subgroup
of $G$ which has index $2$. What is more, S3 and S2 show that $x^2 = p \in K$ for any $x \notin K$
(since then $x \notin Z$). So, \LEM{Hp} yields that $G$ is isomorphic to $\dOUBLE{K}$. But
it is easily seen (since $K = Z \cup cZ$ and $c^2 = p \in Z$) that $K$ is isomorphic to $\dOUBLE{Z}$
(again apply \LEM{Hp}). Observe that under the identification of $K$ with $\dOUBLE{Z}$ given
in the proof of \LEM{Hp}, the point $p \in K$ corresponds to $\tilde{p} = (p,1) \in \dOUBLE{Z}$ and
thus $G$ is isomorphic to $\dOUBLE[\tilde{p}]{(\dOUBLE{Z})}$.
\end{proof}

\begin{lem}{non-1}
Suppose $\kappa_G \notin \hULL{\LLl(G)}$ and let $f \in \hULL{\LLl(G),e_G}$ be different from
$\id_G$. Then:
\begin{enumerate}[\upshape(P1)]
\item the set $K := \{x \in G\dd\ f(x) = x\}$ is an Abelian subgroup of $G$ of index $2$,
\item there is $p \in K \setminus \{e_G\}$ such that $p^2 = e_G$ and for any $x \in K$ and $y \in
   G \setminus K$, $xyx = y$ and $f(y) = y^{-1}$,
\item $\{x^2\dd\ x \in K\} \not\subset \{p,e_G\}$,
\item $G$ is isomorphic to $\dOUBLE{K}$.
\end{enumerate}
\end{lem}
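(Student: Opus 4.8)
The plan is to read off from the involution $f$ the multiplicative relations it forces, and to push these until $G$ is pinned down as $\dOUBLE{K}$. First I would record the starting data. Since $f \in \hULL{\LLl(G),e_G}$ differs from $\id_G$, and since by \LEM{Abel} every non-identity member of $\hULL{\LLl(G),e_G}$ for an Abelian $G$ equals $\kappa_G$ (so that $\kappa_G \in \hULL{\LLl(G)}$), the standing hypothesis $\kappa_G \notin \hULL{\LLl(G)}$ forces $G$ to be non-Abelian. Applying the defining property of the symmetrized $2$-hull to the pairs $(x,e_G)$ gives $f(x) \in \{x,x^{-1}\}$ for every $x$; hence $K = \{x \in G \dd f(x) = x\}$ is exactly the fixed-point set of $f$, it contains every involution of $G$, and \eqref{eqn:x-x-1} shows it is closed under inversion. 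Also $f$ is an involution by \LEM{basic}(b).

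Next I would extract the two governing relations. Feeding the hull property the mixed pairs $(x,y)$ with $x \in K$ and $y \notin K$ gives $\{x,y^{-1}\} = \{ax,ay\}$, whose only admissible matching yields $xyx = y$, i.e. $y^{-1}xy = x^{-1}$: conjugation by any $y \notin K$ inverts $K$ pointwise. This is precisely the relation in (P2), and $f(y) = y^{-1}$ holds by the definition of $K$. Feeding it pairs inside $G \setminus K$ yields the dichotomy ``$y^2 = {y'}^2$ or $yy' = y'y$'', a local version of condition (ii) of \LEM{-1}. From the first relation one obtains the pivotal equivalence for $x_1,x_2 \in K$: conjugating $x_1 x_2$ by a fixed $y \notin K$, together with the fact that $K$ contains all involutions, shows $x_1 x_2 \in K$ if and only if $x_1 x_2 = x_2 x_1$. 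Thus (P1) is reduced to proving that $K$ is Abelian, and the natural candidate for $p$ is $y^2$ with $y \notin K$.

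The heart of the argument is to establish that $K$ is an Abelian subgroup of index $2$ and that $y^2$ is a single element $p \in K$ with $p^2 = e_G \neq p$. Here I would run a step-by-step analysis in the spirit of \LEM{-1}, with the two relations above replacing the global condition (ii). The delicate point, which I expect to be the \emph{main obstacle}, is excluding a non-commuting pair $x_1,x_2 \in K$: by the equivalence just proved such a pair forces $x_1^2 = x_2^2$ to be the unique involution and makes $\langle x_1,x_2\rangle$ a copy of the quaternion group, which is exactly the iso-singular degeneration. I would rule it out by tracking how the globally single-valued $f$, together with relations (a) and (b), constrains elements lying outside this subgroup, the point being that its presence would make condition (ii) of \LEM{-1} hold throughout $G$ and hence return $\kappa_G$ to $\hULL{\LLl(G)}$, contrary to hypothesis. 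The same bookkeeping forces $y^2$ to be constant on $G \setminus K$ and equal to an element $p \in K$ with $p^2 = e_G \neq p$.

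Once $K$ is known to be an Abelian subgroup of index $2$ with $x^2 = p$ for all $x \notin K$, \LEM{Hp} delivers the isomorphism $G \cong \dOUBLE{K}$, which is (P4) and completes (P2). Finally (P3) is where the hypothesis enters one last time: were $\{x^2 \dd x \in K\} \subseteq \{p,e_G\}$, then $K$ itself would be of the form $\dOUBLE[p]{H'}$ with $H'$ Boolean, so $G \cong \dOUBLE[\tilde{p}]{(\dOUBLE{H'})}$ with $\tilde{p} = (p,1)$; by \LEM{-1} this puts $\kappa_G \in \hULL{\LLl(G)}$, a contradiction. Hence $\{x^2 \dd x \in K\} \not\subset \{p,e_G\}$, establishing (P3) and finishing the proof.
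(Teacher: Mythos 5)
Your opening moves and your endgame coincide with the paper's: the derivation of $f(x)\in\{x,x^{-1}\}$, the relation $xyx=y$ for $x\in K$ and $y\notin K$, the ``equal squares or commuting'' dichotomy on $G\setminus K$, the appeal to \LEM{Hp} for (P4), and the proof of (P3) by contradiction through \LEM{-1} are all correct. (Your route to (P3) is in fact organized differently from the paper's, which must produce $q\in K$ with $q^2\notin\{p,e_G\}$ \emph{before} knowing that $K$ is a subgroup, since it then identifies $K$ as the centralizer of $q$; your order would also work once everything else is in place.) The genuine gap is that the central block --- $K$ is closed under multiplication, $K$ has index $2$, and $y\mapsto y^2$ is constant on $G\setminus K$ --- is asserted rather than proved, and the sketch you offer for it would not survive execution as written.

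Concretely: first, the tool that powers the paper's twelve-step argument never appears in your plan, namely restricting $f$ to the subgroup generated by two commuting elements and applying \LEM{basic}(d) together with \LEM{Abel} to conclude that $f$ is either the identity or the inversion there. Without it, even the backward half of your ``pivotal equivalence'' is unproved: from $x_1x_2=x_2x_1$ the conjugation computation only shows that $y$ carries $x_1x_2$ to its inverse, which does not place $x_1x_2$ in $K$, because nothing established so far says that conjugation by $y$ inverts \emph{only} elements of $K$ (in $\dOUBLE{K}$ it does invert some elements of the nontrivial coset as well). Second, index $2$ means $uu'\in K$ for all $u,u'\in G\setminus K$; this is the paper's S5--S7 (a product of two elements of $U:=G\setminus K$ centralizes $K$, and $x,y,xy\in U$ would force $f=\kappa_G$), and your plan does not address it, so ``(P1) is reduced to proving that $K$ is Abelian'' is not accurate. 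Third, your proposed contradiction (``a non-commuting pair in $K$ makes condition (ii) of \LEM{-1} hold throughout $G$'') is plausible in outcome but, to be carried out, needs exactly the facts you are postponing: that all squares on $G\setminus K$ equal a single $p$ (the paper's S8) and that the involutions form the center (S9) --- without the latter, condition (ii) cannot be verified for a pair consisting of an involution and an element whose square is not $e_G$. The skeleton is right; the load-bearing middle of the proof is missing.
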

\begin{proof}
Let $U = \{x \in G\dd\ f(x) \neq x\}$, $V = \{x \in G\dd\ f(x) \neq x^{-1}\}$ and $W = \{x \in G\dd\
x^2 = e_G\}$. It follows from \eqref{eqn:x-x-1} that $U$, $V$ and $W$ are pairwise disjoint, $G =
U \cup V \cup W$ and $x \in X \iff x^{-1} \in X$ whenever $x \in G$ and $X \in \{U,V,W\}$. Further,
the assumptions imply that $U$ and $V$ are nonempty. Observe also that $K = G \setminus U$. For
further usage, fix $u \in U$ and $v \in V$ and put $p := u^2$. As previously, we divide the proof
into steps. Everywhere below, $x$, $y$ and $z$ denote arbitrary elements of $G$.
\begin{enumerate}[\bfseries S1.]
\item Suppose $xy = yx$. If $x \in U$, then $f(y) = y^{-1}$; if $x \in V$, then $f(y) = y$.\par
   Proof: Let $C$ be the group generated by $x$ and $y$. We infer from \LEM{basic} that $f\bigr|_C
   \in \hULL{\LLl(C),e_C}$. Since $C$ is Abelian, \LEM{Abel} yields that $f\bigr|_C = \id_C$
   or $f\bigr|_C = \kappa_C$. Now the assertion easily follows.
\item If $x \in U$ and $y \in K$, then $yxy = x$.\par
   Proof: Let $a \in G$ be such that $(\{x^{-1},y\} =) \{f(x),f(y)\} = \{ax,ay\}$. If $y = ay$, then
   $a = e_G$ and $x^{-1} = x$ which is false. Thus $y = ax$ and $x^{-1} = ay$ from which we may
   deduce that indeed $yxy = x$.
\item If $x, y \notin V$, then $xy = yx$ or $x^2 = y^2$.\par
   Proof: Notice that then $f(x) = x^{-1}$ and $f(y) = y^{-1}$ and use the fact that $f \in
   \hULL{\LLl(G)}$ (cf. points (i) and (ii) in \LEM{-1}).
\item If $x \in U$, then $x^4 = e_G$.\par
   Proof: Suppose, for the contrary, that $x^4 \neq e_G$. It follows from S1 that $f(x^2) = x^{-2}
   (\neq x^2)$ and hence $x^2 \in U$. Now S2 gives $vxv = x$ and $vx^2v = x^2$ as well. So, $vx^2v =
   xvxv$ and consequently $vx = xv$. Another usage of S1 yields that $f(v) = v^{-1}$ which is false.
\item If $x \in U$, then $xy, yx \in U$ or $x^2 = y^2$.\par
   Proof: If $xy \notin U$, then S2 implies that $(xy) x (xy) = x$ and thus $yx^2y = e_G$, $x^2 =
   y^{-2}$, $y^2x^2 = e_G$ and finally, by S4, $y^2 = x^2$. Similarly, if $yx \notin U$, then $y^2 =
   x^2$ which completes the proof of S5.
\item $x, y \in U$, $z \in K \implies (xy) z = z (xy)$.\par
   Proof: It follows from S2 that $zxz = x$ and $zyz = y$. But then $yzy^{-1} = z^{-1}$ and $z =
   xz^{-1}x^{-1}$. So, $z = x(yzy^{-1})x^{-1} = (xy) z (xy)^{-1}$ and we are done.
\item $x,y \in U \implies xy \in K$.\par
   Proof: It suffices to show that if $x, y, xy \in U$ for some $x$ and $y$, then $f = \kappa_G$.
   Fix $z$. It follows from \eqref{eqn:x-x-1} that $f(z) \in \{z,z^{-1}\}$. So, we only need
   to check that if $f(z) = z$, then $f(z) = z^{-1}$ as well. But this is immediate: if $f(z) = z$,
   then S6 gives $(xy)z = z(xy)$ and thus, since $xy \in U$, $f(z) = z^{-1}$, by S1.
\item $\{x^2\dd\ x \in U\} = \{p\}$ and $p^2 = e_G \neq p$.\par
   Proof: Let $x \in K$. By S7, $xu \notin U$ and hence, thanks to S5, $x^2 = u^2 (= p)$. Finally,
   $p^2 = u^4 = e_G$, by S4, and, of course, $p \neq e_G$ since $u \notin W$.
\item $W$ coincides with the center of $G$.\par
   Proof: It may be deduced from S1 that each element of $U$ commutes with no element of $V$ and
   therefore the center of $G$ is contained in $W$. Conversely, if $x^2 = e_G$ and $y$ is arbitrary,
   we have the following four possibilities:
   \begin{enumerate}[(1$^\circ$)]
   \item $y \in U$; then $y^2 \neq x^2$ and hence $yx = xy$, by S3,
   \item $y^2 \neq p$; then $y^2 \neq u^2$ and therefore $yu \in U$ (by S5) and it follows from
      (1$^{\circ}$) that $x$ commutes with both $u$ and $yu$ which easily implies that $xy = yx$,
   \item $y^2 = p$ and $(xy)^2 \neq p$; then, by (2$^{\circ}$), $xy$ commutes with $x$ and
      consequently $xy = yx$,
   \item $y^2 = (xy)^2 = p$; then $xyx = y$ and therefore $xy = yx$, since $x^2 = e_G$.
   \end{enumerate}
\item There is $q \in K$ such that $q^2 \notin \{p,e_G\}$.\par
   Proof: Since $\kappa_G \notin \hULL{\LLl(G)}$, \LEM{-1} implies that there are $x, y \in G$ such
   that $xy \neq yx$ and $x^2 \neq y^2$. Then $x, y \notin W$ (thanks to S9) and there is $q \in
   \{x,y\}$ such that $q^2 \neq p$. Then $q^2 \notin \{p,e_G\}$ and consequently $q \in K$ (by S8).
\item $K$ is a subgroup of $G$.\par
   Proof: Let $q$ be as in S10. It suffices to check that $K$ coincides with the centralizer of $q$,
   that is, $K = \{x \in G\dd\ xq = qx\}$. Since $q \in V$, the inclusion `$\supset$' follows from
   S1. To see the converse, first note that $qu \in U$, since $q^2 \neq u^2$ (cf. S5). As mentioned
   earlier, $u^{-1} \in U$ as well. So, if $x \in K$, an application of S6 gives $(qu \cdot u^{-1})
   x = x (qu \cdot u^{-1})$ and we are done.
\item $K$ is Abelian.\par
   Proof: The map $\varphi\dd K \ni x \mapsto uxu^{-1} \in G$ is a homomorphism. However, since
   $u \in U$, it follows from S2 that $xux = u$ for $x \in K$ and therefore $\varphi(x) = x^{-1}$.
   So, $\kappa_K$ is a homomorphism from which we infer the assertion of S12.
\end{enumerate}
Now we are ready to finish the proof. It follows from S11, S12 and S7 that $p \in K$ and point (P1)
is fulfilled. Further, S8, S2 and the definition of $K$ give point (P2), while (P3) is covered
by S10. So, an application of \LEM{Hp} yields (P4) and we are done.
\end{proof}

\begin{lem}{hull1}
Let $(H,p)$, $G$ and $\Phi_G$ be as in \textup{\LEM{odd}}. If $\{x^2\dd\ x \in H\} \not\subset
\{p,e_H\}$, then $\hULL{\LLl(G),e_G} = \{\Phi_G,\id_G\}$.
\end{lem}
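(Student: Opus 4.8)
The plan is to exploit the parity structure of $G = \dOUBLE{H}$, writing elements as pairs $(x,j)$ with $j \in \{-1,1\}$ and setting $\tilde{H} = H \times \{1\}$. Fix an arbitrary $f \in \hULL{\LLl(G),e_G}$; by \LEM{basic} this set is a Boolean group under composition already containing $\id_G$ and, by \LEM{odd}, $\Phi_G$, so it suffices to show $f \in \{\id_G,\Phi_G\}$. First I would record that $f$ preserves parity: $f(\tilde{H}) \subset \tilde{H}$ by \LEM{basic}(d), while \eqref{eqn:x-x-1} applied to $(y,-1)$ (whose inverse is $(yp,-1)$) gives $f(y,-1) \in \{(y,-1),(yp,-1)\}$ for every $y \in H$. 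Hence there is a function $\sigma\dd H \to \{e_H,p\}$ with $f(y,-1) = (y\sigma(y),-1)$. Since $\tilde{H} \cong H$ is Abelian (\LEM{Hxp}), \LEM{basic}(d) together with \LEM{Abel} forces $f|_{\tilde{H}}$ to be either the identity or the inversion $\kappa$; identifying $\tilde{H}$ with $H$, either $f(x,1) = (x,1)$ for all $x$, or $f(x,1) = (x^{-1},1)$ for all $x$.

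The engine of the argument is then to translate the defining property of $\hULL{\LLl(G)}$---that each pair of points is matched by a single left translation $L_a$---into algebraic identities, splitting on the parity of the matching element $a$ and using the explicit multiplication of \DEF{Hxp}. Running this for a mixed pair $(x,1),(y,-1)$ in the case $f|_{\tilde{H}} = \kappa$ yields $\sigma(y) \in \{x^{-2}, x^{-2}p\}$ for every $x \in H$, so that $x^{2} \in \{e_H,p\}$ for all $x$. This contradicts the hypothesis $\{x^2\dd\ x \in H\} \not\subset \{e_H,p\}$; thus the inversion case is impossible and $f|_{\tilde{H}} = \id$ (a conclusion consistent with both $\id_G$ and $\Phi_G$, whose even parts are the identity).

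It remains to pin down $\sigma$. Because $\Phi_G \circ f$ again lies in the group $\hULL{\LLl(G),e_G}$, has trivial even part, and carries $\sigma$ to $p\sigma$, I may assume without loss of generality that $\sigma(e_H) = e_H$; I will then prove $\sigma \equiv e_H$, so that $f = \id_G$, and undoing the normalization gives the original $f \in \{\id_G,\Phi_G\}$. The decisive relation comes from a pair of two odd elements $(x,-1),(y,-1)$: matching by a translation, which parity forces to be even, shows that for all $x,y$ either $\sigma(x) = \sigma(y)$ or $x^{2}\sigma(x) = y^{2}\sigma(y)$. Specializing $y = e_H$ shows that $\sigma(y_0) = p$ forces $y_0^{2} = p$; in particular the element $w$ with $w^{2} \notin \{e_H,p\}$ satisfies $\sigma(w) = e_H$. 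If some $y_0$ had $\sigma(y_0) = p$, then pairing $w$ with $y_0$ (where $\sigma(w) = e_H \neq p = \sigma(y_0)$) would give $w^{2}\sigma(w) = y_0^{2}\sigma(y_0)$, i.e. $w^{2} = p \cdot p = e_H$, contradicting $w^{2} \neq e_H$. Hence $\sigma \equiv e_H$ and $f = \id_G$.

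The main obstacle is the bookkeeping in the parity case split: the two odd-by-odd products in \DEF{Hxp} introduce both an inversion and the twisting factor $p$, so one must track these carefully to extract the correct equations for $\sigma$ in each of the mixed and the odd-odd pairings. Once the two dichotomies are in hand---the mixed-pair relation forcing $x^{2}\in\{e_H,p\}$ in the inversion case, and the odd-pair relation `$\sigma(x)=\sigma(y)$ or $x^{2}\sigma(x)=y^{2}\sigma(y)$'---the hypothesis on squares in $H$ does all the remaining work, and everything is purely algebraic (no topology is needed, since $\hULL{\LLl(G),e_G}$ is treated as a group of maps).
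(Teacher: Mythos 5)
Your proof is correct, but it takes a genuinely different route from the paper's. The paper first invokes \LEM{-1} to rule out $\kappa_G \in \hULL{\LLl(G)}$ and then applies the heavy structural \LEM{non-1} to conclude that the fixed-point set $V$ of any $f \in \hULL{\LLl(G),e_G} \setminus \{\id_G\}$ is an Abelian subgroup of index $2$ with $f = \kappa_G$ off $V$; the hypothesis on squares is then used only once, in a centralizer computation showing $V \subset H \times \{1\}$. You instead work entirely in coordinates on $\dOUBLE{H}$: parity preservation follows from \LEM{basic}(d) and \eqref{eqn:x-x-1}, the restriction $f\bigr|_{\tilde{H}}$ is pinned down to $\{\id,\kappa\}$ via \LEM{Abel}, the mixed-pair computation (correctly yielding $\sigma(y) \in \{x^{-2},x^{-2}p\}$ in both parities of the matching translation) kills the inversion case, and the odd-odd dichotomy ``$\sigma(x)=\sigma(y)$ or $x^2\sigma(x)=y^2\sigma(y)$'' together with the normalization by $\Phi_G$ forces $\sigma$ to be constant. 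I checked the two case splits against the multiplication table of \DEF{Hxp} and they are right, as are the small points one could worry about (the two-point condition is only needed for distinct points, and the elements you pair are distinct because their $\sigma$-values or parities differ). What each approach buys: yours is self-contained and elementary for this particular lemma, needing only \LEM{basic}, \LEM{Abel} and \LEM{odd}, and it makes transparent exactly where the hypothesis $\{x^2\dd\ x \in H\} \not\subset \{p,e_H\}$ enters (twice, in fact); the paper's version is shorter on the page because it amortizes the work into \LEM{non-1}, which is needed anyway for the proof of \THM{L(G)}, where one does not yet know that $G$ has the form $\dOUBLE{H}$.
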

\begin{proof}
It follows from the assumptions that $H$ is non-Boolean and thus $G$ is non-Abelian. What is more,
we conclude from point (iii) of \LEM{-1} that $\kappa_G \notin \hULL{\LLl(G)}$. Thus, if $f \in
\hULL{\LLl(G),e_G} \setminus \{\id_G\}$, \LEM{non-1} implies that $V = \{x \in G\dd\ f(x) = x\}$ is
an Abelian subgroup of $G$ of index $2$, $xyx = y$ for any $x \in V$ and $y \in G \setminus V$, and
$f$ coincides with $\kappa_G$ on $G \setminus V$. So, to convince that $f = \Phi_G$, we only need
to check that $V = H \times \{1\}$. Since both these groups have index $2$, it is enough to verify
that $V \subset H \times \{1\}$. But if $(a,-1) \in V$ for some $a \in H$, then $V$ (being Abelian)
is contained in the centralizer of $(a,-1)$ which coincides with $\{(ax,-1)\dd\ x \in H,\ x^2 =
e_H\} \cup \{(x,1)\dd\ x \in H,\ x^2 = e_H\}$ (cf. point (c) of \LEM{Hxp}). Hence, if $b \in H$ is
such that $b^2 \notin \{p,e_H\}$, then $(b,1) \notin V$ and therefore $(a,-1) \cdot (b,1) \cdot
(a,-1) = (b,1)$. But this is false since $(a,-1) \cdot (b,1) \cdot (a,-1) = (b^{-1}p,1)$ and $b^2
\neq p$. The proof is complete.
\end{proof}

Our last result necessary for giving a proof of \THM{L(G)} is the following

\begin{lem}{hull2}
Let $H$ be a nontrivial topological Boolean group, $p \in H \setminus \{e_H\}$ and let $G =
\dOUBLE[\tilde{p}]{(\dOUBLE{H})}$. Then $\kappa_G \in \hULL{\LLl(G),e_G}$, $\LLl(G)$ is non-normal
in $\hULL{\LLl(G)}$ and $\hULL{\LLl(G),e_G}$ consists of homeomorphisms and is isomorphic
to $\{-1,1\}^3$.
\end{lem}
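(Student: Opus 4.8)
The plan is to prove the three assertions separately, treating the description of $\hULL{\LLl(G),e_G}$ as the substantial part and obtaining the first two quickly. Writing $K = \dOUBLE{H}$, point (d) of \LEM{Hxp} shows that $K$ is non-Boolean and, since $H$ is Boolean, that $K$ is Abelian; thus $(K,\tilde{p})$ is a base pair with $K$ Abelian, and the same point applied to $G = \dOUBLE[\tilde{p}]{K}$ shows that $G$ is non-Abelian. Because $G$ is exactly of the form appearing in condition (iii) of \LEM{-1}, that lemma yields $\kappa_G \in \hULL{\LLl(G)}$; as $\kappa_G(e_G) = e_G$, we get $\kappa_G \in \hULL{\LLl(G),e_G}$, which is the first claim. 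For non-normality, a direct computation gives $\kappa_G \circ L_a \circ \kappa_G = (x \mapsto x a^{-1})$, a \emph{right} translation, which is a left translation precisely when $a$ is central. Since $G$ is non-Abelian I may choose a non-central $a$; then $\kappa_G \circ L_a \circ \kappa_G \in \hULL{\LLl(G)}$ (a group, by \LEM{basic}(a)) but does not lie in $\LLl(G)$, so $\LLl(G)$ is non-normal in $\hULL{\LLl(G)}$.

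For the third claim I would set up explicit coordinates, identifying the underlying space of $G$ with $H \times \{-1,1\} \times \{-1,1\}$ (product topology, the two sign factors discrete), and introduce the pair-of-signs map $\sigma\dd G \to \{-1,1\}^2$. A few elementary calculations in the iterated twisted product establish: (i) $\sigma$ is a continuous homomorphism; (ii) $g^{-1}$ has the same signs as $g$ and differs from $g$ only in the $H$-coordinate, by a factor in $\{e_H,p\}$; and (iii) $g^2 = e_G$ exactly when $\sigma(g) = (1,1)$, while otherwise $g^2 = \pi$, where $\pi := ((p,1),1)$ is a central involution, so that $\{g^2\dd\ g \in G\} = \{e_G,\pi\}$ (cf. \LEM{-1}(iii)). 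By \LEM{basic}(d) every $f \in \hULL{\LLl(G),e_G}$ satisfies $f(g) \in \{g,g^{-1}\}$; hence $f$ preserves $\sigma$ and only alters the $H$-coordinate, and I encode $f$ by $\theta\dd G \to \ZZZ/2$ with $\theta(g) = 1$ iff $f(g) = g^{-1} \neq g$, so that $f(g)g^{-1} = \pi^{\theta(g)}$ lies in the central subgroup $\{e_G,\pi\}$.

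The heart of the argument is to rewrite membership in the hull as a condition on $\theta$. For a pair $g,g'$ the requirement that some $L_a$ satisfy $\{f(g),f(g')\} = \{ag,ag'\}$ splits into the ``ordered'' case $\theta(g) = \theta(g')$ and the ``swapped'' case, which, using centrality of $\pi$, reduces to $\pi^{\theta(g)+\theta(g')} = (g'g^{-1})^2$. Since $(g'g^{-1})^2 = e_G$ iff $\sigma(g) = \sigma(g')$ by (iii) and the homomorphism property of $\sigma$, the two cases together collapse to the single demand that $\theta(g) = \theta(g')$ whenever $\sigma(g) = \sigma(g')$; taking $g' = e_G$ forces in addition $\theta \equiv 0$ on $\sigma^{-1}(1,1)$. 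Thus the admissible $\theta$ are exactly those factoring through $\sigma$ and vanishing at $(1,1)$, of which there are precisely $2^3$, and composition of the corresponding maps corresponds to mod-$2$ addition of the $\theta$'s (note $\theta(g) = \theta(g^{-1})$ is automatic). This gives $\hULL{\LLl(G),e_G} \cong (\ZZZ/2)^3 \cong \{-1,1\}^3$; and each such $f$, being on every clopen sheet $H \times \{(i,\epsilon)\}$ either the identity or left multiplication by the central element $\pi$ (i.e. translation of the $H$-coordinate by $p$), is a homeomorphism.

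The main obstacle is precisely this reduction: one must verify the two clean structural facts that $f(g)g^{-1}$ always lands in the central two-element subgroup $\{e_G,\pi\}$ and that $(g'g^{-1})^2 = e_G$ holds exactly when $g$ and $g'$ share the same signs, and then dispose of the swapped case of the hull condition. Everything else---the coordinate computations, the count of admissible $\theta$, and the homeomorphism check---is routine once these facts are in place.
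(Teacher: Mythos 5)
Your argument is correct, and the first two claims (membership of $\kappa_G$ via \LEM{-1}, non-normality via $\kappa_G \circ L_a \circ \kappa_G^{-1} = (x \mapsto xa^{-1})$ for non-central $a$) are handled exactly as in the paper. For the substantial third claim your route is genuinely different. The paper also works with the four cosets of the center $Z = H \times \{1\} \times \{1\}$ (your $\sigma$-fibers), but it gets the upper bound $\card(\hULL{\LLl(G),e_G}) \leqsl 8$ by restricting $f$ to the three open Abelian index-$2$ subgroups $H_x = Z \cup xZ$ and invoking \LEM{basic}(d) together with \LEM{Abel} (each restriction is $\id$ or inversion), and it gets the lower bound by exhibiting $8$ elements: it applies \LEM{odd} to the three representations $G \cong \dOUBLE[\tilde{\tilde{p}}]{H_x}$ obtained from \LEM{Hp} and composes the resulting maps with $\kappa_G$. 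You instead encode a candidate $f$ by $\theta\dd G \to \ZZZ/2$ via $f(g) = \pi^{\theta(g)}g$ and translate the hull condition into the exact constraint ``$\theta$ factors through $\sigma$ and vanishes on $\sigma^{-1}(1,1)$'', which yields both bounds simultaneously and makes the group law ($\theta$-addition) and the homeomorphism property transparent. Your two-case analysis of the ordered/swapped alternative is sound: the swapped case does reduce to $\pi^{\theta(g)+\theta(g')} = (g'g^{-1})^2$, and the structural facts (i)--(iii) about $\sigma$, inverses and squares all check out in the iterated twisted product. The trade-off is that your computation is self-contained and more explicit, while the paper's reuses its already-established lemmas and avoids redoing the hull calculation from scratch; both are complete proofs.
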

\begin{proof}
We have already shown (in \LEM{-1}) that $\kappa_G \in \hULL{\LLl(G)}$. We conclude from this (and
the non-commutativity of $G$) that $\LLl(G)$ is non-normal (because $(\kappa_G \circ L_a \circ
\kappa_G^{-1})(x) = xa^{-1}$ for any $a, x \in G$ and thus $\kappa_G \circ L_a \circ \kappa_G^{-1}
\notin \LLl(G)$ whenever $a$ does not belong to the center of $G$).\par
We shall naturally identify $G$, as a set, with $H \times \{-1,1\} \times \{-1,1\}$. Put $Z =
H \times \{1\} \times \{1\}$, $a = (e_H,-1,1)$, $b = (e_H,1,-1)$ and $c = ab$. Then $Z$ is
the center of $G$ (since $\dOUBLE{H}$ in non-Boolean --- see \LEM{Hxp}), the sets $Z$, $aZ$, $bZ$
and $cZ$ are pairwise disjoint and their union is $G$. What is more, since $x^2 = \tilde{\tilde{p}}
(= (p,1,1))$ for any $x \notin Z$, the sets $H_a$, $H_b$ and $H_c$ are subgroups of $G$ where $H_x =
xZ \cup Z$ for $x \in \{a,b,c\}$. It is clear that each of these groups is open and Abelian. Thus,
if $f \in \hULL{\LLl(G),e_G}$, then $f\bigr|_{H_x}$ coincides with $\id_{H_x}$ or $\kappa_{H_x}$ for
$x = a,b,c$ (see Lemmas~\ref{lem:basic} and \ref{lem:Abel}). This implies that $f$ is
a homeomorphism and that $\card(\hULL{\LLl(G),e_G}) \leqsl 8$. Since the group $\hULL{\LLl(G),e_G}$
is Boolean (cf. \LEM{basic}), it remains to show that $\card(\hULL{\LLl(G),e_G}) = 8$. We leave it
as an exercise that whenever $f\dd G \to G$ is such that $f\bigr|_Z = \id_Z$ and $f\bigr|_{xZ} \in
\{\id_G\bigr|_{xZ},\kappa_G\bigr|_{xZ}\}$ for $x \in \{a,b,c\}$, then $f \in \hULL{\LLl(G),e_G}$,
which finishes the proof. (For example, use the fact that for each $x \in \{a,b,c\}$, $H_x$ is
of index $2$ and $y^2 = \tilde{\tilde{p}}$ for $y \notin H_x$ to show that $G$ is `naturally'
isomorphic to $\dOUBLE[\tilde{\tilde{p}}]{H_x}$, cf. \LEM{Hp}. Then apply \LEM{odd} to conclude that
$f_x \in \hULL{\LLl(G),e_G}$ where $f_x(y) = y$ for $y \in H_x$ and $f_x(y) = y^{-1}$ otherwise.
Finally, check that composing $f_a,f_b,f_c$ and $\kappa_G$ one may obtain any of functions mentioned
above.)
\end{proof}

\begin{proof}[Proof of \THM{L(G)}]
Let us briefly sum up all facts already established to conclude the whole assertion. The case when
$G$ is Abelian (Boolean or not) directly follows from \LEM{Abel}. Therefore, we may assume $G$ is
non-Abelian. We have three possibilities:
\begin{itemize}
\item $\kappa_G \in \hULL{\LLl(G),e_G}$; in that case use Lemmas~\ref{lem:-1} and \ref{lem:hull2}
   to get that $G$ is isomorphic to $\dOUBLE[\tilde{p}]{(\dOUBLE{H})}$ for some base pair $(H,p)$
   with Boolean $H$ and that $\hULL{\LLl(G),e_G}$ consists of $8$ homeomorphisms;
\item $\kappa_G \notin \hULL{\LLl(G),e_G}$ and $\hULL{\LLl(G),e_G} \neq \{\id_G\}$; in that case use
   Lemmas~\ref{lem:non-1}, \ref{lem:hull1} and \ref{lem:odd} to conclude that $G$ is isomorphic
   to $\dOUBLE{H}$ for some base pair $(H,p)$ with $\{x^2\dd\ x \in H\} \not\subset
   \{p,e_H\}$ and that $\hULL{\LLl(G),e_G}$ consists of two homeomorphic automorphisms of $G$;
\item $\hULL{\LLl(G),e_G} = \{\id_G\}$; in that case use \LEM{odd} to infer that $G$ is isomorphic
   to no group of the form $\dOUBLE{H}$ where $(H,p)$ is a base pair.
\end{itemize}
Notice that all assumptions (about the form of $\hULL{\LLl(G),e_G}$) in the above cases, as well as
all their conclusions, are mutually exclusive, hence points (a), (b) and (c) of the theorem are
satisfied and there is no other possibility. (In all above situations the fact that $\hULL{\LLl(G)}$
consists of homeomorphisms may simply be deduced from point (c) of \LEM{basic}.) Now the proof is
complete.
\end{proof}

\begin{cor}{isom}
Let $(H,p)$ and $(K,q)$ be two base pairs.
\begin{enumerate}[\upshape(A)]
\item The topological groups $\dOUBLE{H}$ and $\dOUBLE[q]{K}$ are isomorphic iff
   the base pairs $(H,p)$ and $(K,q)$ are isomorphic (cf. \textup{\DEF{Hxp}}).
\item If $H$ and $K$ are Boolean, then the topological groups $\dOUBLE[\tilde{p}]{(\dOUBLE{H})}$ and
   $\dOUBLE[\tilde{q}]{(\dOUBLE[q]{K})}$ are isomorphic iff the base pairs $(H,p)$ and $(K,q)$ are
   isomorphic.
\end{enumerate}
\end{cor}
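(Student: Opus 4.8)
The plan is to recover each base pair canonically from its associated group, exploiting the fact that the assignment $G \mapsto \hULL{\LLl(G),e_G}$ is an isomorphism invariant. Indeed, if $\Psi\dd G \to G'$ is an isomorphism of topological groups, then $\Psi \circ L_a \circ \Psi^{-1} = L_{\Psi(a)}$ for every $a \in G$, so $\Psi \LLl(G) \Psi^{-1} = \LLl(G')$; since $\hULL{\cdot}$ is defined purely set-theoretically (\DEF{hull}) and $\Psi$ is a bijection, conjugation by $\Psi$ carries $\hULL{\LLl(G)}$ onto $\hULL{\LLl(G')}$ and $\hULL{\LLl(G),e_G}$ onto $\hULL{\LLl(G'),e_{G'}}$. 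The \emph{if} parts of both (A) and (B) are routine: an isomorphism $\theta$ of base pairs (so $\theta(p) = q$) induces the maps $(x,j) \mapsto (\theta(x),j)$, respectively $(x,j,k) \mapsto (\theta(x),j,k)$, and, because the multiplications in \DEF{Hxp} refer only to the group operations and to $p$ (resp.\ $q$), these are isomorphisms. So only the \emph{only if} parts require work.

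I would treat (B) first, as it is the cleaner case. Here $G = \dOUBLE[\tilde p]{(\dOUBLE{H})}$ with $H$ Boolean, the iso-singular situation of \LEM{hull2} (case (c) of \THM{L(G)}). The decisive point is that both $H$ and $p$ sit canonically inside $G$: by \LEM{hull2} and \LEM{Hxp}(c) the centre $Z(G)$ equals $H \times \{1\} \times \{1\}$, which is isomorphic as a topological group to $H$, while every non-central element squares to $\tilde{\tilde p} = (p,1,1) \in Z(G)$, the image of $p$. Thus for an isomorphism $\Psi\dd G \to G'$ the restriction $\Psi|_{Z(G)}$ is a topological isomorphism $H \to K$, and since $\Psi$ sends non-central elements to non-central elements it carries the common square $\tilde{\tilde p}$ to $\tilde{\tilde q}$, hence $p$ to $q$; this is an isomorphism of base pairs $(H,p) \cong (K,q)$.

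For part (A) I would argue by the trichotomy of \THM{L(G)}. Since $G = \dOUBLE{H}$ is non-Boolean and manifestly of the form $\dOUBLE{(\cdot)}$, it never falls under case (a); and because the case is pinned down by the invariant $\hULL{\LLl(G),e_G}$ (two elements in case (b), eight in case (c)), an isomorphism $\dOUBLE{H} \cong \dOUBLE[q]{K}$ forces the two groups into the same case. In case (b) the set $\hULL{\LLl(G),e_G} = \{\id_G,\sigma\}$ has a unique nontrivial element $\sigma$, which is $\kappa_G$ when $H$ is Boolean (\LEM{Abel}) and $\Phi_G$ when $H$ is non-Boolean (\LEM{hull1}, \LEM{odd}); in either description a direct check gives $\operatorname{Fix}(\sigma) = \{g \in G\dd\ \sigma(g) = g\} = H \times \{1\} = \tilde H$ (elements $(x,-1)$ are never fixed, as $p \neq e_H$). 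By the invariance above, $\Psi \sigma \Psi^{-1}$ is the corresponding $\sigma'$, so $\Psi(\tilde H) = \Psi(\operatorname{Fix}\sigma) = \operatorname{Fix}\sigma' = \tilde K$; restricting gives a topological isomorphism $H \to K$ which, exactly as in (B), sends $\tilde p$ (the common square of the elements outside $\tilde H$, by \LEM{Hxp}(b)) to $\tilde q$, i.e.\ $p$ to $q$. In case (c), $H$ is non-Boolean with $\{x^2\dd\ x \in H\} \subset \{e_H,p\}$, so the squaring homomorphism of the Abelian group $H$ has image $\{e_H,p\}$ and kernel $M := \{x \in H\dd\ x^2 = e_H\}$ of index $2$; by \LEM{Hp}, $H \cong \dOUBLE[p]{M}$ with $M$ Boolean, and therefore $\dOUBLE{H} = \dOUBLE[\tilde p]{(\dOUBLE[p]{M})}$ is precisely a group treated in part (B), with base pair $(M,p)$. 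Writing $K \cong \dOUBLE[q]{N}$ with $N := \{y \in K\dd\ y^2 = e_K\}$ similarly, part (B) applied to $\dOUBLE{H} \cong \dOUBLE[q]{K}$ yields $(M,p) \cong (N,q)$; the inducing isomorphism $M \to N$ then produces $\dOUBLE[p]{M} \cong \dOUBLE[q]{N}$, i.e.\ $H \cong K$, carrying the distinguished element $p \in H$ to $q \in K$, so $(H,p) \cong (K,q)$ here as well.

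The conceptual crux is the invariance of $\hULL{\LLl(G),e_G}$ under isomorphism together with the resulting canonical descriptions---the fixed-point subgroup $\operatorname{Fix}(\sigma) = \tilde H$ in case (b) and the centre $Z(G) \cong H$ in the iso-singular case. The most delicate bookkeeping is case (c) of (A), where one must recognize $\dOUBLE{H}$ as an iso-singular double-double and keep careful track of the distinguished involution through the reduction to (B); the three candidate index-two Abelian subgroups appearing in \LEM{hull2} are not individually canonical, which is exactly why the distinguished element is better recovered as the common square of the non-central elements rather than through the subgroup $\tilde H$.
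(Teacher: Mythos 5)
Your proof is correct, and in substance it rests on the same pillars as the paper's: the conjugation\hyp{}invariance of $\hULL{\LLl(G),e_G}$ under a group isomorphism, the rigidity statements of \LEM{Abel}, \LEM{hull1} and \LEM{hull2}, and the canonical recovery of $\tilde H$ inside $\dOUBLE{H}$ (as the fixed\hyp{}point set of the distinguished involution, or through the centre). The genuine structural difference is the direction of the reduction between (A) and (B). The paper proves (A) first, splitting into the cases $G$ Abelian, $\card(\{x^2\dd\ x\in G\})>2$, and $G$ non\hyp{}Abelian with $\{x^2\dd\ x\in G\}=\{\tilde p,e_G\}$; in the last case it passes to the centre $\tilde Z$ of $G$, obtains $(Z,p)\cong(W,q)$, and rebuilds $(H,p)$ from $(Z,p)$ via \LEM{Hp}, after which (B) is derived from (A). You instead prove (B) first and directly --- the centre of $\dOUBLE[\tilde p]{(\dOUBLE{H})}$ is a copy of $H$ and every non\hyp{}central element squares to $\tilde{\tilde p}$, so any isomorphism matches $(H,p)$ with $(K,q)$ --- and then feed (B) back into the iso\hyp{}singular case of (A) after recognizing $\dOUBLE{H}$ as $\dOUBLE[\tilde p]{(\dOUBLE[p]{M})}$ with $M=\{x\in H\dd\ x^2=e_H\}$, which is legitimate by \LEM{Hp}. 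Your route buys a short self\hyp{}contained argument for (B) and a uniform treatment of the paper's cases (1$^{\circ}$) and (2$^{\circ}$) via $\Fix(\sigma)=\tilde H$ for the unique nontrivial $\sigma\in\hULL{\LLl(G),e_G}$; the paper's route avoids re\hyp{}encoding $\dOUBLE{H}$ as a double\hyp{}double at the cost of making (B) depend on (A). Both arguments are complete.
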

\begin{proof}
The sufficiency of the latter condition in both points (A) and (B) is immediate. Let us show
the necessity in (A). For simplicity, put $G = \dOUBLE{H}$ and $L = \dOUBLE[q]{K}$. Below $\tilde{H}
= H \times \{1\} \subset G$ and $\tilde{K} = K \times \{1\} \subset L$. Let $\Psi\dd G \to L$ be
an isomorphism. Since $(H,p)$ and $(\tilde{H},\tilde{p})$ (respectively $(K,q)$ and
$(\tilde{K},\tilde{q})$) are isomorphic, it suffices to show that $(\tilde{H},\tilde{p})$ and
$(\tilde{K},\tilde{q})$ are isomorphic. We have three possibilities:
\begin{enumerate}[(1$^{\circ}$)]
\item $G$ is Abelian. Then so is $L$ and thus $H$ and $K$ are Boolean. Therefore $\Psi(\tilde{H}) =
   \tilde{K}$ since both $\tilde{H}$ and $\tilde{K}$ consist of all elements of order $2$.
   Consequently, $\Psi(\tilde{p}) = \tilde{q}$, since $x^2 = \tilde{p}$ for $x \in G \setminus
   \tilde{H}$ and $y^2 = \tilde{q}$ for $y \in L \setminus \tilde{K}$, and we are done.
\item $\card(\{x^2\dd\ x \in G\}) > 2$. Then $\card(\{y^2\dd\ y \in L\}) > 2$ as well and
   consequently $\{h^2\dd\ h \in H\} \not\subset \{p,e_H\}$ and $\{k^2\dd\ k \in K\} \not\subset
   \{q,e_K\}$. Then $f := \Psi \circ \Phi_G \circ \Psi^{-1} \in \hULL{\LLl(L),e_L} \setminus
   \{\id_L\}$ and therefore $f = \Phi_L$, by \LEM{hull1}. But then $\Psi(\tilde{H}) = \tilde{K}$,
   because $\tilde{H} = \{x \in G\dd\ \Phi_G(x) = x\}$ and similarly for $\tilde{K}$. Consequently,
   $\Psi(\tilde{p}) = \tilde{q}$ (cf. point (1$^{\circ}$)) and we are done.
\item $G$ is non-Abelian and $\{x^2\dd\ x \in G\} = \{\tilde{p},e_G\}$. Then $L$ is non-Abelian and
   $\{y^2\dd\ y \in L\} = \{\tilde{q},e_L\}$ as well and therefore $\Psi(\tilde{p}) = \tilde{q}$,
   $\{h^2\dd\ h \in H\} = \{p,e_H\}$ and $\{k^2\dd\ k \in K\} = \{q,e_K\}$ (notice that both $H$ and
   $K$ are non-Boolean). Put $Z = \{h \in H\dd\ h^2 = e_H\}$, $W = \{k \in K\dd\ k^2 = e_K\}$,
   $\tilde{Z} = Z \times \{1\} \subset \tilde{H}$ and $\tilde{W} = W \times \{1\} \subset
   \tilde{K}$. Then $\Psi(\tilde{Z}) = \tilde{W}$ since $\tilde{Z}$ and $\tilde{W}$ coincide with
   the centra of $G$ and $L$, respectively (cf. \LEM{basic}). So, $(Z,p)$ and $(W,q)$ are isomorphic
   and therefore $(\dOUBLE{Z},\tilde{p})$ and $(\dOUBLE[q]{W},\tilde{q})$ are isomorphic as well.
   Finally, \LEM{Hp} implies that $(H,p)$ is isomorphic to $(\dOUBLE{Z},\tilde{p})$ and $(K,q)$
   to $(\dOUBLE[q]{Z},\tilde{q})$.
\end{enumerate}
So, the proof of (A) is complete, while point (B) simply follows from (A).
\end{proof}

\begin{cor}{count}
Up to isomorphism, there are only countably many locally compact Polish iso-singular groups. Among
them only three are infinite: one compact, one discrete and one non-compact non-discrete. Each such
a group $G$ is of exponent $4$ (that is, $x^4 = e_G$ for any $x \in G$) and totally disconnected.
And if $G$ is finite, it is uniquely determined (up to isomorphism) by its cardinality.
\end{cor}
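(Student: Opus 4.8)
The plan is to translate the problem, via \THM{L(G)} and \COR{isom}, into a classification of \emph{base pairs}, and then to classify these. By point (c) of \THM{L(G)} a locally compact Polish group $G$ is iso-singular exactly when it is isomorphic to $\dOUBLE[\tilde{p}]{(\dOUBLE{H})}$ for some base pair $(H,p)$ with $H$ Boolean, where $\tilde p=(p,1)$; and since as a topological space this group is simply $H\times\{-1,1\}\times\{-1,1\}$, it is locally compact Polish iff $H$ is. Point (B) of \COR{isom} asserts that two such groups built from Boolean base pairs $(H,p)$ and $(K,q)$ are isomorphic iff the base pairs themselves are isomorphic. Thus iso-singular groups correspond, up to isomorphism, bijectively to isomorphism classes of base pairs $(H,p)$ with $H$ a nontrivial locally compact Polish Boolean group. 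To remove the dependence on the distinguished point, I would regard a Boolean group additively as a topological vector space over the two-element field $\ZZZ/2\ZZZ$ and show that the group of \emph{continuous} automorphisms of a nontrivial locally compact Polish Boolean group $H$ acts transitively on $H\setminus\{e_H\}$; granting this, any two base pairs with the same $H$ are isomorphic, and the whole classification reduces to classifying the groups $H$.

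For that classification I would invoke the structure theory of locally compact abelian groups. Such an $H$ is totally disconnected — a connected locally compact Boolean group is divisible of exponent $2$, hence trivial — so by van~Dantzig's theorem it has a compact open subgroup $C$. Being a compact metrizable Boolean group, $C$ is isomorphic (e.g.\ by Pontryagin duality, its dual being a countable discrete $\ZZZ/2\ZZZ$-vector space) either to $(\ZZZ/2\ZZZ)^n$ for some finite $n$ or to the Cantor group $(\ZZZ/2\ZZZ)^{\NNN}$. As $C$ is open and $H$ is second countable, $H/C$ is a countable discrete Boolean group, i.e.\ a copy of $(\ZZZ/2\ZZZ)^{(\NNN)}$ of some dimension $m\leqsl\aleph_0$; moreover the extension splits topologically (lifting a basis of $H/C$ is unobstructed since every lift already has order $\leqsl 2$), so $H\cong C\oplus(H/C)$. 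Running through the four possibilities shows that the nontrivial locally compact Polish Boolean groups are, up to isomorphism, the finite groups $(\ZZZ/2\ZZZ)^n\ (n\geqsl 1)$, the compact group $(\ZZZ/2\ZZZ)^{\NNN}$, the discrete group $(\ZZZ/2\ZZZ)^{(\NNN)}$, and the non-compact non-discrete group $(\ZZZ/2\ZZZ)^{\NNN}\oplus(\ZZZ/2\ZZZ)^{(\NNN)}$ — a countable list with exactly the three infinite members claimed, one of each topological type.

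The step I expect to be the main obstacle is precisely the transitivity above, since an automorphism must be a homeomorphism and not merely $\ZZZ/2\ZZZ$-linear. For the finite and the discrete groups this is the standard fact that the general linear group of a $\ZZZ/2\ZZZ$-vector space carries any nonzero vector to any other (extend it to a basis). For the two remaining groups I would exhibit explicit continuous involutions: if $p$ has nonzero coordinate at index $i$, then $x\mapsto x+x_i(p+e_i)$ is a continuous linear involution sending the basic element $e_i$ to $p$, so all nonzero elements of $(\ZZZ/2\ZZZ)^{\NNN}$ are equivalent; and for $H=(\ZZZ/2\ZZZ)^{\NNN}\oplus(\ZZZ/2\ZZZ)^{(\NNN)}$ the map $x\mapsto x+(a(x)+b(x))(e_1+d_1)$, where $a,b$ are the continuous coordinate functionals reading off a fixed product-direction $e_1$ and a fixed sum-direction $d_1$, is a continuous involution interchanging $e_1$ and $d_1$. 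A short reduction — first normalise the discrete part by an automorphism of the summand, then clear the compact part by a continuous shear, then apply this swap — places every nonzero element in a single orbit, giving transitivity in all cases.

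Finally I would read off the residual assertions from \LEM{Hxp}. Applying its part (b) once to the base pair $(K,\tilde p)$ with $K=\dOUBLE{H}$ and once to $(H,p)$ shows that the square of every element of $G$ lies in $\{e_G,\tilde p,\tilde{\tilde p}\}$, where $\tilde{\tilde p}=(\tilde p,1)$, and each of these squares to $e_G$; hence $x^4=e_G$ for all $x\in G$. Total disconnectedness of $G$ is inherited from that of $H$ through the homeomorphism $G\approx H\times\{-1,1\}^2$. And if $H\cong(\ZZZ/2\ZZZ)^n$ is finite, then $\card(G)=4\cdot 2^n=2^{n+2}$, so $\card(G)$ determines $n$ and therefore, by the bijection of the first paragraph, the isomorphism type of $G$; this proves the final clause.
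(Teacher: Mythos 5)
Your proof is correct, and its overall architecture is the same as the paper's: reduce via \THM{L(G)}(c) and \COR{isom}(B) to classifying base pairs $(H,p)$ with $H$ a nontrivial locally compact Polish Boolean group, observe that there are only the four families $(\ZZZ/2\ZZZ)^n$, $(\ZZZ/2\ZZZ)^{\NNN}$, $(\ZZZ/2\ZZZ)^{(\NNN)}$ and their product, and then kill the dependence on $p$ by showing the continuous automorphism group of each $H$ acts transitively on $H\setminus\{e_H\}$. The two genuine points of divergence are these. First, for the classification of the groups $H$ the paper simply invokes Braconnier's theorem that every locally compact Boolean group is of the form $\{-1,1\}^{\alpha}\times\{-1,1\}^{\oplus\beta}$, whereas you rederive this in the Polish case from van Dantzig's theorem, Pontryagin duality for the compact open subgroup, and a splitting of the (discrete, countable) quotient; your route is longer but self-contained and makes visible why the extension splits. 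Second, for transitivity the paper picks open index-$2$ subgroups $H_p$ and $H_q$ avoiding $p$ and $q$, takes any isomorphism $\eta$ between them and patches it to an automorphism by $\xi\mapsto\eta(\xi p)q$ off $H_p$, while you write down explicit continuous linear involutions (shears and a swap of a product-coordinate with a sum-coordinate); both work, and yours has the small advantage that continuity is immediate from continuity of coordinate functionals rather than from the choice of $\eta$. The remaining assertions (exponent $4$ via \LEM{Hxp}, total disconnectedness, and the finite case being determined by $\card(G)=2^{n+2}$) you read off exactly as the paper does.
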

\begin{proof}
By a theorem of Braconnier \cite{brc} (see also \cite[(25.29)]{hr1}), every locally compact Boolean
group is isomorphic to the group of the form
\begin{equation}\label{eqn:Boole}
\{-1,1\}^{\alpha} \times \{-1,1\}^{\oplus \beta}
\end{equation}
where $\alpha$ and $\beta$ are arbitrary cardinals (when one of them is equal to $0$, omit suitable
factor), $\{-1,1\}^{\alpha}$ is the (full) Cartesian product of $\alpha$ copies of $\{-1,1\}$
equipped with the product topology and $\{-1,1\}^{\oplus \beta}$ is the \textit{weak direct product}
of $\beta$ copies of $\{-1,1\}$ (cf. \cite[(2.3)]{hr1}) equipped with the discrete topology; that
is, if $\card(Y) = \beta$, $\{-1,1\}^{\oplus \beta}$ may be represented as the subgroup
of $\{-1,1\}^Y$ (taken without inheriting the topology and) consisting of all functions $f\dd Y \to
\{-1,1\}$ for which the set $\{y \in Y\dd\ f(y) = -1\}$ is finite. When the group \eqref{eqn:Boole}
is Polish, both cardinals $\alpha$ and $\beta$ do not exceed $\aleph_0$. So, up to isomorphism,
there are only countable number of nontrivial locally compact Polish Boolean groups: $B(n) :=
\{-1,1\}^n\ (n=1,2,3,\ldots)$, $B(\omega) := \{-1,1\}^{\aleph_0}$, $B(\sigma) :=
\{-1,1\}^{\oplus \aleph_0}$ and $B(\infty) := B(\sigma) \times B(\omega)$. Each of these groups may
natutally be represented as the group of $\{-1,1\}$-valued sequences (finite or not): $B(n)$,
$B(\omega)$, $B(\sigma)$ and $B(\infty)$ consist of sequences numbered by, respectively, $J(n) =
\{1,\ldots,n\}$, $J(\omega) = J(\sigma) = \{1,2,\ldots\}$ and $J(\infty) = \ZZZ$ where
$$B(\infty) = \{(x_k)_{k\in\ZZZ} \subset \{-1,1\}|\quad \exists k_0 \in \ZZZ\quad
\forall k < k_0\dd\ x_k = 1\}$$
(note also that the topology of $B(\infty)$ is finer than the one of pointwise convergence).
It follows from the definition of an iso-singular group and \COR{isom} that it is enough to show
that whenever $k \in \{1,2,\ldots,\omega,\sigma,\infty\}$ and $p, q \in B(k)$ are different from
the neutral element, then there exists an automorphism $\tau\dd B(k) \to B(k)$ which sends $p$
to $q$. This may be provided as follows. There are $n, m \in J(k)$ such that $p_m = q_n = -1$. Put
$H_p = \{x \in B(k)\dd\ x_m = 1\}$ and similarly $H_q = \{x \in B(k)\dd\ x_n = -1\}$. It is clear
that $H_p$ and $H_q$ are isomorphic and open in $G$. So, if $\eta\dd H_p \to H_q$ is an isomorphism,
then the formulas $\tau(\xi) = \eta(\xi)$ for $\xi \in H_p$ and $\tau(\xi) = \eta(\xi p) q$
otherwise well defines the automorphism we searched for. Further details are left to the reader (see
also \REM{left-right} below).
\end{proof}

Another consequences of \THM{L(G)} are stated below.

\begin{cor}{iso}
Let $G$ be a locally compact Polish group.
\begin{itemize}
\item $G$ is an iso-group iff there is a left invariant metric $d \in \Metr_c(G)$ such that
   $\Iso(G,d) = \LLl(G)$.
\item If $G$ is iso-singular, there is a left invariant metric $d \in \Metr_c(G)$ such that the set
   $\Iso(G,d;e_G) := \{f \in \Iso(G,d)\dd\ f(e_G) = e_G\}$ contains precisely $8$ functions.
\item If $G$ is non-iso-singular and not an iso-group, there is a left invariat metric $d \in
   \Metr_c(G)$ such that $\card(\Iso(G,d;e_G)) = 2$.
\end{itemize}
\end{cor}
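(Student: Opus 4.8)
The plan is to produce a single left invariant metric that settles all three bullets simultaneously, by applying \THM{main} to the symmetrized $2$-hull of the left-translation group. First I would record the crucial auxiliary fact: \emph{there is a left invariant $d \in \Metr_c(G)$ with $\Iso(G,d) = \hULL{\LLl(G)}$.} Since $G$ is locally compact Polish, $\LLl(G)$ is a group of homeomorphisms satisfying \textup{(Iso1)--(Iso2)}; hence by \LEM{iso-hull} so does $\hULL{\LLl(G)}$, and the idempotency $\hULL{\hULL{\FFf}} = \hULL{\FFf}$ gives \textup{(Iso3)} for $\hULL{\LLl(G)}$. Thus $\hULL{\LLl(G)}$ is an iso-group of transformations and \THM{main} yields $d \in \Metr_c(G)$ with $\Iso(G,d) = \hULL{\LLl(G)}$; the exceptional condition is vacuous here, because $\LLl(G)$ acts freely (so $\card(\LLl(G)) = \card(G)$) and $\card(\hULL{\LLl(G)}) \geqsl \card(G)$, whence $(\card(\hULL{\LLl(G)}),\card(G)) \neq (1,2)$ automatically. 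Because $\LLl(G) \subset \hULL{\LLl(G)} = \Iso(G,d)$, every $L_a$ is a $d$-isometry, i.e. $d$ is left invariant. This is exactly the reasoning already carried out in the proof of \PRO{Abel}.

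With this metric in hand, the three bullets follow by reading off \THM{L(G)}. For the first bullet, if $G$ is an iso-group then condition \textup{(a)} of \THM{L(G)} gives $\hULL{\LLl(G)} = \LLl(G)$, so the above $d$ satisfies $\Iso(G,d) = \LLl(G)$ and is left invariant; conversely, if some left invariant $d \in \Metr_c(G)$ has $\Iso(G,d) = \LLl(G)$, then the implication `(i)$\implies$(iii)' of \THM{main} shows $\LLl(G)$ satisfies \textup{(Iso1)--(Iso3)}, and \textup{(Iso3)} reads $\hULL{\LLl(G)} = \LLl(G)$, which is precisely condition \textup{(a)}, so $G$ is an iso-group. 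For the remaining two bullets I would observe that with $d$ as above
$$\Iso(G,d;e_G) = \{f \in \hULL{\LLl(G)}\dd\ f(e_G) = e_G\} = \hULL{\LLl(G),e_G}.$$
If $G$ is iso-singular (condition \textup{(c)}), \THM{L(G)} gives $\hULL{\LLl(G),e_G} \cong \{-1,1\}^3$, which has exactly $8$ elements; and since \THM{L(G)} lists \textup{(a)}, \textup{(b)}, \textup{(c)} as mutually exclusive and exhaustive, ``non-iso-singular and not an iso-group'' means exactly condition \textup{(b)}, whence $\hULL{\LLl(G),e_G} \cong \{-1,1\}$ has exactly $2$ elements.

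Essentially no genuine obstacle remains once the left invariant $d$ with $\Iso(G,d) = \hULL{\LLl(G)}$ is produced: everything else is a dictionary translation through \THM{L(G)}, and the left invariance comes for free from $\LLl(G) \subset \Iso(G,d)$. The only point demanding some care is verifying that $\hULL{\LLl(G)}$ inherits \textup{(Iso2)} from $\LLl(G)$ --- that is, that the $\LLl(G)$-invariant proper metric furnished by \LEM{iso-hull} (which is also $\hULL{\LLl(G)}$-invariant) makes the families $\DDd_K$ of isometries both equicontinuous and pointwise precompact; this is routine, since isometries of a proper metric space with $h(K) \cap K \neq \varempty$ map each point into a fixed compact ball and are uniformly $1$-Lipschitz. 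As in \PRO{Abel}, I would simply cite \LEM{iso-hull} for this inheritance and leave the short verification implicit.
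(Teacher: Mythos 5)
Your proposal is correct and follows essentially the same route as the paper, whose proof is the one-line instruction to combine \THM{L(G)} with \THM{hull} (equivalently, the implication `(iii)$\implies$(ii)' of \THM{main} applied to $\hULL{\LLl(G)}$, exactly as in the proof of \PRO{Abel}) together with the observation that $G$ is an iso-group iff $\hULL{\LLl(G)} = \LLl(G)$. Your expanded version merely makes explicit the steps the paper leaves implicit, including the automatic left invariance of the resulting metric from $\LLl(G) \subset \Iso(G,d)$.
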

\begin{proof}
Just apply Theorems~\ref{thm:L(G)} and \ref{thm:hull} (notice that $G$ is an iso-group \iaoi{}
$\hULL{\LLl(G)} = \LLl(G)$).
\end{proof}

\begin{cor}{left-right}
For a locally compact Polish group $G$ \tfcae
\begin{enumerate}[\upshape(i)]
\item every left invariant pseudometric on $G$ (possibly having nothing in common with the topology
   of $G$) is right invariant,
\item every left invariant metric in $\Metr_c(G)$ is right invariant,
\item $G$ is either Abelian or iso-singular.
\end{enumerate}
In particular, up to isomorphism, there are only countably many (locally compact Polish) non-Abelian
groups $G$ which satisfy \textup{(i)}.
\end{cor}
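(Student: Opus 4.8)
The plan is to reduce the whole statement to one algebraic dichotomy and then feed it into the machinery already built: for a left invariant pseudometric $d$ on $G$, the inversion $\kappa_G$ is a $d$-isometry if and only if $d$ is also right invariant, and the set $\hULL{\LLl(G)}$ governing which maps must be isometries of every left invariant $d$ has been computed in \THM{L(G)}. I would first isolate the elementary equivalence. If $d$ is left invariant, then $d(e_G,w)=d(e_G,w^{-1})$ for every $w$ (left-multiply by $w$ and use symmetry). Granting this, one checks directly that $d$ is right invariant iff $d(x^{-1},y^{-1})=d(x,y)$ for all $x,y$, i.e. iff $\kappa_G\in\Iso(G,d)$: in one direction $d(xg,yg)=d((xg)^{-1},(yg)^{-1})=d(x^{-1},y^{-1})=d(x,y)$, and conversely $d(x^{-1},y^{-1})=d(e_G,y^{-1}x)=d(e_G,x^{-1}y)=d(x,y)$. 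This argument uses only symmetry and left invariance, so it is valid for arbitrary (possibly degenerate, possibly discontinuous) pseudometrics; hence it serves both (i) and (ii).

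For the implication (iii)$\implies$(i) the key remark is that every left invariant pseudometric $d$ is automatically invariant under the full symmetrized $2$-hull: for $f\in\hULL{\LLl(G)}$ and any $x,y$ there is $a\in G$ with $\{f(x),f(y)\}=\{ax,ay\}$, so $d(f(x),f(y))=d(ax,ay)=d(x,y)$ (this is the purely algebraic half of \LEM{iso-hull}, requiring no topology). It therefore suffices to show $\kappa_G\in\hULL{\LLl(G)}$ when $G$ is Abelian or iso-singular. For Abelian $G$ this is immediate from \LEM{Abel} (take $a=e_G$ there); for iso-singular $G$ it is precisely the content of \LEM{-1} (equivalently \LEM{hull2}). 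In both cases $\kappa_G$ is an isometry of every left invariant pseudometric, so every such pseudometric is right invariant by the first step. The implication (i)$\implies$(ii) is trivial, since a left invariant metric in $\Metr_c(G)$ is in particular a left invariant pseudometric.

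The substantive step, which I expect to be the main obstacle, is (ii)$\implies$(iii), and I would argue it contrapositively: assuming $G$ is neither Abelian nor iso-singular, I must produce a single left invariant $\varrho\in\Metr_c(G)$ that fails to be right invariant. Since $G$ is then non-Abelian and not of the form of case (c) of \THM{L(G)}, \LEM{-1} gives $\kappa_G\notin\hULL{\LLl(G)}$. Now $\LLl(G)$ satisfies (Iso1)--(Iso2) (as noted just before \THM{L(G)}), so by \PRO{1-2} the set $\Metr_c(G\,|\,\LLl(G))$ of left invariant proper compatible metrics is nonempty, and I would apply \THM{hull} with $X=G$ and acting group $\LLl(G)$ to obtain $\varrho\in\Metr_c(G)$ with $\Iso(G,\varrho)=\hULL{\LLl(G)}$. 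This $\varrho$ is left invariant but, since $\kappa_G\notin\hULL{\LLl(G)}=\Iso(G,\varrho)$, it is not right invariant by the first step, contradicting (ii). The only point demanding care is the book-keeping that the abstract, topology-free characterization of $\kappa_G\in\hULL{\LLl(G)}$ coincides with the topological output of \THM{hull}; but both refer to the one set $\hULL{\LLl(G)}$, so no gap arises. Finally, the ``in particular'' clause follows at once from \COR{count}, which bounds the iso-singular groups up to isomorphism by a countable family, while there are of course uncountably many non-Abelian locally compact Polish groups in total.
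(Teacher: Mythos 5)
Your proposal is correct and follows essentially the same route as the paper: the paper's proof reduces everything to the observation that a left invariant pseudometric is right invariant iff $\kappa_G$ is isometric for it, and then cites \THM{L(G)} (equivalently, \LEM{Abel} and \LEM{-1}, which decide when $\kappa_G\in\hULL{\LLl(G)}$), \THM{hull} (to manufacture the left invariant $\varrho\in\Metr_c(G)$ with $\Iso(G,\varrho)=\hULL{\LLl(G)}$ in the contrapositive of (ii)$\implies$(iii)), and \COR{count} for the countability claim. You have merely written out the details the paper leaves implicit, including the purely algebraic half of \LEM{iso-hull} needed for the pseudometric case in (iii)$\implies$(i).
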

\begin{proof}
It is well-known (and quite an easy exercise) that a left invariant pseudometric on a group $G$ is
right invariant as well iff $\kappa_G$ is isometric with respect to this pseudometric. Therefore
the conclusion simply follows from Theorems~\ref{thm:L(G)} and \ref{thm:hull} and \COR{count}.
\end{proof}

\begin{rem}{left-right}
As we announced in the introductory part, we are now able to give explicit descriptions of all
locally compact Polish non-Abelian groups on which every left invariant metric is automatically
right invariant. In what follows, we preserve the notation introduced in the proof
of \COR{count}.\par
Let $p(1) = -1 \in B(1)$, $p(2) = (-1,1) \in B(2)$ and for $n > 2$ let $p(n) = (-1,1,1,\ldots,1) \in
B(n)$; further, let $p(\omega) = p(\sigma) = (-1,1,1,1,\ldots) \in B(\sigma) \subset B(\omega)$
(the inclusion is purely set-theoretic, i.e. it does not imply that the topology is inherited) and
finally let $p(\infty) = (p_m)_{m\in\ZZZ}$ be such that $p_0 = -1$ and $p_m = 1$ otherwise. Now for
$k \in \{1,2,\ldots,\omega,\sigma,\infty\}$ let
$$IS(k) = \dOUBLE[\widetilde{p(k)}]{(\dOUBLE[p(k)]{B(K)})}$$
(where, as usual, $\widetilde{p(k)} = (p(k),1)$; cf. \DEF{Hxp}). \COR{left-right} combined with
the argument presented in the proof of \COR{count} shows that $IS(k)$'s are the only groups under
the question.
\end{rem}

\COR{left-right} may simply be generalized as follows.

\begin{pro}{left-right}
For a metrizable topological group $G$ \tfcae
\begin{enumerate}[\upshape(i)]
\item every left invariant pseudometric on $G$ (possibly having nothing in common with the topology
   of $G$) is right invariant,
\item every left invariant compatible metric on $G$ is right invariant,
\item $G$ is either Abelian or iso-singular.
\end{enumerate}
\end{pro}
\begin{proof}
It follows from \THM{L(G)} that (iii) is equivalent to the fact that $\kappa_G \in \hULL{\LLl(G)}$.
We have also already mentioned in the proof of \COR{left-right} that a left invariant pseudometric
is right invariant iff it is preserved by $\kappa_G$. We infer from these remarks that (i) follows
from (iii). Since (ii) is obviously implied by (i), to end the proof it remains to show that
if $\kappa_G \notin \hULL{\LLl(G)}$, then there is a compatible metric on $G$ which is not preserved
by $\kappa_G$. To this end, it suffices to apply \LEM{notin} stated below for $f = \kappa_G$.
\end{proof}

\begin{lem}{notin}
Let $G$ be a metrizable topological group and $d$ be a left invariant compatible metric on $G$. For
any $f \notin \hULL{\LLl(G)}$ and $\epsi > 0$ there is a left invariant metric $\varrho$ on $G$ such
that $d \leqsl \varrho \leqsl (1 + \epsi) d$ and $f \notin \Iso(G,\varrho)$.
\end{lem}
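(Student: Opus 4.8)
The plan is to exploit the elementary observation underlying the whole section: a left invariant metric on $G$ is automatically preserved by every member of $\hULL{\LLl(G)}$, since if $\{f(x),f(y)\}=\{ax,ay\}$ then $\varrho(f(x),f(y))=\varrho(ax,ay)=\varrho(x,y)$. So for $f \notin \hULL{\LLl(G)}$ I would build a left invariant perturbation of $d$ that breaks this identity at one pair of points.

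First I would unwind the hypothesis. By \DEF{hull}, $f \notin \hULL{\LLl(G)}$ means there are points $x_0 \ne y_0$ of $G$ (distinctness being forced, as otherwise a suitable $L_a$ trivially exists) such that $\{f(x_0),f(y_0)\} \ne \{ax_0,ay_0\}$ for every $a \in G$. Put $s = x_0^{-1}y_0$ and $t = f(x_0)^{-1}f(y_0)$. As $a$ runs over $G$, the pairs $(ax_0,ay_0)$ fill out the graph of the right translation $u \mapsto us$, while the pairs $(ay_0,ax_0)$ fill out the graph of $u \mapsto us^{-1}$; hence the hypothesis says precisely that $t \notin \{s,s^{-1}\}$, so that $\{s,s^{-1}\} \cap \{t,t^{-1}\} = \varempty$. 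Introduce $L = \{(u,us)\dd\ u\in G\} \cup \{(u,us^{-1})\dd\ u\in G\}$ and $K = \{(u,ut)\dd\ u\in G\} \cup \{(u,ut^{-1})\dd\ u\in G\}$. Since right translations are homeomorphisms and $G$ is Hausdorff, $K$ and $L$ are closed (no properness of the action is needed here); they are clearly symmetric and nonempty; they are disjoint because $\{s,s^{-1}\} \cap \{t,t^{-1}\} = \varempty$; and $L \cap \Delta_G = \varempty$ because $s \ne e_G$. Finally $d$, being left invariant, is constant on $L$ (equal to $d(e_G,s)$) and on $K$.

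Next I would apply \LEM{sep} to the space $(G,d)$ and the sets $K,L$: for the given $\epsi$ it produces a metric $\varrho_{\epsi}$ with $d \leqsl \varrho_{\epsi} \leqsl (1+\epsi)d$ and $\sup_{(x,y)\in K}\varrho_{\epsi}(x,y) \ne \sup_{(x,y)\in L}\varrho_{\epsi}(x,y)$. This $\varrho_{\epsi}$ need not be left invariant, so I would symmetrize it over $\LLl(G)$, mimicking the device $\varrho \mapsto \varrho_G$ of \LEM{dG}, by setting $\varrho(x,y) = \sup_{a \in G}\varrho_{\epsi}(ax,ay)$. Because $d$ is left invariant, $d(ax,ay)=d(x,y)$, so $d \leqsl \varrho \leqsl (1+\epsi)d$; in particular the supremum is finite and $\varrho$ is a compatible metric. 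It is left invariant because replacing $(x,y)$ by $(bx,by)$ merely re-indexes the supremum, and the triangle inequality passes to the supremum.

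It then remains to evaluate $\varrho$ on the two orbits. As $a$ runs over $G$ the pairs $(ax_0,ay_0)$ fill the branch $\{(v,vs)\dd\ v\in G\}$ of $L$, so $\varrho(x_0,y_0) = \sup_{v}\varrho_{\epsi}(v,vs)$; by symmetry of $\varrho_{\epsi}$ the other branch of $L$ yields the same supremum, whence $\varrho(x_0,y_0) = \sup_{(x,y)\in L}\varrho_{\epsi}(x,y)$. The identical computation gives $\varrho(f(x_0),f(y_0)) = \sup_{(x,y)\in K}\varrho_{\epsi}(x,y)$. By the choice of $\varrho_{\epsi}$ these two numbers differ, so $f$ fails to preserve the $\varrho$-distance between $x_0$ and $y_0$, i.e. $f \notin \Iso(G,\varrho)$, as required. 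The one genuinely delicate point is this last step: one must be certain that the left-invariantization does not wash out the separation produced by \LEM{sep}. It does not, precisely because $K$ and $L$ are invariant under the diagonal action of $\LLl(G)$ and each is a union of two translation-graph branches carrying equal suprema, so the supremum defining $\varrho$ along a single branch already recovers the full supremum over the orbit.
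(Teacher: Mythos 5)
Your argument is correct and is essentially the paper's proof in a slightly repackaged form: since a supremum of maxima equals a maximum of suprema, your metric $\varrho(x,y)=\sup_{a}\varrho_{\epsi}(ax,ay)$ built from $\varrho_{\epsi}=\max(d,|u(\cdot)-u(\cdot)|)$ is literally the paper's $\max\bigl(d(x,y),\sup_{g}|u(gx)-u(gy)|\bigr)$. The only differences are cosmetic: you route through \LEM{sep} applied to the two left-translation orbits (encoded by $t\notin\{s,s^{-1}\}$), whereas the paper applies \LEM{Lip} directly to the single orbit of $(a,b)$ and the point $(f(a),f(b))$, after first reducing to $f\in\Iso(G,d)$ --- a reduction your two-orbit formulation neatly avoids.
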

\begin{proof}
We may assume $f \in \Iso(X,d)$ (because otherwise it suffices to put $\varrho = d$). Since
$f \notin \hULL{\LLl(G)}$, there are two points $a$ and $b$ in $G$ such that $(f(a),f(b)) \notin
\{(ga,gb)\dd\ g \in G\} \cup \{(gb,ga)\dd\ g \in G\}$. Equivalently, $(f(a),f(b)) \notin K$ where
$K := \{(x,y) \in G \times G\dd\ x^{-1}y = a^{-1}b\ \vee\ y^{-1}x = b^{-1}a\}$. We conclude that
$a \neq b$. Observing that $K$ is a closed symmetric subset of $G \times G$, we may apply \LEM{Lip}
to obtain a function $u\dd G \to \RRR$ such that $\Lip_d(u) \leqsl 1+\epsi$, $|u(f(a)) - u(f(b))| >
d(a,b)$ and $|u(f(a)) - u(f(b))| > \sup_{(x,y) \in K} |u(x) - u(y)|$. It is now easily seen that
the function $\varrho\dd G \times G \to \RRR$ given by
$$\varrho(x,y) = \max\Bigl(d(x,y),\sup_{g \in G} |u(gx) - u(gy)|\Bigr) \qquad (x,y \in G)$$
is a well defined metric having all postulated properties (since $\varrho(f(a),f(b)) >
\varrho(a,b)$).
\end{proof}

\begin{rem}{base}
\PRO{left-right} favours groups of the form $G := \dOUBLE[\tilde{p}]{(\dOUBLE{H})}$ with Boolean $H$
among topological non-Abelian groups. However, the fact that the above $G$ is independent
(up to isomorphism) of the choice of $p \in H \setminus \{e_H\}$ (as it is in case of locally
compact Polish $G$; see the proof of \COR{count}) is no longer true in general, even in the class
of Polish groups. To convince of that, take a nontrivial connected Polish Boolean group $B$ and put
$H = B \times \{-1,1\}$. Further, let $p = (b,1)$ and $q = (b,-1)$ where $b \in B \setminus
\{e_B\}$. Then the topological groups $\dOUBLE[\tilde{p}]{(\dOUBLE{H})}$ and
$\dOUBLE[\tilde{q}]{(\dOUBLE[q]{H})}$ are non-isomorphic. Indeed, \COR{isom} asserts that these
groups are isomorphic iff so are the base pairs $(H,p)$ and $(H,q)$. But $p$ belongs
to the component of $H$ containing $e_H$, while $q$ does not and therefore these base pairs are
non-isomorphic.
\end{rem}

Before we pass to the proof of \THM{trans}, let us show the following

\begin{pro}{trans}
Let $G$ be a topological group, $(X,d)$ be a metric space and $G \times X \ni (g,x) \mapsto g.x \in
G \times X$ be a (possibly discontinuous) effective action of $G$ on $X$ such that
\begin{equation}\label{eqn:iso-trans}
\Iso(X,d) = \{M_g\dd\ g \in G\}
\end{equation}
where for $g \in G$, $M_g\dd X \ni x \mapsto g.x \in X$, and for some $b \in X$ one of the following
two conditions is fulfilled:
\begin{enumerate}[\upshape(T1)]
\item $G.b$ is dense in $X$ and $(X,d)$ is complete; or
\item $G.b = X$.
\end{enumerate}
Then $G$ is an iso-group.
\end{pro}
\begin{proof}
We argue by a contradiction. Suppose $G$ is not an iso-group. Let $K = \{g \in G\dd\ g.b = b\}$.
It is clear that $K$ is a subgroup of $G$. What is more, it follows from (T1) (since $M_g$ is
continuous), (T2) and the effectivity of the action that for each $a \in G$,
\begin{equation}\label{eqn:aux60}
(\forall x \in G\dd\ xax^{-1} \in K) \implies a = e_G.
\end{equation}
We claim that $K = \{e_G\}$. When $G$ is Abelian, this immediately follows from \eqref{eqn:aux60}.
On the other hand, if $G$ is non-Abelian, there is a base pair $(H,p)$ such that $H$ is non-Boolean
and $G$ is isomorphic to $\dOUBLE{H}$. Let us identify these two groups. Let $Z$ be the center
of $G$. Property \eqref{eqn:aux60} implies that $K \cap Z = \{e_G\}$. So, $K \subset \tilde{H} (= H
\times \{1\})$ because for $x \in G \setminus \tilde{H}$, $x^2 = \tilde{p} \in Z$. Since for $a \in
\tilde{H}$, $xax^{-1} = a$ for $x \in \tilde{H}$ and $xax^{-1} = a^{-1}$ otherwise (cf. \LEM{Hxp}),
we see that indeed $K = \{e_G\}$. Consequently, the action is free at $b$ and the function $\Phi\dd
G \ni g \mapsto g.b \in G.b$ is a bijection. For simplicity, let $X_0$ and $d_0$ stand for,
respectively, $G.b$ and the restriction of $d$ to $X_0 \times X_0$.\par
Since $G$ is not an iso-group, there is $f \in \hULL{\LLl(G)}$ which does not belong to $\LLl(G)$.
Put $u_0 := \Phi \circ f \circ \Phi^{-1}\dd X_0 \to X_0$ and observe that $u_0 \in
\hULL{\Iso(X_0,d_0)}$, by \eqref{eqn:iso-trans}. This means that $u_0$ is isometric (with respect
to $d_0$) and consequently $u_0 \in \Iso(X_0,d_0)$, being a bijection. Now when (T1) is fulfilled,
we see that there is $u \in \Iso(X,d)$ which extends $u_0$; otherwise (i.e. if (T2) holds) we put
$u = u_0 \in \Iso(X,d)$. We conclude from \eqref{eqn:iso-trans} that there is $a \in G$ such that
$u(x) = a.x$ for any $x \in X$. But then for every $g \in G$,
$$f(g) = \Phi^{-1}(u_0(\Phi(g))) = \Phi^{-1}(u(g.b)) = \Phi^{-1}(ag.b) = ag$$
and hence $f = L_a \in \LLl(G)$, a contradiction.
\end{proof}

We are now ready to give a short

\begin{proof}[Proof of \THM{trans}]
Implication `(ii)$\implies$(i)' is immediate; \PRO{trans} and \LEM{Hp} show that (iii) is implied
by (i); and applications of \THM{L(G)}, \LEM{Hp} and \COR{iso} show that (ii) follows from
(iii).\par
Finally, the remainder of the theorem is a consequence of \LEM{Hp} and the equivalence between
points (ii) and (iii) (of the theorem): every group $G$ of the form $\dOUBLE{H}$ (where $H$ is
non-Boolean) is solvable (since $\tilde{H}$ is normal Abelian and $G / \tilde{H}$ is Abelian
as well), disconnected ($\tilde{H}$ is open) and has nontrivial Boolean center (by \LEM{Hxp}).
\end{proof}

Taking into account Theorems~\ref{thm:trans}, \ref{thm:L(G)} and \LEM{notin}, the following question
arises:

\begin{prb}{1}
Does every metrizable iso-group $G$ admit a compatible left invariant metric $d$ such that
$\Iso(X,d)$ consists precisely of all left translations of $G$~?
\end{prb}

\end{document}